\newtheorem{theorem}{Theorem}[section]
\newtheorem{lemma}[theorem]{Lemma}
\newtheorem{proposition}{Proposition}[section]
\newtheorem{corollary}{Corollary}[section]
\theoremstyle{definition}
\newtheorem{definition}[theorem]{Definition}
\theoremstyle{remark}
\newtheorem{remark}[theorem]{Remark}
\numberwithin{equation}{section}
\newcommand{\ud}{\,\mathrm{d}}
\newcommand{\p}{\ensuremath{\partial}}
\newcommand{\n}{\ensuremath{\nonumber}}
\newcommand{\eps}{\ensuremath{\varepsilon}}
\newcommand\be{\begin{equation}}
\newcommand\ee{\end{equation}}
\newcommand\bea{\begin{eqnarray}}
\newcommand\eea{\end{eqnarray}}
\newcommand\bi{\begin{itemize}}
\newcommand\ei{\end{itemize}}
\newcommand\ben{\begin{enumerate}}
\newcommand\bena{\begin{enumerate}[(a)]}
\newcommand\een{\end{enumerate}}
\newcommand\bp{\begin{proof}}
\newcommand\ep{\end{proof}}
\renewcommand\Re{\operatorname{Re}}
\renewcommand\Im{\operatorname{Im}}
\title{Reversal in the Stationary Prandtl Equations}
\author{Sameer Iyer \footnote{Department of Mathematics, University of California, Davis, Davis, CA 95616, \url{sameer@math.ucdavis.edu}} \qquad Nader Masmoudi \footnote{NYUAD Research Institute, New York University Abu Dhabi, PO Box 129188, Abu Dhabi, United Arab Emirates. \\ \footnotesize \normalfont
 Courant Institute of Mathematical Sciences, New York University, 251 Mercer Street, New York, NY 10012, USA.  \url{masmoudi@cims.nyu.edu}}}
\tikzset{
    >=stealth',
    punkt/.style={
           rectangle,
           rounded corners,
           draw=black, very thick,
           text width=6.5em,
           minimum height=2em,
           text centered},
    pil/.style={
           ->,
           thick,
           shorten <=2pt,
           shorten >=2pt,}
}
\begin{document}

\maketitle

\begin{abstract} We demonstrate the existence of a class of data which exhibits \textit{reversal} and \textit{recirculation} for the stationary Prandtl equations (data is taken in an appropriately defined product space due to the simultaneous forward and backward causality in the problem). Reversal describes the development of the solution beyond the Goldstein singularity, and is characterized by the presence of (spatio-temporal) regions in which $u > 0$ and $u < 0$. The classical point of view of regarding the system as an evolution in the tangential direction completely breaks down past the Goldstein singularity. Instead, to describe the development, we view the problem as a new \textit{mixed-type,  non-local, quasilinear, free-boundary} problem across the curve $\{ u = 0 \}$. In a well-chosen nonlinear and self-similar coordinate system, we extract a coupled system for the bulk solution and several modulation variables describing the free boundary. Our work introduces new techniques from mixed-type problems, free-boundary problems, modulation theory, and spectral theory combined with several new points of view on the stationary Prandtl system. 
\end{abstract}

\setcounter{tocdepth}{2}
\tableofcontents

\section{Introduction}

\subsection{The Setting}

We are interested in the 2D, stationary Prandtl equations 
\begin{align}
\begin{aligned} \label{eq:PR:0}
&u_P \p_x u_P + v_P \p_y u_P - \p_y^2 u_{P} = -\p_x p_E(x), \\
&\p_x u_P + \p_y v_P = 0, \qquad (x,y) \in \Omega_L := (1, 1+L) \times (0, \infty),
\end{aligned}
\end{align}
This equation is supplemented with boundary data in the vertical direction, 
\begin{align} \label{numEul}
u_P|_{y = 0} = v_P|_{y = 0} = 0, \qquad \lim_{y \rightarrow \infty} u_P(x, y) = u_E(x), 
\end{align}
where $u_E(x)$ is related to $p_E(x)$ through Bernoulli's law
\begin{align} \label{uEdef}
u_E(x) \p_x u_E(x) = - \p_x p_E(x). 
\end{align}
From the perspective of the Prandtl equations, \eqref{eq:PR:0}, the ``outer" Euler flow, $u_E(x)$, and the corresponding pressure, $\p_x p_E(x)$ is an input appearing as a source term in \eqref{eq:PR:0}. We will discuss below in \eqref{choice:uEx} the particular choices we make for the functions $u_E(x)$ and thus also $\p_x p_E(x)$. These choices are motivated by the existence of a family of classical self-similar profiles, known as the Falkner-Skan profiles for reversed flow. 

It is well-known that due to the parabolic scaling exhibited by \eqref{eq:PR:0}, the stationary Prandtl equations are, in fact, an \textit{evolution} equation in the $x$ direction. Considering the left-hand side of \eqref{eq:PR:0}, we can formally identify $u_P \p_x$ scales like $\p_y^2$, and therefore \eqref{uEdef} behaves like a forward evolution in $x$. This point of view persists so long as $u_P \ge 0$, since $u_P(x, y)$ is the coefficient in front of the transport term $\p_x u_P$. 

To our knowledge, all mathematical results to date regarding the stationary Prandtl system, \eqref{eq:PR:0}, are under the assumption that $u_P$ remains nonnegative, and therefore the point of view of regarding \eqref{eq:PR:0} as a forward evolution in $x$ is essentially always adopted. \textit{Our paper is the first to study the stationary Prandtl system, \eqref{eq:PR:0}, that allows for a sign change for $u_P$.} These works will be surveyed below in Subsection \ref{existing:Lit}, but we discuss now the most relevant results for our theorem.

The existence of local solutions ($0 < x < L << 1$) was established by Oleinik in \cite{Oleinik}, who also established global in $x$ solutions under the assumption that $\p_x p_E(x) \le 0$ (a favorable pressure gradient). In the case of an adverse pressure gradient, $\p_x p_E(x) > 0$, the physics literature has well-documented the possibility of \textit{boundary layer separation}, starting in fact with Prandtl's seminal 1904 paper, \cite{Prandtl}, and in many other works, for instance \cite{Goldstein}. 

Separation is a physical phenomena in which the boundary layer detaches from the wall and enters the flow, as can be seen below in Figure \ref{Fig:1}. 
\begin{figure}[h] 
\hspace{28 mm} \includegraphics[scale=0.2]{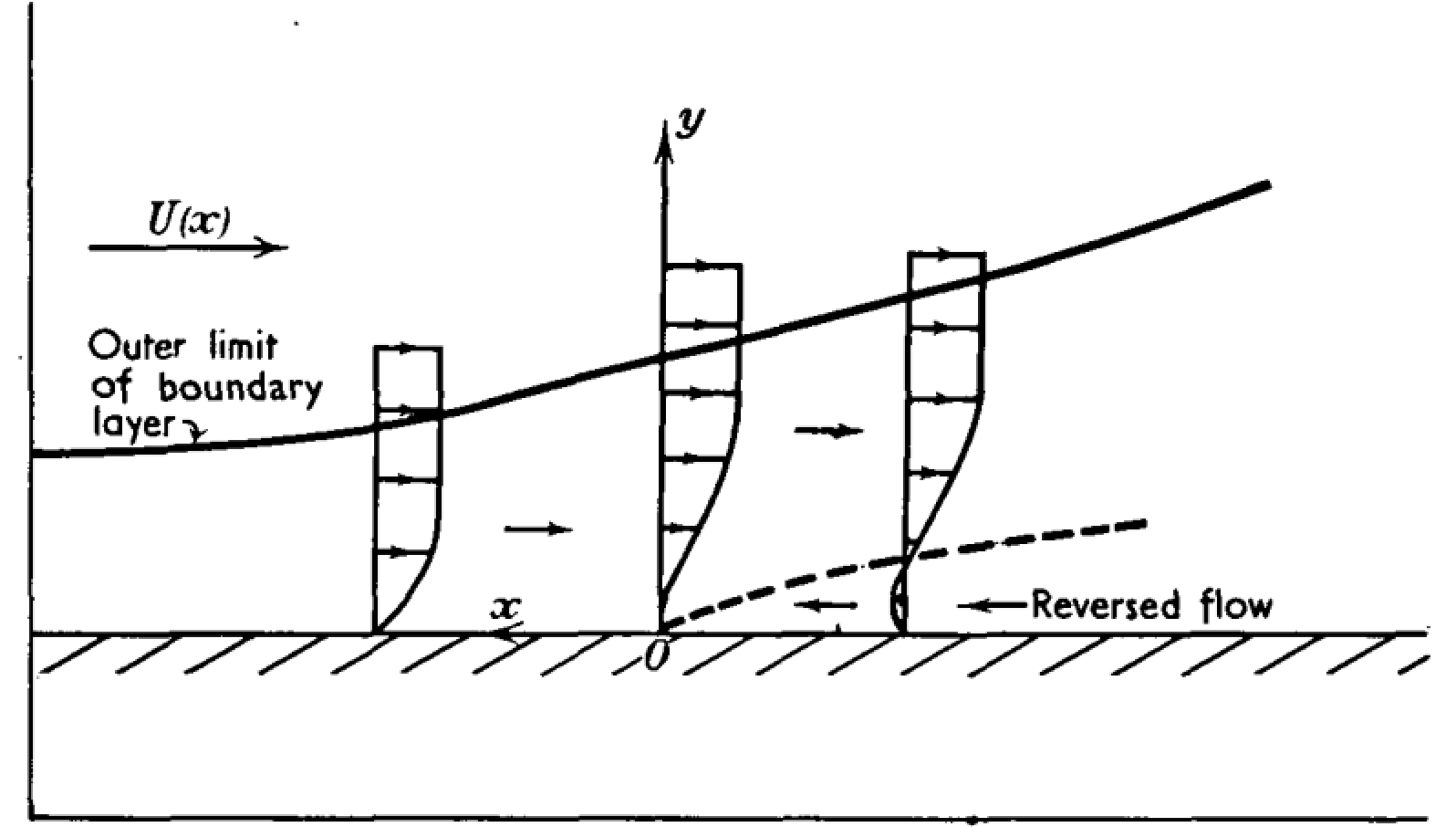}
\caption{Onset and continuation past separation, \cite{Stew0}} \label{Fig:1}
\end{figure}
Mathematically, boundary layer separation has been characterized in the work of \cite{MD}. We point the reader also to the works \cite{Zhangsep} as well as the paper \cite{E} in which a separation result is announced. Separation occurs at a point $x_\ast$ (which in Figure \ref{Fig:1} corresponds to $x_\ast= 0$) if $\p_y u_P(x_{\ast}, 0) = 0$, while $\p_y u_P(x, 0) > 0$ for $x < x_{\ast}$.

\textit{In the present work, we are concerned with describing the solution after separation}, that is, to the right of $x = 0$ in Figure \ref{Fig:1}. Physically, the flow after separation undergoes a new phenomenon of reversal and recirculation: the tangential velocity can be negative, compared to the ``pre-separation" regime. Mathematically, the fact that $u_P$ changes sign completely destroys the pre-existing approaches to the stationary Prandtl equations (for example \cite{MD}, \cite{Oleinik}, \cite{Serrin}, \cite{IyerBlasius}, \cite{Zhangsep}), which, as we have described above, is to regard the equations as an evolution. In this work, we introduce an entirely new framework to study the Prandtl system in this reversal regime. 

The problem of continuing the solution to \eqref{eq:PR:0} after the Goldstein singularity has historically garnered significant attention from many fluid-dynamicists: 
\begin{quote}
``A famous paper by Goldstein asserts that... the solution of the boundary layer equations can, in general, not be continued beyond the point of separation. Subsequent attempts by many authors to overcome the difficulty of continuation have failed." (P. Lagerstrom, 1975, \cite{Lagerstrom75})
\end{quote}

Several features become clear upon inspection: since the flow after separation exhibits \textit{reversal} flow, there exist regions where $u_P < 0$ (below the dotted curve in Figure \ref{Fig:1}), and also regions where $u_P > 0$ (above the dotted curve in Figure \ref{Fig:1}). In particular, the assumption that $u_P \ge 0$ is not true, and the classical point of view of regarding \eqref{eq:PR:0} as a forward evolution in $x$ completely breaks down. 

We develop a new framework, and introduce a new point of view to study the solution in this reversal regime. We devote Section \ref{section:ideas} to describing this new point of view, the steps of our proof, and the various difficulties that are encountered. As a result, we only briefly mention here some aspects of our point of view in a relatively informal manner. The dotted line in Figure \ref{Fig:1} which emerges immediately after the Goldstein singularity occurs depicts the set $\{ u_P = 0\}$. We identify this curve as a \textit{free-boundary} which needs to be characterized. Above this free boundary, we have $u_P > 0$, and therefore \eqref{eq:PR:0}, in principle, should behave like a forward evolution equation in $x$. Below this free boundary, $u_P < 0$, and therefore \eqref{eq:PR:0}, in principle, should behave as a \textit{backwards} evolution in $x$. For this reason, we will regard the reversal problem as a \textit{mixed-type} problem: the equation changes type across $\{u_P = 0\}$ from forward to backward parabolic. It is useful to keep the following picture, Figure \ref{fig:org:mix}, in mind.
\begin{figure}[h]
\centering
\begin{tikzpicture}
\draw[ultra thick, <-] (-.5,3) node[above]{$y$} -- (-.5,-0.5);
\draw[ultra thick, -] (4,1.5) -- (4,-2);
\draw[ultra thick, -] (-.5,-2) -- (4,-2);
\node [below] at (1.75, -2) {$u_P = 0, \psi_P = 0$};
\draw[ultra thick,blue, -] (-.5, -.5) to[bend left] (4,1.5);
\node [below] at (1.75, 1) {$\textcolor{blue}{u_P =0}$};
\node [below] at (1.75, -.2) {$u_P < 0$};
\node [below] at (1.75, -.6) {\scriptsize $u_P \p_x u_P + \textcolor{cyan}{v_P \p_y u_P} - \p_y^2 u_P = F$};
\node [below] at (1.75, -1.2) {(reversed flow)};
\node [below] at (1.75, 3.2) {$u_P > 0$};
\node [below] at (1.75, 2.7) {\scriptsize $u_P \p_x u_P + \textcolor{cyan}{ v_P \p_y u_P} - \p_y^2 u_P = F$};
\node [below] at (1.75, 2.1) {(forward flow)};
\draw[ultra thick, dashed, -] (-.5,-2) -- (-.5,-.5);
\draw[ultra thick, dashed, ->] (4,1.5) -- (4,3);
\draw[ultra thick, dashed, ->] (4,-2) -- (5,-2);
\draw[ultra thick, dashed, ->] (-.5,-2) -- (-1.5,-2);
\node[right] at (5,-2) {$x$};
\end{tikzpicture}
\caption{Prandtl as a Free Boundary Mixed-Type Problem} \label{fig:org:mix}
\end{figure}

In general, mixed-type problems are those in which the equation undergoes a change of type across a particular hypersurface. Some classical examples of mixed-type equations  include the Tricomi equation, $u_{xx} + xu_{yy} = 0$, and the Keldysh equation, $xu_{xx} + u_{yy} = 0$. Evidently, these two aforementioned equations change type from elliptic (when $x > 0$) to hyperbolic (when $x < 0$) across the hypersurface $\{x = 0\}$. \textit{Linear} mixed-type problems have been studied in the past, though certainly not as intensively as their elliptic/ parabolic/ hyperbolic counterparts. The methods surrounding these problems are largely dependent on deriving explicit solutions, see for instance the book of Bitsadze, \cite{Bitsadze}, for a relatively comprehensive overview of linear methods.

In contrast, \textit{quasilinear} mixed-type problems pose substantial difficulties, and there are exceedingly few results. The two equations mentioned above have become increasingly important in gas dynamics recently due to their appearance in problems related to transonic shock formation and the shock reflection problem. We refer the reader to some excellent works in this direction: the pioneering works of Morawetz, for example \cite{MorTransonic}, as well as the remarkable works of Chen-Feldman: \cite{Chen-Feldman-1}, \cite{Chen-Feldman-2} (again, to name just a few). 

It appears to be a general principle that \textit{quasilinear, mixed-type} problems may be regarded as a \textit{free-boundary} problem. Indeed, if we return to the Tricomi equation and replaced it with $u_{xx} + f(u) u_{yy} = 0$, then $f(u)$ is playing the role of $x$, and we would need to determine the zero set $f(u) = 0$ in order to determine the hypersurface across which the equation changes type. This turns out to be extremely challenging in general, which is one reason why there are so few results regarding quasilinear, mixed-type problems. 

In the present work, due to the sign change of the coefficient of $u(x, y)$ in front of the $\p_x u$ term, our situation is a quasilinear, mixed type problem which results in the formation of a free boundary, $\{u = 0\}$, whose characterization is a main goal. This free-boundary point of view is novel for the stationary Prandtl system. A further complexity is that the term $v \p_y u$ in \eqref{eq:PR:0} is \textit{nonlocal} in the vertical direction due to the incompressibility, which has the potential of spoiling this mixed-type characterization. 

We urge the reader to consult Figure \ref{fig:flow:of:info} which depicts these \textbf{three (3) flows of information}. Above the free boundary, $\{ u = 0\}$, the coefficient $u > 0$, and we expect forward parabolic behavior (information flows left-to-right). Below the free boundary, the coefficient $u < 0$ and we therefore expect backward parabolic behavior (information flows right-to-left). The nonlocal term, $v \p_y u$, \textit{sends information from bottom-to-top} through the incompressibility $v = - \int_0^y \p_x u$. Obtaining efficient ways to disentangle these three (3) types of information flow is extremely challenging, delicate, and is a central achievement of this paper.  
\begin{figure}[h]
\centering
\begin{tikzpicture}
\draw[ultra thick, <-] (-.5,3) node[above]{$y$} -- (-.5,-0.5);
\draw[ultra thick, -] (4,1.5) -- (4,-2);
\draw[ultra thick, -] (-.5,-2) -- (4,-2);
\node [below] at (1.75, -2) {$u_P = 0, \psi_P = 0$};
\draw[ultra thick,blue, -] (-.5, -.5) to[bend left] (4,1.5);
\node [below] at (3, -.75) {$\textcolor{magenta}{u_P < 0}$};
\node [below] at (3, -1.2) {\textcolor{magenta}{(reversed flow)}};
\node [below] at (0.75, 3.2) {$\textcolor{orange}{u_P > 0}$};
\node [below] at (0.75, 2.8) {\textcolor{orange}{(forward flow)}};
\draw[ultra thick, orange, dashed, ->] (-.5,1.9) -- (1.5,1.9);
\draw[ultra thick, magenta, dashed, ->] (4,-.5) -- (2,-.5);
\node [below] at (2, 1.2) {\textcolor{cyan}{(incompressibility)}};
\node [below] at (2, .7) {\textcolor{cyan}{$v_P = - \int_0^y \p_x u_P$}};
\draw[ultra thick, cyan, dashed, ->] (2,-2) -- (2,3);
\draw[ultra thick, dashed, -] (-.5,-2) -- (-.5,-.5);
\draw[ultra thick, dashed, ->] (4,1.5) -- (4,3);
\draw[ultra thick, dashed, ->] (4,-2) -- (5,-2);
\draw[ultra thick, dashed, ->] (-.5,-2) -- (-1.5,-2);
\node[right] at (5,-2) {$x$};
\end{tikzpicture}
\caption{Flow of Information in the Prandtl system} \label{fig:flow:of:info}
\end{figure}

It is worth mentioning that a second reason for the lack of works contending with quasilinear mixed-type problems is simply that, at least as far as we are aware, it is relatively rare to find a physically natural context in which these arise, at least compared to their classical hyperbolic, parabolic, elliptic counterparts (again, the well-studied transonic shock problem from gas dynamics is a notable exception). A noteworthy aspect of our present work even at the outset is that we encounter a natural setting, from the point of view of fluid dynamics, which presents as a quasilinear, mixed-type problem. 

We note that our setup, Figure \ref{fig:org:mix}, is also reminiscent of the classical Obstacle problem or the Stefan problem, but with some key differences. In the Obstacle problem or Stefan problem, one regards the set $\p \{u = 0\}$ as a ``free boundary", whose regularity is the crucial question of study. On the other hand, the setup differs from ours as typically an elliptic or parabolic equation (for instance, $\Delta u = 0$ or $\p_t u - \Delta u = 0$) is prescribed only in the region $\{u > 0\}$, and the equation is not allowed to become of mixed-type. We refer the reader to the many classical works in this area, for instance \cite{CafaralliFB}, \cite{FRS1}, \cite{FigSer}  just to name a few. There is also a large literature on the related porous medium equation, which also features a degeneracy in the diffusion. Nearly all results we know of regarding the free-boundary porous medium equation require the degeneracy coefficient to remain nonnegative; see for example \cite{RosOton1}, and the many references therein.

\subsection{Self-Similarity and Reversal}

For concreteness, we shall fix the Euler flow in \eqref{uEdef} to be the following
\begin{align} \label{choice:uEx}
u_E(x) := x^n, \qquad n = \frac{\beta}{2-\beta}, \qquad - 0.2 < \beta < 0.
\end{align}
This choice of $u_E(x)$ and corresponding forcing $- \p_x p_E(x)$ into \eqref{eq:PR:0} is known to produce a self-similar \textit{reversed} solution to the Prandtl equations. Indeed, consider the ansatz:
\begin{align} \label{uFS}
u_{FS}(x, y) := u_E(x) f'(\eta), \qquad \eta := \frac{y}{x^{\frac{1-n}{2}}}. 
\end{align}
The self-similar profile, $f(\cdot)$, satisfies the following equation:
\begin{align} \label{FS:beta}
f''' + f f'' + \beta (1 - (f')^2) = 0, \qquad f(0) = f'(0) = 0, \qquad f'(\infty) = 1,
\end{align}
which is often referred to as a Falkner-Skan self-similar profile to the Prandtl equations. Numerical simulations for negative values of $\beta$ yield the following right-most profile shown below, which undergoes reversed flow (see for instance, \cite{Lagree}, P. 35):

\begin{figure}[h]
\hspace{40 mm} \includegraphics[scale=0.4]{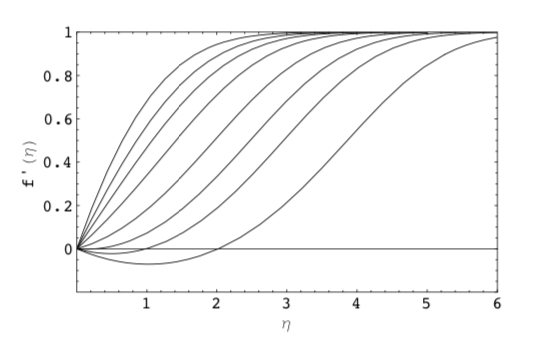}
\caption{Self Similar Falkner-Skan Profiles} \label{Fig:2}
\end{figure}

As far as mathematical treatments of these self-similar flows, we state the following classical result: 
\begin{lemma}[Stewartson, \cite{Stewartson}] For $\beta_{\ast} < \beta < 0$, where $\beta_{\ast} < 0$, there exists a solution $f(\eta)$ whose graph of $f'$ undergoes ``reversal", that is, $f''(0) < 0$, $f'(\eta) \le 0$ for $0 \le \eta \le \eta_{\ast}$, and $f'(\eta) > 0$ for $\eta_{\ast} < \eta < \infty$. 
\end{lemma}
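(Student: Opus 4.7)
The plan is a topological shooting argument, parameterized by the initial curvature $\alpha := f''(0)$. For each $\alpha \in \R$, consider the IVP obtained by replacing the far-field condition in \eqref{FS:beta} with $f''(0) = \alpha$. Standard ODE theory yields a unique local $C^3$ solution $f_\alpha$, continuously dependent on $(\alpha, \beta)$. Integrating the ODE once from $0$ to $\eta$ produces the key identity
\begin{equation*}
f_\alpha''(\eta) = \alpha - f_\alpha(\eta) f_\alpha'(\eta) + (1+\beta) \int_0^\eta (f_\alpha'(s))^2 \ud s - \beta \eta,
\end{equation*}
which, when combined with the energy estimate obtained by multiplying the ODE by $f''$, rules out finite-time blow-up and extends $f_\alpha$ to all of $[0,\infty)$, at least for $(\alpha, \beta)$ in a compact parameter set.

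The reversal structure emerges from the initial data: for $\alpha < 0$, the conditions $f_\alpha'(0) = 0$ and $f_\alpha''(0) = \alpha < 0$ force $f_\alpha' < 0$ immediately to the right of $0$. I would establish the existence of a first return point $\eta_* = \eta_*(\alpha) < \infty$ with $f_\alpha'(\eta_*) = 0$ as follows: at any point $\eta_0$ where $f_\alpha'(\eta_0) = 0$, the ODE reads $f_\alpha'''(\eta_0) = -f_\alpha(\eta_0) f_\alpha''(\eta_0) - \beta$. Since $\beta < 0$ and $f_\alpha < 0$ in the reversal bubble, this geometric constraint together with the integrated identity forces $f'$ to eventually rise back through zero. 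A sign analysis at $\eta_*$ (where necessarily $f_\alpha''(\eta_*) \geq 0$) combined with uniqueness then precludes any further zero crossings, yielding $f_\alpha'(\eta) > 0$ for all $\eta > \eta_*$.

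The shooting step introduces the map $S(\alpha) := \lim_{\eta \to \infty} f_\alpha'(\eta)$, whose well-definedness I would establish by proving eventual monotone convergence in the post-reversal regime $\eta > \eta_*$ (using the identity above to control the sign of $f_\alpha''$ for large $\eta$). Two regimes need to be identified in the $\alpha < 0$ half-line: an \emph{undershoot} regime with $S(\alpha) < 1$ for $\alpha$ close to $0$, where the reversal bubble is negligible and the resulting profile cannot climb to unity; and an \emph{overshoot} regime with $S(\alpha) > 1$ for $|\alpha|$ sufficiently large, where the $\alpha$ and the accumulated $(1+\beta)\int (f')^2$ term drive $f''$ sharply upward after $\eta_*$. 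Continuous dependence of $S$ on $\alpha$ and the intermediate value theorem then produce some $\alpha^*(\beta) < 0$ with $S(\alpha^*) = 1$, which is the desired reversed profile.

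I expect the main obstacle to be the rigorous far-field analysis: showing that $S(\alpha)$ exists, is finite, and depends continuously on $\alpha$ uniformly in $\beta$. This requires linearizing \eqref{FS:beta} about the asymptotic state $f \sim \eta$, $f' \sim 1$, yielding an Airy-type asymptotic analysis for the correction that must be patched back to the transient reversal region. The threshold $\beta_*$ is expected to arise not from a local computation but as the critical value below which one of the two regimes (most likely the overshoot one) ceases to be accessible, causing the IVT argument to collapse; quantifying $\beta_*$ would require a more delicate global analysis of the phase portrait of the reduced system obtained by eliminating $\eta$.
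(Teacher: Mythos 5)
The paper does not prove this lemma; it is cited to Stewartson \cite{Stewartson} (and its modern treatment to \cite{Hastings}, \cite{BrownStewartson}, \cite{YangLan}), so there is no in-house proof to compare against. Your topological shooting framework, parametrized by $\alpha = f''(0)$ with an intermediate-value argument on the terminal slope, is indeed the strategy of the classical literature, and your integrated first-order identity is correct. However, the central claim that the integrated identity ``combined with the energy estimate obtained by multiplying by $f''$, rules out finite-time blow-up'' is false and is in fact the crux of the whole problem. Multiplying by $f''$ and integrating gives $\tfrac12 (f'')^2 = \tfrac12\alpha^2 - \int_0^\eta f (f'')^2 - \beta f' + \tfrac{\beta}{3}(f')^3$; in the reversal bubble $f < 0$, so the term $-\int f(f'')^2$ has the \emph{wrong} sign and is a growth term, not a damping term. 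Solutions of the IVP genuinely blow up in finite $\eta$ for $\alpha$ sufficiently negative (one can see the self-reinforcing loop $f, f', f'' < 0$ making $f''' = -ff'' - \beta(1 - (f')^2) < 0$ once $|f'| > 1$). Consequently your ``overshoot regime with $S(\alpha) > 1$ for $|\alpha|$ sufficiently large'' does not exist; the overshoot is accessed only in a narrow window just above the blow-up boundary in $\alpha$, and controlling that boundary and the continuity of $S$ up to it is precisely where the restriction $\beta > \beta_\ast$ arises. This is the content of the delicate analysis in \cite{Stewartson}, \cite{Hastings}, not a local or soft step.

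Two further gaps: (i) the claim that a ``sign analysis at $\eta_\ast$ combined with uniqueness precludes any further zero crossings'' is not a proof; $f'$ can in principle touch zero with $f''(\eta_\ast) = 0$ and Falkner--Skan solutions with multiple reversal bubbles do occur in other parameter regimes, so eliminating secondary crossings requires a genuine argument (e.g.\ a monotonicity or comparison statement for $f''$ once $f' > 0$), and (ii) the far-field linearization about $f \sim \eta - c$, $f' \sim 1$ yields $g'' + (\eta - c) g' - 2\beta g = 0$ for $g = f' - 1$, which is a parabolic-cylinder (Weber) equation, not an Airy equation; the Airy structure in this paper pertains to the degeneracy of the transport coefficient $u$ near its zero set and has no bearing on the far-field asymptotics of the self-similar profile.
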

After the work \cite{Stewartson}, the quantification of the largest negative value $\beta_\ast$ for which one can find reversed self-similar Falkner-Skan (``FS") profiles has been a subject of study, \cite{BrownStewartson}, \cite{Hastings}, \cite{YangLan}, for instance. As discussed in the works \cite{Hartree}, \cite{Stewartson},  \cite{BrownStewartson}, these ``reversed" self-similar profiles are classical in the boundary layer theory due to their importance in connection with boundary layer separation, and also their importance in various applied settings (such as airfoil design in aerodynamics, etc...). 

\textit{Our main objectives in this work are to establish the existence of a set of data that results in reversed (non self-similar) flows, and to provide stability theorems: if data is prescribed ``near" these reversed Falkner-Skan profiles, they remain ``near" them on an order one tangential scale.}

\begin{remark} Our choice of the ``FS" profiles is exclusively for the sake of concreteness: they serve as a particular solution to the Prandtl equations \eqref{eq:PR:0} (of course, with adverse pressure gradient), which exhibit reversed flow. One could repeat our results for a much wider class of reversed flows. 
\end{remark}

More formally, consider
\begin{align} \label{psir}
\psi_P = \psi_{FS} + \eps \psi_{R}, \qquad u_P = u_{FS} + \eps u_R, \qquad v_P = v_{FS} + \eps v_R, \qquad 0 < \eps << 1, 
\end{align}
where the subscript $``R"$ stands for ``Remainder," and $\eps$ is the size of the perturbation. Above, we introduce the stream function $\psi_P$ such that $\p_y \psi_P = u_P$, $- \p_x \psi_P = v_P$, and $\psi_P|_{y = 0} = 0$. Then, the remainder satisfies the following equation  
\begin{align} \label{exp:eq:intro:1}
(u_{FS} + \eps u_R) \p_x u_R + u_R \p_x u_{FS} + (v_{FS} + \eps v_R) \p_y u_R + v_R \p_y u_{FS} - \p_y^2 u_R = 0. 
\end{align}

\subsection{Type of Equation \& Leading Order Role of Incompressibility}

We now want to describe the data that we pose. Of course, it is always the case that posing the correct data requires understanding the \textit{type} of the equation (for parabolic equations we prescribe Cauchy data on $u$, for hyperbolic we prescribe $u$ and $\p_t u$, etc...). For mixed-type problems (even linear mixed-type) it can be a delicate matter to pose the correct data because the type is non-uniform (see for instance, \cite{Bitsadze}). For quasilinear, mixed-type problems the issue of posing the correct data becomes even more pronounced (see for instance discussions in Morawetz's early work on transonic shocks, \cite{MorTransonic}). In our case, we have not only a quasilinear mixed type problem, but we also have the nonlocal role of incompressibility which enters at leading order (we discuss this below). Therefore, we need to introduce a new conceptual understanding of the type of equation we have in order to even pose the correct data. We will now discuss this at length, and in the next section we will pose the data given the outcome of the present section.  

To our knowledge, all prior mathematical results on \eqref{eq:PR:0} fall into one of two categories: (1) construction and analysis of specific self-similar solutions (for instance  \cite{Goldstein}, \cite{Stewartson}, \cite{YangLan}), or (2) studying the wellposedness/ finite-$x$-blowup of \eqref{eq:PR:0} as an initial value problem, with data prescribed at $\{x = 0\}$ (for instance, \cite{Oleinik}, \cite{GI2}, \cite{Serrin}, \cite{IyerBlasius}, \cite{MD}, \cite{Zhangsep}, \cite{Zhifei:smooth}). The present work is distinguished since, as described earlier, \eqref{eq:PR:0} cannot be regarded as a traditional Cauchy problem in the presence of flow reversal. 

\vspace{2 mm}

\noindent \textit{Reduction to a Model Equation:} First, to fix a simpler setting, we consider the linearized Prandtl equations around a reversed Falkner-Skan profile, $[u_{FS}, v_{FS}]$. The linearized equation, obtained by retaining the leading order terms in $\eps$ from \eqref{exp:eq:intro:1} reads:
\begin{align}
u_{FS} \p_x u_R + u_R \p_x u_{FS} + v_{FS} \p_y u_R + v_R \p_y u_{FS} - \p_y^2 u_R = 0. 
\end{align}
Since the ``type" of the equation is determined by the leading order terms, we may drop the second and third term from the left-hand side, and study the simplified model problem: 
\begin{align} \label{model:pb:00}
u_{FS} \p_x u_R + \p_y u_{FS} v_R - \p_y^2 u_R = 0. 
\end{align}

\vspace{2 mm}

\noindent \textit{Scaling Near the Interface: } It is tempting to remove the middle, nonlocal, term $\p_y u_{FS}v_R$ from \eqref{model:pb:00}, in which case we obtain $u_{FS} \p_x u_R  - \p_y^2 u_R = 0$. Recalling that $u_{FS}$ changes sign, we can parametrize the curve $\{u_{FS} = 0\}$ by $y = \Lambda_G(x)$. Noticing that $u_{FS}$ is nearly linear across $y = \Lambda_G(x)$, we can reduce the study further to the explicit equation $(y - \Lambda_G(x)) \p_x u_R - \p_y^2 u_R = 0$. Of course, at this level, the shape of $\Lambda_G(x)$ plays little role, and so we may flatten it by introducing, $z = \frac{y}{ \Lambda_G(x)}$, after which $y = \Lambda_G(x)$ gets mapped to $\{z = 1\}$. We are therefore led to consider the toy problem  
\begin{align}
((z-1) \p_x - \p_z^2) u_R = 0, \qquad (x, z) \in (1,1+ L) \times \mathbb{R}. 
\end{align}
Above, $L > 0$ is the tangential length-scale. For the toy-problem of this discussion, we also consider the simpler scenario of $z \in \mathbb{R}$ as opposed to $\mathbb{R}_+$, so as to isolate the discussion to the effect of the change in sign, as opposed to also involving effects arising from the physical boundary, $\{y = 0\}$ (which we eventually have to contend with). 

For this equation, $u_R$ exhibits \textit{forward-backward dynamics:} for $z < 1$ the evolution is backwards in $x$ whereas for $z > 1$ the evolution is forwards in $x$. The study of this type of linear problem has been performed in Pagani's works, \cite{Pagani1}, \cite{Pagani2}. The natural data to supplement such a forward-backward mixed-type problem is $u_{R}|_{x = 1, z > 1}$ to initiate the forward evolution for $z > 1$ and $u_R|_{x =1+ L, z < 1}$ to initiate the backward evolution on $z < 1$. 

Performing a formal derivative count (for this discussion we are interested in very close to the interface $\{z = 1\}$), one counts the weight of $z-1$ as $\p_z^{-1}$, and therefore this equation exhibits the formal scaling $\p_z^{-1} \p_x = \p_z^2$, or equivalently $\p_z = \p_x^{\frac13}$. 

We now restore the term $\p_y u_{FS}v_R$ as in \eqref{model:pb:00} and repeat the formal derivative count. Notice that since $u_{FS}$ is linear across the interface, $\p_y u_{FS}$ should act like $1$ near the interface. Therefore, we may simply think of this term as $v_R = - \int_0^y \p_x u_R$. Changing from $y$ to $z = \frac{y}{ \Lambda_G(x)}$ is just a shift, so this term counts as $\p_z^{-1} \p_x$, which also scales like the $((z-1) \p_x - \p_z^2)$ operator. Therefore, when we consider \eqref{model:pb:00}, the incompressibility enters at the leading order. 

Even from a heuristic standpoint, such a nonlocal, leading order effect has the potential to completely spoil the forward-backwards dynamics on $u_R$ (and it does!). Indeed, the term $(z-1) \p_x u_R$ dictates the forward evolution for $z > 1$ and backwards evolution for $z < 1$ essentially due to local considerations: we look at the sign of the coefficient, $z-1$, which is a pointwise feature. However, if $- \p_z^{-1} \p_x u_R$ is included in the equation and contributes at top order, such an operator is nonlocal vertically; it becomes completely unclear what is the sign, if any, of the operator in front of the $\p_x$ (very formally, $(z-1) - \p_z^{-1}$). Therefore, we can no longer conclude that $z > 1$ exhibits forward evolution and $z < 1$ exhibits backwards evolution (which is even a precursor to prescribing the correct data for the problem). 

In light of the above discussion, in order to proceed, we need to identify a mechanism to separate out the forward-backwards dynamics from the nonlocal effect of the incompressibility. \ul{Perhaps the central conceptual achievement of this work is (1) to identify these two effects as leading order, competing effects, and (2) to identify a mechanism to distinguish the role of these effects}. We insist on the word ``conceptual" above for two reasons. First, the idea we introduce, to be described below, marks a major departure from all prior works on the stationary Prandtl system: in a sense, the discussion above indicates that $u_R$ is not the correct quantity to study, at least near the interface. Second, the analytic task of closing estimates, to which the majority of the paper is of course devoted to, is highly nontrivial; these aspects will be discussed in Section \ref{section:ideas}.

\vspace{2 mm}

\noindent \textit{The Good Unknown, $\omega_g$: } Our first goal is to extract leading order forward-backward dynamics by exploiting cancellations in the Prandtl system. First of all, by localizing near the interface, $\{u_{FS} = 0\}$, we notice that $\p_y u_{FS} > 0$. We go to the vorticity formulation of \eqref{model:pb:00} by taking $\p_y$ of the equation. Such a procedure results in a cancellation of the ``cross terms" and yields 
\begin{align} \label{model:pb:vort}
u_{FS} \p_{x} \p_y u_R + \p_y^2 u_{FS} v_R - \p_y^3 u_R = 0
\end{align}
Now, the trick is to take the linear combination $\eqref{model:pb:vort} - \frac{\p_y^2 u_{FS}}{\p_y u_{FS}} \eqref{model:pb:00}$ which cancels out the $v_R$ term, and specifically leaves an equation of the type 
\begin{align} \label{omegaGeq}
u_{FS} \p_x \omega_g - \p_y^2 \omega_g + \mathcal{K}[u_R, \omega_g] = 0, \qquad \omega_g :=  \p_y u_R - \frac{\p_y^2 u_{FS}}{\p_y u_{FS}} u_R. 
\end{align}
Above, the operator $\mathcal{K}$ is a lower order operator on $\omega_g$ and also on the velocity itself $u_R$. Such an idea is motivated by those of Masmoudi-Wong, \cite{MW}, and the Crocco transform from the dynamical setting, but is completely novel for the stationary setting. We specifically need to use it because it allows us to extract a leading order forward-backward equation on the good unknown, $\omega_g$. 

\vspace{2 mm}

\noindent \textit{Three Regions \& the Role of the Critical Point(s): } It is clear that the use of the good unknown, $\omega_g$, relies on the monotonicity of the background profile, $\p_y u_{FS}> 0$, which in turn is only valid near the interface, $\{u_{FS} = 0\}$. Indeed, an examination of the profile from Figure \ref{Fig:2} actually shows that there exists a \textit{critical point} in the background, $\p_y u_{FS} = 0$. Formally, there is also a critical point at $y = \infty$ due to the decay of $\p_y u_{FS}$. We therefore have the picture shown in Figure \ref{Fig:CritPt}. In particular, we decompose the domain into three, mutually overlapping regions:
\begin{quote}

\textit{Crocco Region, $R_{\mathrm{Crocco}}$:} This region is above the curve $\p_y u_{FS} = 0$ and is where $\p_y u_{FS} > 0$ is strictly signed. We call this the ``Crocco Region." Here, due to the cancellation on the good unknown shown above, the forward-backwards dynamics is dominant (on $\omega_g$).

\textit{Lower von-Mise Region, $R_{\mathrm{VM}_-}$:} This region is below the curve $u_{FS} = 0$, and is where $u_{FS} < 0$ is signed negative. In particular, this region contains the critical line $u_{FS}' = 0$. In this region, we shall use the sign of $u_{FS}$ itself (as opposed to $u_{FS}'$) through the use of the so-called von-Mise transform in order to study a backwards evolution equation on a variant of the velocity, $U$, (see the definition \eqref{abs:4:bljk}). 

\textit{Upper von-Mise Region, $R_{\mathrm{VM}_+}$:} This region is (strictly) above the curve $u_{FS} = 0$. Here, $u_{FS} > 0$ and we have access to the von-Mise transform again in order to study a forward evolution equation on a variant of the velocity. However, the treatment of this region is not symmetric with the lower von-Mise region, again due to incompressibility. 

\end{quote}
Clearly, there is some choice to be made regarding exactly where these regions start and stop. For this discussion, we will remain vague regarding this. However, the important aspect that we capitalize on is that they are overlapping, which allows us to to transition from one type of analysis to the next. 

\begin{figure}[h]
\centering
\begin{tikzpicture}
\draw[ultra thick, <-] (0,3) node[above]{$y$} -- (0,0);
\node [left] at (0, 0) {$\textcolor{red}{(1, y_1)}$};
\filldraw [red] (0, 0) circle (2pt);
\draw[ultra thick, -] (3,1.5) -- (3,-2);
\node [right] at (3.2, 1.5) {$\textcolor{red}{(1 + L, y_{1 + L})}$};
\filldraw [red] (3, 1.5) circle (2pt);
\draw[ultra thick, -] (0,-2) -- (3,-2);
\node [below] at (1.5, -2) {$u_R = 0, \psi_R = 0$};
\draw[ultra thick,red, -] (0, 0) to[bend left] (3, 1.5);
\draw[ultra thick,red, -] (0, -1.5) to[bend left] (3, 0);
\node [above] at (1.5, 1.4) {$\textcolor{red}{\{u_{FS} = 0\}}$};
\node [above] at (1.5, -1.5) {$\textcolor{red}{\{\p_y u_{FS} = 0\}}$};
\draw[ultra thick, dashed, -] (0,-2) -- (0,0);
\draw[ultra thick, dashed, ->] (3,-1.5) -- (3,3);
\draw[ultra thick, dashed, ->] (3,-2) -- (4.5,-2);
\draw[ultra thick, dashed, ->] (0,-2) -- (-1.5,-2);
\node[right] at (4.5,-2) {$x$};
\end{tikzpicture}
\caption{$(x, y)$ Domain} \label{Fig:CritPt}
\end{figure}
\vspace{2 mm}

\noindent \textit{Codimension One Forward-Backward Dynamics for $u_R$: } Let us now re-interpret the dynamics in terms of the velocity, $u_R$. For this forthcoming discussion, $x$ plays the role of a parameter (meaning the formula \eqref{formula:1} below applies $x$ by $x$). Fix a value $y_{0} \in R_{\mathrm{VM}_-} \cap R_{\mathrm{Crocco}}$, and any $y \in R_{\mathrm{Crocco}}$. By inverting the formula \eqref{omegaGeq} for $u_R$ in terms of $\omega_g$, we obtain 
\begin{align} \label{formula:1}
u_R(y) = \underbrace{u_R(y_0) \frac{u_{FS}'(y)}{u_{FS}'(y_0)}}_{\text{Backwards von-Mise Dynamics}} + \underbrace{ u_{FS}'(y) \int_{y_0}^y \frac{\omega_g(\bar{y})}{u_{FS'}(\bar{y})} \ud \bar{y}.}_{\text{Forward-Backwards Dynamics}}
\end{align}
Due to the forward-backwards equation on $\omega_g$, the second, integral term on the right-hand side above can be thought of as ``being determined by the forward-backwards dynamics". However, to recover $u_R$, we also need to know the first term. This term represents information coming from beneath: it is determined by the von-Mise backwards evolution. 

It is important to note that we cannot choose $y_0 = 0$ in \eqref{formula:1}, which would eliminate the first term since $u_R|_{y = 0} = 0$, exactly due to the critical point of $u_{FS}$: $y_0$ must necessarily lie about the critical point. This shows the fundamental role of the critical point of $u_{FS}$: it necessarily implies that the von-Mise evolution is coupled with the Crocco evolution.

Nevertheless, this expression provides an interpretation to the original question raised at the beginning of the section: 
\begin{quote}
Q: Because $u_{FS} \p_x u_R$ and $\p_y u_{FS} v_R$ appear to have leading order scaling, how can we distinguish the forward-backwards dynamics from the role of the incompressibility?

A: The decomposition \eqref{formula:1} allows us to think of $u_R$ as having a decomposition into a codimension one component, $\omega_g$, which exhibits the forward backwards dynamics to leading order, and a one dimensional component which has to be determined through a separate analysis (it does not exhibit forward backwards dynamics). 
\end{quote}

\begin{remark} When we refer to the stream function, $\psi_R$, we further integrate in $y$ \eqref{formula:1}. In that case, the forward-backwards aspect is co-dimension two.
\end{remark}

\subsection{Prescribed Data}

Motivated by the understanding of the type of the problem, we are now prepared to specify the data we pose. More specifically, we want to prescribe data which is consistent with the three regions described above. For the Crocco region, we want to prescribe data on $\omega_g$ on the left and right, in a similar manner to Pagani for the forward backward problem $(z-1) \p_x - \p_z^2$. For the lower von-Mise region, we want to prescribe data on $u$ itself, but on the right at $\{x = 1 + L\}$ in order to initiate the backwards evolution. For the upper von-Mise region, we want to prescribe data again for $u$ itself, but on the left at $\{x = 1\}$ in order to initiate the forwards evolution. Of course, these need to be done in a consistent matter as one transitions from the lower von-Mise region to the Crocco region at $\{x = 1 + L\}$, and similarly from Crocco to upper von-Mise at $\{x = 1\}$. 

We highlight that even at this level, our analysis represents a significant departure from the prior works on \eqref{eq:PR:0}. To our knowledge, all prior mathematical results on \eqref{eq:PR:0} fall into one of two categories: (1) construction and analysis of specific self-similar solutions (for instance  \cite{Goldstein}, \cite{Stewartson}, \cite{YangLan}), or (2) studying the wellposedness/ finite-$x$-blowup of \eqref{eq:PR:0} as an initial value problem, with data prescribed at $\{x = 0\}$ (for instance, \cite{Oleinik}, \cite{GI2}, \cite{Serrin}, \cite{IyerBlasius}, \cite{MD}, \cite{Zhangsep}, \cite{Zhifei:smooth}). 

\vspace{2 mm}

\noindent \textit{Preliminaries: } We first fix a parameter $L > 0$ which describes our tangential length scales. Upon doing so, we define $y_1, y_2$  associated to the Falkner-Skan solution, \eqref{uFS}:
\begin{align}
u_{FS}(1, y_1)  = f'(y_1) = 0, \qquad u_{FS}(1+L, y_2) = (1+L)^n f'( \frac{y_2}{(1+L)^{\frac{1-n}{2}}} ) = 0
\end{align}
We will be looking for solutions perturbed around $u_{FS}$ as in \eqref{psir}. 
We also define the curve $\Lambda_G(x)$ to be the zero set of the background flow, $u_{FS}$, from \eqref{uFS}, and the free boundary $\Lambda(x)$ to be the perturbed zero set of the full solution, $u_P$:
\begin{align}
u_{FS}(x, \Lambda_G(x)) = 0, \qquad u_P(x, \Lambda(x)) =  u_{FS}(x, \Lambda(x)) + \eps u_R(x, \Lambda(x)) = 0 
\end{align}
It is useful to have the image below, Figure \ref{figure:dom}, in mind (which is a slightly more detailed version of Figure \ref{fig:org:mix}). 
\begin{figure}[h]
\centering
\begin{tikzpicture}
\draw[ultra thick, magenta, <-] (0,3) node[above]{\textcolor{black}{$y$}} -- (0,1.6);
\draw[ultra thick, -] (0,-0.5) -- (0,1.6);
\node [left] at (0, 0) {$\textcolor{red}{(1, y_1)}$};
\filldraw [red] (0, 0) circle (2pt);
\draw[ultra thick, -] (3,1.5) -- (3,-.5);
\draw[ultra thick, cyan, -] (3,-0.5) -- (3,-2);
\node [right] at (3.2, 1.5) {$\textcolor{red}{(1 + L, y_{1 + L})}$};
\filldraw [red] (3, 1.5) circle (2pt);
\node [right] at (3, -0.5) {$\textcolor{cyan}{(1+L, y_\ast)}$};
\filldraw [cyan] (3, -0.5) circle (2pt);
\draw[ultra thick, -] (0,-2) -- (3,-2);
\node [below] at (1.5, -2) {$u_R = 0, \psi_R = 0$};
\draw[ultra thick,red, -] (0, 0) to[bend left] (3, 1.5);
\node [left] at (0, -.5) {$\textcolor{blue}{(1, y_1(\eps))}$};
\filldraw [blue] (0, -.5) circle (2pt);
\node [left] at (0, 1.6) {$\textcolor{magenta}{(1, y^\ast)}$};
\filldraw [magenta] (0, 1.6) circle (2pt);
\draw[ultra thick,blue, -] (0, -.5) to[bend left] (3,1.5);
\node [below] at (1.5, .8) {$\textcolor{blue}{y = \Lambda(x)}$};
\node [above] at (1.5, 1.4) {$\textcolor{red}{y = \Lambda_G(x)}$};
\node [right] at (3, 0.7) {$\omega_g|_{x = 1 + L}$ given};
\node [right] at (3, 0.2) {in terms of $F_{\mathrm{Right}}$};
\node [right] at (3, -1.2) {\textcolor{cyan}{$u_R|_{x = 1 + L}$ given}};
\node [right] at (3, -1.8) {\textcolor{cyan}{in terms of $G_{\mathrm{Right}}$}};
\node [left] at (0, 1.1) {$\omega_g|_{x = 1}$ given};
\node [left] at (0, 0.6) {in terms of $F_{\mathrm{Left}}$};
\node [left] at (0, 2.7) {\textcolor{magenta}{$u_R|_{x = 1}$ given}};
\node [left] at (0, 2.1) {\textcolor{magenta}{in terms of $G_{\mathrm{Left}}$}};
\draw[ultra thick, dashed, -] (0,-2) -- (0,-.5);
\draw[ultra thick, dashed, ->] (3,1.5) -- (3,3);
\draw[ultra thick, dashed, ->] (3,-2) -- (4.5,-2);
\draw[ultra thick, dashed, ->] (0,-2) -- (-1.5,-2);
\node[right] at (4.5,-2) {$x$};
\end{tikzpicture}
\caption{$(x, y)$ Domain} \label{figure:dom}
\end{figure}
As shown in Figure \ref{figure:dom}, $y_1(\eps), y_{1 + L}$ are defined as follows:
\begin{align}
y_1(\eps)= \Lambda(1), \qquad y_{1 + L} = \Lambda(1 + L),
\end{align}
and we correspondingly define 
\begin{align} \label{gudef}
\gamma_u := u_R(1, y_1(\eps)), \qquad \gamma_\psi = \psi_R(1, y_1(\eps)). 
\end{align}
The role of these two numbers will become clear shortly. 

To help declutter notations, we use $'$ to denote $\p_y$ in the forthcoming discussion. We choose $y_\ast$ to be such that $u_{FS}'(1 + L, y_\ast) > \frac14$ and $u_{FS}(1 + L, y_\ast) < 0$ (so $y_\ast$ is in the overlap region between lower von-Mise and Crocco at the right boundary, $\{x = 1 + L\}$). Similarly, we choose $y^\ast$ such that $u_{FS}'(1, y^\ast) > \frac14$ and $u_{FS}(y^\ast) > 0$ (so $y^\ast$ is in the overlap region between Crocco and upper von-Mise at the right boundary, $\{x = 1\}$). We now define four cutoff functions, which are used to transition between the various regions. We define $\chi_{\mathrm{Cont}, +}$ to be a smooth, decreasing cutoff function satisfying 
\begin{align}
\chi_{\mathrm{Top}} := 1 - \chi_{\mathrm{Cont}, +}, \qquad \chi_{\mathrm{Cont}, +}(y) = \begin{cases} 1, \qquad y < y^\ast +1 \\ 0 \qquad y > y^\ast + 2 \end{cases} 
\end{align}
and $\chi_{\mathrm{Cont}, -}(y)$ will be a smooth, increasing cutoff function satisfying
\begin{align}
\chi_{\mathrm{Bot}} := 1 - \chi_{\mathrm{Cont}, -}, \qquad \chi_{\mathrm{Cont}, -}(y) = \begin{cases} 0, \qquad y < \frac{y_\ast}{4} \\ 1, \qquad y > \frac{y_\ast}{2}.  \end{cases}
\end{align}

\vspace{2 mm}

\noindent \textit{Structure of Prescribed Data:} We may now give the precise formulation of the prescribed data. First, we fix functions 
\begin{align*}
&F_{\mathrm{Left}}: \mathbb{R}_+ \rightarrow \mathbb{R}, && F_{\mathrm{Right}}: \mathbb{R}_- \rightarrow \mathbb{R}, \\
&G_{\mathrm{Left}}: [y^\ast, \infty) \rightarrow \mathbb{R}, && G_{\mathrm{Right}}: [0, y_\ast) \rightarrow \mathbb{R}.  
\end{align*}
Then, we prescribe the following:
\begin{align} \n
u_R|_{x = 1}(y) := & \chi_{\mathrm{Cont, +}}(y)(\gamma_{u} \frac{u_{FS}'(y)}{u_{FS}'(y_1(\eps))} + u_{FS}'(y) \int_{y_1(\eps)}^y \frac{F_{\mathrm{Left}}(\frac{1}{L^{\frac13}}(\frac{\bar{y}}{y_1(\eps)} - 1) )}{u_{FS}'} \ud \bar{y} )  \\ \label{PhiIota:1}
& + \chi_{\mathrm{Top}}(y) G_{\mathrm{Left}}(y)  , \qquad y > y^\ast\\ \label{PhiIota:2}
\omega_g|_{x = 1}(y) := & F_{\mathrm{Left}}(\frac{1}{L^{\frac13}}( \frac{y}{y_1(\eps)} - 1) ) , \qquad y_1(\eps) < y < y^\ast, \\ \label{PhiIota:3}
\omega_g|_{x = 1 + L}(y) := & F_{\mathrm{Right}}(\frac{1}{L^{\frac13}}(\frac{y}{y_{1 + L}} - 1) ), \qquad y_\ast < y < y_{1 + L}, \\ \n
u_R|_{x = 1 + L}(y) := & \chi_{\mathrm{Cont}, -}(y) ( - u_{FS}'(y) \int_{y_\ast}^{y_{1 + L}} \frac{F_{\mathrm{Right}}(\frac{1}{L^{\frac13}}(\frac{\bar{y}}{y_{1 + L}} - 1) )}{u_{FS}'} \ud \bar{y} \\  \n
&+ u_{FS}'(y) \int_{y_\ast}^y \frac{F_{\mathrm{Right}}(\frac{1}{L^{\frac13}}(\frac{\bar{y}}{y_{1 + L}} - 1) )}{u_{FS}'}  \ud \bar{y}) \\ \label{PhiIota:4}
& + \chi_{\mathrm{Bot}}(y) G_{\mathrm{Right}}(y) , \qquad 0 < y < y_{\ast}.
\end{align}


Several points are noteworthy about the setup: 

\begin{itemize}
\item[(1)] \textit{Nonresonant tangential scales:} The tangential length-scale we choose will need to be non-resonant. As we state in the main theorem below, there exist discretely many values $0 < L_1 < L_2 < \dots < L_{Max}$ which we need to avoid in choosing our tangential domain $x \in (0, L)$. These discrete resonant values correspond to the presence of zero eigenvalues for a particular elliptic operator we derive and study, and depend only on the choice of background flow, $u_{FS}$. Understanding the physical interpretation of these resonances is an interesting question for further investigation. 

\item[(2)] \textit{Data in a product space \& three regions:} As we have emphasized, there are three regions where different dynamics dominate. The prescription of four data elements reflects this aspect of the problem. The functions $[F_{\mathrm{Left}}, F_{\mathrm{Right}}]$ initiate the forward-backwards dynamics for $\omega_g$, whereas $G_{\mathrm{Left}}$ initiates the forward evolution for $u_R$, and $G_{\mathrm{Right}}$ initiates the backwards evolution for $u_R$. We have made a choice of using cutoff functions in \eqref{PhiIota:1} -- \eqref{PhiIota:4}. This is the reason $F_{\mathrm{Left}}$ enters in the expression for $u_R|_{x = 1}$ and similarly $F_{\mathrm{Right}}$ enters the expression for $u_R|_{x = 1 + L}$. We could have alternatively simply prescribed $G_{\mathrm{Left}}, G_{\mathrm{Right}}$, but then enforced compatibility conditions at $y = y_\ast, y^\ast$ by hand.  We chose to use smooth cutoffs, as they are more convenient for performing estimates, and we thus avoid these extra compatibility conditions at $y = y_\ast, y^\ast$ ($F_{\mathrm{Left}}, F_{\mathrm{Right}}, G_{\mathrm{Left}}, G_{\mathrm{Right}}$ will require their own compatibility conditions at $y = 0, y_1(\eps), y_{1 + L}$ as described below in \eqref{compat:1} -- \eqref{these:numbers}).

\item[(3)] \textit{The velocity $u$ at $\{x = 1\}$:} We apply the integral formula \eqref{formula:1} at $x = 1$, with $y_0 = y_1(\eps)$ and $y > y_1(\eps)$. This gives 
\begin{align} \label{gammauint}
u_R(1, y) = \underbrace{\gamma_u \frac{u_{FS}'(y)}{u_{FS}'(y_1(\eps))}}_{\text{Not prescribed}} + \underbrace{ u_{FS}'(y) \int_{y_1(\eps)}^y \frac{F_{\mathrm{Left}}(\frac{1}{L^{\frac13}}( \frac{\bar{y}}{y_1(\eps)} - 1) )}{u_{FS}'(\bar{y})} \ud \bar{y}.}_{\text{Prescribed}}
\end{align} 

The interpretation of this calculation is that we \textbf{cannot} prescribe $u$ on both sides. Instead we must leave the projection $\gamma_u$ onto the one-dimensional subspace spanned by $u_{FS}'(1, \cdot)$ on the left as free. This projection is determined through the global behavior of the solution. \textit{This is a conceptual departure from all prior works on the stationary Prandtl equation}, and displays the extremely subtle nature of studying the Prandtl system in the reversed regime. 

This is in contrast with the velocity $u$ at $\{x = 1 + L\} \cap \{0 < y < y_{1 + L}\}$. In this case, we see from \eqref{PhiIota:4} that for $y < y_\ast$, the velocity $u_R$ itself is prescribed. We can then apply the integral formula \eqref{formula:1} at $x = 1 + L$, with $y_0 = y_\ast$ for $y_\ast < y < y_{1 + L}$ to obtain 
\begin{align} \label{gammauint:yebud}
u_R(1 + L, y) = \underbrace{u_R(1 + L, y_\ast) \frac{u_{FS}'(y)}{u_{FS}'(y_\ast)}}_{\text{Prescribed}} +  \underbrace{u_{FS}'(y) \int_{y_\ast}^y \frac{F_{\mathrm{Right}}(\frac{1}{L^{\frac13}}( \frac{\bar{y}}{y_{1 + L}} - 1) )}{u_{FS}'(\bar{y})} \ud \bar{y}}_{\text{Prescribed}}.
\end{align} 

\item[(4)] \textit{Nonlinearly shifted vertical scale:} We have defined $y_1(\eps)$ to be a nonlinear correction to the known $y_1$: it is given by $y_1(\eps) = \Lambda(1)$. One subtlety, therefore, is that we can prescribe the profile function $F_{\mathrm{Left}}(\cdot)$ as data, but then we need to shift the argument. Note that this subtlety can be seen from the expression \eqref{gammauint:yebud}. Evaluating \eqref{gammauint:yebud} at $y = y_{1 + L}$, we see that $u_R(1 + L, y_{1 + L}) = 0$. Therefore, we may choose $y_{1 + L}$ to simultaneously satisfy 
\begin{align}
u_{FS}(1 + L, y_{1 + L}) = 0, \qquad u_R(1 +L, y_{1 + L}) = 0, \qquad u_P(1 + L, y_{1+ L}) =0.  
\end{align}
This explains why there is no discrepancy between $\Lambda(1 + L)$ and $\Lambda_G(1 + L)$ in Figure \ref{figure:dom}. On the other hand, this cannot be enforced on the left, as we do not prescribe data for $u_R(1, y)$ for $0 < y < y_1(\eps)$. 

\item[(5)] \textit{The $L^{-\frac13}$ rescaling of $F_{\mathrm{Left}}, F_{\mathrm{Right}}$: } We have introduced the tangential scale, $L$, into the data for $\omega_g|_{x = 1}, \omega_g|_{x = 1 + L}$ through \eqref{PhiIota:2}, \eqref{PhiIota:3}. While this is not strictly necessary, it is a natural choice for the following reason, motivated by even the toy problem considered above. If one is working with the equation $((z-1) \p_x - \p_z^2)u = G$ on the domain $(x, z) \in (1, 1 + L) \times \mathbb{R}$, and one wants estimates uniform in $L$, a natural way to proceed is to introduce the rescalings $t := \frac{x-1}{L}, Z = \frac{z-1}{L^{\frac13}}$. By doing so, data of the form \eqref{PhiIota:2} -- \eqref{PhiIota:3} essentially gets rescaled to the profile functions $(F_{\mathrm{Left}}(\cdot), F_{\mathrm{Right}}(\cdot))$. This is seen, for instance, in our Airy analysis, (as one specific example out of many, the form of \eqref{F:plus}). 
\end{itemize}


\vspace{2 mm}

\noindent \textit{Spaces for $[F_{\mathrm{Left}}, F_{\mathrm{Right}}, G_{\mathrm{Left}}, G_{\mathrm{Right}}]$: } We will fix an order of tangential regularity, $k_\ast$, which will be selected as any number $k_\ast \ge 7$. We fix the weight parameter $N_0 := 3 k_\ast + 5$. Then, we assume 
\begin{align}
\| F_{\mathrm{Left}} \langle \eta \rangle^3 \|_{H^{2 + 3k_\ast}_\eta} + \| F_{\mathrm{Right}} \langle \eta \rangle^3 \|_{H^{2 + 3k_\ast}_\eta}  
+ \|  G_{\mathrm{Right}} \|_{H^{3 + 3k_\ast}_y} + \|  G_{\mathrm{Left}} \langle y \rangle^{N_0} \|_{H^{3 + 2k_\ast}_y} < \infty.
\end{align} 
Above, the variable $\eta$ is a dummy variable. Here are the noteworthy points: 

\begin{itemize}

\item[(1)] \textit{Scalings:} Let us motivate the scalings above. Formally, $F_{\mathrm{Left}}, F_{\mathrm{Right}}$ measure the vorticity, $\omega_g$, which is at one order of vertical derivative more than the velocity, where the $G_{\mathrm{Left}}, G_{\mathrm{Right}}$ come in. This explains the $+2$ versus $+3$ discrepancy between the regularities. Next, very close to the interface, we have the scalings $\p_x \sim \p_y^3$ as motivated in the discussion above. Hence, $\p_x^{k_\ast}$ should translate to $3k_\ast$ vertical derivatives for $F_{\mathrm{Left}}, F_{\mathrm{Right}}$. The same scaling is valid for $G_{\mathrm{Right}}$ due to the physical boundary at $\{y = 0\}$. On the other hand, the $G_{\mathrm{Left}}$ enters in the purely parabolic regime, hence $\p_x^{k_\ast}$ costs $2k_\ast$ vertical derivatives. 

\item[(2)] \textit{Regularity and Choice of $k_\ast$:} We need to work in a higher regularity space $k_\ast \ge 7$ to control nonlinear contributions. At a high level, the restriction of $k_\ast \ge 7$ appears due to numerology with Sobolev embeddings. At a more specific level, there are essentially three contributors to the restriction: numerology for quasilinear estimates (seen, for instance, in \eqref{mr:b:1}), numerology for semilinear estimates (seen, for instance, in \eqref{mr:b:2}), and numerology for maximal regularity estimates (seen for instance in Lemma \ref{lmrsem}).

\item[(3)] \textit{Decay and Weights:} The choice of $N_0 = 3k_\ast + 5$ is made to make some convenient numerology work out in the analysis. The choice has to be tied to the eventual regularity, $k_\ast$ we propagate due to a \textit{downward weight cascade}, \eqref{def:NsubK}, where the weight depends on the regularity. In turn, this creates some numerology with the maximal regularity analysis; the exponent $N_k - J_k$ in \eqref{MR:norm} will be required to be $\ge 0$, which is guaranteed by the choice $N_0 = 3k_\ast + 5$. 

In a similar spirit, the fact that we use $\langle \eta \rangle^3$ is again due to convenience of some numerology. Specifically, one can examine \eqref{hm:1:R} -- \eqref{hm:2:R} for two instances. To help simplify the $\max\{ K, 3 \}$ power appearing, we simply choose the exponent of $3$. It is likely that one can decrease this weight at the expense of introducing more complicated estimates, though this is not the goal of the current work. 
\end{itemize}

\vspace{2 mm}

\noindent \textit{Compatibility Conditions on $[F_{\mathrm{Left}}, F_{\mathrm{Right}}, G_{\mathrm{Left}}, G_{\mathrm{Right}}]$: } In our setting, we will have three types of compatibility conditions on the data. 

\begin{itemize}

\item[(1)] \textit{Compatibility Conditions at $(x, y) = (1 + L, 0)$:} These are standard parabolic compatibility conditions for the backwards evolution where the Cauchy surface $\{x = 1 + L\}$ meets the boundary $\{y = 0\}$: 
\begin{align} \label{compat:1}
&\p_y^jG_{\mathrm{Right}}(0) = 0, \qquad j = 2 + k, 3 + k, \qquad 0 \le k \le k_\ast - 1.
\end{align}

This type of compatibility condition appears from the purely parabolic (albeit backwards) part of the problem, and therefore is completely standard, and can be found in essentially any works which require differentiating the Prandtl system (for example, \cite{GI2}, \cite{IyerBlasius}).  

\item[(2)] \textit{Compatibility Conditions at $(x, y) = \{ (1, y_1(\eps)), (1 + L, y_{1 + L}) \}$: } We will assume the decomposition 
\begin{align}
(F_{\mathrm{Left}}, F_{\mathrm{Right}}) = (\mathring{F}_{\mathrm{Left}}, \mathring{F}_{\mathrm{Right}}) + (Q_{\mathrm{Left}}, Q_{\mathrm{Right}}),
\end{align}
where $(\mathring{F}_{\mathrm{Left}}, \mathring{F}_{\mathrm{Right}})$ satisfies homogeneous conditions at $0$: 
\begin{align} \label{compat:3}
\p_{\eta}^{2 + 3k} \mathring{F}_{\mathrm{Left}}(0) =  \p_{\eta}^{2 + 3k} \mathring{F}_{\mathrm{Right}}(0) = 0. 
\end{align}
On the other hand, $(Q_{\mathrm{Left}}, Q_{\mathrm{Right}})$ will be taken to be compactly supported, $C^\infty$ functions that satisfy the following conditions at $0$:
\begin{align} \label{these:numbers}
\p_{\eta}^{2 + 3k} Q_{\mathrm{Left}}(0) =  q_{\mathrm{Left}, 2 + 3(k-1)},  \qquad \p_{\eta}^{2 + 3k} Q_{\mathrm{Right}}(0) =  q_{\mathrm{Right}, 2 + 3(k-1)}
\end{align}
The numbers $q_{\mathrm{Left}, 2 + 3(k-1)}, q_{\mathrm{Right}, 2 + 3(k-1)})$ are difficult to write explicitly, but are represented in \eqref{yeayea:1} -- \eqref{cc:1}. 

\begin{remark} The need for these compatibility conditions is as follows: if we consider an equation of the type $((z-1) \p_x - \p_z^2) \omega_g = G$. If we want to propagate one $\p_x$, then the data will be of the form $\p_x \omega_g|_{x = 1} = \frac{\p_z^2 \omega_g|_{x = 1} + G|_{x = 1}}{z-1}$. Therefore, due to the division by $z-1$, we should enforce the compatibility condition $\p_z^2 \omega_g|_{x = 1}(1) = - G|_{x = 1}(1)$. An analogous condition should hold at $x = 1 + L$. This is precisely what is being achieved by the $Q_{\mathrm{Left}}, Q_{\mathrm{Right}}$ ($z = 1$ in this analogy is playing the role of $y = y_1(\eps), y_{1 + L}$). 
\end{remark}

\begin{remark} Normally, even if $G$ contains lower order terms in $\omega_g$, $G|_{x = 1}(0)$ is also prescribed data and therefore the numbers \eqref{these:numbers} are explicitly determinable in terms of the given data. In our setting, $G$ will contain terms that depend on $u, \psi$, not just $\omega_g$. Due to the condition \eqref{gammauint}, this implies that $G$ will contain quantities that actually depend on the solution itself (that isn't prescribed). Nevertheless, we will be able to control these contributions precisely (see for example, \eqref{QbdLR}). 
\end{remark}

\item[(3)] \textit{Compatibility Conditions of Orthogonality Type: } It will turn out that we will need to further decompose $(\mathring{F}_{\mathrm{Left}}, \mathring{F}_{\mathrm{Right}})$ as follows. First, define $H^{K}_\eta(M_1) \times H^{K}_\eta(M_2)$ to be the following weighted Hilbert space:
\begin{align}
\| (F_{\mathrm{Left}}, F_{\mathrm{Right}}) \|_{H^{K}_\eta(M_1) \times H^{K}_\eta(M_2)} := \| F_{\mathrm{Left}} \langle \eta \rangle^{M_1} \|_{H^{K}_\eta} + \| F_{\mathrm{Right}} \langle \eta \rangle^{M_2} \|_{H^{K}_\eta \times H^{K}_\eta}.
\end{align}
Then, it will turn out that we want our data $(\mathring{F}_{\mathrm{Left}}, \mathring{F}_{\mathrm{Right}})$ to lie in a $2k_\ast$ co-dimension subspace of $H^{2 + 3k_\ast}_\eta(3) \times H^{2+3k_\ast}_\eta(3)$. To define this subspace,  there are $2k_\ast$ elements $(\bold{e}^{(k)}_0, \bold{e}^{(k)}_1)_{k = 1}^{k_\ast}$ where $\bold{e}^{(k)}_i \in H^{K}_\eta \times H^{K}_\eta$ and $2k_\ast$ linearly independent functionals $(\ell^{(k)}_0, \ell^{(k)}_1)_{k =1}^{k_\ast}$ such that
\begin{align*}
\ell^{(k)}_i(\bold{e}^{(k')}_{i'}) = \delta_{i = i'} \delta_{k =k'}.
\end{align*}
While these functionals are not explicit, they are fixed and universal. For the purposes of this paper, these do not play any role. We therefore invite the reader to consult our companion paper, \cite{IM22b}, for a more detailed description. We then define 
\begin{align} \n
\overline{\mathcal{P}}^{K}_{k_\ast}(M) := \{ (F_{\mathrm{Left}}, F_{\mathrm{Right}}) \in H^{K}_\eta(M) \times H^{K}_\eta(M) :& \ell_i^{(k)}(F_{\mathrm{Left}}, F_{\mathrm{Right}}) = 0, \\
& 0 \le i \le 1, 0 \le k \le k_\ast \}.
\end{align}
Our data will be of the form
\begin{align} \label{uanL1}
(\mathring{F}_{\mathrm{Left}}, \mathring{F}_{\mathrm{Right}}) = ( \bar{F}_{\mathrm{Left}}, \bar{F}_{\mathrm{Right}}) + \sum_{k = 1}^{k_\ast} (c_0^{(k)} \bold{e}^{(k)}_0 + c^{(k)}_1 \bold{e}^{(k)}_1),
\end{align}
where $( \bar{F}_{\mathrm{Left}}, \bar{F}_{\mathrm{Right}}) \in \overline{\mathcal{P}}^{2 + 3k_\ast}_{k_\ast}(3)$, and the $2k_\ast$ numbers $c_i^{(k)}$ are projections which, while non-explicit, are fixed. 

\begin{remark} These final type of compatibility conditions are further restrictions on $(\mathring{F}_{\mathrm{Left}}, \mathring{F}_{\mathrm{Right}})$ that are required in order to guarantee smoothness of the solution, as introduced by \cite{DMR}. In this paper, we are proving \textit{a-priori} estimates given any $(\mathring{F}_{\mathrm{Left}}, \mathring{F}_{\mathrm{Right}})$, and in our companion paper, \cite{IM22b}, we apply these estimates to the restricted class given by \eqref{uanL1}. Therefore, this type of further restriction on the data will not appear in this paper. 
\end{remark}

\end{itemize}

\vspace{2 mm}

%

\subsection{Main Theorem}

In order to state our theorems, we need to use the $\mathcal{Z}$ norm, which, while precisely defined in \eqref{Z:norm:ult}, will be unnecessary to fully introduce here. Instead, we inform the reader that this $\mathcal{Z}$ norm is stronger than the following: 
\begin{align}
\| \psi_R \|_{\mathcal{Z}} \gtrsim \sum_{k = 0}^{k_\ast} ( \| \psi_{R,k} \|_{L^\infty} +\| u_{R,k} \|_{L^\infty} + \| \p_y u_{R,k} \|_{L^\infty_x L^2_y} +\| \p_y^2 u_{R,k} \|_{L^2_{xy}} ),
\end{align}
where in turn, $\psi_{R, k}, u_{R,k}$ are normalized tangential derivatives defined in \eqref{Rknot}. In the true definition \eqref{Z:norm:ult}, one sees weights in $y$ appearing and much more refined information. 

We first state a special case of our main result (which we choose to state as a theorem in its own right so that we can refer to it in the body of the paper). 
\begin{theorem}[Small $L$ Version] \label{main:thm} Let $0 < L << 1$ relative to universal constants, and let $0 < \eps << L$. Assume $(\mathring{F}_{\mathrm{Left}}, \mathring{F}_{\mathrm{Right}}, G_{\mathrm{Left}}, G_{\mathrm{Right}})$ are given functions satisfying the compatibility conditions \eqref{compat:1}, \eqref{compat:3}. Fix $k_\ast \ge 7$, and let $N_0 = 3k_\ast + 5$. Assume 
\begin{align}
\| \mathring{F}_{\mathrm{Left}} \langle \eta \rangle^3 \|_{H^{2 + 3k_\ast}_\eta} + \| \mathring{F}_{\mathrm{Right}} \langle \eta \rangle^3 \|_{H^{2 + 3k_\ast}_\eta}  
+ \|  G_{\mathrm{Right}} \|_{H^{3 + 3k_\ast}_y} + \|  G_{\mathrm{Left}} \langle y \rangle^{N_0} \|_{H^{3 + 2k_\ast}_y} < \infty.
\end{align} 
Assume there exists a solution, $\psi_R$ to \eqref{exp:eq:intro:1} with data \eqref{PhiIota:1}  --\eqref{PhiIota:4} satisfying the bound 
\begin{align} \label{bootstrap:thm:1}
\| \psi_R \|_{\mathcal{Z}} < \eps^{-\frac14} \qquad \text{(Bootstrap)}
\end{align} 
Then $\psi_R$ satisfies the \textit{a-priori} stability estimate 
\begin{align}
\| \psi_R \|_{\mathcal{Z}} \lesssim L^{\frac16}( \| \mathring{F}_{\mathrm{Left}} \langle \eta \rangle^3 \|_{H^{2 + 3k_\ast}_\eta} + \| \mathring{F}_{\mathrm{Right}} \langle \eta \rangle^3 \|_{H^{2 + 3k_\ast}_\eta} ) 
+ \|  G_{\mathrm{Right}} \|_{H^{3 + 3k_\ast}_y} + \|  G_{\mathrm{Left}} \langle y \rangle^{N_0} \|_{H^{3 + 2k_\ast}_y}.
\end{align}
\end{theorem}

 The most general result we have is the following, in which the smallness $0 < L << 1$ is replaced with a more general condition:
\begin{theorem}[Nonresonant $L$ Version] \label{thm:res} Let $0 < L_{\mathrm{Max}} < \infty$ be any given tangential length scale. There exist a discrete set of resonant lengths, $\{L_k\}_{k = 1}^N$, so that the following holds. Let $L \in (0, L_{\mathrm{Max}}) \setminus \cup_{k = 1}^N L_k$. There exists an $\eps_\ast > 0$, small relative to $L_{\mathrm{Max}}$ and relative to $\min |L - L_k|$ such that for $0 < \eps < \eps_\ast$, the same result of Theorem \ref{main:thm} holds, namely, assuming there exists a solution, $\psi_R$ to \eqref{exp:eq:intro:1} with data \eqref{PhiIota:1}  --\eqref{PhiIota:4} satisfying the bootstrap bound \eqref{bootstrap:thm:1}, then $\psi_R$ satisfies the \textit{a-priori} stability estimate 
\begin{align}
\| \psi_R \|_{\mathcal{Z}} \lesssim  \| \mathring{F}_{\mathrm{Left}} \langle \eta \rangle^3 \|_{H^{2 + 3k_\ast}_\eta} + \| \mathring{F}_{\mathrm{Right}} \langle \eta \rangle^3 \|_{H^{2 + 3k_\ast}_\eta} 
+ \|  G_{\mathrm{Right}} \|_{H^{3 + 3k_\ast}_y} + \|  G_{\mathrm{Left}} \langle y \rangle^{N_0} \|_{H^{3 + 2k_\ast}_y},
\end{align}
where the constant in the above inequality depends on $L_{\mathrm{Max}}$ and on small values of $\min_{1 \le k \le N} |L - L_k|$. 
\end{theorem}
We refer the reader to Section \ref{section:ideas} in which we provide a detailed overview of the ideas we introduce and the overall strategy we follow to obtain Theorems \ref{main:thm} and \ref{thm:res}.  

\begin{remark} As stated, the above results are \textit{a-priori} stability estimates. A good analogue to keep in mind to understand the presentation of this theorem is to the (clearly much simpler) quasilinear elliptic equation $(1 + \eps u) \Delta u = F$. In order to prove existence for such a problem, given the appropriate data, one needs to couple \textit{a-priori} estimates (in this example quite straightforward) with an iteration scheme of the sort $(1 + \eps u^{(n)}) \Delta u^{(n+1)} = F$ through which a contraction mapping argument is performed. The role of the bootstrap \eqref{bootstrap:thm:1} will essentially be replaced by uniform bounds on the iterates along the iteration scheme, which is carried out in our companion paper, \cite{IM22b}. In the current setting of the reversed Prandtl system, the most challenging part of the argument is the obtention of the \textit{a-priori} estimates, which we perform in this paper. We also note that in order to make our estimates as general as possible, we develop them in an abstract setting first. More precisely, one can see from \eqref{genbd:1} that we will consider \textit{general backgrounds, $u_B$} which will serve as a placeholder for $u^{(n)}$ in the iteration scheme.  
\end{remark}

The argument leading to the construction of the solution is given in a companion paper, \cite{IM22b}, which essentially performs an iteration which uses as a ``black-box" the stability analysis of this paper (Theorems \ref{main:thm}, \ref{thm:res}). An iteration argument is needed for two main reasons. First, as we emphasized above, for \textit{any} quasilinear equation (as a typical example $(1 + \eps u) \Delta u = F$), we would need to iterate the coefficient $(1 + \eps u^{(n)}) \Delta u^{(n+1)} = F$ for the sake of a contraction mapping type argument. Second, in the spirit of the recent work \cite{DMR} on a related mixed-type problem, we need to add further compatibility conditions on $(\mathring{F}_{\mathrm{Left}}, \mathring{F}_{\mathrm{Right}})$ as in \eqref{uanL1}.

We can now state the main result that combines the stability theorems, Theorems \ref{main:thm}, \ref{thm:res}, with the existence result of \cite{IM22b}:
\begin{theorem}[Existence and Stability] \label{thm:twomain} Let $k_\ast \ge 7$, and $N_0 = 3k_\ast + 5$. Fix an arbitrary $( \overline{F}_{\mathrm{Left}}, \overline{F}_{\mathrm{Right}}) \in \overline{\mathcal{P}}^{2 + 3k_\ast}_{k_\ast}(3)$, and $( G_{\mathrm{Left}}, G_{\mathrm{Right}}) \in H_y^{3 + 2k_\ast}(N_0) \times H_y^{3 + 3k_\ast}(0)$. There exists an operator 
\begin{align}
\mathcal{N}_{side}: \overline{\mathcal{P}}^{2 + 3k_\ast}_{k_\ast}(3) \times H_y^{3 + 2k_\ast}(N_0) \times H_y^{3 + 3k_\ast}(0) \mapsto H^{2 + 3k_\ast}_\eta(3) \times H^{2+3k_\ast}_\eta(3),
\end{align}
which is a compact perturbation of the identity (when restricted to $\overline{\mathcal{P}}^{2 + 3k_\ast}_{k_\ast}(3) \subset  H^{2 + 3k_\ast}_\eta(3) \times H^{2+3k_\ast}_\eta(3)$), such that the following holds. Let $(\mathring{F}_{\mathrm{Left}}, \mathring{F}_{\mathrm{Right}}) := \mathcal{N}_{side}[\overline{F}_{\mathrm{Left}}, \overline{F}_{\mathrm{Right}}, G_{\mathrm{Left}}, G_{\mathrm{Right}}]$. Let $L \in (0, L_{Max}) \setminus \cup_{k = 1}^N L_k$. There exists an $\eps_\ast > 0$, small relative to $L_{\mathrm{Max}}$ and relative to $\min |L - L_k|$ such that for $0 < \eps < \eps_\ast$, 

\begin{itemize}

\item There exists a unique solution $\psi_R$  such that $\omega_g$ achieves the data as shown in \eqref{PhiIota:1} -- \eqref{PhiIota:4}. Moreover, this solution has the regularity $\p_x^k u_R \in L^2_x H^1_y$ for each $k$, $0 \le k \le k_{\ast}$.

\item The solution obeys the stability estimates 
\begin{align}  \label{stable:2:ult}  
\| \psi_R \|_{\mathcal{Z}} \lesssim  \| \overline{F}_{\mathrm{Left}} \langle \eta \rangle^3 \|_{H^{2 + 3k_\ast}_\eta} + \| \overline{F}_{\mathrm{Right}} \langle \eta \rangle^3 \|_{H^{2 + 3k_\ast}_\eta} 
+ \|  G_{\mathrm{Right}} \|_{H^{3 + 3k_\ast}_y} + \|  G_{\mathrm{Left}} \langle y \rangle^{N_0} \|_{H^{3 + 2k_\ast}_y}
\end{align}

\item The numbers $(c_i^{(k)})$ from \eqref{uanL1} are determined by the prescribed functions $(\overline{F}_{\mathrm{Left}}, \overline{F}_{\mathrm{Right}})$ and $(G_{\mathrm{Left}}, G_{\mathrm{Right}})$, and obey the estimate 
\begin{align} \label{A:bd:thm}
|c_i^{(k)} | \lesssim ( \| \overline{F}_{\mathrm{Left}} \langle \eta \rangle^3 \|_{H^{2 + 3k_\ast}_\eta} + \| \overline{F}_{\mathrm{Right}} \langle \eta \rangle^3 \|_{H^{2 + 3k_\ast}_\eta} 
+ \|  G_{\mathrm{Right}} \|_{H^{3 + 3k_\ast}_y} + \|  G_{\mathrm{Left}} \langle y \rangle^{N_0} \|_{H^{3 + 2k_\ast}_y}),
\end{align}
where the constants above depend on large $L_{\mathrm{Max}}$ and on small values of $\min_{1 \le k \le N} |L - L_k|$.
\end{itemize}
\end{theorem}

\subsection{Related Literature} \label{existing:Lit}

The boundary layer theory originated with Prandtl's seminal 1904 paper, \cite{Prandtl}, which developed the theory in precisely the present setting: for 2D, steady flows over a plate. Since its inception, the boundary layer theory has had monumental impacts in various domains of physics and engineering, perhaps most notably in aerodynamics. Despite it being a classical physical theory, mathematical results are less widespread. In fact, in Prandtl's original work, he states the following: 

\begin{quote}
``The most important practical result of these investigations is that, in certain cases, the flow separates from the surface at a point [$x_\ast$] entirely determined by external conditions... As shown by closer consideration, the necessary condition for the separation of the flow is that there should be a pressure increase along the surface in the direction of the flow." (L. Prandtl, 1904, \cite{Prandtl})
\end{quote}

 The stationary Prandtl equations are a classical system, whose wellposedness theory was initiated by Oleinik in the classical works, \cite{Oleinik}. Indeed, the dichotomy between favorable and unfavorable pressure gradient (which was actually pointed out in the original work of Prandtl) appears in Oleinik's results: she obtains local in $x$ wellposedness results in general, and, in the case of favorable pressure gradients, global wellposedness. 

The local wellposedness results of Oleinik rely upon a nonlinear change of variables, which faces difficulty upon further differentiation and therefore cannot be employed to obtain higher regularity estimates. Higher regularity was obtained in \cite{GI1} through energetic arguments, and in \cite{Zhifei:smooth} using maximum principle type arguments. 

In the case of $\p_x p_E(x) \le 0$, the result of Serrin, \cite{Serrin} characterizes the self-similar Falkner-Skan profiles as asymptotic in $x$ ``attractors": general solutions to the Prandtl system approach self-similarity as $x \rightarrow \infty$. This result was revisited using different techniques in \cite{IyerBlasius}. While the methods employed by Serrin are maximum principle based, \cite{IyerBlasius} relies more on weighted energy/ virial type estimates. 

In the case of $\p_x p_E(x) > 0$, separation can occur, and we refer to the recent work \cite{MD} which shows the existence of an open set of datum that undergoes the Goldstein singularity. The methods from \cite{MD} rely upon making a self-similar change of variables, and subsequently using modulation theory techniques. We refer also to the work \cite{Zhangsep}, which provides a different, maximum principle based approach to separation. 

Regarding flow reversal in the stationary Prandtl equations, several authors have investigated the structure of the self-similar Falkner-Skan profiles, \eqref{FS:beta}. This began with classical investigations by Hartree, \cite{Hartree}, Stewartson, \cite{Stewartson}, Brown and Stewartson, \cite{BrownStewartson}, Hastings, \cite{Hastings}, and there also are some more recent works in this direction: \cite{YangLan}. As far as we know, all known results on reversal are regarding the particular self-similar profiles \eqref{FS:beta}, which are extremely important from the point of view of applications. Indeed, the reader can consult the discussions in the sources \cite{Hartree}, \cite{Stewartson}, \cite{BrownStewartson} which discuss the various applied settings in which the FS profiles are used (airfoil design, etc...). The present paper is the first to consider \textit{stability} of these profiles.  

As stated earlier, we note that due to the mixed-type nature of the problem, it is a nontrivial task to go from \textit{a-priori} stability estimates to the construction of the solution. This is achieved in our companion work, \cite{IM22b}. We refer the reader also to the recent work of \cite{DMR} which addresses the existence of strong solutions to the mixed-type problem $u \p_x u - \p_y^2 u = 0$. 

There is a separate but very important question of \textit{validity} of the Prandtl ansatz in describing the inviscid limit of Navier-Stokes. Here, there are very few works in the stationary setting, which are all relatively recent. The first results were local in the tangential variable, for instance \cite{GI1}, \cite{GI3}, and also \cite{Varet-Maekawa}. Recently, the works \cite{IM2}, \cite{IM21} established stability of the Prandtl ansatz globally in the tangential variable with asymptotics in $x$. See also the related works, \cite{Gao-Zhang}, \cite{GN}.

For unsteady flows, the picture is rather different than the stationary setting, and for which there is a substantial body of works studying both (1) wellposedness results as well as (2) stability results for the inviscid limit of Navier-Stokes. We focus the discussion here only wellposedness results, since the present paper is not concerned with stability under the inviscid limit. For wellposedness results, we refer the reader to \cite{Oleinik}, \cite{AL}, \cite{MW},  \cite{KMVW} assuming either monotonicity or partial monotonicity in Sobolev spaces. If one removes the hypothesis of monotonicity, wellposedness results are in smoother spaces (either analytic or Gevrey spaces):  \cite{GVDie},  \cite{GVM}, \cite{Kuka},\cite{LMY}, \cite{Caflisch1}. In Sobolev spaces, the unsteady Prandtl equations are illposed: \cite{GVD}. We refer the reader also to finite time blowup results: \cite{EE}, as well as unsteady boundary layer separation: \cite{Collot1}, \cite{Collot2}, \cite{Collot3} using blowup arguments.

The above discussion is not comprehensive, and we have elected to provide a more in-depth of the steady theory due to its relevance to the present paper. We refer to the articles, \cite{BT1}, \cite{E}, and references therein for a more complete review of other aspects of the boundary layer theory.

\subsection{Notational Conventions}

\noindent \underline{Coordinate Systems:} We will be working with four coordinate systems in our analysis. The $(x, y)$ coordinates are the original coordinates in which the system \eqref{eq:PR:0} is set. The $(s, z)$ coordinates are self-similar and adapted to the free-boundary; they are defined in \eqref{z:variable}. The $(s, Y)$ and $(s, Z)$ coordinate systems are local to the ``interface", $\{y = \Lambda(x)\}$, and are defined in \eqref{change:1} and \eqref{philz1}, respectively. We denote by $\mathcal{F}$ the usual Fourier transform defined for functions, $f \in L^2(\mathbb{R})$: 
\begin{align}
\mathcal{F}[f](\xi) = \widehat{f}(\xi) := \int_{\mathbb{R}} e^{- i \xi s} f(s) \ud s,
\end{align}
and $\mathcal{F}^{-1}$ the inverse Fourier transform. We sometimes use the notation $'$ for vertical differentiation, either in $\p_y$, $\p_z$, or $\p_Z$, made clear by context. 

\vspace{2 mm}

\noindent \underline{Norms:} We often have to use norms in different coordinate systems. We choose to emphasize this by using subscripts for the variable (for example, $\| f \|_{L^2_{xy}}$ versus $\| f \|_{L^2_{sz}}$). If there is no possible confusion from the context, we omit the subscript.

\vspace{2 mm}

\noindent \underline{Parameters:} The main small parameters that we have are $L$ (only in the course of proving Theorem \ref{main:thm}), the tangential length scale, and $\eps$, the size of the perturbation (and hence the pre-factor in front of the quadratic terms). There is also the parameter $\bar{L}$, which will eventually shown to be equivalent to $L$ via \eqref{LbarL}. 

\vspace{2 mm}

\noindent \underline{Data on the Sides:} Due to the ``fast rescaling" appearing in \eqref{PhiIota:1} -- \eqref{PhiIota:4}, we need to introduce two more variables: 
\begin{align*}
\eta := \frac{y-\Lambda(x)}{\Lambda(x) L^{\frac13}} = \frac{z-1}{L^{\frac13}}, \qquad \rho = \frac{Z}{\bar{L}^{\frac13}}.
\end{align*}
These will be primarily used to describe data elements associated to $(F_{\mathrm{Left}}, F_{\mathrm{Right}})$. We use the subscripts ``Left" and ``Right" to refer to functions on the prescribed boundaries, $\{x = 1\} \cap \{y > y_1(\eps)\}$ and $\{x = 1 + L\} \cap \{0 < y < y_{1 + L}\}$, respectively. 

\vspace{2 mm}

\noindent \underline{Cutoff Functions:} Throughout the paper, $\chi$ will stand for a smooth, normalized cutoff function satisfying 
\begin{align} \label{chidef}
\chi(p) := \begin{cases} 0, \qquad p < -\frac{1}{5} \\ 1, \qquad - \frac{1}{10}< p < \frac{1}{10} \\ 0, \qquad p > \frac{1}{5}. \end{cases}
\end{align}
While the choice of $\frac{1}{5}, \frac{1}{10}$ above may seem artificial, it allows for some convenience, for example in definitions of \eqref{ext:gamma:1} it allows us to put two non-overlapping cutoffs side by side. Several other cutoff functions will be introduced throughout the paper, but as far as possible will be written in terms of this $\chi(\cdot)$. A common type of modification is to apply these cutoffs at different scales. Given $\ell > 0$, define $\chi_{\ell}(p) := \chi(\frac{p}{\ell})$.

\section{Main New Ideas \& Overview of the Proof} \label{section:ideas}

In this section, our aim is two-fold. First, we want to provide an outline of the proof and organize our strategy into the main steps. Second, by doing so, we will be able to highlight the difficulties that arise and the several main ingredients we need to employ in order to obtain our result. 

\subsection{Formulations of the Prandtl System, Section \ref{SectionCH}}

We begin by displaying the main system that we analyze. We seek solutions that are small, $O(\eps)$, perturbations of the background profile $[u_{FS}, v_{FS}]$, \eqref{psir}, which results in \eqref{exp:eq:intro:1}. The coefficient $u_P = u_{FS} + \eps u_R$ in front of the $\p_x$ term above determines the change of sign. This coefficient is now nonlinear as opposed to the linearized discussion above. An essential aspect of our approach is a use of the ``tame principle" from quasilinear equations: if we take higher tangential derivatives of the equation, the governing coefficient $u_P = u_{FS} + \eps u_R$ remains the same. Working in such a higher order space enables us to think of $u_R$, despite being nonlinear, as a lower order quantity. 

\subsubsection{The $(x, y)$ Coordinates}

Our strategy will be to commute a rescaled $\p_x$ into \eqref{exp:eq:intro:1}. Even at the outset, we notice a choice. The level set $\{u_{FS} + \eps u_R = 0\}$ clearly does not coincide with any level set of $y$ itself. Therefore, one may decide to take tangential derivatives to this level set, which essentially corresponds to taking $\p_s$ derivatives, \eqref{chain:rule:1}. Normally in free boundary problems, one has a boundary condition on the free boundary $\{u_{FS} + \eps u_R = 0\}$ that is only respected with derivatives tangent to the free boundary, which requires tangential derivatives. In our case, there is no boundary condition on this free boundary, and hence we may equally well elect to take flat $\p_x$ derivatives. It turns out to be natural to commute rescaled $L \p_x$ derivatives to \eqref{exp:eq:intro:1}. This is essentially due to the fact that $x \in (1, 1 + L)$, and hence $L\p_x$ respects the tangential length scale of the domain. Doing so produces an equation of the form 
\begin{align} \label{urk}
&u_P \p_x u_{R, k} + u_{Py} v_{R,k} + \bold{C}_1  \p_y u_{R,k} + \bold{C}_2 u_{R,k}  +\bold{C}_3 \psi_{R,k} - \p_y^2 u_{R, k}= \overline{ \mathcal{R}}_{k}, 
\end{align}
for $u_{R,k} := (L\p_x)^k u_R$, $\bold{C}_i$ (eventually nonlinear) coefficients, and $\overline{\mathcal{R}}_k$ to be lower order source terms. This form of the equation will be our first formulation, and is thought of as the ``parent formulation" (many offspring arise from this upon performing transformations, etc...). In fact, we are careful to develop estimates for $\bold{C}_i, \overline{R}_k$ as abstract coefficients and source terms, and even with $u_P = u_{FS} + \eps u_R$ being replaced by a more general background $\bar{U} := u_{FS} + \eps u_B$. This is because we want to eventually apply our analysis not just to the full nonlinear equations, but also on a more general linearized version of the equations for an iteration scheme (in our companion existence paper, \cite{IM22b}). 

Going forward in this section, we will drop the subscript $k$ in \eqref{urk}, which allows us to treat all $k$ derivatives in an essentially unified way, and also to de-clutter the notations. Therefore, we will now consider 
\begin{align} \label{urk:abs}
&\bar{U} \p_x u_{R} + \bar{U}_y v_{R} + \bold{C}_1  \p_y u_{R} + \bold{C}_2 u_{R}  +\bold{C}_3 \psi_{R} - \p_y^2 u_{R}=  \mathcal{R}, 
\end{align}
with the understanding that the ideas we develop can be applied ``$k$-by-$k$" to \eqref{urk}.

\subsubsection{The $(s, z)$ Modulated Coordinates}

One observes that the level set of interest is $\{u_P = 0\}$. We parametrize this (nonlinear) level set by $\Lambda(x)$, and subsequently introduce
\begin{align} \label{z:variable}
\frac{\ud s}{\ud x} = \frac{1}{\Lambda(x)^2}, \qquad z := \frac{y}{\Lambda(x)} = \frac{y}{\Lambda_G(x) + \eps \Xi(x)}, \qquad  \bar{U}(x, \Lambda(x)) = 0
\end{align}
This transformation has the effect of straightening out the level set $y = \Lambda(x)$ into $z = 1$, and is motivated by corresponding strategies in free boundary problems. On the other hand, the rescaling of the $x$ variable is done in order to preserve the scaling of the equation. Such a transformation converts \eqref{urk:abs} into 
\begin{align} \label{forme}
\bar{W} \p_s u + v \p_z \bar{W}  - \p_z^2 u + \bold{c}_1 \p_z u + \bold{c}_2 u + \bold{c}_3 \psi   = \bold{R},
\end{align}
for newly defined coefficients $\bar{W}, \bold{c}_i$ and source term $\bold{R}$ (these will be defined precisely in \eqref{dubbar:1}, \eqref{defn:bold:c1}, and \eqref{res:in:2}).

\subsubsection{The $(s, Z)$ Coordinates: A New Perspective on Crocco and Masmoudi-Wong}

One of the steps of the analysis will be through exploiting specific features of the Airy functions, solutions to the ODE \eqref{AIRY:ODE}, which can only be done if our system of study is \textit{exactly} of the form 
\begin{align} \label{offo}
Z \p_s \Omega_I - \p_Z^2 \Omega_I = F_I,
\end{align}
for appropriately defined variables, $Z, \Omega_I$, and where $F_I$ contains lower order linear terms, that are small with respect to small $L$, or nonlinear terms that contain a prefactor of $\eps$ so that the terms in $F_I$ can be regarded as small perturbations. The motivation for invoking these Airy functions will be explained in the next subsection, Section \ref{AIRY::1}. 

For now, we turn to the task of transforming \eqref{forme} into this precise form. The conceptual underpinnings of these transformations have been motivated earlier, surrounding the discussion of \eqref{model:pb:vort}. Here, we develop this in a manner more similar to the actual proof which relies on a change of variables. In order to proceed at a conceptual level, we drop the $\bold{c}_1 \p_z u + \bold{c}_2 u + \bold{c}_3 \psi$ terms from \eqref{forme} for the rest of this discussion, as these are lower order terms that eventually become perturbative.

Localizing to near the zero set of $\bar{W}$, we can introduce the change of variables and change of function: 
\begin{align} \label{Y}
Y = \bar{W}, \qquad \phi(s, Y) = \psi(s, z), \qquad V(s, Y) := \p_Y^2 \phi(s, Y). 
\end{align}
Considering now the equation on $\phi$, we see a substantial cancellation which effectively eliminates the nonlocal term $v \p_z \bar{W}$ from \eqref{forme}, and produces 
\begin{align} \label{eq:om:1}
Y \p_s V - |\bar{W}_z|^2 \p_Y^2 V = \text{lower order terms}. 
\end{align} 
Thus, the non-locality is, in a sense, eliminated through the change of variables, \eqref{Y}. 

From \eqref{eq:om:1}, we would now like to eliminate the term $\bar{W}_z^2$ in front of the diffusion. We achieve this by splitting the diffusion coefficient into $\bar{W}_z^2 - \bar{W}_z^2(s, 1)$, which we move to the right-hand side and subsequently use the vanishing, and $\bar{W}_z^2(s, 1)$, which is purely dependent on $s$. To do this, we introduce another transformation, which is a simple rescaling, $Z = p(s, Y) = \frac{Y}{\bar{W}_z(s, 1)^{\frac23}}$. This process is carried out in Section \ref{subsection:local}. We then set $\Omega(s, Z) = V(s, Y)$.

To appreciate further the introduction of $V(s, Y)$ and $\Omega(s, Z)$ in \eqref{Y}, we can compute an explicit representation of $\Omega$ which turns out to read 
\begin{align} \label{om:sy}
\Omega(s, Z) = \frac{1}{\bar{W}_z^2} (\p_z u - \frac{\bar{W}_{zz}}{\bar{W}_z}u),
\end{align}
which is clearly reminiscent the good unknown, $\omega_g$, \eqref{omegaGeq} of Masmoudi-Wong, \cite{MW}. The $\Omega_I$ appearing in \eqref{offo} is technically a localized version of $\Omega$ near the interface, thereby avoiding the critical points where $\bar{W}_z = 0$, which is required in order that the change of variables $Y = \bar{W}$ is well-defined. 

Our strategy of a change of variables $Y = \bar{W}$ is, on a technical level, reminiscent of the Crocco transform (which relies on the monotonicity of the background $\bar{W}$). To our knowledge, \textit{neither} of these ideas have been used in the study of the stationary Prandtl equation. In this work, we will need to use these ideas extensively: the monotonicity $\bar{W}_z > 0$ allows a mechanism to transition between regions of signed background, $\bar{W} < 0$ and $\bar{W} > 0$ (as we have emphasized in the discussion of the three regions surrounding Figure \ref{Fig:CritPt}).

\subsubsection{von-Mise type good unknowns}

The $(s, Y)$ coordinates introduced in \eqref{Y} (and by implication, the $(s, Z)$ coordinate system introduced thereafter) are only defined locally near the interface, $\{z = 1\}$. This is due to the definition $Y = \bar{W}$, where $\p_z \bar{W}$ degenerates as $z \rightarrow \infty$ and also in the interior of $0 < z < 1$. Since our analysis is not localized to just near $\{z = 1\}$, but needs to eventually extend globally to $0 < z < \infty$, we need a different set of transformations which apply globally in $z$ (in particular in the ``outer region"). 

In this region, the formulation we use is in the self-similar variables, $(s, z)$, \eqref{forme}. We observe the presence of the nonlocal term $v \p_z \bar{W}$. By noticing that $v = - \p_s \psi$, we identify this term as being transported, and therefore losing one $s$-derivative if moved to the right-hand side and treated perturbatively. To handle this, we introduce the quantity 
\begin{align} \label{newcalU}
U := \bar{W}^2 \p_z \Big( \frac{\psi}{\bar{W}} \Big).
\end{align}
The quantity $U$ is reminiscent of the classical von-Mise type unknown, which has been used in various works prior to this one (see, for instance, \cite{IyerBlasius}, \cite{IM2}, \cite{IM21}, \cite{MD} as just a few examples). 

\subsubsection{Three Regions: ``lower von-Mise" $\mapsto$ ``Crocco" $\mapsto$ ``upper von-Mise"} \label{3regs}

The discussion above indicates that there are three regions: a ``Crocco" region localized to the interface, where we use the good unknown $\Omega$ (reminiscent of the Crocco transform), and two ``von-Mise" regions where we use the good unknown $U$: one which is localized below the interface and the other above the interface. Therefore, we have the following diagram, Figure \ref{three:reg}, which is useful to keep in mind. 
\begin{figure}[h]
\centering
\begin{tikzpicture}
\draw[ultra thick, <-] (0,2) node[above]{$z$} -- (0,0);
\node [left] at (0, 0) {$\textcolor{blue}{(1, 1)}$};
\filldraw [blue] (0, 0) circle (2pt);
\draw[ultra thick, -] (3,0) -- (3,-2);
\node [right] at (3.2, 0) {$\textcolor{blue}{(1 + \overline{L}, 1)}$};
\filldraw [blue] (3, 0) circle (2pt);
\draw[ultra thick, -] (0,-2) -- (3,-2);
\node [below] at (1.5, -2) {$u = 0, \psi = 0$};
\draw[ultra thick,blue, -] (0, 0) to (3,0);
\node [below] at (1.5, .8) {\textcolor{blue}{Crocco Region}};
\draw[ultra thick, dashed, -] (0,-2) -- (0,0);
\draw[ultra thick, dashed, ->] (3,0) -- (3,2);
\draw[ultra thick, dashed, ->] (3,-2) -- (4.5,-2);
\draw[ultra thick, dashed, ->] (0,-2) -- (-1.5,-2);
\node[right] at (4.5,-2) {$s$};
\draw[thick, dashed, red, ->] (-1,1.5) -- (1,1.5);
\draw[thick, dashed, red, <-] (2,-1.5) -- (4,-1.5);
\node [above] at (1.5, 1.5) {\textcolor{red}{upper von-Mise Region}};
\node [below] at (1.5, -1.5) {\textcolor{red}{lower von-Mise Region}};
\end{tikzpicture}
\caption{Three Regions} \label{three:reg}
\end{figure}

Prior works on the stationary Prandtl system, \cite{MD}, \cite{Oleinik}, \cite{IyerBlasius}, \cite{Serrin} essentially relied exclusively on a globally defined ``von-Mise" region, where one can work with $U$ and go back and forth to $u$ and $\psi$. This is due to the sign definite, $u_P \ge 0$, property in all these prior works. Here, such a strategy will fail due to the sign change of the background flow. It is for this reason that we need to decompose and work with both the von-Mise and Crocco formulations, depending on the value of $z$. 

When performing estimates, we need to proceed ``upwards". Indeed, near the physical boundary $y = 0$, one has a sign condition $\bar{u}_{FS} \le 0$, and therefore, we can work with the von-Mise unknown $U$ (and invert to $u$ and $\psi$). However, as we approach the interface, we need to switch to the Crocco variable ($\omega_g$), which due to the sign condition $\bar{W}_z \gtrsim 1$ in the interface region, is well-defined and invertible. As we then leave the interface towards $z \rightarrow \infty$, the Crocco transform starts to fail (due to the rapid vanishing of $\bar{W}_z$ as $z \rightarrow \infty$) and the von-Mise formulation becomes valid again. To perform these arguments, we use crucially the overlapping cutoffs $\chi_I$ (which localizes to the Crocco region, defined in \eqref{defn:Chi:I}), $\chi_{O,-}$ (which localizes to the lower von-Mise region, defined in \eqref{sightglass:1}) and $\chi_{O,+}$ (which localizes to the upper von-Mise region, also defined in \eqref{sightglass:1}).

\subsubsection{Abstract Operator Formulation \& Architecture of the Argument}

As we have emphasized above, we develop a lot of the analysis in a $k$-independent manner, and then afterwards apply our results to the $k$-by-$k$ system, \eqref{urk}. The overall structure of the paper matches this. For the sake of organization, we introduce (without precise definitions) the following organizational chart: 
\begin{figure}[h] 
\hspace{50 mm} \includegraphics[scale=0.3]{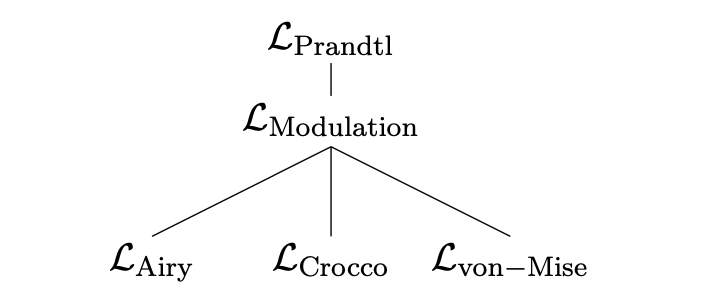}
\caption{Organizational Structure of Abstract Operators} \label{org:tree}
\end{figure}

The operator $\mathcal{L}_{\mathrm{Prandtl}}$ describes the operator in the original $(x, y)$ coordinates, \eqref{urk:abs}. After transforming to $(s, z)$ coordinates, we obtain $\mathcal{L}_{\mathrm{Modulation}}$, which essentially describes \eqref{forme}. The operators $\mathcal{L}_{\mathrm{Airy}}$, $\mathcal{L}_{\mathrm{Crocco}}$  describe formulations near the interface, essentially studying the unknown $\Omega$ (more precisely a localized version, $\Omega_I$), in the $(s, Z)$ variables introduced above. The operator $\mathcal{L}_{\mathrm{von-Mise}}$ is used to study and estimate the von-Mise unknown, $U$, in $(s, z)$ variables. 

The paper is structured by first studying the ``children" operators, $\mathcal{L}_{\mathrm{Airy}}$ (Section \ref{Section:Airy}), $\mathcal{L}_{\mathrm{Crocco}}$ (Section \ref{Section:Interior}), and $\mathcal{L}_{\mathrm{von-Mise}}$ (Section \ref{Section:Outer}). Then, in Section \ref{LpLqestsec}, we glue these three components together to study the ``parent" operator $\mathcal{L}_{\mathrm{Modulation}}$, and in Section \ref{yikes} we transition back from $(s,z)$ to $(x ,y)$ to reach the ``grandparent" operator, $\mathcal{L}_{\mathrm{Prandtl}}$, thereby concluding the study of the abstract operator formulation. 

We subsequently restore the $k$-dependence in Sections \ref{ksec} -- \ref{bdsdata:Sec}, and close all our bounds to prove Theorem \ref{main:thm} in Section \ref{final:sec}. Finally, in Section \ref{sec:spec}, we include a generalization of the result to address the case of $L$ not small, thereby obtaining Theorem \ref{thm:res}.

In the forthcoming subsections, we highlight the main mathematical components that appear in each section of the paper.  
 
\subsection{Airy Analysis, Section \ref{Section:Airy}} \label{AIRY::1}

In Section \ref{Section:Airy}, our main objective is to study forward-backward systems of the type \eqref{offo}, supplemented with appropriate data, which is written precisely in \eqref{abs:2:b:sec3}. The data elements $(\Xi_{\mathrm{Left}}, \Xi_{\mathrm{Right}})$ from \eqref{abs:2:b:sec3} will be thought of as abstract for this discussion, but will eventually be related to the given data \eqref{PhiIota:1} -- \eqref{PhiIota:4} in a $k$-by-$k$ manner. 

\subsubsection{Dirichlet-Neumann Matching}

It will turn out that the main quantity of interest in solving \eqref{offo} is the one-dimensional quantity 
\begin{align}
\Omega_I|_{Z = 0}(s) =: \gamma^{(0)}_{\Omega}(s), \qquad 1 < s < 1 + \bar{L}.
\end{align}
First of all, to see that $\gamma^{(0)}_\Omega(s)$ is, in a sense, the ``main unknown", notice that \textit{if we knew} $\gamma_\Omega^{(0)}(s)$, then \eqref{offo} would totally decouple into an ``upper" problem $(Z > 0)$ in which a forward parabolic evolution would take place and into a ``lower" problem $(Z < 0)$ in which a backwards parabolic evolution would take place.  

In order to identify the unknown trace $\gamma_\Omega^{(0)}(s)$, our strategy is a ``Dirichlet-to-Neumann" matching procedure. Indeed, consider the following operators:
\begin{align*}
\gamma_{\Omega,\pm}^{(1)}(s)[\gamma_\Omega^{(0)}](s) := \p_Z \Omega_I^{(\pm)}(s, 0),
\end{align*}
where we think of the Neumann condition as a functional on $\gamma_\Omega^{(0)}$ as well as the data on the sides. Above, we have $(+)$ corresponding to the Neumann condition arising from the ``upper" solution ($Z > 0$) and similarly the $(-)$ corresponding to the Neumann condition arising from the ``lower" solution ($Z < 0$). We now can realize $\gamma_\Omega^{(1)}(s)$ as the unique solution to the equation: 
\begin{align} \label{gamma2gamma1}
\gamma_{\Omega, +}^{(1)}[\gamma_\Omega^{(0)}] = \gamma_{\Omega, -}^{(1)}[\gamma_\Omega^{(0)}]. 
\end{align}

\subsubsection{The maps $\gamma_{\Omega, \pm}^{(1)}[\gamma_\Omega^{(0)}]$}

Now that we have identified \eqref{gamma2gamma1} as a central object of study, we need to characterize the maps $\gamma_2^{(\pm)}[\cdot]$. Let us first deal with $\gamma_{\Omega, +}^{(1)}[\cdot]$.  One way to understand these Dirichlet-to-Neumann maps is to (1) take extensions in $s$ to $s \in \mathbb{R}$ (there is some care to be taken in doing this), and subsequently (2) taking the Fourier transform in the $s$ variable. Doing so results essentially in: 
 \begin{subequations} 
\begin{align} \label{sun3Fourier}
&i \xi Z \widehat{\Omega}_I - \p_Z^2 \widehat{\Omega}_I = \widehat{F}_I, \qquad (\xi, Z) \in \mathbb{R} \times \mathbb{R}_+, \\ \label{sun4}
&\widehat{\Omega}_I|_{Z = 0}= \widehat{\gamma}_{\Omega}^{(0)}(\xi).
\end{align} 
\end{subequations} 
where $f_I$ is a source term depending on the original source, $F_I$, as well as the data elements $\Xi_{\mathrm{Left}}, \Xi_{\mathrm{Right}}$ which appears through the extension procedure. We can now explicitly describe solutions to \eqref{sun3Fourier} -- \eqref{sun4} through the Airy functions (and the phase-shifted Airy functions): 
\begin{subequations}
\begin{align} \label{us:n:1}
\widehat{\Omega}_I^{(+)}(\xi, Z) = &\frac{ai( (i\xi)^{\frac13}Z )}{ai(0)} \widehat{\gamma}_\Omega^{(0)}(\xi) + \mathcal{G}^{(+)}[f_I],  \\ \label{us:n:2}
\widehat{\Omega}_I^{(-)}(\xi, Z) = & \frac{B_{-sgn(\xi)}( (i\xi)^{\frac13}Z )}{B_{-sgn(\xi)}(0)} \widehat{\gamma}_\Omega^{(0)}(\xi) + \mathcal{G}^{(-)}[f_I]
\end{align}
\end{subequations}
Above, the quantity $\mathcal{G}^{(\pm)}[\cdot]$ is a functional on the source term and should be regarded temporarily as being on the right-hand side. 

The solution in the region $Z < 0$ needs to be described in two cases, due to the asymptotic behavior of the Airy functions in different subsets of $\mathbb{C}$. The quantities $ai(\cdot), B_{\pm}(\cdot)$ are the classical Airy function and appropriate phase shifted Airy functions, as described in Lemma \ref{lemma:fund:solutions}. To explain this more, we refer the reader to Figure \ref{FigC}.
\begin{figure}[h]
\centering
\begin{tikzpicture}
\draw[<->] (-3,0) -- (3,0);
\node[right] at (3,0) {$\Re(w)$};
\draw[<->] (0,-3) -- (0,3);
\node[above] at (0,3) {$\Im(w)$};
\draw[ultra thick, dashed, -, magenta] (0,0) -- (2,3.464);
\draw[ultra thick, dashed, -, magenta] (0,0) -- (2,-3.464);
\draw[ultra thick, dashed,  -, magenta] (0,0) -- (-4, 0);
\node [right] at (2.2, 3) {$\textcolor{magenta}{\Sigma_0}$};
\node [right] at (0.8, -3) {$\textcolor{magenta}{\Sigma_{-1}}$};
\node [right] at (-4, 0.5) {$\textcolor{magenta}{\Sigma_{+1}}$};
\draw[ultra thick, -, cyan] (0,0) --  (3.464, 2);
\draw[ultra thick, -, cyan] (0,0) -- (3.464, -2);
\node [right] at (3.5, 1.5) {$\textcolor{cyan}{\arg(w) = \frac{\pi}{6}}$};
\node [right] at (3.5, -1.5) {$\textcolor{cyan}{\arg(w) = - \frac{\pi}{6}}$};
\draw[ultra thick, -, violet] (0,0) -- (-3.464, -2);
\draw[ultra thick, -, teal] (0,0) -- (-3.464, 2);
\node [right] at (-3.6, 2.2) {$\textcolor{teal}{\arg(w) = \frac{5\pi}{6}}$};
\node [right] at (-3.6, -2.2) {$\textcolor{violet}{\arg(w) = - \frac{5\pi}{6}}$};
\end{tikzpicture}
\caption{Dirichlet-Neumann Matching on $\mathbb{C}$} \label{FigC}
\end{figure}

First of all, the three sectors $\Sigma_0, \Sigma_{+1}, \Sigma_{-1}$ are defined so that there is a consistent choice of decaying Airy basis function on each sector. However, as the sector changes, the decaying basis function also changes. Lemma \ref{lemma:fund:solutions} details this, but in summary: the decaying basis function in $\Sigma_0$ is $ai(\cdot)$, in $\Sigma_{+1}$ is the phase shifted $B_{+1}(\cdot) := ai(e^{- \frac{2}{3}\pi i}\cdot)$, and in $\Sigma_{-1}$ is the phase shifted $B_{-1}(\cdot) := ai(e^{\frac{2}{3}\pi i}\cdot)$.

Let us first turn to the case when $Z > 0$, where we write the solution as \eqref{us:n:1}. The arguments into the Airy functions are of the form $(i\xi)^{\frac13}Z$, which for $Z > 0$, $\xi > 0$, are elements on the ray $\arg = \frac{\pi}{6}$. For reasons to be described shortly, we think of these rays as parametrized by $Z$, for fixed $\xi$. Similarly, complex numbers $(i\xi)^{\frac13}Z$ when $Z > 0$ and $\xi < 0$ are elements of the ray $\arg = - \frac{\pi}{6}$. Both of these rays remain within $\Sigma_0$, and therefore we choose $ai((\xi)^{\frac13}Z)$ independently of $sgn(\xi)$, as in \eqref{us:n:1}. 

When $Z < 0$, the situation is different. Indeed, when $Z < 0$ and $\xi > 0$, we have $\arg((i\xi)^{\frac13}Z) = - \frac{5 \pi}{6}$, and this ray (parametrized by $Z$ for fixed $\xi$ is contained in $\Sigma_{-1}$).  On the other hand, when $Z < 0$ and $\xi < 0$, we have $\arg((i\xi)^{\frac13}Z) =  \frac{5 \pi}{6}$, and this ray is contained in $\Sigma_{+1}$. Therefore, we need to distinguish the two cases by $sgn(\xi)$ in \eqref{us:n:2}. We will see that these are crucial observations: they lead to the ``hidden ellipticity" in the problem, as we describe now. 

\subsubsection{Hidden Ellipticity on the Boundary and the Fractional Poisson Problem}

We now turn to \eqref{gamma2gamma1}. Given Figure \ref{FigC}, we can reinterpret the Dirichlet-Neumann matching of \eqref{gamma2gamma1} as matching the derivative of $ai(\cdot)$ along the ray $\arg = \frac{\pi}{6}$ with the derivative of $B_{-1}(\cdot)$ along the ray $\arg = - \frac{5\pi}{6}$ at the origin, for $\xi > 0$. Similarly, for $\xi < 0$, we can think of matching the derivative of $ai(\cdot)$ along the ray $\arg = - \frac{\pi}{6}$ with the derivative of $B_{+1}(\cdot)$ along the ray $\arg = \frac{5 \pi}{6}$ again at the origin (since we are interested in the Neumann condition at $Z = 0$). 

Performing this calculation with \eqref{us:n:1} -- \eqref{us:n:2} gives roughly the identity: 
\begin{align}\n
(i\xi)^{\frac13} \Big( \frac{ai'(0)}{ai(0)} - \frac{B_{-sgn(\xi)}'(0)}{B_{-sgn(\xi)}(0)} \Big) \widehat{\gamma}_{\Omega}^{(0)}(\xi) =& \text{Source Terms depending only on $F_I$ } + \\  \label{od2}
&\text{Data Terms depending on $(\Xi_{\mathrm{Left}}, \Xi_{\mathrm{Right}})$}. 
\end{align}
Examining now the structure of the ``Wronskian"-type Fourier multiplier appearing above, we discover the explicit identity (this is done in Lemma \ref{prop:Fourier:mult}):
\begin{align} \label{pd3}
\frac{ai'(0)}{ai(0)} - \frac{B_{-sgn(\xi)}'(0)}{B_{-sgn(\xi)}(0)} =  \frac{1}{ai(0)^2} \frac{1}{2\pi} e^{- i sgn(\xi) \frac{\pi}{6} }.
\end{align}
The fact that we distinguished the dependance of the basis functions on $sgn(\xi)$ in \eqref{us:n:2} produces the $\xi$-dependent factor above in \eqref{pd3}. Pairing now with the prefactor of $(i\xi)^{\frac13}$ appearing in \eqref{od2}, we discover a ``hidden-ellipticity":
\begin{align} \label{HE1}
(i\xi)^{\frac13} \Big( \frac{ai'(0)}{ai(0)} - \frac{B_{-sgn(\xi)}'(0)}{B_{-sgn(\xi)}(0)} \Big) \widehat{\gamma}_\Omega^{(0)}(\xi) = \frac{1}{ai(0)^2} \frac{1}{2\pi}  |\xi|^{\frac13} \widehat{\gamma}_\Omega^{(0)}(\xi).
\end{align}
The multiplier being applied to $\gamma_\Omega^{(0)}(\xi)$ is a fractional Laplacian. This ``hidden-ellipticity" reflects the fact that the free boundary, described by $\widehat{\gamma}_\Omega^{(0)}(\xi)$ should by symmetric between the left and the right side. Multipliers of the type $(i\xi)^{\frac13}$ are directional, whereas multipliers of $|\xi|^{\frac13}$ are symmetric.

We thus naturally arrive at a fractional elliptic problem: 
\begin{subequations}
\begin{align} \label{frac:pb:1}
&(-\Delta_D)^{\frac16} \gamma_\Omega^{(0)} =   \mathcal{G}^{(+)}[F_I] - \mathcal{G}^{(-)}[F_I] + \text{$(\Xi_{\mathrm{Left}}, \Xi_{\mathrm{Right}})$ contributions}, \qquad s \in (1, 1 + \bar{L}), \\ \label{frac:pb:3}
&\mathcal{G}^{(+)}[F_I] \approx \int_0^{\infty} ai( (i\xi)^{\frac13}\bar{Z}) \widehat{F}_I(\xi,\bar{Z}) \ud \bar{Z}. 
\end{align}
\end{subequations}
Above, the fractional Laplacian is the classical Riesz fractional Laplacian of power $1/6$:
\begin{align}
(-\Delta_D)^{\frac16} u(s) := \mathcal{R}_{(1, 1 + \bar{L})} \mathcal{F}^{-1} |\xi|^{\frac13} \mathcal{F} u.
\end{align}
Several remarks are in order regarding \eqref{frac:pb:1}:

First, the scaling of \eqref{offo} reflects the fact that, formally, $\p_s^{\frac13} \approx \p_Z$, and the Neumann trace should thus be $1/3$ $s$-derivative weaker than the Dirichlet trace. Second, one can see formally that the operator $\mathcal{G}^{(\pm)}[F]$, defined in \eqref{frac:pb:3} are smoothing of order $1/3$ in $s$-derivative. Hence, \eqref{frac:pb:1} continues to reflect the fact that the left-hand side of \eqref{offo} formally gains $\p_Z^2 \approx \p_s^{\frac23}$ over the right-hand side. 

Finally, we find it interesting (but also, \textit{a-posteriori} natural) that there is some hidden ellipticity in the problem that we need to find and use. Intuitively, this answers the question: if we give data on the left at ($s = 1$) and on the right $(s = 1 + \bar{L})$, how do they ``know of, and adapt to, each other's presence" dynamically? Clearly, the mechanism cannot be purely through independent forward and backwards evolutions. Ellipticity in the tangential, $s$, direction perfectly ``factors in" the left and right in a symmetric fashion. Returning to the analogue of Figure \ref{figure:dom}, we have in the $(s, z)$ coordinates the figure shown below, Figure \ref{figure:domsz}. 
\begin{figure}[h]
\centering
\begin{tikzpicture}
\draw[ultra thick, <-] (0,2) node[above]{$z$} -- (0,0);
\node [left] at (0, 0) {$\textcolor{blue}{(1, 1)}$};
\filldraw [blue] (0, 0) circle (2pt);
\draw[ultra thick, -] (3,0) -- (3,-2);
\node [right] at (3.2, 0) {$\textcolor{blue}{(1 + \overline{L}, 1)}$};
\filldraw [blue] (3, 0) circle (2pt);
\draw[ultra thick, -] (0,-2) -- (3,-2);
\node [below] at (1.5, -2) {$u = 0, \psi = 0$};
\draw[ultra thick,blue, -] (0, 0) to (3,0);
\node [below] at (1.5, .8) {$\textcolor{blue}{(-\Delta_D)^{\frac16} \gamma_\Omega^{(0)} = f}$};
\node [right] at (3, -1) {$\Omega_I = \Xi_{\mathrm{Right}}$};
\node [left] at (0, 1) {$\Omega_I = \Xi_{\mathrm{Left}}$};
\draw[ultra thick, dashed, -] (0,-2) -- (0,0);
\draw[ultra thick, dashed, ->] (3,0) -- (3,2);
\draw[ultra thick, dashed, ->] (3,-2) -- (4.5,-2);
\draw[ultra thick, dashed, ->] (0,-2) -- (-1.5,-2);
\node[right] at (4.5,-2) {$s$};
\draw[thick, dashed, red, ->] (-1,1.5) -- (1,1.5);
\draw[thick, dashed, red, <-] (2,-1.5) -- (4,-1.5);
\node [above] at (1.5, 1.5) {\textcolor{red}{+ parabolicity}};
\node [below] at (1.5, -1.5) {\textcolor{red}{- parabolicity}};
\end{tikzpicture}
\caption{$(s, z)$ Domain} \label{figure:domsz}
\end{figure}

Above, the source term $f = f(s)$ is a nonlocal integral operator that depends on the given data $\Xi_{\mathrm{Left}}, \Xi_{\mathrm{Right}}$, as well as lower order terms on the interface itself, $\gamma_\Omega^{(0)}$. Therefore, we can write $f = f[\Xi_{\mathrm{Left}}, \Xi_{\mathrm{Right}}, \gamma_\Omega^{(1)}]$. These lower order terms are what require us to remove certain ``resonances" in the parameter, $L$ in the large $L$ case.

The main outcome of Section \ref{Section:Airy} through the study of \eqref{frac:pb:1} -- \eqref{frac:pb:3} are bounds which control the Neumann trace, $\gamma_{\Omega}^{(1)}(s) := \p_Z \Omega(s, 0)$ in terms of the right-hand side, heuristically bounds of the form 
\begin{align} \label{yeahyeahbh:1}
\| \gamma_\Omega^{(1)} \|_{L^2_s} \lesssim \text{Contributions of Source Terms, $F_I$} + \text{Contributions of Data, $(\Xi_{\mathrm{Left}}, \Xi_{\mathrm{Right}})$}
\end{align}

\subsection{Crocco Analysis, Section \ref{Section:Interior}}\label{secIEE}

The type of bounds that appear from the previous section, \eqref{yeahyeahbh:1} are of the type $L^\infty_Z L^2_s$. Our objective in Section \ref{Section:Interior} is to obtain $L^\infty_s L^2_Z$ type bounds for appropriate quantities, because it turns out $L^\infty_s$ type norms are important to control in order to control our full set of norms (essentially we need to control $\| u \|_{L^\infty}$ to close the entire loop of estimates, which by Sobolev embedding is controlled by $\| \Omega \|_{L^\infty_s L^2_Z}$). In order to proceed, we can essentially think of $\gamma_\Omega^{(1)}(s)$ as a given function, since it has been controlled by the analysis of the previous section. Then, we may perform Crocco-type energy estimates. 

In particular, we define the norm $\| \cdot \|_{\mathrm{Crocco}}$ in \eqref{def:Crocco:norm}, and Section \ref{Section:Interior} is devoted to performing delicate energy estimates that control this norm. In particular, from a formal standpoint, we notice that $\| \Omega_I \|_{L^\infty_s L^2_Z}$ has the same scaling as $\p_s^{\frac12} \Omega_I \sim \p_Z^{\frac32} \Omega_I \in L^2_{sZ}$ which has the same scaling as $\p_Z \Omega_I \in L^\infty_Z L^2_s$. Moreover, the norm contains a hierarchy of $(Z\p_Z)$ commuted to the basic energy-dissipation components of the norm. Therefore, the Crocco norm contains norms that are all of ``equal strength" from a scaling point-of-view to the $\|\gamma_\Omega^{(1)} \|_{L^2_s}$ controlled from the previous section. This is captured in the eventual main estimate, \eqref{pro:cro:1}, by the appearance of $\| \gamma_\Omega^{(1)} \|_{L^2_s}^2$ on the right-hand side, with no small $L$ prefactors.   

A nuanced point is that for this portion of the analysis, we actually will retain all the lower order in $\Omega_I$ terms instead of moving them to the right-hand side (as in $F_I$ in \eqref{offo}). Therefore, we actually consider the operator shown in \eqref{abs:3}. The reason is essentially because we are performing energy estimates, we want to keep the structure of these terms which allows extra integrations by parts. For this reason, we give the operator a different name from the previous section, $\mathcal{L}_{\mathrm{Crocco}}$.

\subsection{von-Mise Analysis, Section \ref{Section:Outer}}

The von-Mise analysis takes place in the exterior region (which we loosely refer to as being away from the interface). This is quantified by the introduction of a sequence of nested cutoff functions, defined in \eqref{para:1} -- \eqref{parahyun}. In particular, due to the critical point $\bar{W}_z = 0$, we lose access to the $(s, Z)$ coordinate system, and we need to work with the less explicit formulation \eqref{forme}. 

The starting point of our strategy to perform energy estimates in the exterior region is to introduce the new good unknown $U$ from \eqref{newcalU}. Given this, we need to design and control several norms which are tailored to the variable $U$, which we label our von-Mise norms (see \eqref{yahL1} -- \eqref{vonMisenorm} for these definitions). 

We emphasize that, although the von-Mise type good unknown has been used in previous works, the overall perspective and technical components are quite different in this paper. Due to the zero set $\bar{W} = 0$, we cannot use a global von-Mise formulation; as we have been emphasizing we need to switch to the Crocco framework and rely on the monotonicity $\bar{W}_z > 0$ to transition from the backwards von-Mise to the forwards von-Mise formulation. To appreciate the complexity that this adds, we turn to the \textit{inversion} formulae: how to recover the original variable stream function, $\psi$, from the good unknown, $U$. Solving the ODE \eqref{newcalU} gives two cases
\begin{align} \label{jo1}
\psi(s, z) = &\bar{W}(s, z) \int_0^z \frac{U(s, z')}{\bar{W}(s, z')^2} \ud z', && \text{$z$ in Lower von-Mise Region}, \\ \label{jo2}
\psi(s, z) = & \frac{ \psi(s, z_0)}{\bar{W}(s, z_0)}\bar{W}(s, z) + \bar{W}(s, z) \int_{z_0}^z \frac{U(s, z')}{\bar{W}(s, z')^2} \ud z', && \text{$z$ in Upper von-Mise Region}
\end{align}
where $z_0 > 1$ is fixed. We think of the ``Lower von-Mise Region" as the set where $\chi_{O,-} = 1$, and the upper von-Mise region as the set where $\chi_{O,+} = 1$ (these cutoffs are defined in \eqref{sightglass:1}). 

The interpretation is that in the Lower von-Mise region due to the boundary condition $\psi|_{z = 0}$, the good unknown $U$ can be inverted in a stand-alone manner to recover $\psi$. The main difficulty in analyzing the inversion formula is the degeneracy of $\bar{W}$ at $\{z = 0\}$. This is conceptually and mathematically very similar to the standard use of the von-Mise in other works, the only exception being that $\bar{W} < 0$ instead of $\bar{W} > 0$ (for $z > 0$), but this does not pose any difficulties. 

On the other hand, the Upper von-Mise inversion formula is quite different. One needs to input $\psi(s, z_0)$, where $z_ 0 > 1$. This function of $s$ cannot be determined in a stand-alone way by the Upper von-Mise analysis, nor the Lower von-Mise analysis (as $z_0 > 1$, so $\bar{W}$ has switched sign here). Therefore, this is really an input that is determined by a combination: Lower von-Mise \& Crocco $\rightarrow \psi(s, z_0)$. This is a quintessential example of \textit{information flow upwards due to incompressibility} that persists throughout many components of the analysis. 

Mathematically, these notions are reflected in the analysis of Section \ref{Section:Outer} where a relatively asymmetric treatment of the Lower and Upper regions is required. On the one hand, for Lower von-Mise, the bounds are stand-alone, but contend with degeneracy of $\bar{W}|_{z = 0} = 0$. On the other hand, the Upper von-Mise needs to separate out the one dimensional variable $\psi(s, z_0)$ and cannot be stand-alone estimates. The Upper von-Mise does not deal with the degeneracy in $\bar{W}$ as it is always strictly positive in the ``upper region", but it does need to quantify weights at $\infty$ of the type $\langle z \rangle^n$.  

\subsection{Closing the Main Linearized Scheme, Sections \ref{LpLqestsec} - \ref{yikes} }

The main objective in Section \ref{LpLqestsec} is to glue together the analyses in the previous sections of $\mathcal{L}_{\mathrm{Airy}}, \mathcal{L}_{\mathrm{Crocco}}, \mathcal{L}_{\mathrm{von-Mise}}$, which are all active in different spatial regimes, in order to produce a global control over solutions to \eqref{forme}, $\mathcal{L}_{\mathrm{Modulation}}$. From the point-of-view of Figure \ref{org:tree}, we go from the three children operators to the parent operator. This involves estimating the right-hand sides that appear in the $\mathcal{L}_{\mathrm{Airy}}$, $\mathcal{L}_{\mathrm{Crocco}}$, and $\mathcal{L}_{\mathrm{von-Mise}}$ analyses. The key lemma that summarizes all of these bounds is Lemma \ref{keylemma}. 

As a matter of perspective, we identify $\| \psi \|_{L^\infty}$ and $\| u \|_{L^\infty}$ as the main ``global" quantities that enter as error terms in the bounds. For example, $\| \psi \|_{L^\infty}$ can control the non-local one-dimensional function $\psi(s, z_0)$ appearing in \eqref{jo2}. For this reason, when we glue together the Airy, Lower von-Mise, Upper von-Mise, and Crocco analyses, we need to ensure that we end up with a norm that is strong enough to control $\| \psi \|_{L^\infty}$ and $\| u \|_{L^\infty}$. This key calculation is seen in the proof of estimate \eqref{nlyougo:2}.

The main outcome of Section \ref{LpLqestsec} is to close the bound on solutions \eqref{forme} on the \textit{global norm}, \eqref{global:n}. However, we want to peel back this information one more time to the grandparent system, \eqref{urk}. Moreover, we want to do so in a way that does not rely on any cutoff functions or changes of functions: we want a clean norm stated in terms of just the solution $u_{R, k}$. This is because the source terms in $\mathcal{R}_{k}$ are in terms of the original variables and the original function $u_{R,k}$. This role is played by the \textit{effective norm}, \eqref{eff:norm}, which is defined and controlled in Section \ref{yikes}. 

An important subtlety in these sections is the propagation of \textit{two-tiers} of estimates, as one sees by examining \eqref{main:G} and \eqref{main:G:g}, and correspondingly \eqref{main:E} and \eqref{main:Ekju}. The quantities on the left-hand sides of \eqref{main:G:g} are controlled by $\| \cdot \|_{\mathrm{Global}}$, which appears on the left-hand side of \eqref{main:G}. However, the right-hand side of \eqref{main:G:g} is weaker than the right-hand side of \eqref{main:G}, due to the extra $\bar{L}^{\frac13}$ factor multiplying $\| \p_z^j \zeta_{\mathrm{Left}} \chi_{O,j}^+ \langle z \rangle^n \|_{L^2_z}$. In turn, this extra factor will be crucial to close the bounds in Section \ref{final:sec}, as explained below in the discussion of Section \ref{hguyjhjhjh}.

\subsection{Restoring the $k$ Dependence, Sections \ref{ksec} - \ref{final:sec}}

We want to apply the effective norm bounds $k$-by-$k$ to the system \eqref{urk}. This is the basis for our main \textit{linear norm}, \eqref{Linear:norm}. One of the noteworthy features of this linear norm is that the weight in $\langle y \rangle$ is actually $k$-dependent: higher derivatives decay at a weaker rate, which we refer to as a ``downward cascade". 

\subsubsection{Estimates of Source Terms, Section \ref{ksec}}

The main point of Section \ref{ksec} is to provide careful estimates on the $k$-dependent source terms in \eqref{urk}. This downward cascade of weights is motivated by the loss of one weight in the linear commutator terms appearing in $\mathcal{R}_k$. Indeed, consider a linear term of the form $v_{FS} \p_y u_R$. Upon taking $L\p_x$, assume this falls on the coefficient, which produces the commutator term $\p_y u_R (L \p_x) v_{FS}$, which subsequently enters as a forcing term for the $k = 1$ bound. It is the case that $\p_x v_{FS} \sim \langle y \rangle$ for large $y$, and therefore in order to control this growth, we need to estimate the $k = 1$ norm with one weaker weight in $y$ compared to $k = 0$. This type of calculation is seen precisely in estimate \eqref{dwc:1}. 

\subsubsection{Estimates on the Data, Section \ref{bdsdata:Sec}} 

The data in this problem is given in terms of \eqref{PhiIota:1} -- \eqref{PhiIota:4}. However, as we are performing higher order in $\p_x$ estimates, we need to ascertain the corresponding data for the $k$'th derivative. To simplify the discussion, we focus only on the Airy formulation, \eqref{offo}. As usual with parabolic problems, we can solve for $\p_s \Omega|_{s = 1} = \frac{\p_z^2 \Omega|_{s = 1} + F|_{s =1 }}{Z}$, for $Z > 0$ and also analogously for $s = 1 + \bar{L}$ for $Z < 0$. Of course, this involves ensuring a compatibility condition at $(s, Z) = (1, 0)$. In the Prandtl setting in particular, it turns out that iterating this procedure requires a very detailed bookkeeping strategy. Indeed, let us consider the model problem 
\begin{align} \label{throughme:1}
Z \p_s \Omega - \p_Z^2 \Omega = \tau_0 \Omega + \tau_1 \p_z \Omega + \tau_{-1} u,
\end{align}
which retains some of the structure from the full Crocco formulation of the equation \eqref{abs:3}: it re-inserts the $u$ dependent term which is put into the source term, $S_I$, in \eqref{abs:3}, but also drops certain terms that are not relevant for the present discussion. The important component to pay attention to is the velocity contribution, $u$. Due to \eqref{PhiIota:2}, up to a very minor change of coordinate, we can think of $\Omega|_{s = 1}(Z) \sim F_{\mathrm{Left}}(\frac{Z}{L^{\frac13}})$, that is, for $L << 1$, the prescribed data is in terms of a \textit{fast variable}, $Z/L^{\frac13}$. 

We now turn our attention to $u$. Indeed, using the expression \eqref{gammauint}, we see that the first term $\gamma_u \frac{u_{FS}'(y)}{u_{FS}'(y_1(\eps))}$ is actually a function of the original, \textit{slow variable}, $y  \sim Z$. In fact, by rewriting the second term in \eqref{gammauint}, 
\begin{align*} 
\int_{y_1(\eps)}^y \frac{F_{\mathrm{Left}}(\frac{1}{L^{\frac13}}( \frac{\bar{y}}{y_1(\eps)} - 1) )}{u_{FS'}(\bar{y})} \ud \bar{y} = &-\int_{y}^{y^\ast}  \frac{F_{\mathrm{Left}}(\frac{1}{L^{\frac13}}( \frac{\bar{y}}{y_1(\eps)} - 1) )}{u_{FS'}(\bar{y})} \ud \bar{y}  \\
&+ \int_{y_1(\eps)}^{y^\ast}  \frac{F_{\mathrm{Left}}(\frac{1}{L^{\frac13}}( \frac{\bar{y}}{y_1(\eps)} - 1))}{u_{FS'}(\bar{y})} \ud \bar{y},
\end{align*}
we see there is a secondary \textit{slow} contribution of the form $u_{FS}'(y)\int_{y_1(\eps)}^{y^\ast}  \frac{F_{\mathrm{Left}}(\frac{1}{L^{\frac13}}( \frac{\bar{y}}{y_1(\eps)} - 1) + 1)}{u_{FS'}(\bar{y})} \ud \bar{y}$. 

Therefore, even though $\Omega|_{s = 1}$ is prescribed in the variable $Z/L^{1/3}$, $u_R$ (and $u$) will have a fast-slow decomposition. This fast-slow decomposition then feeds back into the expression for $\p_s \Omega_{s = 1}$ through \eqref{throughme:1}. This therefore forces us to design a very careful bookkeeping scheme in order to keep track of the structure of the fast-slow data at $s = 1, 1 + \bar{L}$. 

As a secondary complexity, the integral formula for $u_R$, \eqref{gammauint}, contains a number $\gamma_u$ which is not prescribed; it is solution dependent. Therefore, any mechanism to keep track of the $k$'th data must also distinguish between the solution dependent components (which depend on $2k$ numbers $\gamma_{u; k}, \gamma_{\psi; k}$ defined in \eqref{def:gamma:u:k}) and the prescribed components. In turn, this dependence on $\gamma_{u; k}, \gamma_{\psi; k}$ is the reason we introduce a \textit{two-tiered} system of estimates, as explained below in Section \ref{hguyjhjhjh}.

A tertiary complexity is that the Crocco formulation to read off the data of $\p_s^k \Omega_I$ clearly is only valid locally near the interface. To determine the $k \mapsto k+1$ data iteration in the exterior regions, we need to also invoke the formulation \eqref{forme}. 

Due to the various ingredients that enter this section, the analysis at the boundaries $\{x = 1\} \cap \{y > y_1(\eps)\}$ and $\{x = 1 + L\} \cap \{0 < y < y_{1 + L}\}$ ends up becoming some of the more subtle analysis of the entire proof. This is yet another example of how distinguished the Prandtl system is from the model mixed-type problem \eqref{offo} due to nonlocal effects. Our analysis of these boundary contributions culminates in the bounds \eqref{est:data:M} -- \eqref{est:data:below:M}.

\subsubsection{Maximal Regularity Theory, Section \ref{Lpr}} \label{s:Max:reg}

We will need to trade away $\p_x$ derivatives to gain back $\p_y$ derivatives in order to control nonlinear terms. This is quantified by our \textit{Maximal Regularity norm}, which we define in \eqref{MR:norm} and devote Section \ref{Lpr} to controlling. Since we are propagating control over the scaled derivative $L \p_x$, we can only expect to gain $\p_y$ derivatives with weights of $L$ in front, which is quantified by the weights of $L^j$ that appear in \eqref{MR:norm}. Moreover, since higher $\p_x$ derivatives are controlled with weaker weights at $y = \infty (z = \infty)$, we need to account for slower decay when we invoke this norm, as quantified by the powers of $\langle y \rangle^{N_k -j}$ appearing in \eqref{MR:norm}. 

It is in this section that we will need the sixth formulation of the Prandtl system, namely $\mathcal{L}_{\mathrm{Lin}}$. Since we are less concerned with $\p_x$ derivative loss, it is more convenient to work with the equation in its linear form, \eqref{linHGYU:1}, with the semilinear and quasilinear contributions grouped together. Due to the precise numerology of derivatives and weights we need to keep track of in the norm definition, \eqref{MR:norm}, some of the lemmas in this section require some precise numerology in order to execute (see, for example, Lemma \ref{lmrsem}).

\subsubsection{Closing the Main \textit{a-priori} Estimate, Section \ref{final:sec}}  \label{hguyjhjhjh}

The above analyses enable us to close our full \textit{a-priori} estimate on $\| \psi_R \|_{\mathcal{Z}} := \| \psi_R \|_{\mathrm{Linear}} + \| \psi_R \|_{\mathrm{MR}}$, of the form 
\begin{align} \label{omix:1}
\| \psi_R \|_{\mathcal{Z}} \lesssim \eps L^{-1}\| \psi_R \|_{\mathcal{Z}}^2 + \text{Contributions from $\{x = 1\}, \{x = 1 + L\}$}.
\end{align}
Normally, this would be enough to conclude the argument. However, a look at the Contributions from $\{x = 1\}, \{x = 1 + L\}$ terms appearing above, specifically defined in \eqref{boldDat} and estimated in Proposition \ref{prop:data:M}, estimate \eqref{est:data:M} shows a dependence, with $O(1)$ constant on $|\gamma_{u; k}|$ and $|\gamma_{u;\psi}|$. The presence of these factors can be appreciated, even for $k = 0$, for instance by inspecting the formula \eqref{PhiIota:1}: any time $u$ itself shows up as a data element, $\gamma_{u}$ will necessarily accompany it. Therefore, the bound above, \eqref{omix:1} really reads: 
\begin{align}
\| \psi_R \|_{\mathcal{Z}} \lesssim \eps L^{-1}\| \psi_R \|_{\mathcal{Z}} + \sum_{k' = 0}^k (|\gamma_{u; k}| + |\gamma_{\psi; k}|) + \text{Contributions of }(\mathring{F}_{\mathrm{Left}}, \mathring{F}_{\mathrm{Right}}, G_{\mathrm{Left}}, G_{\mathrm{Right}}).
\end{align}
The reader should see \eqref{sbu:ghy:1} -- \eqref{dat:est:yea} for a more precise representation of the bounds. 

Therefore, we need to in addition de-couple the role of $ \sum_{k' = 0}^k (|\gamma_{u; k}| + |\gamma_{\psi; k}|)$ from the estimate above. We do this by realizing that these contributions are determined primarily from the bottom, whereas these contributions appear on the right-hand side primarily from the upper von-Mise data elements. Therefore, we may estimate $\sum_{k' = 0}^k (|\gamma_{u; k}| + |\gamma_{\psi; k}|)$ without invoking $\sum_{k' = 0}^k (|\gamma_{u; k}| + |\gamma_{\psi; k}|)$ itself. More precisely, we close the bounds: 
\begin{align} \n
\| \psi_R \|_{\mathcal{Z}} \lesssim &\eps L^{-1}\| \psi_R \|_{\mathcal{Z}}^2 + \sum_{k' = 0}^k (|\gamma_{u; k}| + |\gamma_{\psi; k}|) \\ \label{seq1}
&+ \text{Contributions of }(\mathring{F}_{\mathrm{Left}}, \mathring{F}_{\mathrm{Right}}, G_{\mathrm{Left}}, G_{\mathrm{Right}}), \\ \label{seq2}
\sum_{k' = 0}^{k_\ast} (|\gamma_{u; k}| + |\gamma_{\psi; k}|) \lesssim &\eps L^{-1}\| \psi_R \|_{\mathcal{Z}}^2 +  \text{Contributions of }(\mathring{F}_{\mathrm{Left}}, \mathring{F}_{\mathrm{Right}}, G_{\mathrm{Left}}, G_{\mathrm{Right}}).
\end{align}
In order to close the second, ``decoupled" bound above, we run a separate layer of bounds throughout the entire analysis on the special quantities $\gamma_{u; k}, \gamma_{\psi; k}$. Here, we are referring specifically to estimates \eqref{main:G:g}, \eqref{main:Ekju}, \eqref{shbujL2}, which show that $\gamma_{u; k}, \gamma_{\psi; k}$ are controlled by a more refined norm on the sides, namely $\mathrm{\bold{Dat}}_{\mathrm{Below}}$ defined in \eqref{boldDat}. In turn, $\eqref{boldDat}$ \textit{does not} see the dependence of $\gamma_{u; k}, \gamma_{\psi; k}$ at $O(1)$, rather with a small factor of $L^{\frac13}$ in front. In turn, this sequence \eqref{seq1} -- \eqref{seq2} are sufficient to close our main quadratic bounds for $\| \psi_R \|_{\mathcal{Z}}$ in terms of the given data. 

We emphasize that the role of this two-tiered system of bounds in order to decouple the dependence of the data on $\gamma_{u; k}, \gamma_{\psi; k}$ is yet another instantiation of the incompressibility effect (which is what creates the dependence of the $\mathrm{Left}$ data on these numbers to begin with).

\subsection{Spectral Analysis \& Discrete Set of Resonances, Section \ref{sec:spec}} \label{spectral:S}

In the final section of the paper, we want to generalize our result from $0 < L << 1$ to $L$ values potentially large, but away from a discrete set of ``resonances". To do so, we use crucially the ``hidden ellipticity" we have discovered in \eqref{frac:pb:1} -- \eqref{frac:pb:3}. The analysis of this section is motivated by an analogy to the (much simpler) elliptic problem, $- \Delta + V_L$ defined on $(0, 1)$, where $V_L$ is a potential with analytic dependence on the parameter $L$. Classical elliptic theory implies that for such operators, the set of $L$ for which these operators are not invertible is discrete. In our context, we want to use this observation to take $L$ large, as long as a discrete set of values are avoided. 

One of the main instances of the small $L$ hypothesis is to ``close" the estimate on the elliptic problem \eqref{frac:pb:1}. Indeed, the right-hand side depends on $F_I$, which itself depends on the solution (see the definitions in \eqref{defFCL}, for example), and therefore on the trace itself, $\gamma_\Omega^{(0)}$. We may thus think of the most dangerous $F_I$ contributions as a potential $\underline{\bold{K}}_L(\gamma_\Omega^{(0)})$. Therefore, a simplified version of how we are ``closing" \eqref{frac:pb:1} (which occurs in actuality in \eqref{nlyougo:1} -- \eqref{nlyougo:4}) is by thinking of \eqref{frac:pb:1} as 
\begin{align*}
&(-\Delta_D)^{\frac16} \gamma_\Omega^{(0)} =  \underline{\bold{K}}_L(\gamma_\Omega^{(0)})+ \text{$(\Xi_{\mathrm{Left}}, \Xi_{\mathrm{Right}})$ contributions},
\end{align*}
where we estimate the potential term $\| \underline{ \bold{K}}_L(\gamma_\Omega^{(0)}) \|_{L^2_s} \lesssim \| \gamma_\Omega^{(0)} \|_{L^2_s}$. Such an equation admits bounds of the type $\| \gamma_\Omega^{(0)} \|_{L^{2}_s} \lesssim L^{\frac13} \| \underline{\bold{K}}_L(\gamma_\Omega^{(0)}) \|_{L^2_s} + \text{Data Contributions}$. Because the potential term is lower order, we can close an analysis for $L << 1$ (essentially from a scaling point of view, one gets a small factor of $L^{\frac13}$ from inverting the Laplacian).

In order to generalize our result for $L$ not small, we essentially want to move the potential term to the left-hand side: 
\begin{align*}
&(-\Delta_D)^{\frac16} \gamma_\Omega^{(0)} - \underline{ \bold{K}}_L(\gamma_\Omega^{(0)}) = \text{$(\Xi_{\mathrm{Left}}, \Xi_{\mathrm{Right}})$ contributions}, \qquad s \in (1, 1 + \bar{L}).
\end{align*}
By making an analogy with the simpler elliptic problem $-\Delta \gamma_\Omega^{(0)} + V(\gamma_\Omega^{(0)})$ set on $(1,1 + \bar{L})$, where $V(\gamma_\Omega^{(0)})$ is bounded $L^2 \rightarrow L^2$, there are discretely many values $\bar{L}$ (and therefore $L$) for which this operator is not invertible. To see this in our context, we essentially rescale the above equation to be set on the normalized domain $(0, 1)$, and let $\Gamma_\Omega^{(0)}(t) = \gamma_\Omega^{(0)}(s)$, where $t = \frac{s-1}{\bar{L}}$. This produces 
\begin{align*}
&(-\Delta_D)^{\frac16} \Gamma_\Omega^{(0)} - \bold{K}_L[\Gamma_\Omega^{(0)}] = \text{$(\Xi_{\mathrm{Left}}, \Xi_{\mathrm{Right}})$ contributions}, \qquad t \in (0, 1)
\end{align*}
for an appropriately defined potential $\bold{K}_L[\Gamma_\Omega^{(0)}]$. The invertibility of the operator on the left-hand side then boils down to ruling out a kernel to $(-\Delta_D)^{\frac16} + \bold{K}_L$. We rely upon two facts: 
\begin{itemize}
\item[(1)] There is no kernel when $0 < L << 1$ (as shown by our small $L$ analysis, Sections \ref{Section:Airy} -- \ref{final:sec}).

\item[(2)] The operator $(-\Delta_D)^{\frac16} + \bold{K}_L$ depends analytically on the parameter $L$ (in a sense to be made precise in Section \ref{sec:spec}).
\end{itemize}
These two facts imply that the set of $L$ where kernel elements could appear are discrete (essentially, it is because a nonzero analytic function can only have discretely many zeros). The physical interpretation of these discrete resonances, $\{L_k\}$, is an interesting question.

\section{Six Formulations of the Stationary Prandtl System} \label{SectionCH}

We start the proof of Theorems \ref{main:thm} and \ref{thm:res} by performing several changes of variables and transformations. Overall, the conclusion of this section will be the derivation of six canonical systems to study the stationary Prandtl equations. These will be summarized below in Section \ref{sec:four:canonical}. 

\subsection{The $(x, y)$ variables and $\mathcal{L}_{\text{Prandtl}}, \mathcal{L}_{\mathrm{Lin}}$}

We are considering the problem given by \eqref{eq:PR:0}, which reads
\begin{subequations}
\begin{align}
&u_P \p_x u_P + v_P \p_y u_P - \p_y^2 u_{P} = -\p_x p_E(x), \qquad (x, y) \in (1, 1+L) \times (0, \infty), \\
&u_P|_{y = 0} = 0, \qquad u_P|_{y \rightarrow \infty}  = u_E(x). 
\end{align}
\end{subequations}
Recall from \eqref{psir}, that we consider perturbations of the form
\begin{align} \label{mu:pert}
u_P = u_{FS} + \eps u_R, \qquad v_P = v_{FS} + \eps v_R, \qquad \Lambda(x) = \Lambda_{G}(x) + \eps \Xi(x), 
\end{align}
where $\Lambda(x)$ defines the perturbed free boundary (and $\eps \Xi(x)$ the perturbation):
\begin{align}
u_P(x, \Lambda(x)) = u_{FS}(x, \Lambda(x)) + \eps u_R(x, \Lambda(x)) = 0
\end{align}
Expanding around $(u_{FS}, v_{FS})$, which is an exact solution of \eqref{eq:PR:0}, we obtain the equation for the perturbation $(u_R, v_R)$ via
\begin{align} \label{exp:eq}
(u_{FS} + \eps u_R) \p_x u_R + u_R \p_x u_{FS} + (v_{FS} + \eps v_R) \p_y u_R + v_R \p_y u_{FS} - \p_y^2 u_R = 0. 
\end{align}

\vspace{2 mm}

\noindent \underline{$\p_\tau = L \p_x$ Vector Field:} We will need to differentiate tangentially the system above $k_{\ast}$ times. To do this, we introduce the rescaling
\begin{align} \label{dtau}
\tau := \frac{x-1}{L} \Rightarrow \p_{\tau} = L \p_x
\end{align}
We now differentiate this equation by applying $\p_{\tau}^k$. We introduce the notation
\begin{align} \label{Rknot}
u_{R,k}(x, y) := \p_{\tau}^k u_R(x, y), \qquad v_{R,k}(x, y) := \p_{\tau}^{k} v_R(x, y). 
\end{align}
Upon grouping terms according to order of derivative, we obtain the following system:
\begin{align}
\left. \begin{aligned} \label{sideLH}
&u_P \p_x u_{R, k} + u_{Py} v_{R,k} + \bold{C}_1  \p_y u_{R,k} + \bold{C}_2 u_{R,k}  +\bold{C}_3 \psi_{R,k} - \p_y^2 u_{R, k}= \overline{ \mathcal{R}}_{k}, \\
&u_R|_{y = 0} = 0, \qquad u_R|_{y \rightarrow \infty = 0}.
\end{aligned}\right|
\end{align}
The coefficients are defined as follows:
\begin{subequations}
\begin{align} \label{bold:C1:pcts}
\bold{C}_1 := &v_{FS} + \eps 1_{k \ge 1} v_R\\  \label{bold:C2:pcts}
\bold{C}_2 := &\p_x u_{FS} + 1_{k \ge 1} k \p_x u_{P} + \eps 1_{k \ge 2} \p_x u_R \\  \label{bold:C3:pcts}
\bold{C}_3 := & 1_{k = 1} \p_{xy} u_{FS} + 1 _{k \ge 2} k  \p_{xy} u_P, 
\end{align}
\end{subequations}
and the source terms $\overline{\mathcal{R}}_{k}$ are given by two components,
\begin{align} \label{defRKRK:hat}
\overline{\mathcal{R}}_{k} = &\overline{\mathcal{R}}_{k, \mathrm{Lin}} + \overline{\mathcal{R}}_{k, \mathrm{Semi}}.
\end{align}
The linear source terms are defined as
\begin{align} \n
\overline{\mathcal{R}}_{k, \mathrm{Lin}} :=& -1_{k \ge 2} \sum_{k' = 0}^{k-2} \binom{k}{k'} u_{FS,k-k'} \p_x u_{R, k'}   - 1_{k \ge 1} \sum_{k' = 0}^{k-1} u_{R,k'} \p_x u_{FS, k-k'} \\ \label{defRKRK:temp}
& - 1_{k \ge 1} \sum_{k' = 0}^{k-1} \binom{k}{k'} v_{FS, k-k'} \p_y u_{R,k'} - 1_{k \ge 2} \sum_{k' = 0}^{k-2} \binom{k}{k'} v_{R,k'} \p_y u_{FS,k-k'} =: \sum_{i = 1}^4 R_{k, \mathrm{Lin}}^{(i)},
\end{align}
whereas the semilinear terms as follows:
\begin{align}
\overline{\mathcal{R}}_{k, \mathrm{Semi}} :=&-1_{k \ge 3} \eps \sum_{k' = 1}^{k-2} \binom{k}{k'} u_{R,k-k'} \p_x u_{R, k'}- 1_{k \ge 3} \eps \sum_{k' = 1}^{k-2} \binom{k}{k'} v_{R,k'} \p_y u_{R,k-k'} = \sum_{i = 1}^2 \mathcal{R}_{k, \mathrm{Semi}}^{(i)}.
\end{align}

\vspace{2 mm}

\noindent \underline{Quaslinearization:} It will turn out to be convenient to further manipulate the form of our system before we are ready for performing analysis. In particular, we want to ``quasilinearize" the transport term $u_P \p_x u_{R; k} + u_{Py} v_{R; k}$, meaning that we will only consider the full nonlinear background $u_P$ for higher orders of $k$. For lower orders of $k$, we are fine with moving some $\p_x$ terms to the right-hand side. In turn, working with a linear background simplifies the analysis at lower orders of $k$. 

To make matters precise, define $k_s = k_\ast - 1$. We next rewrite 
\begin{align}
u_P = u_{FS} + \eps u_R = \underbrace{u_{FS} + \eps u_{R; \mathrm{Quasi}}}_{:= U_P} + \eps u_{R, \mathrm{Pert}},
\end{align}
where 
\begin{align} \label{urQuasi}
u_{R, \mathrm{Quasi}}(x, y) := \begin{cases}  - \frac{u_{FS}(x, y) - u_{FS}(x, y + \chi(y)(\Lambda_G(x) - \Lambda(x)))}{\eps}, \qquad 0 \le k \le k_s \\ u_R(x, y), \qquad k_s < k \le k_\ast\end{cases}, 
\end{align}
and 
\begin{align} \label{uRpert}
u_{R, \mathrm{Pert}}(x, y) := &  \begin{cases} \frac{u_{FS}(x, y) - u_{FS}(x, y + \chi(y)(\Lambda_G(x) - \Lambda(x)))}{\eps} +  u_R(x, y), \qquad 0 \le k \le k_s \\ 0 \qquad k_s < k \le k_\ast \end{cases}
\end{align}

\begin{remark}The reason for such a complicated looking expression is as follows. Ideally, for $0 \le k \le k_s$, one wants to omit the term $\frac{u_{FS}(x, y) - u_{FS}(x, y + \chi(y)(\Lambda_G(x) - \Lambda(x)))}{\eps}$ from both $u_{R, \mathrm{Quasi}}$ and $u_{R, \mathrm{Pert}}$. In that case, the background would simply be $U_P = u_{FS}$, which notably does not have the same level set, $\Lambda(x)$, as $u_P$. In the case presented, we have that $U_P = u_{FS}(x, y + \chi(y) (\Lambda_G(x) - \Lambda(x)))$, which satisfies $U_P(x, \Lambda(x)) = 0$. While this is not strictly necessary for the proof, we find it conceptually simpler to preserve the same $\Lambda(x)$ for all $k$. 
\end{remark}

While the decomposition creates a $k$-dependence on the background, $U_P$, we choose to suppress this dependence in the notation. We may now rewrite \eqref{sideLH} as follows
\begin{align} \label{sideLH:2}
&U_P \p_x u_{R, k} + U_{Py} v_{R,k} + \bold{C}_1  \p_y u_{R,k} + \bold{C}_2 u_{R,k}  +\bold{C}_3 \psi_{R,k} - \p_y^2 u_{R, k}= \mathcal{R}_{k}.
\end{align}
The source term $\mathcal{R}_k$ is split into linear and semilinear source terms, 
\begin{align} \label{defRKRK}
\mathcal{R}_{k} = & \mathcal{R}_{k, \text{Lin}} + \mathcal{R}_{k, \text{Semi}},
\end{align}
where we define
\begin{align} \label{RkLin}
\mathcal{R}_{k, \text{Lin}}  = \overline{\mathcal{R}}_{k, \text{Lin}} 
\end{align}
and 
\begin{align} \n
\mathcal{R}_{k, \text{Semi}} := & -1_{k \ge 3} \eps \sum_{k' = 1}^{k-2} \binom{k}{k'} u_{R,k-k'} \p_x u_{R, k'}- 1_{k \ge 3} \eps \sum_{k' = 1}^{k-2} \binom{k}{k'} v_{R,k'} \p_y u_{R,k-k'}  \\ \label{RkSemi}
&  - \eps u_{R, \mathrm{Pert}} \p_x u_{R,k} + \eps \p_y u_{R, \mathrm{Pert}} \p_x \psi_{R,k}=: \sum_{i = 1}^4 \mathcal{R}^{(i)}_{k, \text{Semi}}.
\end{align}

\vspace{2 mm}

\noindent \underline{Relaxation of Background Profile \& Study of the Linearized Equations:} It turns out to be convenient to work in a slightly more general framework, which we now explain. Consider \eqref{sideLH:2}; viewing $\bold{C}_i$ as \textit{given} background coefficients we could almost equally easily develop the analysis we perform in this paper for the \textit{linearized} problem, which is more general. In particular, this allows us to easily apply the analysis developed in this paper to the iteration scheme to prove existence in \cite{IM22b}. 

To motivate our formulation, we \textit{relax} the requirement on the background profile of the transport term, $U_P \p_x u_{R, k} + U_{Py} v_{R, k}$, as follows. Recall that $U_P = u_{FS} + \eps u_{R; \mathrm{Quasi}}$. Instead of insisting on backgrounds of this form, where the $\eps$ perturbation is necessarily tied to the solution, we define more generally backgrounds of the form 
\begin{align} \label{genbd:1}
\bar{U} := u_{FS} + \eps u_B,
\end{align}
where $u_B$ is an abstract background profile. Since $u_B$ is supposed to be a stand-in for \eqref{urQuasi}, to be precise, we need to also develop a stand-in for $u_R$, and the corresponding $\Lambda$. Therefore, we will assume $u_B$ is of the form 
\begin{align} \label{uB}
u_{B}(x, y) := \begin{cases}  - \frac{u_{FS}(x, y) - u_{FS}(x, y + \chi(y)(\Lambda_G(x) - \Lambda(x)))}{\eps}, \qquad 0 \le k \le k_s \\ u_{B,R}(x, y), \qquad k_s < k \le k_\ast\end{cases}, 
\end{align}
where $\Lambda$ is defined through
\begin{align} \label{defLambdagn}
u_{FS}(x, \Lambda(x)) + \eps u_{B, R}(x, \Lambda(x)) = 0. 
\end{align}
Notice this necessarily implies that $\bar{U}(x, \Lambda(x)) = 0$. Therefore, one can think of the generalized form: $u_{R} \mapsto u_{B, R}$ and $u_{R; \mathrm{Quasi}} \mapsto u_B$. In order to prove existence in our paper \cite{IM22b}, since we will rely on an iterative process, this level of generality allows us to set $u_{B,R} = u_R^{(n-1)}$ (the previous iterate).

We define $\Xi$ to be the perturbation of the free-boundary 
\begin{align} 
\bar{U}(x, \Lambda(x)) = 0, \qquad \Lambda(x) = \Lambda_G(x) + \eps \Xi(x),
\end{align}
where still $u_{FS}(x, \Lambda_G(x)) = 0$. We then consider the linearized system:
\begin{align} \label{sideLH:3}
&\bar{U} \p_x u_{R, k} + \bar{U}_y v_{R,k} + \bold{C}_1  \p_y u_{R,k} + \bold{C}_2 u_{R,k}  +\bold{C}_3 \psi_{R,k} - \p_y^2 u_{R, k}= \mathcal{R}_{k},
\end{align}
which will eventually motivate the definition of the linearized operator $\mathcal{L}_{\mathrm{Prandtl}}$ below, \eqref{abs:1}.

\vspace{2 mm}

\noindent \underline{Full Linearization:} It will turn out that in order to do our maximal regularity analysis in Section \ref{Lpr}, that it is more convenient to fully linearize the system. Essentially, this is because we care about generating $\p_y$ derivatives from $\p_x$ derivatives, and so we can effectively move all the $\p_x$ terms to the right-hand side. More precisely, in Section \ref{Lpr}, we will use the following formulation from \eqref{sideLH}: 
\begin{align} \label{linHGYU:1}
u_{FS} \p_x u_{R,k} + \p_y u_{FS} v_{R,k} + v_{FS} \p_y u_{R,k} + \p_x u_{FS} u_{R,k} + \widetilde{\bold{C}}_3  \psi_{R,k} - \p_y^2 u_{R,k} = \mathcal{R}_{k} + \mathcal{M}_{k, \mathrm{Quasi}},
\end{align}
where the coefficient $\widetilde{\bold{C}}_3$ contains only the linear part of $\bold{C}_3$: 
\begin{align}
\widetilde{\bold{C}}_3 := 1_{k \ge 1} k \p_{xy} u_{FS},
\end{align}
and where the quasilinear terms are put into $\mathcal{M}_{k, \mathrm{Quasi}}$: 
\begin{align} \n
\mathcal{M}_{k, \text{Quasi}} := & - \eps u_R \p_x u_{R,k} - \eps u_{Ry} v_{R, k} - \eps 1_{k \ge 1} v_R \p_y u_{R,k} - \eps (1_{k \ge 1} k + 1_{k \ge 2}) \p_x u_R u_{R,k} \\ \label{def:MkQuasi}
& - \eps 1_{k \ge 2} k \p_{xy} u_R \psi_{R,k} := \sum_{i = 1}^5 \mathcal{M}^{(i)}_{k, \text{Quasi}}.
\end{align} 
This formulation given in \eqref{linHGYU:1} will motivate our formulation of $\mathcal{L}_{\mathrm{Lin}}$, \eqref{linform:1}.

\subsection{The modulated $(s, z)$ coordinates and $\mathcal{L}_{\text{Modulation}}, \mathcal{L}_{\text{von-Mise}}$}

We now change to self-similar variables, $(s, z)$, via
\begin{align} \label{z:variable}
\frac{\ud s}{\ud x} = \frac{1}{\Lambda(x)^2}, \qquad z := \frac{y}{\Lambda(x)} = \frac{y}{\Lambda_G(x) + \eps \Xi(x)}.
\end{align}
We now transform our functions to 
\begin{subequations}
\begin{align} \label{change:function}
&u_k(s, z) = u_{R, k}(x, y), \qquad v_k(s, z) := - \int_0^z \p_s u_k, \qquad \psi_k(s, z) := \int_0^z u_k, \\ 
&\bar{u}(s, z) :=  u_{FS}(x, y), \qquad \bar{v}(s, z) := - \int_0^z \p_s \bar{u}, \qquad \bar{\psi}(s, z) := \int_0^z \bar{u}, \\
&\lambda(s) := \Lambda(x), \qquad \lambda_G(s) := \Lambda_G(x), \qquad \mu(s) :=  \Xi(x).
\end{align}
\end{subequations}
We also define the notation: 
\begin{align}
u_P(x, y) = & u_{FS}(x, y) + \eps u_R(x, y) = \bar{u}(s, z) + \eps u(s, z) = \bar{w}(s, z), \\ \label{dubbar:1}
\bar{U}(x, y) =& u_{FS}(x, y) + \eps u_B(x, y) = \bar{u}(s, z) + \eps w_B(s, z) = \bar{W}(s, z)
\end{align}
to stand for the total background profiles. Due to \eqref{uB}, these backgrounds depend on $k$, although we elect to suppress the $k$ dependence.  

Associated to \eqref{z:variable}, we record the following identity for a generic function $f = f(s, z)$,
\begin{align} \label{chain:rule:1}
\p_x f(s, z) = \frac{1}{\lambda(s)^2} \p_sf- \frac{\lambda'(s)}{\lambda(s)^3} z \p_z f
\end{align}
We now write the system for $(u, v)$ in the $(s, z)$ coordinates, which after repeated application of the chain rule \eqref{chain:rule:1} to \eqref{sideLH:2} results in 
\begin{align} \label{res:in:1}
& \bar{W} \p_s u_k - \p_z \bar{W} \p_s \psi_k +  \bold{c}_1 \p_z u_k + \bold{c}_2 u_k   + \bold{c}_3 \psi_k - \p_z^2 u_k = \bold{R}_k, \qquad (s, z) \in (1, 1 + \bar{L}) \times (0, \infty),
\end{align}
where 
\begin{align} \label{res:in:2}
\bold{R}_k = \lambda^2 \mathcal{R}_k
\end{align}
and $\mathcal{R}_k$ has been defined in \eqref{defRKRK}, and with coefficients 
\begin{align} \label{defn:bold:c1}
\bold{c}_1 := & \bold{C}_1(x, y) - \frac{\lambda'}{\lambda}z \bar{W},\qquad \bold{c}_2 := \bold{C}_2(x, y) + \frac{\lambda'}{\lambda} z \bar{W}_{z}, \qquad \bold{c}_3 := \bold{C}_3(x, y) - \frac{\lambda'}{\lambda}\bar{W}_{z}.
\end{align}
Above, we have put $\bar{L} := \int_{1}^{1+L} \frac{1}{\Lambda(x)^2} \ud x$. We note that $|\bar{L}| \le L \sup_{x \in (1, 1+L)} \frac{1}{\Lambda(x)^2} \lesssim L$. We will of course need to ensure the condition that $\bar{U}(x, \Lambda(x)) = 0$, which reads in the $(s, z)$ coordinates
\begin{align} \label{mu:pert:1}
\bar{U}(s, \frac{\lambda_G(s) + \eps \mu(s)}{\lambda(s)}) + \eps w_B(s, 1) = 0. 
\end{align}
The closed, coupled, system for $(u, v, \mu(s))$ then reads: 
\begin{align}  \label{eq:for:v:1}
\left. \begin{aligned}
&\bar{W} \p_s u_k + v_k \p_z \bar{W} +  \bold{c}_1 \p_z u_k + \bold{c}_2 u_k   + \bold{c}_3 \psi_k - \p_z^2 u_k = \lambda^2 \mathcal{R}_k,  \\  
&\bar{u}(x, \frac{\lambda_G(s) + \eps \mu(s)}{\lambda(s)}) + \eps w_B(s, 1) = 0.
\end{aligned} \right| (s, z) \in (1, 1+\bar{L}) \times (0, \infty)
\end{align}
The above formulation will motivate our the definition of $\mathcal{L}_{\mathrm{Modulation}}$ below, \eqref{merc:1lkj}.




We now introduce the so-called von-Mise unknowns. Notice that the first two terms on the left-hand side of \eqref{eq:for:v:1} have derivatives in $s$. To write a transport equation on a ``good-unknown" which is transported in $s$, we introduce: 
\begin{align} \label{goodunk:1}
U_k := \bar{W} u_k -\bar{W}_z \psi_k.
\end{align}
Our system then reads:
\begin{align}  \label{eqeq1}
\left.\begin{aligned}
&\p_s U_k +  \bold{b}_1 \p_z u_k + \bold{b}_2 u_k   + \bold{b}_3 \psi_k - \p_z^2 u_k =  \lambda^2 \mathcal{R}_k,  \\ 
&U_k|_{s = 1} = U_{\mathrm{Left}; k}(z), \qquad z > 1, \\ 
&U_k|_{s = 1 + \bar{L}} = U_{\mathrm{Right}; k}(z), \qquad 0 < z <  1, \\ 
&u|_{z = 0} = u|_{z \rightarrow \infty} = 0. 
\end{aligned} \right| (s, z) \in (1, 1+\bar{L}) \times (0, \infty)
\end{align}
Above, we have defined the new coefficients as follows 
\begin{align} \label{tru:1}
\bold{b}_1 := & \bold{c}_1, \qquad \bold{b}_2 :=  \bold{c}_2 - \p_s \bar{ W}, \qquad \bold{b}_3 :=  \bold{c}_3 + \p_{sz} \bar{W}.
\end{align}
This motivates our system $\mathcal{L}_{\mathrm{von-Mise}}$ defined below in \eqref{abs:4:bljk}. As we will be performing energy estimates on \eqref{eqeq1}, we retain the data on the left and on the right as part of the formulation of the abstract operator. In turn, $[U_{\mathrm{Left}; k}(z), U_{\mathrm{Right}; k}(z)]$ will be computed in terms of the given data in Section \ref{bdsdata:Sec}.


\subsection{The Crocco $(s, Z)$ coordinates and $\mathcal{L}_{\text{Airy}}, \mathcal{L}_{\text{Crocco}}$}

\subsubsection{Straightening the Background: the $Y$ variable}

Much of our analysis will take place near the interface $\{z = 1\}$. Due to the structure of $\bar{w}$ near $\{z = 1\}$, we may perform further changes of variables which are valid only in a thin vertical strip around $\{z = 1\}$, which simplify the structure of our system. Indeed, for the remaining changes of coordinates, we will assume that $\delta_1$ is chosen such that $u_{FS}' \ge \frac{1}{2}  u_{FS}'(x, \Lambda_G(x) ) > 0$ (this quantity is $x$-independent) when $|y - \Lambda_G(x)| < \delta_1$. Define
\begin{align} \label{change:1}
Y := \bar{W}(s, z), \qquad \phi_k(s, Y) := \psi_k(s, z) 
\end{align}
We now compute the equation \eqref{eq:for:v:1} on the function $\phi_k$, which results in 
\begin{align}   \n
Y \p_{sY} \phi_k - \p_s \phi_k - \bar{W}_z^2 \p_Y^3 \phi_k = & (\bar{W} \bar{W}_s - 3 \bar{W}_{zz} + \bold{c}_1 \bar{W}_z ) \p_Y^2 \phi_k \\ \n
&+ \Big( \frac{\bar{W} \bar{W}_{sz} - \bar{W}_s \bar{W}_z - \bar{W}_{zzz} + \bold{c}_2  \bar{W}_z + \bold{c}_1 \p_z^2 \bar{W}}{\bar{W}_z}\Big) \p_Y \phi_k \\ \label{vel:st:1}
&+ \frac{\bold{c}_3}{\bar{W}_z} \phi_k  + \frac{ \mathcal{R}_k}{\bar{W}_z}.
\end{align}
Due to the fact that the map $(s, z) \mapsto (s, Y)$ defined by \eqref{change:1} is only invertible when $\bar{W}_z > 0$, we will only be performing this change of variables (and hence considering the formulation \eqref{vel:st:1}) in a thin strip around $\{Y = 0\}$. We note that while this change of variable has the effect of moving the interface from $\{z = 1\}$ to $\{Y = 0\}$, one should not think of $Y \approx z - 1$, because this type of translation would not eliminate the nonlocal term, the second term in \eqref{eq:for:v:1}, whereas our definition of $Y$ will result in a subtle cancellation of this nonlocal term. We now describe these details. 

We introduce the vorticity in the straightened coordinates, $(s,Y)$:
\begin{align} \label{change:2}
V_k(s, Y) := \p_Y^2 \phi_k(s, Y). 
\end{align}
Differentiating equation \eqref{vel:st:1}, we obtain the following system on $V$:
\begin{align} \label{real:sys:omY}
Y \p_s V_k - |\bar{W}_z|^2\p_Y^2 V_k = &   \underline{\tau}_1 \p_Y V_k + \underline{\tau}_0 V_k + \underline{\tau}_{-1}  \p_Y \phi_k + \underline{\tau}_{-2} \phi_k +  \frac{\p_z}{\bar{W}_z} \Big( \frac{\mathcal{R}_k}{\bar{W}_z} \Big).
\end{align}
Above, the coefficients are indexed according to $Y$ regularity in $V$. They are defined are follows: 
\begin{subequations} \label{utau:def}
\begin{align}
\underline{\tau}_1(s, Y) := &\bar{W} \bar{W}_s + \bold{c}_1 \bar{W}_z - \bar{W}_{zz}, \\ 
\underline{\tau}_0(s, Y) := & \frac{\p_z}{\bar{W}_z}( \bar{W} \bar{W}_s + \bold{c}_1 \bar{W}_z - \bar{W}_{zz} ) + \frac{1}{\bar{W}_z} ( \bar{W} \bar{W}_{sz} - \bar{W}_s \bar{W}_z + \bold{c}_2 \bar{W}_z + \bold{c}_1 \bar{W}_{zz} - \bar{W}_{zzz} )   \\
\underline{\tau}_{-1}(s, Y) := & \frac{\p_z}{\bar{W}_z}\Big( \frac{ \bar{W} \bar{W}_{sz} - \bar{W}_s \bar{W}_z - \bar{W}_{zzz} + \bold{c}_2 \bar{W}_z + \bold{c}_1 \bar{W}_{zz} }{\bar{W}_z} \Big) + \frac{\bold{c}_3}{\bar{W}_z}, \\
\underline{\tau}_{-2}(s, Y) := & \frac{\p_z}{\bar{W}_z} \Big( \frac{\bold{c}_3}{\bar{W}_z} \Big).
\end{align}
\end{subequations}
%
We also record the following identity, which shows the connection between our $(s, Y)$ coordinate system and good unknown, $V(s, Y)$, and the Masmoudi-Wong good-unknown, \cite{MW}. Indeed, computing using \eqref{change:1} and \eqref{change:2}, we get the identities
\begin{align}
& u_k(s, z) = \bar{W}_z(s, z) \p_Y \phi_k(s, Y), \\
&\p_z u_k(s, z) = \bar{W}_{zz} \p_Y \phi_k + \bar{W}_z^2 V_k(s, Z) = \frac{\bar{W}_{zz}}{\bar{W}_z} u_k + \bar{W}_z^2 V_k, 
\end{align}
which upon rearranging, results in 
\begin{align} \label{good:unk:1:V}
V_k(s, Y) = \frac{1}{\bar{W}_z^2} (\p_z u_k - \frac{\bar{W}_{zz}}{\bar{W}_z} u_k),
\end{align}
which is essentially equivalent to the Masmoudi-Wong good unknown, up to the pre-factor of $\frac{1}{\bar{W}_z^2}$. We find our new perspective on this good unknown noteworthy. 

\subsubsection{Eikonal Equation for the $Z$ Variable} \label{subsection:local}

Our aim is to now perform a further change of variables, $(s, Y) \mapsto (s, Z)$, which converts the prefactor of $\bar{W}_z^2$ in front of the diffusion in \eqref{real:sys:omY} into a constant prefactor of $1$ (as opposed to a nonconstant function of $s$) when restricted to $Y = 0$ (coinciding with $Z = 0$). We introduce
\begin{align} \label{philz1}
V_k(s, Y) = \Omega_k(s,Z) = \Omega_k(s, p(s, Y)), \qquad Z = p(s, Y). 
\end{align}
Note that this is a change of variable and function we use only \textit{at the vorticity level}, unlike the introduction of the $Y$ variable in \eqref{change:1}, which occurs at the level of the stream function. We also note that, putting \eqref{philz1} together with \eqref{good:unk:1:V}, we get
\begin{align}\label{good:unk:1}
\Omega_k(s, Z) = V_k(s, Y) = \frac{1}{\bar{W}_z^2} (\p_z u_k - \frac{\bar{W}_{zz}}{\bar{W}_z}  u_k).
\end{align}
We will also introduce a notation for the stream function in $(s, Z)$ coordinates:
\begin{align}
\Phi_k(s, Z) = \phi_k(s, Y)
\end{align}
Notice while $\p_Y^2 \phi_k = V_k$, a more complicated elliptic equation relates $\Phi_k$ and $\Omega_k$:
\begin{align}
\Omega_k(s, Z) = |p_Y|^2 \p_Z^2 \Phi_k + p_{YY} \p_Z \Phi_k.
\end{align}
We choose the function $p(s, Y)$ explicitly via
\begin{align} \label{Eikonal:1}
p(s, Y) = \frac{Y}{\bar{W}_z(s, 1)^{\frac23}}. 
\end{align}
We record the following identities from an application of the chain rule,
\begin{subequations}
\begin{align}
Y \p_s V_k =  &Y \p_s \Omega_k + Y p_s \p_Y \Omega_k \\
\p_Y V_k = & p_Y \p_Z \Omega_k, \\
\p_Y^2 V_k= & p_Y^2 \p_Z^2 \Omega_k + p_{YY} \p_Z \Omega_k.
\end{align}
\end{subequations}
From here, we obtain upon invoking \eqref{Eikonal:1} 
\begin{align} \n
Y \p_s V_k - |\bar{W}'|^2 \p_Y^2 V_k = & Z \bar{W}_z(s,1)^{\frac23} \p_s \Omega_k - \frac{\bar{W}_z^2}{\bar{W}_z(s,1)^{\frac43}}  \p_Z^2 \Omega_k + (p_Y^2 |\bar{W}_z|^2 Z p_s  )\p_Z \Omega_k.
\end{align}
We therefore observe that the left-hand side of \eqref{real:sys:omega} is conjugated to a pure Airy operator in the $(s, Z)$ variable, up to lower order terms. Indeed, equation \eqref{real:sys:omY} in the $(s, Y)$ coordinates transforms into 
\begin{subequations} \label{real:sys:omega}
\begin{align} 
&Z \p_s \Omega_k - \p_Z^2 \Omega_k = \tau_2 \p_Z^2 \Omega_k +   \tau_1 \p_Z \Omega_k + \tau_0 \Omega_k + \tau_{-1} \p_Z \Phi_k + \tau_{-2} \Phi_k + \bold{F}_k. 
\end{align}
\end{subequations}
Above, we have defined the following quantities: 
\begin{subequations} \label{tau:def}
\begin{align}
\tau_2(s, Z) := &\frac{\bar{W}_z^2}{\bar{W}_z(s, 1)^2} -1 \\
\tau_1(s, Z) := & - \frac23 \frac{\bar{W}_z^2 \bar{W}_{sz}(s, 1)}{\bar{W}_z(s, 1)^{\frac{11}{3}}}Z + \frac{\underline{\tau}_1}{\bar{W}_z(s,1)^{\frac43}}, \\ 
\tau_0(s, Z) := & \frac{ \underline{\tau}_0}{\bar{W}_z(s, 1)^{\frac23}} \\ 
\tau_{-1}(s, Z) := & \frac{ \underline{\tau}_{-1}}{\bar{W}_z(s, 1)^{\frac43}} \\
\tau_{-2}(s, Z) := &   \frac{ \underline{\tau}_{-2}}{\bar{W}_z(s, 1)^{\frac23}}
\end{align}
\end{subequations}
Finally, we define 
\begin{align} \label{boldF}
\bold{F}_k(s, Z) := & \frac{\p_z}{\bar{W}_z} \Big( \frac{\mathcal{R}_k}{\bar{W}_z} \Big)(s, z).
\end{align}

\subsubsection{Localization}

We choose the parameter $\delta = \delta(u_{FS}) > 0$ so $(s, Z) \in (1, 1 + \bar{L}) \times (-\delta, \delta)$ implies that $(x, y) \in (1, 1 + L) \times (\lambda_G(x) - \delta_1, \lambda_G(x) + \delta_1)$ (in particular, $u_{FS}'$ is bounded below uniformly in this set). To localize $Z$, we cut-off the vorticity $\Omega_k$ via
\begin{align} \label{defn:Chi:I}
\Omega_{I, k} (s, Z) := \Omega_k(s, Z) \chi(\frac{Z}{\delta}) = \Omega_k(s, Z) \chi_I(Z)
\end{align}
where we define $\chi_I(Z) = \chi(\frac{Z}{\delta})$, and where the cut-off $\chi$ is defined in \eqref{chidef}. Consolidating the problem formulation so far, we have from  \eqref{real:sys:omega}
\begin{subequations}
\begin{align} \label{loc:sys:1}
&Z \p_s \Omega_{I, k} - \p_Z^2 \Omega_{I, k} =   F_{I; k}, \qquad (s, Z) \in (1, 1 + \bar{L}) \times \mathbb{R} \\
&\Omega_{k,I}(s, \pm \infty) = 0,  \qquad s \in (1, 1+\bar{L})
\end{align}
\end{subequations}
where the forcing, $F_{I; k}$, appearing above is defined by 
\begin{align}\n
F_{I; k} := & \chi\Big(\frac{Z}{\delta}\Big) (\tau_2 \p_Z^2 \Omega_k +   \tau_1 \p_Z \Omega_k + \tau_0 \Omega_k + \tau_{-1} \p_Z \Phi_k + \tau_{-2} \Phi_k)  \\  \label{loc:sys:I1}
&  - \frac{\chi''(\frac{Z}{\delta})}{\delta^2} \Omega_k - \frac{2}{\delta} \chi'(\frac{Z}{\delta})\p_Z \Omega_k  + \bold{F}_k \chi\Big(\frac{Z}{\delta}\Big).
\end{align}
We assume data on the sides is as follows:
\begin{align}
\Omega_{I, k}(1, Z) =   \Omega_{I, \mathrm{Left}; k}(Z), \qquad \Omega_{I, k}(1 + \overline{L}, Z) =  \Omega_{I, \mathrm{Right}; k}(Z),
\end{align}
In turn, $ \Omega_{I, \mathrm{Left}; k},  \Omega_{I, \mathrm{Right}; k}$ will be computed in terms of the given data in Section \ref{bdsdata:Sec}.

It will be beneficial to commute the cut-off into the $\p_Z$ derivatives on the right-hand side above, which results in the following form of $F_{I; k}$:
\begin{align} \n
F_{I; k} = & \tau_2 \p_Z^2 \Omega_{I, k}+  \tau_1 \p_Z \Omega_{I, k} + \tau_0 \Omega_{I, k}+\chi(\frac{Z}{\delta}) ( \tau_{-1} \p_Z \Phi_k + \tau_{-2} \Phi_k)   \\ \label{form:FI}
&-( (1 + \tau_2) \frac{\chi''(\frac{Z}{\delta})}{\delta^2}  + \frac{\tau_1}{\delta} \chi'(\frac{Z}{\delta}) ) \Omega_k - \frac{2}{\delta} (1 + \tau_2) \chi'(\frac{Z}{\delta}) \p_Z \Omega_k+ \bold{F}_k \chi(\frac{Z}{\delta}), 
\end{align}
Given \eqref{loc:sys:1} and \eqref{form:FI}, we will use two different formulations within this coordinate system. First, for the in Section \ref{Section:Airy},  we need to view the primary operator as the $Z \p_s - \p_Z^2$ operator, and therefore we keep the equation as written \eqref{loc:sys:1}. On the other hand, for the analysis in Section \ref{Section:Interior}, we want to take advantage of integrations by parts through energy estimates, and therefore it is advantageous to move $\tau_2 \p_Z^2 \Omega_I+  \tau_1 \p_Z \Omega_{I, k} + \tau_0 \Omega_{I, k}$ to the left-hand side, resulting in 
\begin{align} \n
&Z \p_s \Omega_{I, k} - (1 + \tau_2) \p_Z^2 \Omega_{I, k} -  \tau_1 \p_Z \Omega_{I, k} - \tau_0 \Omega_{I, k} \\ \n
=& \chi(\frac{Z}{\delta}) ( \tau_{-1} \p_Z \Phi_k + \tau_{-2} \Phi_k)  -( (1 + \tau_2) \frac{\chi''(\frac{Z}{\delta})}{\delta^2}  + \frac{\tau_1}{\delta} \chi'(\frac{Z}{\delta}) ) \Omega_k \\  \label{bedlowo}
&  - \frac{2}{\delta} (1 + \tau_2) \chi'(\frac{Z}{\delta})\p_Z \Omega_k+ \bold{F}_k \chi(\frac{Z}{\delta})
\end{align}

\subsection{Summary of Six Canonical Systems} \label{sec:four:canonical}

We are now prepared to consolidate the various transformations we have made above. In particular, in the course of our analysis, we will have need for six formulations of the Prandtl system. Our argument will provide abstract \textit{a-priori} bounds on the systems presented in this section, and then devote separate sections to the estimation of the corresponding error terms for each $k$.Therefore, subindices of $k$ do not appear in what follows, and the unknowns here are abstract placeholders for $u_{R; k}$, etc...

Motivated by the system \eqref{sideLH:3}, we consider the following abstract problem:
\begin{align}
\left. \begin{aligned} \label{abs:1}
&\bar{U} \p_x u_{R} + \bar{U}_y v_{R} + \bold{C}_1  \p_y u_{R} + \bold{C}_2 u_{R}  +\bold{C}_3 \psi_{R} - \p_y^2 u_R = \mathcal{R}, \\
&u_R|_{y = 0} = 0, \qquad u_R|_{y \rightarrow \infty = 0}, \\
&\psi_R = \mathcal{L}_{\text{Prandtl}}[\mathcal{R}].
\end{aligned}\right| (x, y) \in (1, 1+L) \times \mathbb{R}_+
\end{align}
Motivated by the system \eqref{linHGYU:1}, we consider the following abstract problem: 
\begin{align}
\left. \begin{aligned} \label{linform:1}
&u_{FS} \p_x u_{R} + \p_y u_{FS} v_{R} + v_{FS} \p_y u_{R} + \p_x u_{FS} u_{R} + \widetilde{\bold{C}}_3  \psi_{R} - \p_y^2 u_{R} = \mathcal{R} + \mathcal{M}, \\
&u_R|_{y = 0} = 0, \qquad u_R|_{y \rightarrow \infty = 0}, \\
&\psi_R = \mathcal{L}_{\mathrm{Lin}}[\mathcal{R}, \mathcal{M}], 
\end{aligned} \right| (x, y) \in (1, 1 + L) \times \mathbb{R}_+
\end{align}
Next, motivated by \eqref{res:in:1}, we have two formulations in the $(s, z)$ coordinate system. 
\begin{align}  \label{merc:1lkj}
\left. \begin{aligned}
&\bar{W} \p_s u + v \p_z \bar{W}  - \p_z^2 u + \bold{c}_1 \p_z u + \bold{c}_2 u + \bold{c}_3 \psi   = \bold{R}, \\
&u|_{z = 0} = u|_{z \rightarrow \infty} = 0, \\
&\psi = \mathcal{L}_{\text{Modulation}}[\bold{R}], \\
\end{aligned}\right| (s, z) \in  (1, 1 + \bar{L}) \times \mathbb{R}_+
\end{align}
and the related von-Mise formulation: 
\begin{align}  \label{abs:4:bljk}
\left. \begin{aligned}
&\p_s U + \bold{b}_1\p_z u + \bold{b}_2 u + \bold{b}_3 \psi - \p_z^2 u = \bold{R}, \\  
&U|_{s = 1} = \zeta_{\text{Left}}(z), \\  
&U|_{s = 1 + \bar{L}} = \zeta_{\text{Right}}(z), \\ 
&u|_{z = 0} = u|_{z \rightarrow \infty} = 0, \\
&U := \bar{W} u - \bar{W}_z \psi, \\
&U = \mathcal{L}_{\text{von-Mise}}[\bold{R}, \zeta_{\text{Left}}, \zeta_{\text{Right}}].
\end{aligned}\right| (s, z) \in  (1, 1 + \bar{L}) \times \mathbb{R}_+
\end{align}
Above $\bold{R} = \lambda^2 \mathcal{R}$, according to \eqref{res:in:1}. Motivated by the system \eqref{loc:sys:1}, we will consider the following abstract problem:
\begin{align} \label{abs:2:b:sec3}
\left. \begin{aligned}
&Z \p_s \Omega_I - \p_Z^2 \Omega_I = F_I \\
&\Omega_I(s, \pm \infty) = 0, \\
&\Omega_I|_{s = 1} = \Xi_{\text{Left}}(\frac{Z}{\bar{L}^{\frac13}}), \qquad Z > 0, \\
&\Omega_I|_{s = 1 + \bar{L}} = \Xi_{\text{Right}}(\frac{Z}{\bar{L}^{\frac13}}), \qquad Z < 0, \\
&\Omega_I = \mathcal{L}_{\text{Airy}}[F_I, \Xi_{\text{Left}}, \Xi_{\text{Right}}]  
\end{aligned} \right| (s, Z) \in (1, 1 + \bar{L}) \times \mathbb{R}, 
\end{align}
Motivated by \eqref{bedlowo}, we consider the following abstract problem for energy estimates: 
\begin{align} \label{abs:3}
\left. \begin{aligned}
&Z \p_s \Omega_I - \tau_0 \Omega_I - \tau_1 \p_Z \Omega_I - (1 + \tau_2) \p_Z^2 \Omega_I = S_I \\
&\Omega_I(s, \pm \infty) = 0, \\
&\Omega_I|_{s = 1} = \Xi_{\text{Left}}(\frac{Z}{\bar{L}^{\frac13}}), \qquad Z > 0, \\
&\Omega_I|_{s = 1 + \bar{L}} = \Xi_{\text{Right}}(\frac{Z}{\bar{L}^{\frac13}}), \qquad Z < 0, \\
&\Omega_I = \mathcal{L}_{\text{Crocco}}[S_I, \Xi_{\text{Left}}, \Xi_{\text{Right}}]  
\end{aligned} \right| (s, Z) \in (1, 1 + \bar{L}) \times \mathbb{R}, 
\end{align}
The source terms $F_I, S_I$ appearing in \eqref{abs:2:b:sec3} and \eqref{abs:3} are, of course, designed to mimic \eqref{form:FI} and \eqref{bedlowo}, and are defined in a $k$-independent way in \eqref{defFCL} -- \eqref{defSCL}.

A few remarks are in order. First, the operator $\mathcal{L}_{\mathrm{Lin}}$ is only used to perform Maximal Regularity estimates, Section \ref{Lpr}, and is somehow not \textit{as} prominent in the analysis as the other five operators. Regarding the other five, we think of the organizational structure as shown in Figure \ref{org:tree}. That is, $\mathcal{L}_{\mathrm{Prandtl}}$ is thought of as a ``grandparent" operator and $\mathcal{L}_{\mathrm{Modulation}}$ as a ``parent operator". These operators govern the linearized system. To study them, we introduce three different sub-operators, $\mathcal{L}_{\mathrm{von-Mise}}$, $\mathcal{L}_{\mathrm{Airy}}$, $\mathcal{L}_{\mathrm{Crocco}}$ on which we actually perform estimates. The data that appears in our eventual bounds are those from \eqref{abs:4:bljk}, \eqref{abs:2:b:sec3}, and \eqref{abs:3}. 

The data elements $[\zeta_{\mathrm{Left}}, \zeta_{\mathrm{Right}}]$ are placeholders for $[U_{\mathrm{Left}; k}, U_{\mathrm{Right}; k}]$, and the data elements $[\Xi_{\mathrm{Left}}, \Xi_{\mathrm{Right}}]$ are placeholders for $[\Omega_{I; \mathrm{Left}; k}, \Omega_{I; \mathrm{Right}; k}]$. Finally, we remark that we think of $[\Xi_{\mathrm{Left}}, \Xi_{\mathrm{Right}}]$ as functions of the rescaled variable $Z/\bar{L}^{\frac13}$, because this will turn out to be the dominant form of the data due to the corresponding rescaling given in \eqref{PhiIota:2} -- \eqref{PhiIota:3}. Indeed, it is in Section \ref{bdsdata:Sec} that we restore the $k$ dependent definitions of the data elements and subsequently estimate them.


%
%

\section{Analysis of $\mathcal{L}_{\text{Airy}}[F_I, \Xi_{\text{Left}}, \Xi_{\text{Right}}]$} \label{Section:Airy}

The main system of study in this section will be \textit{a-priori} estimates on the abstract system, \eqref{abs:2:b:sec3}. We define the following traces:
\begin{align} \label{defngammaomegaj}
\gamma_{\Omega}^{(j)}(s) := \p_Z^j \Omega_I|_{Z = 0}(s), \qquad 1 \le s \le 1 + \bar{L}.
\end{align}

\begin{proposition} \label{prop:Airy} Let $F_I \in L^2_{sZ}$. Assume the data satisfies $\sum_{i = 0}^2\| \langle \rho \rangle (\p_\rho^i \Xi_{\mathrm{Left}},\p_\rho^i \Xi_{\mathrm{Right}}) \|_{L^2_\rho} < \infty$. The unique strong solution $\Omega_I$ to the system \eqref{abs:2:b:sec3} satisfies the following estimate:
\begin{align} \label{tr:es:1}
\| \gamma_{\Omega}^{(1)} \|_{L^{2}_s}   \lesssim & \bar{L}^{\frac16} \| F_I \|_{L^2_{sZ}} + \bar{L}^{\frac16} \sum_{i = 0}^2\| \langle \rho \rangle (\p_\rho^i \Xi_{\mathrm{Left}},\p_\rho^i \Xi_{\mathrm{Right}}) \|_{L^2_\rho}.
\end{align}
\end{proposition}
\begin{remark} This estimate can be compared to that of Pagani, \cite{Pagani2}, namely Theorem 4.2, in which the trace $\gamma_{\Omega}^{(1)}$ is shown to be in $\dot{H}^{\frac16}$, which is consistent with the scaling factor of $\bar{L}^{\frac16}$ appearing in \eqref{tr:es:1}. One can actually recover estimate \eqref{tr:es:1} by introducing the change of variables $t = \frac{s -1 }{\bar{L}}, \rho = \frac{Z}{\bar{L}^{\frac13}}$, and applying Theorem 4.2 in \cite{Pagani2}. We include our proof for self-containedness, and since our proof is different on the technical level than Pagani's, who uses the Mellin transform whereas ours will proceed via a Fourier transform in $s$. Moreover, the proof itself involves the derivation of the fractional elliptic equation \eqref{def:Op:A13} in Lemma \ref{lemma:ODE:Analysis}, which is not explicitly written in Pagani's work, and will be useful for our analysis in the case for $L$ not necessarily small, Section \ref{sec:spec}. 
\end{remark}

\subsection{Tangential Extensions}

We will eventually perform a Dirichlet-to-Neumann matching procedure in order to identify the traces $\gamma_{\Omega}^{(0)}(s), \gamma_{\Omega}^{(1)}(s)$. In order to compute the Dirichlet-to-Neumann map for $Z > 0$ and $Z < 0$, we need to: given a Dirichlet condition $\gamma_{\Omega}^{(0)}(s)$, compute the corresponding Neumann trace. In order to do so, we extend tangentially. Importantly, the logic is that we are \textit{given} $\gamma_{\Omega}^{(0)}$, then we extend in order to compute the corresponding Neumann trace. We now turn to the details. 

\vspace{2 mm}

\noindent \textit{Extensions for $Z > 0$:} Assume $\gamma_{\Omega}^{(0)}(s)$ is a given function. We first define its extension as follows 
\begin{align} \label{ext:gamma:1}
\gamma_{\Omega, \mathrm{Ext}}^{(0)}(s) := \begin{cases} \Xi_{\mathrm{Left}}(0) \chi(\frac{s-1}{\bar{L}}), \qquad s \le 1 \\ \gamma_{\Omega}^{(0)}(s), \qquad 1 < s < 1 + \bar{L} \\ \Xi_{\mathrm{Right}}(0) \chi(\frac{s- (1 + \bar{L})}{\bar{L}}) \qquad s \ge 1 + \bar{L}  \end{cases}
\end{align}
where $\chi$ is defined in \eqref{chidef}.

We now extend $F_I$ into $F_{\rightarrow}(s, Z) := 1_{1 \le s \le 1 + \bar{L}}F_I(s, Z)$. We then extend $\Omega_I$ to $\Omega_{\rightarrow}(s, Z)$ via the solution to 
\begin{align}
&(Z \p_s - \p_Z^2) \Omega_{\rightarrow} = F_{\rightarrow}, \qquad 1 < s < \infty \\
&\Omega_{\rightarrow}|_{s = 1} = \Xi_{\mathrm{Left}}(\frac{Z}{\bar{L}^{\frac13}}), \qquad Z > 0, \\
&\Omega_{\rightarrow}|_{Z = 0}= \gamma_{\Omega}^{(0)}(s) 1_{1 < s < 1 + \bar{L}} + \Xi_{\mathrm{Right}}(0) \chi(\frac{s- (1 + \bar{L})}{\bar{L}})  1_{s > 1 + \bar{L}}
\end{align}
Finally, we define the extension of $\Omega$ from $\Omega_{\rightarrow}$ to values of $s < 1$, which we denote by $\Omega_{\mathrm{Ext}}$ by 
\begin{align}
\Omega_{\mathrm{Ext}, +}(s, Z) := \begin{cases} \chi(\frac{s-1}{\bar{L}})  \Xi_{\mathrm{Left}}(\frac{Z}{\bar{L}^{\frac13}}) \qquad s < 1 \\ \Omega_{\rightarrow}(s, Z) \qquad s > 1 \end{cases}
\end{align}
We then define 
\begin{align}
F_{\mathrm{Ext}, +}(s, Z) := \begin{cases}  \frac{Z}{L} \chi'(\frac{s- 1}{\bar{L}}) \Xi_{\mathrm{Left}}(\frac{Z}{\bar{L}^{\frac13}}) - L^{-\frac23}\chi(\frac{s- 1}{\bar{L}}) \Xi''_{\mathrm{Left}}(\frac{Z}{\bar{L}^{\frac13}}), \qquad s < 1  \\  F_{\rightarrow}(s, Z), \qquad s > 1 \end{cases}
\end{align}
It is then the case that 
\begin{align}
&Z \p_s \Omega_{\mathrm{Ext}, +} - \p_Z^2 \Omega_{\mathrm{Ext}, +} = F_{\mathrm{Ext}, +}, \qquad (s, Z) \in \mathbb{R} \times (0, \infty), \\
&\Omega_{\mathrm{Ext}, +}|_{Z = 0} = \gamma^{(0)}_{\Omega, \mathrm{Ext}}(s), \qquad s \in \mathbb{R}. 
\end{align}
For technical reasons, we find it convenient to lift the boundary values $\Xi_{\mathrm{Left}}(0), \Xi_{\mathrm{Right}}(0)$ from \eqref{ext:gamma:1}. We do so by introducing 
\begin{align} \label{kew:plus}
Q_+(s, Z) := \Omega_{\mathrm{Ext}, +} -  \Xi_{\mathrm{Left}}(0) \chi(\frac{s-1}{\bar{L}}) \chi(\frac{Z}{\bar{L}^{\frac13}}) -  \Xi_{\mathrm{Right}}(0) \chi(\frac{s-(1 + \bar{L})}{\bar{L}}) \chi(\frac{Z}{\bar{L}^{\frac13}}).
\end{align}
Then, the following system is satisfied 
\begin{align}
&Z \p_s Q_{ +} - \p_Z^2 Q_{+} = F_{Q,+}, \qquad (s, Z) \in \mathbb{R} \times (0, \infty), \\
&Q_+|_{Z = 0} = \gamma^{(0)}_{Q, \mathrm{Ext}}(s), \qquad s \in \mathbb{R}. 
\end{align}
Above, the new lifted source term is defined as follows: 
\begin{align} \n
F_{Q,+}(s, Z) := &F_{\mathrm{Ext}, +}(s, Z) + \Xi_{\mathrm{Left}}(0)( - \frac{Z}{\bar{L}} \chi'(\frac{s - 1}{\bar{L}}) \chi(\frac{Z}{\bar{L}^{\frac13}}) + \frac{1}{\bar{L}^{\frac23}} \chi(\frac{s-1}{\bar{L}}) \chi''(\frac{Z}{\bar{L}^{\frac23}}) ) \\
& +  \Xi_{\mathrm{Right}}(0)( - \frac{Z}{\bar{L}} \chi'(\frac{s - (1 + \bar{L})}{\bar{L}}) \chi(\frac{Z}{\bar{L}^{\frac13}}) + \frac{1}{\bar{L}^{\frac23}} \chi(\frac{s - (1 + \bar{L})}{\bar{L}}) \chi''(\frac{Z}{\bar{L}^{\frac23}}) )
\end{align}
The boundary term $\gamma_{Q, \mathrm{Ext}}^{(0)}(s)$ is obtained by evaluating \eqref{kew:plus} at $\{Z = 0\}$ and using $\Omega_{\mathrm{Ext}, +}(s, 0) = \gamma_{\Omega, \mathrm{Ext}}^{(0)}(s)$, which is defined in \eqref{ext:gamma:1}, we obtain 
\begin{align}
\gamma^{(0)}_{Q, \mathrm{Ext}}(s) := & \begin{cases} 0, \qquad s \in (1, 1 + \bar{L})^C \\ \gamma_{\Omega}^{(0)}(s) -  \Xi_{\mathrm{Left}}(0) \chi(\frac{s-1}{\bar{L}})  -  \Xi_{\mathrm{Right}}(0) \chi(\frac{s - (1 + \bar{L})}{\bar{L}}), \qquad s \in (1, 1 + \bar{L}). \end{cases}
\end{align}
For future use, we will define 
\begin{align} \label{gamma:Q:def:he}
\gamma^{(0)}_Q(s) := \gamma_{\Omega}^{(0)}(s) -  \Xi_{\mathrm{Left}}(0) \chi(\frac{s-1}{\bar{L}}) -  \Xi_{\mathrm{Right}}(0) \chi(\frac{s - (1 + \bar{L})}{\bar{L}}), \qquad s \in (1, 1 + \bar{L}),
\end{align}
after which $\gamma^{(0)}_{Q, \mathrm{Ext}}$ becomes extension of $\gamma_Q^{(0)}(s)$ by zero to $(1, 1 + \bar{L})$.  At this point, it is convenient to separate out the components of $F_{Q, +}$ which depend on the data from the dependence on the original source term, $F_I$. Therefore, we define 
\begin{align} \n
F_{\mathrm{Data}, +} := & F_{\mathrm{Data}, +}[\Xi_{\mathrm{Left}}, \Xi_{\mathrm{Right}}] \\ \n
= & 1_{s < 1}( \frac{Z}{\bar{L}} \chi'(\frac{s-1}{\bar{L}})\Xi_{\mathrm{Left}}(\frac{Z}{\bar{L}^{\frac13}}) - \bar{L}^{-\frac23} \chi(\frac{s-1}{\bar{L}})\Xi''_{\mathrm{Left}}(\frac{Z}{\bar{L}^{\frac13}})) \\ \n
&\Xi_{\mathrm{Left}}(0)( - \frac{Z}{\bar{L}} \chi'(\frac{s - 1}{\bar{L}}) \chi(\frac{Z}{\bar{L}^{\frac13}}) + \frac{1}{\bar{L}^{\frac23}} \chi(\frac{s-1}{\bar{L}}) \chi''(\frac{Z}{\bar{L}^{\frac23}}) ) \\ \label{F:plus}
& +  \Xi_{\mathrm{Right}}(0)( - \frac{Z}{\bar{L}} \chi'(\frac{s - (1 + \bar{L})}{\bar{L}}) \chi(\frac{Z}{\bar{L}^{\frac13}}) + \frac{1}{\bar{L}^{\frac23}} \chi(\frac{s - (1 + \bar{L})}{\bar{L}}) \chi''(\frac{Z}{\bar{L}^{\frac23}}) ).
\end{align}
After this, we have 
\begin{align}
F_{Q, +} = 1_{1 \le s \le 1 + \bar{L}}F_I +  F_{\mathrm{Data}, +}[\Xi_{\mathrm{Left}}, \Xi_{\mathrm{Right}}].
\end{align}

\vspace{2 mm}

\noindent \textit{Extensions for $Z < 0$:} A completely symmetric construction is performed for $Z < 0$; we denote those extensions by $ \Omega_{\mathrm{Ext}, -}$, $F_{\mathrm{Ext}, -}$, and $Q_{-}$, and the corresponding operator on the data by $F_{\mathrm{Data}, -}$. 

\subsection{Derivation of Fractional Elliptic Problem}
We begin by defining Dirichlet-to-Neumann operators for the inhomogeneous problem 
\begin{subequations}
\begin{align}\label{dada1}
&Z \p_s Q_{\iota} - \p_Z^2 Q_{\iota} = F_{Q, \iota}, \qquad (s, Z) \in \mathbb{R} \times \mathbb{R}_{\iota}, \qquad \iota \in \{ +, -\} \\ \label{dada2}
&Q_{\iota}|_{Z = 0} = \gamma^{(0)}_{Q, \mathrm{Ext}}(s), \qquad Q_{\iota}|_{Z = \iota \infty} = 0, \qquad s \in \mathbb{R}.
\end{align}
\end{subequations}
We want to consider the \textit{restriction} of these Neumann trace of the extended problem:  
\begin{align}
\gamma_{Q, \iota}^{(1)}[\gamma^{(0)}_{Q}](s) := \mathcal{R}_{(1, 1 + \bar{L})} \p_Z Q_{\iota}(s, 0), \qquad s \in (1, 1 + \bar{L}), \qquad \iota \in \{+, -\}.
\end{align} 
where $\mathcal{R}_{(1, 1 + \bar{L})}$ is the restriction operator on the interval $(1, 1 + \bar{L})$.

\begin{lemma} \label{lemma:ODE:Analysis} Let $F_I \in L^2_{sZ}$. Assume the data satisfies $\sum_{i = 0}^2\| \langle \rho \rangle (\p_\rho^i \Xi_{\mathrm{Left}},\p_\rho^i \Xi_{\mathrm{Right}}) \|_{L^2_\rho} < \infty$. There exists a unique solution, $\gamma^{(1)}_{\mathcal{Q}}(s)$, to the Dirichlet to Neumann equation: 
\begin{align}
\gamma_{Q, +}^{(1)}[\gamma^{(0)}_{Q}] = \gamma_{Q, -}^{(1)}[\gamma^{(0)}_{Q}]. 
\end{align}
Moreover, this fixed point satisfies the following fractional elliptic equation  
\begin{align} \label{a:Green:1}
(-\Delta_D)^{\frac16} \gamma_{Q}^{(0)} =  \sum_{\iota \in \pm} \mathcal{T}_{- \frac 1 3, \iota} [F_I 1_{1 \le s \le 1 + \bar{L}}] + \sum_{\iota \in \pm} \mathcal{T}_{- \frac 1 3, \iota} [F_{\mathrm{Data}, \iota}[\Xi_{\mathrm{Left}}, \Xi_{\mathrm{Right}}]], \qquad s \in (1, 1 + \bar{L})
\end{align}
where the linear operators above are defined as follows:
\begin{align} \label{def:Op:A13}
(-\Delta_D)^{\frac16}\gamma_{Q}^{(0)} := & \mathcal{F}^{-1} \Big[|\xi|^{\frac13}  \mathcal{F}[ \gamma^{(0)}_{Q, \mathrm{Ext}} ] \Big],  \\ \label{defn:T:op}
 \mathcal{T}_{- \frac 1 3, \iota} [\cdot] := &(-1)^{\iota} \mathcal{R}_{(1, 1+\bar{L})} \mathcal{F}^{-1} \Big[ \mathcal{G}_{\iota}[\cdot](\xi) \Big] . 
\end{align}
and the integral operators $\mathcal{G}_{\pm}[\cdot](\xi)$ are defined in \eqref{G:top:defn}, \eqref{G:bot:defn}. 
\end{lemma}

\begin{proof} The proof proceeds by finding a unique $\gamma^{(0)}_{Q}(s)$ so that the top Dirichlet-to-Neumann operator coincides with the bottom Dirichlet-to-Neumann operator. As such, we will separately compute the top and bottom Dirichlet-Neumann operators. 

\vspace{2 mm}

\noindent \textit{The ``top" problem, $Z > 0$:} Going to Fourier in $s$, we obtain from \eqref{dada1} -- \eqref{dada2}: 
\begin{subequations}
\begin{align} \label{jj1}
&i \xi Z \widehat{Q}_{+} -\p_Z^2 \widehat{Q}_{+} = \widehat{F}_{Q, +}, \qquad (\xi, Z) \in \mathbb{R} \times \mathbb{R}_+, \\ \label{jjb}
&\widehat{Q}_{+}(\xi, 0) = \widehat{\gamma}^{(0)}_{Q, \mathrm{Ext}}(\xi), \qquad \widehat{Q}_{+}(\xi,  \infty) = 0
\end{align}
\end{subequations}
We solve this ODE via
\begin{align} \label{repL}
\widehat{Q}_{+}(\xi, Z) = c_1(\xi) ai( (i \xi)^{\frac 1 3}Z) + c_2(\xi) bi((i \xi)^{\frac 1 3} Z) + \widehat{U}_{P,+}(\xi, Z),
\end{align}
where $ai(\cdot), bi(\cdot)$ are the standard Airy functions. We refer the reader to Appendix \ref{app:Airy} for definitions and properties of these functions, and the reference \cite{Olver} for further properties.  

The particular solution is defined via 
\begin{align} \n
 \widehat{U}_{P,+}(\xi, Z) := &   \frac{1}{(i \xi)^{\frac 1 3} \mathcal{W}[ai, bi]} ai( (i \xi)^{\frac 1 3}Z) \int_0^Z bi( (i \xi)^{\frac 1 3} Z') \widehat{F_{Q, +}}(\xi, Z') \ud Z' \\ \label{repL1}
 &+ \frac{ 1 }{(i \xi)^{\frac 1 3}\mathcal{W}[ai, bi]} bi( (i \xi)^{\frac 1 3}Z) \int_Z^{\infty} ai( (i \xi)^{\frac 1 3} Z') \widehat{F_{Q, +}}(\xi, Z') \ud Z'.
\end{align}
Above, $\mathcal{W}[ai, bi]$ is the Wronskian between the two fundamental solutions, $ai(\cdot), bi(\cdot)$, and is therefore a constant by classical ODE theory. We note that the due to $\widehat{F}_{Q,+} \in L^2_{\xi Z}$ and the exponential decay of $ai((i\xi)^{\frac13}Z')$ as $Z' \rightarrow \infty$ according to Lemma \ref{lemma:fund:solutions}, the second integral above converges (the first integral above is clearly finite for each fixed $Z < \infty$).  

In order to choose decaying solutions as $Z \rightarrow \infty$, we must set $c_2(\xi) = 0$ in \eqref{repL}, according to \eqref{airy:asy:1}. This, in turn, is due to the fact that for each fixed $\xi \in \mathbb{R}$, for $Z \ge 0$, the quantity $(i\xi)^{\frac13}Z \in \Sigma_0$.  

We now have an equation for the remaining coefficient $c_1(\xi)$:
\begin{align} \label{sys:ci:1}
&c_1(\xi) ai(0) =\widehat{\gamma}^{(0)}_{Q, \mathrm{Ext}}(\xi) - \widehat{U}_{P,+}(\xi, 0),
\end{align}
which we immediately solve to obtain 
\begin{align} \label{df:c1}
c_1(\xi) = \frac{\widehat{\gamma}^{(0)}_{Q, \mathrm{Ext}}(\xi) - \widehat{U}_{P,+}(\xi, 0)}{ai(0)}.
\end{align}

We thus obtain the normal derivative as: 
\begin{align}\label{pztop}
\p_Z \widehat{Q}_{+}(\xi, 0) = &  (i \xi)^{\frac 13} c_1(\xi) ai'(0) + \p_Z \widehat{U}_{P,+}(\xi, 0)  =  (i \xi)^{\frac 1 3} \frac{ai'(0)}{ai(0)} \widehat{\gamma}^{(0)}_{Q, \mathrm{Ext}}(\xi) + \mathcal{G}_{+}[F_{Q, +}](\xi),
\end{align}
where we define the integral operator 
\begin{align} \label{G:top:defn}
\mathcal{G}_{+}[F_{Q, +}](\xi) := & \frac{1}{\mathcal{W}[ai, bi]} \Big( bi'(0) - bi(0) \frac{ai'(0)}{ai(0)} \Big) \int_0^{\infty} ai ( (i \xi)^{\frac 1 3} Z' ) \widehat{F}_{Q, +}(\xi, Z') \ud Z'.
\end{align}
We introduce the quantity
\begin{align} \label{def:M:bot:delta:s}
M_{+} := \frac{ai'(0)}{ai(0)}.
\end{align}
We now take the formula \eqref{pztop} and restrict to $s \in (1, 1+ \bar{L})$ on the spatial side to obtain our top Dirichlet to Neumann operator: 
\begin{align} \label{DN:top}
\gamma^{(1)}_{Q, +}[\gamma^{(0)}_{Q}] = & \mathcal{R}_{(1, 1 + \bar{L})} \mathcal{F}^{-1} \Big[ (i \xi)^{\frac 1 3} M_{+}(\xi) \widehat{\gamma}^{(0)}_{Q, \mathrm{Ext}}(\xi) + \mathcal{G}_{+}[F_{Q, +}](\xi) \Big].
\end{align}

We now repeat the same steps for the bottom problem. 

\vspace{2 mm} 

\noindent \textit{The ``bottom" problem, $Z < 0$:} We arrive at the problem set on $s \in \mathbb{R}$:
\begin{subequations}
\begin{align}
&Z \p_s Q_{ -} - \p_Z^2 Q_{ -} = F_{Q, -}, \qquad (s, Z) \in \mathbb{R} \times (-\infty, 0) \\
&Q_{-}|_{Z = 0} = \gamma^{(0)}_{Q, \mathrm{Ext}}(s), \qquad Q_{ -}|_{Z = - \infty} = 0, \qquad s \in \mathbb{R}.
\end{align}
\end{subequations}
Going again to Fourier in the $s$ variable, we have for the bottom problem: 
\begin{subequations}
\begin{align} \label{jj11}
&i \xi Z \widehat{Q}_{ -}  - \p_Z^2 \widehat{Q}_{-}  = \widehat{F}_{Q, -}, \qquad (\xi, Z) \in \mathbb{R} \times (-\infty,  0), \\ \label{jjb1}
&\widehat{Q}_{-}(\xi, 0) = \widehat{\gamma}^{(0)}_{Q, \mathrm{Ext}}(\xi), \qquad \widehat{Q}_{-}(\xi,  -\infty) = 0
\end{align}
\end{subequations}
Recall now from Lemma \ref{lemma:fund:solutions} that the fundamental solutions in the region $\Sigma_1$ are $\{ ai(w), ai(e^{- \frac{2}{3} \pi i}w) \}$ and on $\Sigma_{-1}$ are $\{ ai(w), ai(e^{\frac{2}{3} \pi i}w) \}$. We therefore define the functions 
\begin{align}
B_{-1}(w) := ai(e^{\frac{2}{3} \pi i}w), \qquad B_{+1}(w) := ai(e^{-\frac{2}{3} \pi i}w)
\end{align}
We will treat the case $\xi > 0$ in detail, and the case of $\xi < 0$ will follow with similar arguments. In this case $(i \xi)^{\frac 1 3} Z$ for $Z < 0$ lies in the region $\Sigma_{-1}$. Hence, we now write a solution
\begin{align} \label{relPP}
\widehat{\mathcal{Q}}_{-}(\xi, Z) = d_1(\xi) ai( (i \xi)^{\frac 13}Z) + d_2(\xi) B_{-1}( (i \xi)^{\frac 1 3}Z) + \widehat{U}_{P,-}(\xi, Z),
\end{align}
where the particular solution above is defined to be 
\begin{align} \n
 \widehat{U}_{P,-}(\xi, Z) := &- \frac{1}{(i \xi)^{\frac 1 3} \mathcal{W}[ai, B_{-1}] } B_{-1}( ( i \xi )^{\frac 1 3} Z ) \int_Z^0 ai( (i \xi)^{\frac 1 3} Z' ) \widehat{F}_{Q, -}(\xi, Z') \ud Z' \\
 & - \frac{1}{(i \xi)^{\frac 1 3} \mathcal{W}[ai, B_{-1}]} ai( (i \xi)^{\frac 1 3} Z ) \int_{-\infty}^Z B_{-1}( (i\xi)^{\frac 1 3} Z' ) \widehat{F}_{Q,-}(\xi, Z') \ud Z'. 
\end{align}
In this case, since for $\xi > 0$ and $Z < 0$, we have $(i\xi)^{\frac13}Z \in \Sigma_{-1}$, we have according to \eqref{airy:asy:3} that $ai((\xi)^{\frac13}Z)$ grows as $Z \rightarrow - \infty$ and $B_{-1}((i\xi)^{\frac13}Z)$ decays as $Z \rightarrow - \infty$. Therefore, we can set $d_1(\xi) = 0$, and obtain the simpler representation:  
\begin{align} \label{relPPP}
\widehat{\mathcal{Q}}_{-}(\xi, Z) =  d_2(\xi) B_{-1}( (i \xi)^{\frac 1 3}Z) + \widehat{U}_{P,-}(\xi, Z),
\end{align}
Evaluation at $Z = 0$ gives the following equation for the Fourier multiplier $d_2(\xi)$, 
\begin{align} \label{eq:d1:a}
& d_2(\xi) B_{-1}(0) = \widehat{\gamma}^{(0)}_{Q, \mathrm{Ext}}(\xi) - \widehat{U}_{P,-}(\xi, 0),  
\end{align}
which we immediately solve to obtain 
\begin{align}
d_2(\xi) =  \frac{\widehat{\gamma}^{(0)}_{Q, \mathrm{Ext}}(\xi) - \widehat{U}_{P,-}(\xi, 0)}{B_{-1}(0)}.
\end{align}

We thus obtain the following expression for the normal derivative:
\begin{align} \label{dYuB}
\p_Z \widehat{\mathcal{Q}}_{-}(\xi, 0) = &  (i \xi)^{\frac 1 3} d_2(\xi) B_{-1}'(0) + \widehat{U}_{P,-}'(\xi, 0) =  (i \xi)^{\frac 1 3} M_{-}(\xi) \widehat{\gamma}^{(0)}_{Q, \mathrm{Ext}}(\xi) + \mathcal{G}_{-}[F_{\mathcal{Q}, -}](\xi), 
\end{align}
where we have defined the quantity  
\begin{align} \label{def:M:bot:delta}
M_{-}(\xi) := & \frac{B_{-1}'(0)}{B_{-1}(0)}, \qquad \xi > 0. 
\end{align}
and the integral operator (for $\xi > 0$)
\begin{align} \label{G:bot:defn}
\mathcal{G}_{-}[F_{Q, -}](\xi) :=& - \frac{1}{\mathcal{W}[ai, B_{-1}]} \Big( ai'(0) - ai(0) \frac{B_{-1}'(0)}{B_{-1}(0)} \Big) \int_{-\infty}^0 B_{-1} \Big( (i \xi)^{\frac 1 3} Z' \Big) \widehat{F}_{Q, -}(\xi, Z') \ud Z'. 
\end{align}
We will see that the expression \eqref{def:M:bot:delta} will change for $\xi < 0$, and therefore we think of $M_{-}(\xi)$ as a Fourier multiplier as opposed to a simple constant. 

We now address the case when $\xi < 0$. In this case, the main change will be that the symbol $M_{-}(\xi)$ will be given by $M_{-}(\xi) := \frac{B_{+1}'(0)}{B_{+1}(0)}$, as opposed to \eqref{def:M:bot:delta}. Therefore, to unify these two cases, we define
\begin{align} \label{na}
M_{-}(\xi) :=  \frac{B_{-sgn(\xi)}'(0)}{B_{-sgn(\xi)}(0)}, \qquad \xi \in \mathbb{R}
\end{align}

 We now take the formula \eqref{dYuB} and restrict to $s \in (1, 1 + \bar{L})$ on the spatial side to obtain our bottom Dirichlet to Neumann operator: 
\begin{align}\label{DN:bot}
\gamma_{\mathcal{Q}, -}^{(1)}[\gamma^{(0)}_{Q}] = & \mathcal{R}_{(1, 1+ \bar{L})} \mathcal{F}^{-1} \Big[ (i \xi)^{\frac 1 3} M_{-}(\xi) \widehat{\gamma}^{(0)}_{Q, \mathrm{Ext}}(\xi) + \mathcal{G}_{-}[F_{Q, -}](\xi) \Big]. 
\end{align}

\vspace{2 mm}

\noindent \textit{Matching:} We now want to solve the fixed point equation 
\begin{align}
\gamma_{Q, -}^{(1)}[\gamma^{(0)}_{Q}](s) =\gamma_{Q, +}^{(1)}[\gamma^{(0)}_{Q}](s) , \qquad s \in (1, 1+\bar{L}),
\end{align}
which, upon equating \eqref{DN:top} with \eqref{DN:bot}, we get
\begin{align} \label{we:get}
\mathcal{R}_{(1, 1 + \bar{L})} \mathcal{F}^{-1} \Big[(i \xi)^{\frac 1 3} \mathcal{M}(\xi)  \widehat{\gamma}^{(0)}_{Q, \mathrm{Ext}}(\xi) \Big] = &   \mathcal{R}_{(1,1 + \bar{L})} \mathcal{F}^{-1} \Big[ \mathcal{G}_{-}[F_{Q, -}](\xi) - \mathcal{G}_{+}[F_{Q, +}](\xi) \Big],
\end{align} 
where we have defined the Fourier multiplier 
\begin{align} \label{defn:M:major}
\mathcal{M}(\xi) := M_{+}(\xi) - M_{-}(\xi). 
\end{align}
This Fourier multiplier will be studied below in Lemma \ref{prop:Fourier:mult}, and the following structural identity will be shown: $\mathcal{M}(\xi) = \frac{1}{2\pi ai(0)^2} e^{- i \frac{\pi}{6} sgn(\xi)}$. Temporarily accepting this structural identity, we proceed to analyze the left-hand side of \eqref{we:get} as follows: 
\begin{align*}
\mathcal{R}_{(1, 1 + \bar{L})} \mathcal{F}^{-1} \Big[(i \xi)^{\frac 1 3} \mathcal{M}(\xi)  \widehat{\gamma}^{(0)}_{Q, \mathrm{Ext}}(\xi) \Big]  := & \mathcal{R}_{(1, 1 + \bar{L})} \mathcal{F}^{-1} \Big[(i \xi)^{\frac 1 3}  \frac{1}{2\pi ai(0)^2} e^{- i \frac{\pi}{6} sgn(\xi)}   \widehat{\gamma}^{(0)}_{Q, \mathrm{Ext}} \Big]  \\
= &  \frac{1}{2\pi ai(0)^2} \mathcal{R}_{(1,1 +\bar{L})} \mathcal{F}^{-1} \Big[ |\xi|^{\frac 13}  \widehat{\gamma}^{(0)}_{Q, \mathrm{Ext}} \Big] \\
= &  \frac{1}{2\pi ai(0)^2} \mathcal{R}_{(1,1 +\bar{L})} (-\Delta_D)^{\frac 16}  \gamma^{(0)}_{Q}. 
\end{align*}
Above, we have used the elementary identity 
\begin{align} \label{Wrons:cancel:0}
(i \xi)^{\frac 1 3} e^{- i \frac{\pi}{6} sgn(\xi)} = |\xi|^{\frac 1 3} e^{i \frac{\pi}{6} sgn(\xi)} e^{- i \frac{\pi}{6} sgn(\xi)}  = |\xi|^{\frac 1 3}. 
\end{align}
This concludes the proof of the lemma. 
\end{proof}

We now need to establish the crucial structural identity used above. 
\begin{lemma} \label{prop:Fourier:mult}  We have the following structural identity for the symbol defined in \eqref{defn:M:major}:
\begin{align} \label{structural:1}
\mathcal{M}(\xi) = \frac{1}{2\pi ai(0)^2} e^{- i \frac{\pi}{6} sgn(\xi)}.
\end{align}

\end{lemma}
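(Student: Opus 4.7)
The identity is purely algebraic once one unpacks the definitions of $B_{\pm 1}$ in terms of $ai$. My plan is as follows. First, from $B_{\pm 1}(w) := ai(e^{\mp \frac{2}{3}\pi i} w)$ I compute the Dirichlet and Neumann traces at the origin: $B_{\pm 1}(0) = ai(0)$ and, by the chain rule, $B_{\pm 1}'(0) = e^{\mp \frac{2}{3}\pi i}\, ai'(0)$. Substituting into \eqref{na}, for $\xi \neq 0$ I obtain
\begin{equation*}
M_{-}(\xi) = \frac{B_{-\operatorname{sgn}(\xi)}'(0)}{B_{-\operatorname{sgn}(\xi)}(0)} = e^{\operatorname{sgn}(\xi)\frac{2}{3}\pi i}\, \frac{ai'(0)}{ai(0)},
\end{equation*}
and hence
\begin{equation*}
\mathcal{M}(\xi) = M_{+}(\xi) - M_{-}(\xi) = \frac{ai'(0)}{ai(0)} \bigl(1 - e^{\operatorname{sgn}(\xi)\frac{2}{3}\pi i}\bigr).
\end{equation*}

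Next, I extract the phase and modulus of $1 - e^{\pm \frac{2}{3}\pi i}$ explicitly. A direct trigonometric computation yields $1 - e^{\frac{2}{3}\pi i} = \frac{3}{2} - i\frac{\sqrt{3}}{2} = \sqrt{3}\, e^{-i\pi/6}$ and similarly $1 - e^{-\frac{2}{3}\pi i} = \sqrt{3}\, e^{i\pi/6}$, which together give $1 - e^{\operatorname{sgn}(\xi)\frac{2}{3}\pi i} = \sqrt{3}\, e^{-i\frac{\pi}{6}\operatorname{sgn}(\xi)}$. Inserting this into the previous display produces the correct phase factor $e^{-i\pi\operatorname{sgn}(\xi)/6}$ appearing in \eqref{structural:1}; this is precisely the ``hidden ellipticity'' mechanism highlighted in the introduction, since the explicitly directional factor $(i\xi)^{1/3}$ multiplied against an $\operatorname{sgn}(\xi)$-odd phase symmetrizes to $|\xi|^{1/3}$ at the boundary.

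Finally, to pin down the prefactor $\frac{1}{2\pi\, ai(0)^2}$, I invoke the classical identity for the Airy function at the origin, which follows from combining the explicit formulae $ai(0) = \frac{1}{3^{2/3}\Gamma(2/3)}$, $ai'(0) = \mp\frac{1}{3^{1/3}\Gamma(1/3)}$ with the Gamma reflection formula $\Gamma(1/3)\Gamma(2/3) = \frac{\pi}{\sin(\pi/3)} = \frac{2\pi}{\sqrt{3}}$, giving $\sqrt{3}\,ai(0)\,ai'(0) = \mp\frac{1}{2\pi}$, so that $\sqrt{3}\,\frac{ai'(0)}{ai(0)} = \mp\frac{1}{2\pi\, ai(0)^2}$. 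Combined with the previous step this delivers \eqref{structural:1}. The only potentially delicate point is tracking the overall sign, which depends entirely on the normalization convention for $ai$ fixed in Lemma~\ref{lemma:fund:solutions}; there is no analytic obstacle here, as every step is a finite-dimensional identity at $w = 0$, but care with sign conventions is where a careless argument could slip.
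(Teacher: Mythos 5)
Your argument is correct and takes a genuinely different route from the paper's. The paper collects $\mathcal{M}(\xi) = M_+(\xi) - M_-(\xi)$ into a single fraction, recognizes the numerator (using $B_{\pm1}(0)=ai(0)$) as, up to sign, the Wronskian $\mathcal{W}[ai, B_{-\operatorname{sgn}(\xi)}]$ evaluated at the origin, and then quotes the tabulated value of that Wronskian from Olver; the computation is short but opaque, since it outsources the key numerical identity. You instead evaluate everything from first principles: chain rule at $w=0$ to get $B_{\pm1}'(0)=e^{\mp\frac{2}{3}\pi i}ai'(0)$, elementary trigonometry to get $1-e^{\pm\frac{2}{3}\pi i}=\sqrt{3}\,e^{\mp i\pi/6}$, and the explicit special values $ai(0)=\frac{1}{3^{2/3}\Gamma(2/3)}$, $ai'(0)=-\frac{1}{3^{1/3}\Gamma(1/3)}$ together with the Gamma reflection formula to nail the prefactor $\sqrt{3}\,ai(0)\,ai'(0)=-\frac{1}{2\pi}$. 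This is more self-contained and, as a bonus, re-derives Olver's Wronskian from scratch. One genuine observation to make: your computation produces $\mathcal{M}(\xi)=-\frac{1}{2\pi\,ai(0)^2}e^{-i\frac{\pi}{6}\operatorname{sgn}(\xi)}$, with an overall minus sign; a direct check of the paper's numerator $B_{-\operatorname{sgn}(\xi)}(0)\,ai'(0)-ai(0)\,B_{-\operatorname{sgn}(\xi)}'(0)=ai(0)ai'(0)\bigl(1-e^{\operatorname{sgn}(\xi)\frac{2}{3}\pi i}\bigr)=-\frac{1}{2\pi}e^{-i\frac{\pi}{6}\operatorname{sgn}(\xi)}$ confirms your sign, so \eqref{structural:1} as stated in the paper appears to be off by a factor of $-1$. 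You were right to flag the sign as the one delicate spot; it is immaterial downstream, since the paper only uses the phase $e^{-i\frac{\pi}{6}\operatorname{sgn}(\xi)}$ to convert $(i\xi)^{1/3}$ into $|\xi|^{1/3}$ and the real constant is absorbed into $C_0$ in \eqref{lower:A23}, but it is a real, if harmless, discrepancy.
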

\begin{proof} We have the expression 
\begin{align} \n
\mathcal{M}(\xi) = & \Big( \frac{ai'(0)}{ai(0)} -\frac{B_{-sgn(\xi)}'(0)}{B_{-sgn(\xi)}(0)}  \Big) \\ \n
= & \frac{1}{ai(0) B_{-sgn(\xi)}(0)} \Big( B_{-sgn(\xi)}ai'(0) - ai(0) B_{-sgn(\xi)}'(0) \Big)\\ \n
= & \frac{1}{ai(0)^2} \mathcal{W}[ai, B_{-sgn(\xi)}]  \\ 
= & \frac{1}{ai(0)^2} \frac{1}{2\pi} e^{- i sgn(\xi) \frac{\pi}{6} }.
\end{align}
Above, we have used that $B_{-sgn(\xi)}(0) = ai(0)$ by definition, which enables us to pull out the $\xi$ independent factor of $ \frac{1}{ai(0) B_{-sgn(\xi)}(0)}$, and subsequently appeal to the explicit numerical computation for the associated Wronskians on P. 416 of \cite{Olver}.

\end{proof}

\subsection{Proof of Proposition \ref{prop:Airy}}

We now analyze the fractional Poisson problem \eqref{a:Green:1} to give a proof of Proposition \ref{prop:Airy}. 

\begin{proof}[Proof of Proposition \ref{prop:Airy}] The proof will proceed in several steps, which we explicitly delineate below. 

\vspace{2 mm}

\noindent \textit{Step 1: Bound on $\gamma^{(0)}_{Q}$: } First, we let $G_Q$ equal the right-hand side of \eqref{a:Green:1}, so that 
\begin{align} \label{a:Green:fr}
&(-\Delta_D)^{\frac16} \gamma_{Q}^{(0)} =  G_Q \text{ for }s \in (1, 1 + \bar{L}), \qquad \gamma_{Q, \mathrm{Ext}}^{(0)}|_{(1, 1+ \bar{L})^C} = 0. 
\end{align}
We now introduce rescalings which normalize our functions to $(0, 1)$:
\begin{align}
t := \frac{s-1}{\bar{L}}, \qquad \Gamma(t) := \gamma^{(0)}_{Q}(\bar{L}t), \qquad  H_{Q}(t) := G_{Q}(s). 
\end{align}
We define again the extension $\Gamma_{\mathrm{Ext}}(t) := \Gamma(t) 1_{t \in (0, 1)}$. In the normalized $t$ variable, \eqref{a:Green:fr} converts to 
\begin{align} \label{poisson:4}
&(-\Delta_D)^{\frac16} \Gamma =  \bar{L}^{\frac13} H_{Q}, \text{ for } t \in (0, 1), \qquad \Gamma_{\mathrm{Ext}}|_{(0, 1)^C} = 0. 
\end{align}
From standard estimates on the fractional Laplacian operator set on $(0, 1)$ (see for instance Theorem 1.1 in \cite{Gulsa} and the references therein, for instance \cite{Grubb}), we have 
\begin{align} \label{where:1}
\| \mathcal{F}^{-1} |\xi|^{\frac13} \widehat{\Gamma}_{\mathrm{Ext}}(\xi) \|_{L^2_t(\mathbb{R})} \lesssim \bar{L}^{\frac13} \| H_{Q} \|_{L^2_t(0, 1)}. 
\end{align}
We now rewrite the operator on the left-hand side via
\begin{align} \n
\mathcal{F}^{-1}\{ |\xi|^{\frac13} \widehat{\Gamma}_{\mathrm{Ext}}(\xi) \}(t) = & \int e^{i \xi t} \frac{|\xi|^{\frac13}}{\bar{L}} \widehat{\gamma}^{(0)}_{Q, \mathrm{Ext}}(\frac{\xi}{\bar{L}}) \ud \xi = \bar{L}^{- \frac23}  \int e^{i \frac{\xi}{\bar{L}} \bar{L}t} \Big(\frac{|\xi|}{\bar{L}} \Big)^{\frac13} \widehat{\gamma}^{(0)}_{Q, \mathrm{Ext}}(\frac{\xi}{\bar{L}}) \ud \xi \\
= & \bar{L}^{\frac13} \int e^{i \eta \bar{L}t} |\eta|^{\frac13} \widehat{\gamma}^{(0)}_{Q, \mathrm{Ext}}(\eta) \ud \eta = \bar{L}^{\frac13} \mathcal{F}^{-1} \{ |\xi|^{\frac13} \widehat{\gamma}^{(0)}_{Q, \mathrm{Ext}}(\xi) \}(\bar{L}t). 
\end{align}
Inserting back into \eqref{where:1}, we get
\begin{align}
\bar{L}^{\frac13} \| \mathcal{F}^{-1} \{ |\xi|^{\frac13} \widehat{\gamma}^{(0)}_{Q, \mathrm{Ext}}(\xi) \}(L\cdot)\|_{L^2_t(\mathbb{R})} \lesssim \bar{L}^{\frac13} \| H_{Q} \|_{L^2_t(0, 1)} = \bar{L}^{\frac13} \| G_{Q}(\bar{L}\cdot) \|_{L^2_t(0, 1)}
\end{align}
The factor of $\bar{L}^{\frac13}$ cancels from both sides. We now change variables $t \mapsto s$, which contributes a Jacobian of $\bar{L}^{\frac12}$ on both sides, which also cancels out. This results in the bound 
\begin{align} \n
&\| \mathcal{F}^{-1}  |\xi|^{\frac 1 3} \widehat{\gamma}^{(0)}_{Q, \mathrm{Ext}} \|_{L^2_s(\mathbb{R})} \\ \n
 \lesssim & \| (-\Delta_D)^{\frac 16} \gamma^{(0)}_{Q} \|_{L^{2}_s(1,1 +\bar{ L})} = \| G_{Q} \|_{L^2(1,1 + \bar{L})} \\ \label{lower:A23}
\lesssim & \sum_{\iota \in \pm} \| \mathcal{T}_{- \frac 1 3, \iota} [F_I 1_{1 \le s \le 1 + \bar{L}}]  \|_{L^2_s(1, 1+ \bar{L})}+ \sum_{\iota \in \pm} \| \mathcal{T}_{- \frac 1 3, \iota} [F_{\mathrm{Data}, \iota}[\Xi_{\mathrm{Left}}, \Xi_{\mathrm{Right}}]] \|_{L^2_s(1,1+ \bar{L})}.
\end{align} 
While this furnishes a bound on $\gamma_{Q}^{(0)}$ (and hence can also be used to produce an estimate on $\gamma_{\Omega}^{(0)}$), we do not record these bounds at this stage. Our main motivation is to obtain a bound on $\gamma_{\Omega}^{(1)}$. 

\vspace{2 mm}

\noindent \textit{Step 2: Bound on $\gamma_{Q}^{(1)}$: } We notice now through the expression \eqref{DN:top} (we could equally well use \eqref{DN:bot}), upon invoking \eqref{lower:A23} that we have 
\begin{align} \label{asiun:1}
\| \gamma_{Q}^{(1)} \|_{L^2_s(1, 1 + \bar{L})} \lesssim & \sum_{\iota \in \pm} \| \mathcal{T}_{- \frac 1 3, \iota} [F_I 1_{1 \le s \le 1 + \bar{L}}]  \|_{L^2_s(1, 1+ \bar{L})}+ \sum_{\iota \in \pm} \| \mathcal{T}_{- \frac 1 3, \iota} [F_{\mathrm{Data}, \iota}[\Xi_{\mathrm{Left}}, \Xi_{\mathrm{Right}}]] \|_{L^2_s(1,1+ \bar{L})}.
\end{align} 

\vspace{2 mm}

\noindent \textit{Step 3: Bound on $\gamma_{\Omega}^{(1)}$: } By differentiating \eqref{kew:plus} in $Z$, and evaluating at $Z = 0$, we have 
\begin{align}
\gamma^{(1)}_{Q}(s) = \gamma^{(1)}_{\Omega}(s) - \bar{L}^{-\frac13} \Xi_{\mathrm{Left}}(0) \chi(\frac{s-1}{\bar{L}})  - \bar{L}^{-\frac13}  \Xi_{\mathrm{Right}}(0) \chi(\frac{s - (1 + \bar{L})}{\bar{L}}).
\end{align}
From here, we easily obtain 
\begin{align} \n
\| \gamma^{(1)}_{\Omega} \|_{L^2_s(1, 1 + \bar{L})} \lesssim & \| \gamma^{(1)}_{Q} \|_{L^2_s(1, 1 + \bar{L})} + \bar{L}^{\frac16}(  \| \Xi_{\mathrm{Left}}, \Xi_{\mathrm{Right}} \|_{L^\infty} ) \\ \n
\lesssim & \sum_{\iota \in \pm} \| \mathcal{T}_{- \frac 1 3, \iota} [F_I 1_{1 \le s \le 1 + \bar{L}}]  \|_{L^2_s(1, 1+ \bar{L})}+ \sum_{\iota \in \pm} \| \mathcal{T}_{- \frac 1 3, \iota} [F_{\mathrm{Data}, \iota}[\Xi_{\mathrm{Left}}, \Xi_{\mathrm{Right}}]] \|_{L^2_s(1,1+ \bar{L})} \\ \label{guy:1}
& + \bar{L}^{\frac16}(  \| \Xi_{\mathrm{Left}}, \Xi_{\mathrm{Right}} \|_{L^\infty} ).
\end{align}

\vspace{2 mm}

\noindent \textit{Step 4: Evaluation of Source Term $F_I$: } We turn to the evaluation of the source term contribution from \eqref{guy:1}, namely $\sum_{\iota \in \pm} \| \mathcal{T}_{- \frac 1 3, \iota} [F_I 1_{1 \le s \le 1 + \bar{L}}]  \|_{L^2_s(1, 1+ \bar{L})}$. By symmetry, it suffices to consider $\iota = +$. Recall the definition \eqref{defn:T:op}, from which we have 
\begin{align} \n
\| \mathcal{T}_{- \frac 1 3, +} [F_I 1_{1 \le s \le 1 + \bar{L}}]  \|_{L^2_s(1, 1+ \bar{L})} = & \| \mathcal{R}_{(1, 1+\bar{L})} \mathcal{F}^{-1} \Big[ \mathcal{G}_{+}[ F_I 1_{1 \le s \le 1 + \bar{L}}](\xi) \Big] \|_{L^2_s(1, 1 + \bar{L})} \\ \n
\lesssim &  \|  \mathcal{F}^{-1} \Big[ \mathcal{G}_{+}[ F_I 1_{1 \le s \le 1 + \bar{L}}](\xi) \Big] \|_{L^2_s(\mathbb{R})} \\ \label{asinasin:2}
= & \| \mathcal{G}_{+}[ F_I 1_{1 \le s \le 1 + \bar{L}}](\xi)\|_{L^2_\xi(\mathbb{R})}.  
\end{align}
Above, we have used the trivial bound that $\mathcal{R}_{(1, 1 + \bar{L})}$ is a bounded operator, followed by Plancharel. We now recall the definition \eqref{G:top:defn} to bound
\begin{align} \n
&\| \mathcal{G}_{+}[ F_I 1_{1 \le s \le 1 + \bar{L}}](\xi)\|_{L^2_\xi(\mathbb{R})} \lesssim \|  \int_0^{\infty} ai \Big( (i \xi)^{\frac 1 3} Z' \Big) \mathcal{F}[F_I 1_{1 \le s \le 1 + \bar{L}}](\xi, Z') \ud Z' \|_{L^2_\xi(\mathbb{R})} \\ \n
\lesssim & \|  (\int_0^{\infty} (ai ( (i \xi)^{\frac 1 3} Z'))^2 (i\xi)^{\frac13} \ud Z' )^{\frac12} (\int_0^\infty (i\xi)^{-\frac13}  \mathcal{F}[F_I 1_{1 \le s \le 1 + \bar{L}}](\xi, Z')^2 \ud Z')^{\frac12} \|_{L^2_\xi(\mathbb{R})} \\ \label{iheap:1}
\lesssim & \| |\p_s|^{-\frac16} F_I 1_{1 \le s \le 1 + \bar{L}}  \|_{L^2_{sZ}} \lesssim  \| F_I 1_{1 \le s \le 1 + \bar{L}}  \|_{L^2_Z(L^{\frac32}_s)} \lesssim  \bar{L}^{\frac16} \| F_I \|_{L^2_{sZ}}. 
\end{align}
In the last line, we have used the one dimensional fractional Sobolev inequality (for a generic function $f(s)$), $\| |\p_s|^{-\frac16} f \|_{L^2(\mathbb{R})} \lesssim \| f \|_{L^{\frac32}(\mathbb{R})}$, coupled with the bound $\| f 1_{1 \le s \le 1 + \bar{L}} \|_{L^{\frac32}} \le \bar{L}^{\frac16} \| f 1_{1 \le s \le 1 + \bar{L}}\|_{L^2}$. 

\vspace{2 mm}

\noindent \textit{Step 5: Evaluation of Data Term $F_{\mathrm{Data}, \pm}$: } We now turn to the evaluation of the data contributions from \eqref{guy:1}, namely $\sum_{\iota \in \pm} \| \mathcal{T}_{- \frac 1 3, \iota} [F_{\mathrm{Data}, \iota}[\Xi_{\mathrm{Left}}, \Xi_{\mathrm{Right}}]] \|_{L^2_s(1,1+ \bar{L})}$. Again by symmetry, we treat the case $\iota = +$. By repeating \eqref{asinasin:2}, we need to estimate 
\begin{align} \n
&\| \mathcal{T}_{- \frac 1 3, +} [F_{\mathrm{Data}, +}[\Xi_{\mathrm{Left}}, \Xi_{\mathrm{Right}}]] \|_{L^2_s(1,1+ \bar{L})} \lesssim  \| \mathcal{G}_{+}[ F_{\mathrm{Data}, +}[\Xi_{\mathrm{Left}}, \Xi_{\mathrm{Right}}] ](\xi)\|_{L^2_\xi(\mathbb{R})} \\ \n
\lesssim & \| \mathcal{G}_{+}[ 1_{s < 1} \frac{Z}{\bar{L}} \chi'(\frac{s - 1}{\bar{L}})\Xi_{\mathrm{Left}}(\frac{Z}{\bar{L}^{\frac13}})  ](\xi)\|_{L^2_\xi(\mathbb{R})} + \| \mathcal{G}_{+}[ 1_{s < 1} \bar{L}^{-\frac23} \chi(\frac{s-1}{\bar{L}})\Xi''_{\mathrm{Left}}(\frac{Z}{\bar{L}^{\frac13}})  ](\xi)\|_{L^2_\xi(\mathbb{R})} \\ \n
& + \| \mathcal{G}_{+}[\Xi_{\mathrm{Left}}(0)( - \frac{Z}{\bar{L}} \chi'(\frac{s-1}{\bar{L}}) \chi(\frac{Z}{\bar{L}^{\frac13}}) + \frac{1}{\bar{L}^{\frac23}} \chi(\frac{s-1}{\bar{L}}) \chi''(\frac{Z}{\bar{L}^{\frac23}}) )  ](\xi)\|_{L^2_\xi(\mathbb{R})} \\ \n
& + \| \mathcal{G}_{+}[ \Xi_{\mathrm{Right}}(0)( - \frac{Z}{\bar{L}} \chi'(\frac{s - (1 + \bar{L})}{\bar{L}}) \chi(\frac{Z}{\bar{L}^{\frac13}}) + \frac{1}{\bar{L}^{\frac23}} \chi(\frac{s - (1 + \bar{L})}{\bar{L}}) \chi''(\frac{Z}{\bar{L}^{\frac23}}) ) ](\xi)\|_{L^2_\xi(\mathbb{R})} \\ \label{spurt:1}
=: &\sum_{i = 1}^4 D_i. 
\end{align}
where above we have recalled the definition \eqref{F:plus}. By symmetry $D_3$ and $D_4$ are treated the same. We have upon essentially repeating the bound on the operator $G_+$ from \eqref{iheap:1}
\begin{align} \label{spurt:2} 
D_1 \lesssim &L^{-\frac23} \bar{L}^{\frac16} \| \chi'(\frac{s-1}{\bar{L}}) \frac{Z}{\bar{L}^{\frac13}} \Xi_{\mathrm{Left}}(\frac{Z}{\bar{L}^{\frac13}}) \|_{L^2_{sZ}} \lesssim \| \frac{Z}{\bar{L}^{\frac13}} \Xi_{\mathrm{Left}}(\frac{Z}{\bar{L}^{\frac13}}) \|_{L^2_Z} \lesssim \bar{L}^{\frac16} \| \rho \Xi_{\mathrm{Left}} \|_{L^2_\rho}, \\ \label{spurt:3}
D_2 \lesssim & L^{-\frac23} \bar{L}^{\frac16} \| \chi(\frac{s-1}{\bar{L}})  \Xi''_{\mathrm{Left}}(\frac{Z}{\bar{L}^{\frac13}}) \|_{L^2_{sZ}} \lesssim \bar{L}^{\frac16} \| \Xi''_{\mathrm{Left}} \|_{L^2_\rho}, \\ \label{spurt:4}
D_3 \lesssim & \bar{L}^{\frac16} \| \Xi_{\mathrm{Left}} \|_{L^\infty}.
\end{align}
Finally, to conclude the proof, we simply use that $\| \Xi_{\mathrm{Left}} \|_{L^\infty} \lesssim \|  \Xi_{\mathrm{Left}} \|_{H^1_\rho}$. The proposition is proven. 
\end{proof}

\section{Analysis of $\mathcal{L}_{\text{Crocco}}[S_I, \Xi_{\text{Left}}, \Xi_{\text{Right}}]$} \label{Section:Interior}

In this section, we are interested in the abstract problem, \eqref{abs:3}. Again, as we are performing \textit{a-priori} estimates, we take as input the trace $\gamma^{(1)}_{\Omega}$ (which of course cannot be prescribed, they are determined through the Airy equation). However the point-of-view in this section is to have estimates on the bulk behavior of $\Omega$ \textit{in terms of the} trace. Moreover, since our eventual application of these bounds will be to $\Omega_I$ defined in \eqref{defn:Chi:I}, we may assume $\Omega_I$ has compact support in $Z$ (hence, we will freely use bounds such as $|Z| \lesssim |Z|^{1/2}$). 

Define the following norm: 
\begin{align} \n
\| \Omega_I \|_{\text{Crocco}}^2 := &\sup_{1 \le s \le \bar{L}} \Big( \int_{\mathbb{R}} \Omega_I^2 \ud Z + \int_{\mathbb{R}} Z^2 |\p_Z \Omega_I|^2 \ud Z + \int_{\mathbb{R}} Z^4 \p_Z^2 \Omega_{I}^2 \ud Z  \Big) \\ \label{def:Crocco:norm}
& + \int_1^{1 + \bar{L}} \int_{\mathbb{R}} |Z| |\p_Z^2 \Omega_{I}|^2 \ud Z \ud s + \int_1^{1 + \bar{L}} \int_{\mathbb{R}} |Z|^3 |\p_Z^3 \Omega_{I}|^2 \ud Z \ud s.
\end{align}
The main proposition of this section concerns an \textit{a-priori} bound on the $\| \cdot \|_{\text{Crocco}}$ norm introduced above, and is formulated as follows: 
\begin{proposition} \label{prop:Crocco}Assume $\Omega_I$ satisfies the equation \eqref{abs:3}, and suppose  
\begin{align} \label{fte:sup}
\text{supp}(\Omega_I) \subset \{ (s, Z) \in (1, 1+ \bar{L}) \times (-10,10) \}. 
\end{align}
Assume the coefficients $\tau_0, \tau_1, \tau_2$ satisfy the following estimates: 
\begin{align} \label{tau:b:1}
\sum_{i =0}^2 \sum_{j = 0}^2 \| \p_Z^j \tau_i \|_{L^\infty(-2\delta < Z < 2\delta)} \lesssim & 1, \\ \label{tau:b:2}
|\tau_2| \ge & \frac{1}{4}, \qquad -2\delta < Z < 2\delta,
\end{align}
where $\delta$ is defined above \eqref{defn:Chi:I}. Assume the source term satisfies $\sum_{i = 0}^1 \| Z^i \p_Z^i S_I \|_{L^2_{sZ}}^2 < \infty$. Then the following estimates are valid for $\bar{L} << 1$: 
\begin{align} \label{pro:cro:1}
\| \Omega_I \|_{\mathrm{Crocco}}^2 \lesssim &  \| \gamma_{\Omega}^{(1)} \|_{L^2_s}^2 + \sum_{i = 0}^1 \| Z^i \p_Z^i S_I \|_{L^2_{sZ}}^2  +  \bar{L}^{\frac13} \sum_{i = 1}^2 \sum_{\iota \in \{\mathrm{Left, Right}\}} \|\rho^i \p_\rho^i \Xi_{\iota}\|_{L^2_\rho}^2.
\end{align}
\end{proposition}

\begin{remark} The coefficients $\tau_0, \tau_1, \tau_2$ always accompany a factor of $\Omega_I$ itself, and therefore only need to be defined (and controlled) on the support of $\Omega_I$. Hence, we state \eqref{tau:b:1} -- \eqref{tau:b:2} on the interval $-2\delta < Z < 2\delta$. 
\end{remark}


\subsection{Preliminary Interpolation Bounds}

We give here some interpolation bounds which are used above to close the estimate on $\Omega_I$. 
\begin{lemma} The following interpolation estimates are valid: 
\begin{align}  \label{ode:2}
\| \p_Z \Omega_I \|_{L^2_{sZ}}^2 \lesssim & \bar{L}^{\frac13} \Big( \sup_{1 \le s \le 1 + \bar{L}} \int_{\mathbb{R}_+} Z^2 |\p_Z \Omega_I|^2 \ud Z + \int_{1}^{1 + \bar{L}}\int_{\mathbb{R}_+} Z |\p_Z^2 \Omega_{I}|^2 \ud Z \ud s \Big), \\ \label{ode:3}
\|Z\p_Z^2\Omega_{I} \|_{L^2_{sZ}}^2 \lesssim & \bar{L}^{\frac13} \Big( \sup_{1 \le s \le 1 + \bar{L}} \int_{\mathbb{R}_+} Z^4 |\p_Z^2 \Omega_{I}|^2 \ud Z + \int_{1}^{1 + \bar{L}}\int_{\mathbb{R}_+} Z^3 |\p_Z^3 \Omega_{I}|^2 \ud Z \ud s \Big).
\end{align}
\end{lemma}
\begin{proof} We fix a function $f(s, Z)$ that decays as $Z \rightarrow \infty$. We also fix an integer $n$ (to be taken either $n = 0$ or $n = 1$ for our applications). To simplify calculations involving cutoff functions, we take the convention for this proof that, given a small parameter $\delta_L > 0$, the function $\chi(Z \le \delta_{L}) := \chi_{\delta_{L}}(Z)$ and $\chi(Z \ge \delta_{L}) := 1 - \chi_{\delta_{L}}(Z)$, where $\chi_{\delta_{L}}$ is defined beneath \eqref{chidef}.

Fix a number $\delta_L > 0$ to be chosen later. We have 
\begin{align*}
\| Z^{n} f \|_{L^2_{sZ}}^2 \le & \| Z^{n} f \chi(Z \le \delta_L)\|_{L^2_{sZ}}^2 + \| Z^{n} f \chi(Z \ge \delta_L)\|_{L^2_{sZ}}^2 \\
\le &  \| Z^{n} f \chi(Z \le \delta_L)\|_{L^2_{sZ}}^2 + \frac{C}{\delta^2} \| Z^{n+1} f \chi(Z \ge \delta_L)\|_{L^2_{sZ}}^2 \\
\le &  \| Z^{n} f \chi(Z \le \delta_L)\|_{L^2_{sZ}}^2 + \frac{C \bar{L}}{\delta_L^2} \| Z^{n+1} f\|_{L^\infty_{s}L^2_Z}^2.
\end{align*} 
For the near-field term, we have 
\begin{align} \n
\| Z^{n} f \chi(Z \le \delta_L)\|_{L^2_{sZ}}^2 = & \int_1^{1 + \bar{L}} \int_{\mathbb{R}_+} \frac{\p_Z \{ Z^{2n+1}\}}{2n+1} f^2 \chi(Z \le \delta_L)^2 \ud Z \ud s \\ \n
= & - \int_1^{1 + \bar{L}} \int_{\mathbb{R}_+} \frac{  Z^{2n+1}}{2n+1} 2 f f_Z \chi(Z \le \delta_L)^2 \ud Z \ud s \\ \label{sam:3}
& - \frac{1}{\delta_L} \int_1^{1 + \bar{L}} \int_{\mathbb{R}_+} \frac{  Z^{2n+1}}{2n+1}2 f^2 \chi(Z \le \delta_L) \chi'(Z \le \delta_L) \ud Z \ud s = A_{1} + A_2.
\end{align}
The term $A_2$ is analogous to the far-field term, and is estimated above by $A_2 \le \frac{C \bar{L}}{\delta_L^2} \| Z^{n+1} f \|_{L^\infty_{s}L^2_Z}^2$. For the term $A_1$, we use Cauchy-Schwartz and then Young's inequality for products to estimate via 
\begin{align*}
|A_1| \le & C \| Z^n f \chi(Z \le \delta_L) \|_{L^2_{sZ}} \| Z^{n+1} f_Z \chi(Z \le \delta_L) \|_{L^2_{sZ}}\\
 \le& \frac12 \| Z^n f \chi(Z \le \delta_L) \|_{L^2_{sZ}}^2 + C \| Z^{n+1} f_Z \chi(Z \le \delta_L) \|_{L^2_{sZ}}^2. 
\end{align*}
We again use the $1/2$ factor to absorb into the left-hand side of \eqref{sam:3}. This produces 
\begin{align*}
\| Z^{n} f \|_{L^2_{sZ}}^2 \le &\frac{C \bar{L}}{\delta_L^2} \| Z^{n+1} f\|_{L^\infty_{s}L^2_Z}^2 + C \| Z^{n+1} f_Z \chi(Z \le \delta_L) \|_{L^2_{sZ}}^2 \\
\lesssim & \frac{\bar{L}}{\delta_L^2} \| Z^{n+1} f \|_{L^\infty_{s}L^2_Z}^2 +  \delta_L \| Z^{n+\frac12} f_Z  \|_{L^2_{sZ}}^2.
\end{align*}
We now choose $\delta_L = \bar{L}^{\frac13}$ and either $f = \p_Z \Omega_I, n = 0$ for \eqref{ode:2} or $f = \p_Z^2 \Omega_{I}, n = 1$ for \eqref{ode:3}. The claim is proven. 
\end{proof}

We now consolidate the interpolation bounds above in a concise manner to be used repeatedly to bound linear contributions appearing on the right-hand sides of the forthcoming energy estimates.
\begin{corollary} The following estimates are valid: 
\begin{align} \label{west:1}
\| \Omega_I \|_{L^2_{sZ}}^2 \lesssim & \bar{L} \| \Omega_I \|_{\mathrm{Crocco}}^2, \\ \label{west:2}
\|\p_Z \Omega_I \|_{L^2_{sZ}}^2 + \| Z\p_Z^2 \Omega_{I} \|_{L^2_{sZ}}^2 \lesssim & \bar{L}^{\frac13} \| \Omega_I \|_{\mathrm{Crocco}}^2.
\end{align}
\end{corollary}

\subsection{Crocco Energy Estimates}

The forthcoming lemmas will imply the main proposition, Proposition \ref{prop:Crocco}.

\begin{lemma} \label{klem1}Under the assumptions of Proposition \ref{prop:Crocco}, the following estimates are valid on $Z > 0$:
\begin{align} \n
&\sup_{1 \le s \le 1 + \bar{L}} \int_{\mathbb{R}_+} (|\Omega_I|^2 +Z^2 |\p_Z \Omega_I|^2) \ud Z + \int_{1}^{1 + \bar{L}}\int_{\mathbb{R}_+} Z |\p_Z^2 \Omega_{I}|^2 \ud Z \ud s \\ \label{croc:en:2}
& \qquad \qquad \lesssim \bar{L}^{\frac13} \| \Omega_I \|_{\mathrm{Crocco}}^2 + \| \gamma_{\Omega}^{(1)} \|_{L^2_s}^2 + \| S_I \|_{L^2_{sZ}}^2 +  \bar{L}^{\frac13} \sum_{i = 0}^1 \|\rho^i \p_\rho^i \Xi_{\mathrm{Left}} \|_{L^2_\rho}^2,
\end{align}
and a completely analogous set of bounds are valid on $Z < 0$ (with $Z$ weights above being replaced by $-Z$).
\end{lemma}
\begin{proof} We take the inner-product of \eqref{abs:3} with the quantity $-\p_Z^2 \{Z \Omega_I\}$. This produces the identity 
\begin{align*}
 \int_{\mathbb{R}_+} S \p_Z^2 \{ Z \Omega_I \} \ud Z = &- \int_{\mathbb{R}_+} Z \p_s \Omega_I \p_Z^2 \{Z \Omega_I\} \ud Z + \int_{\mathbb{R}_+} (1 + \tau_2) Z |\p_Z^2 \Omega_{I}|^2 \ud Z \\
 &+  \int_{\mathbb{R}_+} (1 + \tau_2)  \p_Z^2 \Omega_{I} \p_Z \Omega_{I} \ud Z  + \int_{\mathbb{R}_+} \tau_0 \Omega \p_Z^2 \{ Z \Omega_I \} \ud Z \\
 &+ \int_{\mathbb{R}_+} \tau_1 \p_Z \Omega_I \p_Z^2 \{ Z \Omega_I \} \ud Z \\
 = &\frac{\p_s}{2}\int_{\mathbb{R}_+} |\p_Z \{Z \Omega_I\}|^2 \ud Z + \int_{\mathbb{R}_+} (1 + \tau_2) Z |\p_Z^2 \Omega_{I}|^2 \ud Z \\
 &- \frac12  \int_{\mathbb{R}_+} \p_Z \tau_2  |\p_Z \Omega_{I}|^2 \ud Z  + \int_{\mathbb{R}_+} \tau_0 \Omega \p_Z^2 \{ Z \Omega_I \} \ud Z \\
 &+ \int_{\mathbb{R}_+} \tau_1 \p_Z \Omega_I \p_Z^2 \{ Z \Omega_I \} \ud Z - \frac12 (1 + \tau_2) |\gamma_{\Omega}^{(1)}(s)|^2,
\end{align*}
where we recall $\gamma_{\Omega}^{(1)}(s) = \p_Z \Omega_I(s, 0)$ by \eqref{defngammaomegaj}. Integration over $s \in (1, s_\ast)$, taking the supremum over $s_\ast$, and using that $\tau_2 \ge -\frac14$ yields the inequality 
\begin{align} \n
&\sup_{1 \le s \le 1 + \bar{L}} \int_{\mathbb{R}_+} |\p_Z \{Z \Omega_I\}|^2 \ud Z + \int_{1}^{1 + \bar{L}} \int_{\mathbb{R}_+}  Z |\p_Z^2 \Omega_{I}|^2 \ud Z \ud s \\ \n
\lesssim & \int_{\mathbb{R}_+} |\p_Z \{Z \Xi_{\text{Left}}(\frac{Z}{\bar{L}^{\frac13}})\}|^2 \ud Z + \frac12  \int_{1}^{1 + \bar{L}} \int_{\mathbb{R}_+} | \p_Z \tau_2 | |\p_Z \Omega_{I}|^2 \ud Z \ud s \\ \n
&+| \int_{1}^{1 + \bar{L}} \int_{\mathbb{R}_+} \tau_0 \Omega_I \p_Z^2 \{ Z \Omega_I\} \ud Z \ud s| +| \int_{1}^{1 + \bar{L}} \int_{\mathbb{R}_+} \tau_1 \p_Z \Omega_I \p_Z^2 \{ Z \Omega_I \} \ud Z \ud s| \\ \n
&+ \frac12 \int_{1}^{1 + \bar{L}} |1 + \tau_2| |\gamma_{\Omega}^{(1)}(s)|^2 \ud s + | \int_1^{1 + \bar{L}} \int_{\mathbb{R}_+} S_I \p_Z^2 \{ Z \Omega_I \} \ud Z \ud s|\\ \label{bo:1}
= & \sum_{i = 1}^{6} \text{Err}_i.
\end{align}
As a matter of convention, we include the universal constant appearing on the right-hand side of the above inequality as part of the definitions of $\text{Err}_i$. 

First of all, we note that the quantity appearing on the left-hand side can be bounded below using the classical Hardy inequality, namely $\| \frac{f}{Z} \|_{L^2_Z} \lesssim \| \p_Z f \|_{L^2_Z}$ for functions $f|_{Z = 0}= 0$. In this case, $f = Z \Omega_I$ clearly vanishes at $Z = 0$, and therefore, 
\begin{align}
&\| \Omega_I \|_{L^2_Z} = \| \frac{Z\Omega_I}{Z} \|_{L^2_Z} \lesssim \| \p_Z \{ Z \Omega_I \} \|_{L^2_Z}, \\
&\| Z \p_Z \Omega_I \|_{L^2_Z} =  \| \p_Z \{Z \Omega_I\} - \Omega_I \|_{L^2_Z} \le \| \p_Z \{Z \Omega_I\} \|_{L^2_Z} + \| \Omega_I \|_{L^2_Z} \lesssim \| \p_Z \{Z \Omega_I\} \|_{L^2_Z}.
\end{align}
 Next, it is clear that $|\text{Err}_5| \lesssim \| 1 + \tau_2 \|_{L^\infty} \| \gamma_{\Omega}^{(1)} \|_{L^2_s}^2$. The bulk terms can be estimated as follows:
\begin{align*}
\sum_{i = 2}^4 |\text{Err}_i| \le & C_\lambda (1 + \| \p_Z \tau_2 \|_{L^\infty}^2 + \| \tau_0 \|_{L^\infty}^2 + \| \tau_1 \|_{L^\infty}^2) (\| \Omega_I \|_{L^2_{sZ}}^2 + \| \p_Z \Omega_I \|_{L^2_{sZ}}^2 ) + \lambda \| Z^{\frac12} \p_Z^2 \Omega_I \|_{L^2_{sZ}}^2 \\
\le & C_\lambda \bar{L}^{\frac13} \| \Omega_I \|_{\text{Crocco}}^2 + \lambda \| Z^{\frac12} \p_Z^2 \Omega_I \|_{L^2_{sZ}}^2,
\end{align*}
for any $0 < \lambda << 1$, where we have used the Young's inequality for products as well as \eqref{west:2}.  For the error term $\text{Err}_6$, we estimate as follows:
\begin{align*}
|\text{Err}_6| \lesssim \| S_I \|_{L^2_{sZ}} ( \| Z \p_Z^2 \Omega_{I} \|_{L^2_{sZ}}  + \|\p_Z \Omega_I \|_{L^2_{sZ}}) \le  C_\lambda \| S_I \|_{L^2_{sZ}}^2 + \lambda \| Z^{\frac12} \p_Z^2 \Omega_{I} \|_{L^2_{sZ}}^2 + C \bar{L}^{\frac13} \| \Omega_I \|_{\text{Crocco}}^2. 
\end{align*}
We therefore choose $\lambda = \frac{1}{200}$ so that $2\lambda \| Z^{\frac12} \p_Z^2 \Omega_I \|_{L^2_{sZ}}^2 \le \frac{1}{100} \| Z^{\frac12} \p_Z^2 \Omega_I \|_{L^2_{sZ}}$ and these contributions can therefore be absorbed to the left-hand side of \eqref{bo:1}. 

Finally, we estimate the initial data, 
\begin{align} \n
|\text{Err}_1| \lesssim & \bar{L}^{\frac13} (\| \Xi_{\text{Left}} \|_{L^2_\rho}^2 + \| \rho \p_{\rho} \Xi_{\text{Left}} \|_{L^2_\rho}^2).
\end{align}
The lemma is proven. 
\end{proof}

\begin{lemma} \label{klem2}Under the assumptions of Proposition \ref{prop:Crocco}, the following estimates are valid on $Z > 0$:
\begin{align} \n
&\sup_{1 \le s \le 1 + \bar{L}} \int_{\mathbb{R}_+} Z^4 |\p_Z^2 \Omega_{I}|^2 \ud Z + \int_{1}^{1 + \bar{L}}\int_{\mathbb{R}_+} Z^3 |\p_Z^3 \Omega_{I}|^2 \ud Z \ud s \\ \n
  \lesssim & \bar{L}^{\frac13} \| \Omega_I \|_{\mathrm{Crocco}}^2 +(\| Z \p_Z \Omega_I \|_{L^\infty_s L^2_Z}^2 + \| Z^{\frac12} \p_Z^2 \Omega_{I} \|_{L^2_{sZ}}^2) +  \| \gamma_{\Omega}^{(1)} \|_{L^2_s}^2 \\ \label{croc:en:3}
&  + \sum_{i = 0}^1 \| Z^i \p_Z^i S_I \|_{L^2_{sZ}}^2  +  \bar{L}^{\frac13} \sum_{i = 1}^2 \|\rho^i \p_\rho^i \Xi_{\mathrm{Left}} \|_{L^2_\rho}^2,
\end{align}
and a completely analogous set of bounds are valid on $Z < 0$ (with $Z$ weights above being replaced by $-Z$).
\end{lemma}
\begin{proof}We first of all differentiate twice \eqref{abs:3} in $Z$ to obtain 
\begin{align} \n
\p_s (Z \Omega_I)_{ZZ} - (1 + \tau_2) \p_Z^4 \Omega_I = & \p_Z^2 S_I + (\tau_1 + 2 \p_Z \tau_2) \p_Z^3 \Omega_I + (\tau_0 + 2 \p_Z \tau_1 + \p_Z^2 \tau_2) \p_Z^2 \Omega_I \\
& + (2 \p_Z \tau_0 + \p_Z^2 \tau_1) \p_Z \Omega_I + \p_Z^2 \tau_0 \Omega_I. 
\end{align}
We subsequently multiply by $Z^2 \p_Z^2\{Z \Omega_I\}$ to both sides of the above equation and integrate over $Z \in \mathbb{R}_+$. This produces the identity 
\begin{align} \n
&\frac{\p_s}{2} \int_{\mathbb{R}_+} Z^2 |\p_Z^2 \{ Z \Omega_I\}|^2 \ud Z - \underbrace{ \int_{\mathbb{R}_+} (1 + \tau_2) \p_Z^4 \Omega_I Z^2 \p_Z^2\{Z \Omega_I\} \ud Z}_{:= -D_1} \\ \n
= & \int_{\mathbb{R}_+} \p_Z^2 S Z^2 \p_Z^2\{Z \Omega_I\} \ud Z + \int_{\mathbb{R}_+}  (\tau_1 + 2 \p_Z \tau_2) \p_Z^3 \Omega_I Z^2 \p_Z^2\{Z \Omega_I\} \ud Z \\ \n
& + \int_{\mathbb{R}_+} (\tau_0 + 2 \p_Z \tau_1 + \p_Z^2 \tau_2) \p_Z^2 \Omega_I Z^2 \p_Z^2\{Z \Omega\} \ud Z + \int_{\mathbb{R}_+}(2 \p_Z \tau_0 + \p_Z^2 \tau_1) \p_Z \Omega_I Z^2 \p_Z^2\{Z \Omega_I\} \ud Z \\ \label{yioo:0}
& + \int_{\mathbb{R}_+}  \p_Z^2 \tau_0 \Omega Z^2 \p_Z^2\{Z \Omega_I\} \ud Z.
\end{align}
To extract the positive diffusive term from $D_1$, we need to perform a long integration by parts which we isolate here: 
\begin{align} \n
D_1 = & - \int_{\mathbb{R}_+} (1 + \tau_2) Z^3 \p_Z^4 \Omega_I \p_Z^2 \Omega_I \ud Z - \int_{\mathbb{R}_+} 2 (1 + \tau_2) Z^2 \p_Z^4 \Omega_I \p_Z \Omega_I \ud Z \\ \n
= & \int_{\mathbb{R}_+} (1 + \tau_2) Z^3 |\p_Z^3 \Omega_I|^2 \ud Z + \int_{\mathbb{R}_+} \p_Z \tau_2 Z^3 \p_Z^3 \Omega_I \p_Z^2 \Omega_I \ud Z + \int_{\mathbb{R}_+}3(1+ \tau_2) Z^2 \p_Z^3 \Omega_I \p_Z^2 \Omega_I \ud Z \\ \n
& + \int_{\mathbb{R}_+} 2\p_Z \tau_2 Z^2 \p_Z^3 \Omega_I \p_Z \Omega_I \ud Z  + \int_{\mathbb{R}_+}2(1+\tau_2) Z^2 \p_Z^3 \Omega_I \p_Z^2 \Omega_I \ud Z+ \int_{\mathbb{R}_+}4(1+\tau_2) Z \p_Z^3 \Omega_I \p_Z \Omega_I \ud Z \\ \n
= & \int_{\mathbb{R}_+} (1 + \tau_2) Z^3 |\p_Z^3 \Omega_I|^2 \ud Z + \int_{\mathbb{R}_+} \p_Z \tau_2 Z^3 \p_Z^3 \Omega_I \p_Z^2 \Omega_I \ud Z + \int_{\mathbb{R}_+}3(1+ \tau_2) Z^2 \p_Z^3 \Omega_I \p_Z^2 \Omega_I \ud Z \\ \n
& + \int_{\mathbb{R}_+} 2\p_Z \tau_2 Z^2 \p_Z^3 \Omega_I \p_Z \Omega_I \ud Z + \int_{\mathbb{R}_+}2(1+\tau_2) Z^2 \p_Z^3 \Omega_I \p_Z^2 \Omega_I \ud Z - \int_{\mathbb{R}_+}4(1+\tau_2) Z |\p_Z^2 \Omega_I|^2 \ud Z \\ \n
& - \int_{\mathbb{R}_+}4\p_Z \tau_2 Z \p_Z^2 \Omega_I \p_Z \Omega_I \ud Z- \int_{\mathbb{R}_+}4(1+ \tau_2)  \p_Z^2 \Omega_I \p_Z \Omega_I \ud Z \\ \n
= & \int_{\mathbb{R}_+} (1 + \tau_2) Z^3 |\p_Z^3 \Omega_I|^2 \ud Z + \int_{\mathbb{R}_+} \p_Z \tau_2 Z^3 \p_Z^3 \Omega_I \p_Z^2 \Omega_I \ud Z + \int_{\mathbb{R}_+}3(1+ \tau_2) Z^2 \p_Z^3 \Omega_I \p_Z^2 \Omega_I \ud Z \\ \n
& + \int_{\mathbb{R}_+} 2\p_Z \tau_2 Z^2 \p_Z^3 \Omega_I \p_Z \Omega \ud Z + \int_{\mathbb{R}_+}2(1+\tau_2) Z^2 \p_Z^3 \Omega_I \p_Z^2 \Omega_I \ud Z - \int_{\mathbb{R}_+}4(1+\tau_2) Z |\p_Z^2 \Omega_I|^2 \ud Z \\ \label{yioo:1}
& - \int_{\mathbb{R}_+}4\p_Z \tau_2 Z \p_Z^2 \Omega_I \p_Z \Omega_I \ud Z+  \int_{\mathbb{R}_+}2 \p_Z \tau_2 | \p_Z \Omega_I|^2 \ud Z + 2(1 + \tau_2) |\gamma_\Omega^{(1)}(s)|^2.
\end{align}
We next integrate by parts the source term from \eqref{yioo:0} to obtain 
\begin{align} \label{yioo:3}
 \int_{\mathbb{R}_+} \p_Z^2 S_I Z^2 \p_Z^2\{Z \Omega_I\} \ud Z = -  \int_{\mathbb{R}_+} \p_Z S_I Z^2 \p_Z^3\{Z \Omega_I\} \ud Z - 2 \int_{\mathbb{R}_+} \p_Z S_I Z \p_Z^2\{Z \Omega_I\} \ud Z.
\end{align}
We now pair \eqref{yioo:1} and \eqref{yioo:3} with \eqref{yioo:0}, integrate over $s \in (1, s_\ast)$ and take supremum over $s_\ast$ to obtain 
\begin{align} \n
&\sup_{1 \le s \le 1 + \bar{L}} \int_{\mathbb{R}_+} Z^2 |\p_Z^2 \{ Z \Omega_I\}|^2 \ud Z + \int_{1}^{1 + \bar{L}} \int_{\mathbb{R}_+}  Z^3 |\p_Z^3 \Omega_I|^2 \ud Z \\ \n
\lesssim & \int_{\mathbb{R}_+} Z^2 |\p_Z^2 \{ Z \Xi_{\text{Left}}(\frac{Z}{\bar{L}^{\frac13}})\}|^2 \ud Z + | \int_1^{1 + \bar{L}} \int_{\mathbb{R}_+} \p_Z \tau_2 Z^3 \p_Z^3 \Omega_I \p_Z^2 \Omega_I \ud Z \ud s| \\ \n
&+ |\int_1^{1 + \bar{L}} \int_{\mathbb{R}_+}(1+ \tau_2) Z^2 \p_Z^3 \Omega_I \p_Z^2 \Omega_I \ud Z \ud s|  + |\int_1^{1 + \bar{L}} \int_{\mathbb{R}_+} 2\p_Z \tau_2 Z^2 \p_Z^3 \Omega_I \p_Z \Omega_I \ud Z \ud s| \\ \n
&+ | \int_1^{1 + \bar{L}} \int_{\mathbb{R}_+}4(1+\tau_2) Z |\p_Z^2 \Omega_I|^2 \ud Z \ud s|  +| \int_1^{1 + \bar{L}}  \int_{\mathbb{R}_+}4\p_Z \tau_2 Z \p_Z^2 \Omega_I \p_Z \Omega_I \ud Z \ud s| \\ \n
&+ |\int_1^{1 + \bar{L}} \int_{\mathbb{R}_+}2 \p_Z \tau_2 | \p_Z \Omega_I|^2 \ud Z \ud s| + \| 1 + \tau_2\|_{L^\infty} \int_1^{1 + \bar{L}} |\gamma_\Omega^{(1)}(s)|^2 \ud s| \\ \n
& + |\int_1^{1 + \bar{L}} \int_{\mathbb{R}_+} \p_Z S_I Z^2 \p_Z^3\{Z \Omega_I\} \ud Z \ud s| + |\int_1^{1 + \bar{L}} \int_{\mathbb{R}_+} \p_Z S_I Z \p_Z^2\{Z \Omega_I\} \ud Z \ud s| \\ \n
&+  | \int_1^{1 + \bar{L}} \int_{\mathbb{R}_+}  (\tau_1 + 2 \p_Z \tau_2) \p_Z^3 \Omega_I Z^2 \p_Z^2\{Z \Omega_I\} \ud Z \ud s|  \\ \n
&+| \int_1^{1 + \bar{L}} \int_{\mathbb{R}_+} (\tau_0 + 2 \p_Z \tau_1 + \p_Z^2 \tau_2) \p_Z^2 \Omega_I Z^2 \p_Z^2\{Z \Omega_I\} \ud Z \ud s| \\ \n
& +|\int_1^{1 + \bar{L}} \int_{\mathbb{R}_+}(2 \p_Z \tau_0 + \p_Z^2 \tau_1) \p_Z \Omega_I Z^2 \p_Z^2\{Z \Omega_I\} \ud Z \ud s| +|\int_1^{1 + \bar{L}} \int_{\mathbb{R}_+}  \p_Z^2 \tau_0 \Omega_I Z^2 \p_Z^2\{Z \Omega_I\} \ud Z \ud s| \\ \label{yioo:4}
=: & \sum_{i = 1}^{14} \text{Err}_i.
\end{align}
Again, as a matter of convention the $\text{Err}_i$ are defined to include the implicit constant appearing above. We now estimate each of the terms appearing on the right-hand side above. First, we have the contribution of the data: 
\begin{align*}
|\text{Err}_1| \lesssim \bar{L}^{\frac13} (\| \rho \p_\rho \Xi_{\text{Left}}\|_{L^2_\rho}^2 +\| \rho^2 \p_\rho^2 \Xi_{\text{Left}}\|_{L^2_\rho}^2 ).
\end{align*}
The next three terms contain factors of $\p_Z^3 \Omega_I$, so we estimate them as follows: 
\begin{align} \n
\sum_{i = 2}^4 |\text{Err}_i| \lesssim& (\| \tau_2 \|_{L^\infty} + \| \p_Z \tau_2 \|_{L^\infty}) \| Z^{\frac32}\p_Z^3 \Omega_{I} \|_{L^2_{sZ}} (\| Z^{\frac12} \p_Z^2 \Omega_{I} \|_{L^2_{sZ}} + \|\p_Z \Omega_I \|_{L^2_{sZ}}) \\ \n
\le & \lambda \| Z^{\frac32} \p_Z^3 \Omega_{I} \|_{L^2_{sZ}}^2 + C_\lambda \| Z^{\frac12}\p_Z^2 \Omega_{I} \|_{L^2_{sZ}}^2 + C_\lambda \|\p_Z \Omega_I \|_{L^2_{sZ}}^2 \\  \label{nugget:1}
\le & \lambda \| Z^{\frac32} \p_Z^3 \Omega_{I} \|_{L^2_{sZ}}^2 + C_\lambda \| Z^{\frac12} \p_Z^2 \Omega_{I} \|_{L^2_{sZ}}^2 + C_\lambda L^{\frac13} \| \Omega_I \|_{\text{Crocco}}^2,
\end{align}
where we have used Young's inequality for products ($0 < \lambda << 1$ will be chosen below relative to only universal constants) as well as the estimate \eqref{west:2}. 

The next three terms are lower order and can be estimated as follows: 
\begin{align*}
\sum_{i = 5}^7 |\text{Err}_i| \lesssim & (\| \tau_2 \|_{L^\infty} + \| \p_Z \tau_2 \|_{L^\infty})(\| Z^{\frac12} \p_Z^2 \Omega_{I} \|_{L^2_{sZ}}^2 + \|\p_Z \Omega_I \|_{L^2_{sZ}}^2 ) \lesssim  \| Z^{\frac12} \p_Z^2 \Omega_{I} \|_{L^2_{sZ}}^2 + \bar{L}^{\frac13} \| \Omega_I \|_{\text{Crocco}}^2,
\end{align*}
again by invoking \eqref{west:2}. 

The next term contains the crucial boundary contribution: 
\begin{align*}
|\text{Err}_8| \lesssim \| 1 + \tau_2 \|_{L^\infty} \| \gamma_\Omega^{(1)} \|_{L^2_s}^2. 
\end{align*}
The next two terms contain the contribution from the source term, $S$, and are estimated as follows: 
\begin{align} \n
\sum_{i = 9}^{10}|\text{Err}_i| \lesssim & \| Z \p_Z S_I \|_{L^2_{sZ}} ( \| Z^2 \p_Z^3 \Omega_{I} \|_{L^2_{sZ}} + \| Z \p_Z^2 \Omega_{I} \|_{L^2_{sZ}} + \|\p_Z \Omega_I \|_{L^2_{sZ}} ) \\ \n
\le & \lambda  \| Z^{\frac32} \p_Z^3 \Omega_{I} \|_{L^2_{sZ}}^2 + C_\lambda \| Z \p_Z S_I \|_{L^2_{sZ}}^2 + C\| Z \p_Z^2 \Omega_{I} \|_{L^2_{sZ}}^2 +C \| \p_Z \Omega_{I} \|_{L^2_{sZ}}^2 \\ \label{nugget:2}
\le & \lambda  \| Z^{\frac32} \p_Z^3 \Omega_{I} \|_{L^2_{sZ}}^2 + C_\lambda \| Z \p_Z S_I \|_{L^2_{sZ}}^2 + C\| Z \p_Z^2 \Omega_{I} \|_{L^2_{sZ}}^2 +C \bar{L}^{\frac13} \| \Omega_I \|_{\text{Crocco}}^2, 
\end{align}
where we have applied the Young's inequality for products as well as \eqref{west:2}.

The next term has a factor of $\p_Z^3 \Omega_I$, and therefore we estimate it as follows: 
\begin{align} \n
|\text{Err}_{11}| \lesssim & (\| \tau_1 \|_{L^\infty} + \| \p_Z \tau_2 \|_{L^\infty}) \| Z^{\frac32} \p_Z^3 \Omega_I \|_{L^2_{sZ}}( \| Z\p_Z^2 \Omega_{I} \|_{L^2_{sZ}} + \| \p_Z \Omega_I \|_{L^2_{sZ}} ) \\ \label{nugget:3}
\le & \lambda  \| Z^{\frac32} \p_Z^3 \Omega_{I} \|_{L^2_{sZ}}^2 +  C_\lambda \| Z^{\frac12} \p_Z^2 \Omega_{I} \|_{L^2_{sZ}}^2 +C_\lambda \bar{L}^{\frac13} \| \Omega_I \|_{\text{Crocco}}^2, 
\end{align}
The remaining terms are all lower order, and we estimate them as follows: 
\begin{align*}
\sum_{i = 12}^{14} |\text{Err}_{i}| \lesssim & (\sum_{i = 0}^2 (\| \p_Z^j \tau_0 \|_{L^\infty} + \| \p_Z^j \tau_1 \|_{L^\infty} + \| \p_Z^j \tau_2 \|_{L^\infty})  ) (\| \Omega_I \|_{L^2_{sZ}}^2 + \|\p_Z \Omega_I \|_{L^2_{sZ}}^2 + \| Z \p_Z^2 \Omega_{I} \|_{L^2_{sZ}}^2) \\
\lesssim & \bar{L}^{\frac13} \| \Omega_I \|_{\text{Crocco}}^2,
\end{align*}
upon invoking \eqref{west:2}. 

We now choose $\lambda = \frac{1}{30}$ so that the $\lambda \| Z^{\frac32} \p_Z^3 \Omega_{I} \|_{L^2_{sZ}}^2$ contributions from \eqref{nugget:1}, \eqref{nugget:2}, \eqref{nugget:3} can all be absorbed to the left-hand side of \eqref{yioo:4} (any concrete choice $3\lambda < 1$ would suffice).

To conclude the proof, we consider the first term on the left-hand side of \eqref{yioo:4}, and invoke the reverse triangle inequality as follows: 
\begin{align*}
\| Z \p_Z^2 \{ Z \Omega_I \} \|_{L^2_{Z}}^2 \gtrsim \| Z^2\p_Z^2  \Omega_I \|_{L^2_Z}^2 - \| Z\p_Z \Omega_I \|_{L^2_Z}^2, 
\end{align*}
after which we observe the $\| Z\p_Z \Omega_I \|_{L^2_Z}^2$ has already been controlled in \eqref{croc:en:2}: 
\begin{align*}
\sup_{1 \le s \le 1 + \bar{L}} \int_{\mathbb{R}_+} Z^4 |\p_Z^2 \Omega_{I}|^2 \ud Z \lesssim \sup_{1 \le s \le 1 + \bar{L}} \int_{\mathbb{R}_+} Z^2 |\p_Z^2 \{ Z \Omega_I \}|^2 \ud Z + \sup_{1 \le s \le 1 + \bar{L}} \int_{\mathbb{R}_+} Z^2 |\p_Z \Omega_I|^2 \ud Z.
\end{align*}
The proof of the lemma is complete. 
\end{proof}

\subsection{Proof of Proposition \ref{prop:Crocco}}

We now bring together the above lemmas in order to prove the main proposition of this section. 
\begin{proof}[Proof of Proposition \ref{prop:Crocco}] We collect the estimates \eqref{croc:en:2} and \eqref{croc:en:3} (and their counterparts on $\mathbb{R}_-$). For the term $\| Z \p_Z \Omega_I \|_{L^\infty_s L^2_Z}^2 + \| Z^{\frac12} \p_Z^2 \Omega_{I} \|_{L^2_{sZ}}^2$ appearing on the right-hand side of \eqref{croc:en:3}, we notice that these terms appear on the left-hand side of \eqref{croc:en:2}. Therefore, upon substituting \eqref{croc:en:2} for these terms, we obtain the new set of bounds: 
\begin{align} \n
&\sup_{1 \le s \le 1 + \bar{L}} \int_{\mathbb{R}} (|\Omega_I|^2 +Z^2 |\p_Z \Omega_I|^2) \ud Z + \int_{1}^{1 + \bar{L}}\int_{\mathbb{R}} |Z| |\p_Z^2 \Omega_{I}|^2 \ud Z \ud s \\ \label{croc:en:2:again}
& \qquad \qquad \lesssim L^{\frac13} \| \Omega_I \|_{\text{Crocco}}^2 + \| \gamma_{\Omega}^{(1)} \|_{L^2_s}^2 + \| S_I \|_{L^2_{sZ}}^2 +  \bar{L}^{\frac13} \sum_{i = 0}^1 \|\rho^i \p_\rho^i \Xi_{\text{Left}} \|_{L^2_\rho}^2, \\ \n
&\sup_{1 \le s \le 1 + \bar{L}} \int_{\mathbb{R}} Z^4 |\p_Z^2 \Omega_{I}|^2 \ud Z + \int_{1}^{1 + \bar{L}}\int_{\mathbb{R}} |Z|^3 |\p_Z^3 \Omega_{I}|^2 \ud Z \ud s \\ \label{croc:en:1:again}
 & \qquad \qquad \lesssim  \bar{L}^{\frac13} \| \Omega_I \|_{\text{Crocco}}^2 +  \| \gamma_{\Omega}^{(1)} \|_{L^2_s}^2  + \sum_{i = 0}^1 \| Z^i \p_Z^i S_I \|_{L^2_{sZ}}^2  +  \bar{L}^{\frac13} \sum_{i = 0}^2 \sum_{\iota \in \text{Left, Right}} \|\rho^i \p_\rho^i \Xi_{\iota} \|_{L^2_\rho}^2.
\end{align}
We add both inequalities to obtain 
\begin{align} \n
&\| \Omega_I \|_{\text{Crocco}}^2 \lesssim  \bar{L}^{\frac13} \| \Omega_I \|_{\text{Crocco}}^2 +  \| \gamma_{\Omega}^{(1)} \|_{L^2_s}^2  + \sum_{i = 0}^1 \| Z^i \p_Z^i S_I \|_{L^2_{sZ}}^2  +  \bar{L}^{\frac13} \sum_{i = 0}^2 \sum_{\iota \in \text{Left, Right}} \|\rho^i \p_\rho^i \Xi_{\iota} \|_{L^2_\rho}^2.
\end{align}
For $\bar{L} << 1$, the $\bar{L}^{\frac13} \| \Omega_I \|_{\text{Crocco}}^2$ term can be absorbed to the left-hand side to close the estimate \eqref{pro:cro:1}. The proof of the proposition is complete. 
\end{proof}

\section{Analysis of $\mathcal{L}_{\text{von-Mise}}[\bold{R}, \zeta_{\text{Left}}, \zeta_{\text{Right}}]$} \label{Section:Outer}

In this section, we consider the von-Mise formulation of the problem, namely \eqref{abs:4:bljk}. We define a sequence of ``outer" cutoff functions as follows: Define a set of parameters 
\begin{align} \label{para:1}
1 < z_0^+ < \bar{z}_0^+ < z_1^+ < \bar{z}_1^+ < z_2^+ < \bar{z}_2^+ < z_3^+ < \bar{z}_3^+ < z_{\text{Out}}^+ < \bar{z}_{\text{Out}}^+, \\ \label{para:2}
1 >z_0^- > \bar{z}_0^- > z_1^- > \bar{z}_1^- > z_2^- > \bar{z}_2^- > z_3^- > \bar{z}_3^- > z_{\text{Out}}^- > \bar{z}_{\text{Out}}^-.
\end{align}
Furthermore, we choose $\bar{z}_{\text{Out}}^{\pm}$ so that the set $\{\bar{z}_{\text{Out}}^-,\bar{z}_{\text{Out}}^+\} \subset \{\chi_I(Z) = 1\}$. We fix smooth cutoff functions such that 
\begin{align}
\chi_{O,j}(z) = \begin{cases} 0 \qquad z_j^- \le z \le z_j^+ \\ 1 \qquad z < \bar{z}_j^- \\ 1 \qquad z > \bar{z}_j^+ \end{cases}, \qquad 0 \le j \le 3. 
\end{align}
We similarly define 
\begin{align}
\chi_{O}(z) = \begin{cases} 0 \qquad z_{\text{Out}}^- \le z \le z_{\text{Out}}^+ \\ 1 \qquad z < \bar{z}_{\text{Out}}^- \\ 1 \qquad z > \bar{z}_{\text{Out}}^+ \end{cases}. 
\end{align}
Finally, we define 
\begin{align} \label{folgers:1}
&\chi_{O,j}^{+} := \chi_{O,j} \bold{1}_{z \ge 1}, \qquad \chi_{O,j}^{-} := \chi_{O,j} \bold{1}_{z \le 1}, \\ \label{sightglass:1}
&\chi_{O}^{+} := \chi_{O} \bold{1}_{z \ge 1}, \qquad \chi_{O}^{-} := \chi_{O} \bold{1}_{z \le 1},
\end{align}
where $\bold{1}_{z \ge 1}$ is an indicator function, equal to $1$ if $z \ge 1$ and $0$ if $z < 1$, and $\bold{1}_{z \le 1}$ is defined analogously. Our choice of parameters, \eqref{para:1} -- \eqref{para:2}, ensures that the cutoffs are (1) ``nested" from $j$ to $j+1$, and also that (2) they overlap favorably with $\chi_I$. More precisely:  
\begin{align} \label{ygh1}
\text{supp}\{ \chi_{O, j+1} \} \subset & \{ \chi_{O, j} = 1 \}, \qquad 0 \le j \le 2 \\ \label{ygh2}
\text{supp}\{ \chi_O \} \subset & \{ \chi_{O, 3} = 1 \}, \\
\{(s, z) : \chi_I(Z) \neq 1 \} \subset & \{\chi_{O} = 1 \},  \\ \label{parahyun}
\{(s, z) : \chi_{O, j} \neq 1 \}  \subset& \{(s, z) : \chi_{O} \neq 1 \} \subset  \{\chi_I = 1 \}, \qquad 0 \le j \le 3.
\end{align}
We define here a sequence of Energy-Dissipation functionals. We write them separately for the domain $0 < z < 1$, using a minus sign, and the domain $1 < z < \infty$ indexed by a plus sign. For the bottom problem, we have one index, $\p_z$ regularity, whereas for the top problem, we have two indices: $\p_z$ regularity and weights in $z$. First, 
\begin{align} \label{yahL1}
\mathcal{E}_{O,-}^{(j)} := &\int_0^1 |\p_z^j U|^2 (\chi_{O,j}^-)^2 \ud z, \qquad \mathcal{D}_{O,-}^{(j)} := \int_0^1 |\bar{W}| |\p_z \p_z^j u|^2 (\chi_{O,j}^-)^2 \ud z,
\end{align}
and next, 
\begin{align} \label{canyou}
\mathcal{E}_{O,+}^{(j, n)} := & \int_1^{\infty} |\p_z^j U|^2 (\chi_{O,j}^+)^2 \langle z \rangle^{2n} \ud z, \qquad \mathcal{D}_{O,+}^{(j, n)} := \int_1^{\infty} \bar{W} |\p_z \p_z^j u|^2 (\chi_{O,j}^+)^2\langle z \rangle^{2n} \ud z.
\end{align}
We consolidate these via 
\begin{align} \label{Ejn}
\mathcal{E}_{O}^{(j, n)} := \mathcal{E}_{O,-}^{(j)} +  \mathcal{E}_{O,+}^{(j, n)}, \qquad \mathcal{D}_{O}^{(j, n)} := \mathcal{D}_{O,-}^{(j)} +  \mathcal{D}_{O,+}^{(j, n)}.
\end{align}
It is useful to have notation for norms in addition to $s$-dependent functionals. Therefore, we define 
\begin{align}\label{yahL2}
\| \psi \|_{VM^{j}_-}^2 :=& \sup_{1 \le s \le 1 + \bar{L}}  \mathcal{E}_{O,-}^{(j)}(s) + \int_1^{1 + \bar{L}}  \mathcal{D}_{O,-}^{(j)}(s) \ud s, \\
\| \psi \|_{VM^{j, n}_+}^2 :=& \sup_{1 \le s \le 1 + \bar{L}}  \mathcal{E}_{O,+}^{(j, n)}(s) + \int_1^{1 + \bar{L}}  \mathcal{D}_{O,+}^{(j, n)}(s) \ud s, \\
\| \psi \|_{VM^{j, n}}^2 :=&\| \psi \|_{VM^{j}_-}^2 + \| \psi \|_{VM^{j, n}_+}^2. 
\end{align}
Finally, we define our consolidated norms: 
\begin{align}  \n
\| \psi \|_{\text{von-Mise}_{-}}^2 :=& \sum_{j = 0}^3 \| \psi \|_{VM_-^{j}}^2, \qquad \| \psi \|_{\text{von-Mise}_{+,n}}^2 := \sum_{j = 0}^3 \| \psi \|_{VM_+^{j, n}}^2, \\ \label{vonMisenorm}
\| \psi \|_{\text{von-Mise}_n}^2 :=& \sum_{j = 0}^3 \| \psi \|_{VM^{j, n}}^2 = \| \psi \|_{\text{von-Mise}_{-}}^2 + \| \psi \|_{\text{von-Mise}_{+,n}}^2.
\end{align}
Our main proposition in this section will be the following estimate on the $\| \cdot \|_{\text{von-Mise}_n}$ norm to solutions of \eqref{abs:4:bljk}.
\begin{proposition} \label{pro:vonMise} Fix any $n \in \mathbb{N}$. Assume the background, $\bar{W}$, satisfies the following estimates: 
\begin{align} \label{sushiL1}
&\| \bar{W} \|_{L^\infty} + \sum_{j = 1}^5 \| \p_z^j \bar{W} \langle z \rangle^{n+1} \|_{L^\infty} \lesssim 1, \\ \label{sushiL2}
&|\bar{W}| \ge c_0 z, \qquad 0 < z << 1, 
\end{align}
Assume the source term, $\bold{R}$, satisfies $\bold{R}|_{z = 0} = \p_z \bold{R}|_{z = 0} = 0$. Assume the coefficients $\bold{b}_i$, $i = 1, 2, 3$, satisfy the following: 
\begin{align} \label{eng:1}
\bold{b}_1(s, 0) = & 0, \\ \label{eng:2}
\| \frac{\bold{b}_1}{\langle z \rangle} \|_{L^\infty} +  \sum_{j = 1}^3 \| \p_z^j \bold{b}_1 \|_{L^\infty} \lesssim & 1, \\ \label{eng:3}
\sum_{j = 0}^3 \| \p_z^j \bold{b}_2 \|_{L^\infty} \lesssim & 1, \\ \label{eng:4}
\sum_{j = 0}^3 \| \p_z^j \bold{b}_3 \langle z \rangle^{n+1} \|_{L^\infty} \lesssim & 1
\end{align}
Then the following bounds are valid for $0 < \bar{L} << 1$: 
\begin{align} \n
\| \psi \|_{\mathrm{von-Mise}_n}^2 \lesssim & \sum_{j = 0}^3 ( \| \p_z^j \zeta_{\mathrm{Left}} \chi_{O,j}^+ \langle z \rangle^n \|_{L^2_z}^2 + \| \p_z^j \zeta_{\mathrm{Right}} \chi_{O,j}^- \|_{L^2_z}^2 )\\ \label{mainvonM}
&+ \sum_{j = 0}^3 \| \p_z^j \bold{R} \chi_{O,j}  \langle z \rangle^n \|_{L^2_{sz}}^2 + \bar{L}(\| \psi \chi_I \|_{L^\infty}^2 + \| u \chi_I \|_{L^\infty}^2), \\ \n
\| \psi \|_{\mathrm{von-Mise}_-}^2 \lesssim & \sum_{j = 0}^3 (  \| \p_z^j \zeta_{\mathrm{Right}} \chi_{O,j}^- \|_{L^2_z}^2  )+ \sum_{j = 0}^3 \| \p_z^j \bold{R} \chi_{O,j}  \langle z \rangle^n \|_{L^2_{sz}}^2 \\  \label{lowvonM}
&+ \bar{L}(\| \psi \chi_I \|_{L^\infty}^2 + \| u \chi_I \|_{L^\infty}^2).
\end{align}
\end{proposition}

\begin{remark} We state separately the lower estimate, \eqref{lowvonM}, as we will need this bound in the future to control the quantities $\gamma_\psi, \gamma_u$, defined in \eqref{dgu}, for instance in \eqref{main:G:g}. The key point of this estimate will be the lack of dependence on $\zeta_{\mathrm{Left}}$.
\end{remark}

\subsection{Preliminary Hardy-type Inequalities}

Here we develop some Hardy type inequalities adapted to decreasing cutoff functions.
\begin{lemma} Let the function $f(\cdot), g(\cdot): \mathbb{R}_+ \rightarrow \mathbb{R}$ satisfy $f(0) = 0$ and $g(0) = g'(0) = 0$. Let $\chi_-$ be one of the smooth decreasing cutoff functions $\chi_{O,j}^-$, defined in \eqref{folgers:1}. Then 
\begin{align} \label{Hardy:order:1}
\| \p_z^j \Big( \frac{f}{z} \Big) \chi_-(z) \|_{L^2_z}  \lesssim & \| \p_z^{j+1} f \chi_-(z) \|_{L^2_z} + \| \p_z^{j} f \|_{L^2_z}, \\  \label{Hardy:order:2}
\| \p_z^j \Big( \frac{g}{z^2} \Big) \chi_-(z) \|_{L^2_z} \lesssim & \| \p_z^{j+2} g \chi_-(z) \|_{L^2_z} + \| \p_z^{j+1} g \|_{L^2_z}.
\end{align}
\end{lemma}
\begin{proof} The $j = 0$ case of \eqref{Hardy:order:1} follows from the standard Hardy inequality. Thus, fix $j \ge 1$. Define $F := \frac{f}{z}$, which is well defined due to the assumption $f(0) = 0$. The left-hand side of \eqref{Hardy:order:1} thus reads 
\begin{align*}
\| \p_z^j F \chi_-(z) \|_{L^2_z}^2 = & \int \p_z \{ z \} |\p_z^j F(z)|^2 \chi_-(z)^2 \ud z \\
= & - 2 \int z \p_z^j F(z) \p_z^{j+1}F(z)  \chi_-(z)^2 \ud z - 2 \int z |\p_z^j F(z)|^2 \chi_-'(z) \chi_-(z) \ud z  \\
= & - 2 \int \p_z^j F(z) \p_z^{j+1} \{ z F \} \chi_-(z)^2 \ud z + 2 (j+1) \int |\p_z^j F(z)|^2 \chi_-(z)^2 \ud z \\
&  - 2 \int z |\p_z^j F(z)|^2 \chi_-'(z) \chi_-(z) \ud z. 
\end{align*}
Moving the middle term to the left-hand side, we obtain the identity 
\begin{align*}
(2j + 1) \int |\p_z^j F(z)|^2 \chi_-(z)|^2 = 2 \int \p_z^j F(z) \p_z^{j+1} \{ z F \} \chi_-(z)^2 \ud z + 2 \int z |\p_z^j F(z)|^2 \chi_-'(z) \chi_-(z) \ud z.
\end{align*}
The latter term above is clearly bounded by $\| \p_z^{j} f \|_{L^2_z}^2$ by reinserting the definition $F = \frac{f}{z}$ and using that $z$ is bounded below on the support of $\chi_-'(z)$. For the former term we apply Cauchy-Schwartz and Young's inequality for products, which gives \eqref{Hardy:order:1}. Estimate \eqref{Hardy:order:2} follows by applying \eqref{Hardy:order:1} twice.  
\end{proof}

\subsection{Estimates on von-Mise Inversion Map}

The following bound informally equates information on $U$ to information on $\psi$. The formal derivative count gives us that $U \sim \psi$ near $z = 0$ (where $\bar{W}$ vanishes) and $U \sim u$ away from $z = 0$. However, there is an anomaly for $\psi$ and $u = \p_z \psi$ itself due to the ``loss of derivative" created by division by $\frac{1}{\bar{W}^2}$. This loss eventually becomes ``tame" for $j \ge 2$. These features are all reflected in the bounds below. 
\begin{lemma} For any $0 \le j \le 3$, the following bounds are valid:
\begin{align}  \label{outer:est:l2:0}
\| \bar{W} u^{(j)}  \chi^{-}_{O, j} \|_{L^2_z}^2 + \sum_{j' = 0}^j \| \p_z^{j'} \psi  \chi^{-}_{O, j} \|_{L^2_z}^2 \lesssim & \sum_{j' = 0}^j  \mathcal{E}_{O,-}^{(j')}(s) + \sum_{j' = 0}^2 \mathcal{E}_{O,-}^{(j')}(s).
\end{align}
\end{lemma}
\begin{proof} The key necessary step is to invert the map $U \mapsto \psi$. The inverse to the formula \eqref{goodunk:1} is 
\begin{align} \label{mellowdy:1}
\psi = \bar{W} \int_0^z \frac{U}{\bar{W}^2} \ud z',
\end{align}
which we differentiate to obtain the identities 
\begin{align} \label{inv:u}
\p_z \psi  = &\p_z \bar{W} \int_{0}^z \frac{U}{\bar{W}^2} \ud \tilde{z} + \frac{U}{\bar{W}}, \\ \label{traveler:1}
\p_z^2 \psi  = & \p_z^2 \bar{W} \int_0^z \frac{U}{\bar{W}^2} \ud \tilde{z} + \p_z \bar{W} \frac{U}{\bar{W}^2}+ \p_z \Big( \frac{U}{\bar{W}} \Big), \\  \label{traveler:2}
\p_z^3 \psi = & \p_z^3 \bar{W} \int_0^z \frac{U}{\bar{W}^2} \ud \tilde{z} + 2 \p_z^2 \bar{W} \frac{U}{\bar{W}^2} + \p_z \Big( \frac{U}{\bar{W}^2} \Big) + \p_z^2 \Big( \frac{U}{\bar{W}} \Big).
\end{align}

\vspace{2 mm}

\noindent \textit{Case $j = 0$:} We first estimate $\psi$ itself using \eqref{mellowdy:1}. Due to the bounded support of $\chi_{O,0}^-$, it suffices to establish an $L^\infty_z$ bound on $\psi$. We decompose the integration as follows 
\begin{align} \label{ess:1}
|\psi \chi_{O,0}^-| \lesssim & \chi_{O,0}^- |\int_0^z \frac{U}{\bar{W}^2} \chi_{O,2}^- \ud z'| + \chi_{O,0}^- |\int_0^z \frac{U}{\bar{W}^2} (1- \chi_2) \ud z'| \\  \label{ess:2}
\lesssim & \| \frac{U}{\bar{W}^2} \chi_{O,2}^- \|_{L^2_z} + \widetilde{\chi}_{O,0}^- |\int_0^z \frac{U}{\bar{W}^2}  \chi_{O,0}^- (1- \chi_2) \ud z'| \\  \label{ess:3}
\lesssim & \| \frac{U}{z^2}\chi_{O,2}^- \|_{L^2_z} +\| U \chi_{O,0}^- \|_{L^2_z}\\  \label{ess:4}
\lesssim & \| \p_z^2 U \chi_{O,2}^- \|_{L^2_z} + \| \p_z U \chi_{O,1}^- \|_{L^2_z} +\| U \chi_{O,0}^- \|_{L^2_z} \lesssim  \sum_{j' = 0}^2 \mathcal{E}_{O,-}^{(j')}(s).
\end{align}
Above, to go from \eqref{ess:1} to \eqref{ess:2}, we let $ \widetilde{\chi}_{O,0}^-$ be a slightly fattened version of the cutoff $\chi_{O,0}^-$ so that $\chi_{O,-}\widetilde{\chi}_{O,0}^- = \chi_{O,-}$ and simultaneously the support of $\widetilde{\chi}_{O,0}^-$ avoids $z = 1$. We subsequently observed that $\chi_{O,0}^-(z) \le \chi_{O,0}^-(z')$ when $z' \le z$ in order to bring in this cutoff function into the integration. To go from \eqref{ess:2} to \eqref{ess:3}, we have observed that $\bar{W} \gtrsim z$ in the support of $\chi_{O,2}^-$, and $\bar{W} \gtrsim 1$ in the support of $(1 - \chi_{O,2}^-)$. To go from \eqref{ess:3} to \eqref{ess:4}, we have used the standard Hardy inequality coupled with the property \eqref{ygh1}.

To conclude the estimate for the case $j = 0$, we notice upon using the identity $U = \bar{W} u - \bar{W}_z \psi$ that 
\begin{align*}
\| \bar{W} u \chi_{O,0}^- \|_{L^2_z} \lesssim \| U \chi_{O,0}^- \|_{L^2_z} + \| \psi \chi_{O,0}^- \|_{L^2_z} \lesssim & \sum_{j = 0}^2 \| \p_z^j U \chi_{O,j}^- \|_{L^2_z}  \lesssim \sum_{j' = 0}^2 \mathcal{E}_{O,-}^{(j')}(s),
\end{align*}
where we have invoked \eqref{ess:4} for the second-to-last inequality above. 

\vspace{2 mm}

\noindent \textit{Case $j = 1$:} Recalling the case $j = 0$, we have already established, for any $0 \le j \le 3$ 
\begin{align}
\chi_{O,j}^- |\int_0^z \frac{U}{\bar{W}^2} \ud z' | \lesssim \Big( \sum_{j' = 0}^2 \mathcal{E}_{O,-}^{(j')}(s) \Big)^{\frac12}.
\end{align}
We therefore estimate using the Hardy inequality coupled with the property \eqref{ygh1}
\begin{align*}
\| \frac{U}{\bar{W}} \chi_{O,1}^- \|_{L^2_z} \lesssim \| \frac{U}{z} \chi_{O,1}^- \|_{L^2_z} \lesssim \| \p_z U \chi_{O,1}^- \|_{L^2_z} + \| U \chi_{O,0}^- \|_{L^2_z}.
\end{align*}
From these estimates, we conclude the estimate on $\p_z \psi \chi_{O,1}^-$. To obtain the estimate on $\bar{W} u_z \chi_{O,1}$, we notice the identity 
\begin{align} \label{td:1}
\bar{W} u_z = \p_z U + \bar{W}_{zz}\psi, 
\end{align}
after which the result follows upon applying the previously established estimates on $\psi$. 

\vspace{2 mm}

\noindent \textit{Case $j = 2$:} Using \eqref{traveler:1}, we have 
\begin{align*}
\| \p_z^2 \psi \chi_{O,2}^- \|_{L^2_z}^2 \lesssim & \| \chi_{O,2}^- \int_0^z \frac{U}{\bar{W}^2} \ud z' \|_{L^2_z}^2 + \| \frac{U}{\bar{W}^2} \chi_{O,2}^- \|_{L^2_z}^2 + \| \p_z \{ \frac{U}{\bar{W}} \} \chi_{O,2}^- \|_{L^2_z}^2 \\
\lesssim &  \sum_{j' = 0}^2 \mathcal{E}_{O,-}^{(j')}(s) + \| \frac{U}{z^2} \chi_{O,2}^- \|_{L^2_z}^2 + \| \p_z \{ \frac{z}{\bar{W}} \frac{U}{z} \} \chi_{O,2}^- \|_{L^2_z}^2 \\
\lesssim &  \sum_{j' = 0}^2 \mathcal{E}_{O,-}^{(j')}(s) +\| \p_z \{ \frac{U}{z} \} \chi_{O,2}^- \|_{L^2_z}^2 \\
\lesssim &  \sum_{j' = 0}^2 \mathcal{E}_{O,-}^{(j')}(s).
\end{align*}
Above, we have used \eqref{Hardy:order:1}. Differentiating \eqref{td:1}, we obtain 
\begin{align}
\bar{W} \p_z^2 u = \p_z^2 U - \bar{W}_z u_z + \bar{W}_{zz}u + \bar{W}_{zzz}u,
\end{align}
after which we obtain 
\begin{align*}
\| \bar{W} \p_z^2 u \chi_{O,2}^- \|_{L^2}^2 \lesssim \| \p_z^2 U \chi_{O,2}^- \|_{L^2_z}^2 + \sum_{j' = 0}^2 \| \p_z^{j'} \psi \chi_{O,2}^- \|_{L^2_z}^2 \lesssim \sum_{j' = 0}^2 \mathcal{E}_{O,-}^{(j')}(s).
\end{align*}

\vspace{2 mm}

\noindent \textit{Case $j = 3$:} This follows in much the same way as $j = 2$, upon using \eqref{Hardy:order:1} and \eqref{Hardy:order:2}. The lemma is proven. 
\end{proof}

We now obtain an analogue of the previous lemma for the region above the free boundary, $\{z = 0\}$. These estimates, however, are \textit{not} symmetric. One can see this by noticing the inversion formula \eqref{mellowdy:1} which uses $\psi|_{z = 0} = 0$. Heuristically, the formula \eqref{mellowdy:1} is one instance of ``information getting sent upwards" through the incompressibility condition. Therefore, the forthcoming estimates will be distinguished through the appearance of $\| \psi \chi_I \|_{L^\infty}$ and $\| u \chi_I \|_{L^\infty}$. 

\begin{lemma} For $1 \le j \le 4$, for any $n$, the following bounds are valid:
\begin{align}  \label{wo:est:l2:0}
\| \p_z^j \psi \langle z \rangle^{n} \chi^{+}_{O, j-1} \|_{L^2_z}^2 \lesssim & \| \psi \chi_I \|_{L^\infty}^2 + \| u \chi_I \|_{L^\infty}^2  + \sum_{j' = 0}^{j-1} \mathcal{E}_{O,+}^{(j',n)} ,  \\ \label{wo:est:0}
\| \psi \chi_{O,0} \|_{L^\infty_z}^2 \lesssim & \| \psi \chi_I \|_{L^\infty}^2 +  \mathcal{E}_{O,+}^{(0, 1)}.
\end{align}
\end{lemma}
\begin{proof} Here the main objective is to obtain the weighted estimates in $z$. For any $z_0 > 1$, we have the inversion map: 
\begin{align} \label{psi:inv:up}
\psi(s, z) = & \frac{ \psi(s, z_0)}{\bar{W}(s, z_0)}\bar{W}(s, z) + \bar{W}(s, z) \int_{z_0}^z \frac{U(s, z')}{\bar{W}(s, z')^2} \ud z', \qquad z \ge z_0. 
\end{align}
This gives us the result \eqref{wo:est:0} as follows. First, it clearly suffices to estimate $\psi (1 - \chi_I)$. Let $z \in \text{supp}(1 - \chi_I)$. In this case we may choose $1 < z_0 < z$ satisfying $z_0 \in \{ \chi_I = 1 \} \cap \{ \chi_{O,0} = 1 \}$. We apply \eqref{psi:inv:up} with this choice of $z_0$. Notice that such a choice of $z_0$ guarantees that $(z_0, z) \in \{\chi_{O,0} = 1\}$. Then we obtain 
\begin{align*}
|\psi(s, z)| \lesssim \| \psi \chi_I \|_{L^\infty} + \| U \chi_{O,0} \langle z \rangle \|_{L^2_z} \lesssim \| \psi \chi_I \|_{L^\infty}+  (\mathcal{E}_{O,+}^{(0, 1)})^{\frac12},
\end{align*}
as required by \eqref{wo:est:0}. 

We now differentiate \eqref{psi:inv:up} to generate the identities for $z > z_0$:
\begin{align} \label{inv:u:diff}
\p_z \psi  = &\frac{ \psi(s, z_0)}{\bar{W}(s, z_0)}\p_z \bar{W}(s, z) + \p_z \bar{W} \int_{z_0}^z \frac{U}{\bar{W}^2} \ud \tilde{z} + \frac{U}{\bar{W}}, \\ \label{mvp:1}
\p_z^2 \psi  = &\frac{ \psi(s, z_0)}{\bar{W}(s, z_0)}\p_z^2 \bar{W}(s, z) + \p_z^2 \bar{W} \int_{z_0}^z \frac{U}{\bar{W}^2} \ud \tilde{z} + \p_z \bar{W} \frac{U}{\bar{W}^2}+ \p_z \Big( \frac{U}{\bar{W}} \Big), \\ \label{mvp:2}
\p_z^3 \psi = &\frac{ \psi(s, z_0)}{\bar{W}(s, z_0)}\p_z^3 \bar{W}(s, z) +  \p_z^3 \bar{W} \int_{z_0}^z \frac{U}{\bar{W}^2} \ud \tilde{z} + 2 \p_z^2 \bar{W} \frac{U}{\bar{W}^2} + \p_z \Big( \frac{U}{\bar{W}^2} \Big) + \p_z^2 \Big( \frac{U}{\bar{W}} \Big), \\ \n
\p_z^4 \psi = & \frac{ \psi(s, z_0)}{\bar{W}(s, z_0)}\p_z^4 \bar{W}(s, z) + \p_z^4 \bar{W} \int_{z_0}^z \frac{U}{\bar{W}^2} \ud \tilde{z} +  3 \p_z^3 \bar{W} \frac{U}{\bar{W}^2} + 2 \p_z^2 \bar{W} \p_z ( \frac{U}{\bar{W}^2})   \\ \label{mvp:3}
& + \p_z^2 \Big( \frac{U}{\bar{W}^2} \Big) + \p_z^3 \Big( \frac{U}{\bar{W}} \Big).
\end{align}
We choose $1 < z_0$ in such a way that $z_0 \in \{ \chi_I = 1\}$ and $\{z > z_0\} \supset \text{supp}(\chi^+_{O,0})$. Notice that $\bar{W} \gtrsim z_0 - 1 > 0$ in this regime. 

We will in particular need to estimate the integrals which appear in \eqref{inv:u:diff} -- \eqref{mvp:3}. To do so, we proceed as follows. Fix another point $z_1 > z_0$ satisfying the following criteria: $z_1 \in \{\chi_I = 1\} \cap \{\chi_{O,0}^+ = 1\}$. In particular, this insures $(z_0, z_1) \subset \{\chi_I = 1\}$ and $(z_1, \infty) \subset \{\chi_{O,0}^+ = 1\}$. Then we have for any $z > z_0$, 
\begin{align} \n
|\int_{z_0}^z \frac{U}{\bar{W}^2} \ud z'| \le & |\int_{z_0}^{\infty} \frac{U}{\bar{W}^2} \ud z'| \le |\int_{z_0}^{z_1} \frac{U}{\bar{W}^2} \ud z'| + |\int_{z_1}^{\infty} \frac{U}{\bar{W}^2} \ud z'| \\ \n
\le & |\int_{z_0}^{z_1} \frac{U}{\bar{W}^2} \chi_I \ud z'| + |\int_{z_1}^{\infty} \frac{U}{\bar{W}^2} \chi_{O,0}^+  \ud z'| \\ \n
\lesssim & \| U \chi_I \|_{L^\infty} + \| U \langle z \rangle \chi_{O,0}^+ \|_{L^2_z} \\
\lesssim & \| \psi \chi_I \|_{L^\infty} + \| u \chi_I \|_{L^\infty} + \| U \langle z \rangle \chi_{O,0}^+ \|_{L^2_z}.
\end{align}
We now have from \eqref{inv:u:diff} -- \eqref{mvp:3} for $1 \le j \le 4$, 
\begin{align*}
\| \p_z^j \psi \chi_{O,j-1}^+ \langle z \rangle^n \|_{L^2_z} \lesssim & \sum_{j' = 1}^4 \| \p_z^j \bar{W} \langle z \rangle^{n+1} \|_{L^\infty_z}  \Big( \| \psi \chi_I \|_{L^\infty} +\| u \chi_I \|_{L^\infty} + \| U \langle z \rangle \chi_{O,0}^+ \|_{L^2_z} \\
& + \sum_{j' = 0}^{j-1} \| \p_z^{j'} U \langle z \rangle^n \chi_{O,j-1}^+ \|_{L^2_z} \Big) \\
\lesssim & \sum_{j' = 1}^4 \| \p_z^j \bar{W} \langle z \rangle^{n+1} \|_{L^\infty_z}  \Big( \| \psi \chi_I \|_{L^\infty} + \| u \chi_I \|_{L^\infty}+ \| U \langle z \rangle \chi_{O,0}^+ \|_{L^2_z}\\
&+ \sum_{j' = 0}^{j-1} \| \p_z^{j'} U \langle z \rangle^n \chi_{O,j'}^+ \|_{L^2_z} \Big).
 \end{align*}
The lemma is proven. 
\end{proof}

It will be useful for us to collect the following interpolation estimates, which in particular allow a pairing with $u^{(j)}$ and $\chi_{O,j}^-$ upon invoking a piece of the diffusion. This is only relevant for the bottom portion due to the vanishing of $\bar{W}$, as the top portion is already obtained in \eqref{wo:est:l2:0}. We note that these are the von-Mise analogues of the bounds \eqref{ode:2} -- \eqref{ode:3} obtained in the Crocco variables. 
\begin{lemma} For any $0 \le j \le 3$, the following bounds are valid:
\begin{align} \label{ah:1}
\| u^{(j)} \chi_{O,j}^- \|_{L^2_{sz}}^2 \lesssim & \bar{L}^{\frac13}( \| \sqrt{\bar{W}} u_z^{(j)} \chi_{O,j}^- \|_{L^2_{sz}}^2 +\|\bar{W} u^{(j)} \chi_{O,j}^- \|_{L^\infty_{s}L^2_z}^2).
\end{align}
\end{lemma}
\begin{proof} We fix a cutoff function at scale $\delta_L$ for a small parameter $\delta_L > 0$ to be chosen. We then have 
\begin{align*}
\| u^{(j)} \chi_{O,j}^-  \|_{L^2_{sz}}^2 \lesssim &\| u^{(j)} \chi_{O,j}^- \chi_{\delta_L} \|_{L^2_{sz}}^2 + \| u^{(j)} \chi_{O,j}^- (1 - \chi_{\delta_L}) \|_{L^2_{sz}}^2 \\
\lesssim & \| \bar{W} u_z^{(j)} \chi_{O,j}^- \chi_{\delta_L} \|_{L^2_{sz}}^2 + \frac{1}{\delta_L^2} \|\bar{W} u^{(j)} \chi_{O,j}^- \|_{L^2_{sz}}^2 \\
\lesssim & \delta_L \| \sqrt{\bar{W}} u_z^{(j)} \chi_{O,j}^- \chi_{\delta_L} \|_{L^2_{sz}}^2 + \frac{\bar{L}}{\delta_L^2}\|\bar{W} u^{(j)} \chi_{O,j}^- \|_{L^\infty_{s}L^2_z}^2.
\end{align*}
We then choose $\delta_L = \bar{L}^{\frac13}$. The lemma is proven. 
\end{proof}
As a corollary to this, upon pairing with \eqref{outer:est:l2:0}, we can estimate $\| u^{(j)} \chi_{O,j}^- \|_{L^2_{sz}}^2$ for any $0 \le j \le 3$ in terms of the $\| \cdot \|_{\mathrm{von-Mise}_-}$ norm. In fact, we summarize the following bounds (which are the analogue of \eqref{west:1} -- \eqref{west:2} in the Crocco setting) that will be used to estimate linear error terms appearing in our von-Mise energy estimates.  
\begin{corollary}For any $0 \le j \le 3$, the following bounds are valid:
\begin{align} \label{ujell2} 
\| u^{(j)} \chi_{O,j}^- \|_{L^2_{sz}}^2 \lesssim & \bar{L}^{\frac13} \| \psi \|_{\mathrm{von-Mise}_-}^2, \\ \label{md:2}
\| u^{(j)} \chi_{O,j}^+ \langle z \rangle^n \|_{L^2_{sz}}^2 \lesssim & \bar{L} (\| \psi \chi_I \|_{L^\infty}^2 +\| u \chi_I \|_{L^\infty}^2 ) + \bar{L} \| \psi \|_{\mathrm{von-Mise}_{+, n}}^2, \\ \label{md:3}
\| \psi \chi_{O,0}^- \|_{L^2_{s}L^\infty_z}^2 \lesssim & \bar{L}\| \psi \|_{\mathrm{von-Mise}_-}^2, \\ \label{md:4}
\| \psi \chi_{O,0}^+ \|_{L^2_{s}L^\infty_z}^2 \lesssim & \bar{L} \| \psi \chi_I \|_{L^\infty}^2 + \bar{L} \| \psi \|_{\mathrm{von-Mise}_{+, 1}}^2.
\end{align}
\end{corollary}
\begin{proof} To prove \eqref{ujell2}, we invoke \eqref{ah:1} and subsequently \eqref{outer:est:l2:0}. To prove \eqref{md:2} (resp. \eqref{md:4}), we simply integrate both sides of \eqref{wo:est:l2:0} (resp. \eqref{wo:est:0}) over $s \in (1, 1 + \bar{L})$. To prove \eqref{md:3}, we integrate both sides of \eqref{ess:4}. 

\end{proof}

\subsection{Estimates on von-Mise Forward Map}

The lemmas above essentially provide estimates on the inversion map: converting information about $U$ into information about $\psi$ and $u$, which are useful as the energy functionals \eqref{yahL1}, \eqref{canyou} are expressed in terms of $U$. On the other hand, the dissipation functions are expressed in terms of $u$, and so it turns out to be convenient to provide the following estimates which also go ``forwards": estimate $U$ in terms of $u$. These will primarily be used to control commutators coming from the diffusion terms. 
\begin{lemma} Under the assumptions on $\bar{W}$, \eqref{sushiL1} -- \eqref{sushiL2}, for any $0 \le j \le 3$,
\begin{align} \label{deri:1}
\| \p_z U^{(j)} \chi_{O,j}^- \|_{L^2_{sz}}^2 \lesssim &\| \psi \|_{VM^{j}_-}^2 +\bar{L}^{\frac13} \| \psi \|_{\mathrm{von-Mise}_-}^2,  \\ \label{deri:2}
\| \p_z U^{(j)} \chi_{O,j}^+ \langle z \rangle^n \|_{L^2_{sz}}^2 \lesssim &\| \psi \|_{VM^{j,n}_+}^2 + \bar{L}\| \psi \|_{\mathrm{von-Mise}_{+,n}}^2 +  \bar{L} (\| \psi \chi_I \|_{L^\infty}^2 +\| u \chi_I \|_{L^\infty}^2 ).
\end{align}
\end{lemma}
\begin{proof} We differentiate the map $U = \bar{W} u - \bar{W}_z \psi$ to obtain 
\begin{align} \label{derived:1}
\p_z^{j+1} U = \bar{W} \p_z u^{(j)} + \sum_{j' = 0}^{j} \binom{j+1}{j'} \bar{W}^{(j+1 - j')} u^{(j')} - \sum_{j' = 0}^{j+1} \binom{j+1}{j'} \bar{W}^{(j+2-j')} \psi^{(j')},
\end{align}
after which we obtain 
\begin{align*}
\| \p_z U^{(j)} \chi_{O,j}^- \|_{L^2_{sz}}^2 \lesssim & \| \bar{W} \p_z u^{(j)} \chi_{O,j}^- \|_{L^2_{sz}}^2 +\| \psi \chi_{O,j}^- \|_{L^2_{sz}}^2 + \sum_{j' = 0}^{j} \| u^{(j')} \chi_{O,j}^- \|_{L^2_{sz}}^2 \\
\lesssim & \|\sqrt{ |\bar{W}|} \p_z u^{(j)} \chi_{O,j}^- \|_{L^2_{sz}}^2 +\| \psi \chi_{O,0}^- \|_{L^2_{sz}}^2 + \sum_{j' = 0}^{j} \| u^{(j')} \chi_{O,j'}^- \|_{L^2_{sz}}^2 \\
\lesssim & \|\sqrt{ |\bar{W}|} \p_z u^{(j)} \chi_{O,j}^- \|_{L^2_{sz}}^2 +\bar{L}^{\frac13} \| \psi \|_{\text{von-Mise}_-}^2 \\
\lesssim & \| \psi \|_{VM^{j}_-}^2 + \bar{L}^{\frac13} \| \psi \|_{\text{von-Mise}_-}^2 
\end{align*}
where in the second-to-last line we have invoked \eqref{ujell2} and \eqref{md:3}, and in the final line we have invoked the definitions \eqref{yahL1} and \eqref{yahL2}.

For the bound \eqref{deri:2}, we again use \eqref{derived:1} to estimate as follows: 
\begin{align*}
\| \p_z U^{(j)} \chi_{O,j}^+ \langle z \rangle^n \|_{L^2_{sz}}^2 \lesssim & \| \bar{W} \p_z u^{(j)} \chi_{O,j}^+ \langle z \rangle^n \|_{L^2_{sz}}^2 +\| \psi \chi_{O,j}^- \|_{L^2_sL^\infty_{z}}^2 + \sum_{j' = 0}^{j} \| u^{(j')} \chi_{O,j}^+ \langle z \rangle^n\|_{L^2_{sz}}^2 \\
\lesssim & \|\sqrt{ \bar{W}} \p_z u^{(j)} \chi_{O,j}^+ \langle z \rangle^n \|_{L^2_{sz}}^2 +\| \psi \chi_{O,0}^+ \|_{L^2_{s}L^\infty_z}^2 + \sum_{j' = 0}^{j} \| u^{(j')} \chi_{O,j'}^+  \langle z \rangle^n \|_{L^2_{sz}}^2 \\
\lesssim & \|\sqrt{ \bar{W}} \p_z u^{(j)} \chi_{O,j}^+  \langle z \rangle^n \|_{L^2_{sz}}^2 +\bar{L} \| \psi \|_{\text{von-Mise}_{+, n}}^2 + \bar{L} (\| \psi \chi_I \|_{L^\infty}^2 +\| u \chi_I \|_{L^\infty}^2 ) \\
\lesssim & \| \psi \|_{VM^{j,n}_{+}}^2 + \bar{L} \| \psi \|_{\text{von-Mise}_{+,n}}^2 + \bar{L} (\| \psi \chi_I \|_{L^\infty}^2 +\| u \chi_I \|_{L^\infty}^2 ),
\end{align*}
where we have invoked \eqref{md:2}, \eqref{md:4}. The lemma is proven. 
\end{proof}

\subsection{von-Mise Energy Estimates}

Since we will be differentiating the system three times, it is helpful to present the form of the equation after $\p_z^j$ has been applied, specifically with regards to commutators arising from the left-hand side. We introduce the local notation $g^{(j)} := \p_z^j g$ for an abstract function $g(s, z)$. We have 
\begin{align}
\p_s U^{(j)} + \bold{b}_1 \p_z u^{(j)} + \bold{b}_2 u^{(j)} + \bold{b}_3 \psi^{(j)} - \p_z^2 u^{(j)} = - \sum_{i = 1}^3 LC_{i; j} + \p_z^j \bold{R}, 
\end{align}
where ``LC" stands for "Linear Commutator" and these terms are as follows:
\begin{align}
LC_{1; j} :=  &\sum_{j' = 0}^{j-1} \binom{j}{j'} \bold{b}_1^{(j-j')} u^{(j'+1)}, \\
LC_{2; j} := & \sum_{j' = 0}^{j-1} \binom{j}{j'} \bold{b}_2^{(j-j')} u^{(j')}, \\
LC_{3; j} := & \sum_{j' = 0}^{j-1} \binom{j}{j'} \bold{b}_3^{(j-j')} \psi^{(j')}.
\end{align}

We first record the following observation about the boundary data at $\{z = 0\}$, which is important for us to perform our high order energy method. 
\begin{lemma}[Boundary conditions at $z = 0$] \label{LB1} Assume $\bold{R}|_{z = 0} = \p_z \bold{R}|_{z = 0} = 0$. The following boundary conditions hold:
\begin{align} \label{SPE:1}
&u|_{z = 0} = \p_z^2 u|_{z = 0} = \p_z^3 u|_{z = 0} = 0, \qquad \p_z^j U|_{z = 0} = 0, \qquad j = 0, 1, 3.
\end{align}
\end{lemma}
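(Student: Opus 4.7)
\bp[Proof proposal]
The plan is a direct calculation resting on two ingredients: every background quantity $\bar u,\bar v,\bar\psi$ as well as the perturbation traces $\psi|_{z=0}$, $u|_{z=0}$, $v|_{z=0}$ vanish, which makes most terms in \eqref{eq:for:v:1} disappear at $z=0$; and the Falkner--Skan background satisfies the additional wall compatibility $\bar u_{zzz}(s,0)=0$, which is required only for the $j=4$ identity on $U$.

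First I would record the background identities. The wall conditions $u_{FS}(x,0)=v_{FS}(x,0)=0$, together with \eqref{z:variable}--\eqref{change:function}, give $\bar u(s,0)=\bar v(s,0)=\bar\psi(s,0)=0$. Differentiating the Prandtl equation for $u_{FS}$ in $y$ and evaluating at $y=0$ kills the pressure term (no $y$-dependence) and every remaining term carries a factor of $u_{FS}$ or $v_{FS}$ evaluated at the wall, whence $u_{FS,yyy}(x,0)=0$. Rescaling yields $\bar u_{zzz}(s,0)=\lambda(s)^3 u_{FS,yyy}(x,0)=0$, so that $\bar w_{zzz}(s,0)=\bar u_{zzz}(s,0)+\eps u_{zzz}(s,0)=0$ once $u_{zzz}|_{z=0}=0$ is established below.

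Next, $u|_{z=0}=0$ is built into \eqref{eq:for:v:5}, hence $\p_s u|_{z=0}=0$, and $v(s,0)=0$ is immediate from \eqref{for:v:3b}. Evaluating \eqref{eq:for:v:1} at $z=0$: each of $\bar w\p_s u$, $v\p_z\bar w$, $\mathcal A[\bar\psi,\psi]=\psi_z\bar\psi_{sz}+\bar v\psi_{zz}$, and $\mathcal B[\bar\psi,\psi]=\bar\psi_{zz}\psi+\bar\psi\psi_{zz}$ contains a factor among $\{\bar w,\bar v,\bar\psi,\psi,\psi_z=u,v\}$ vanishing at $z=0$, leaving $\p_z^2 u(s,0)=0$. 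Differentiating \eqref{eq:for:v:1} in $z$ and re-evaluating at $z=0$: every resulting Leibniz term retains at least one vanishing factor (using additionally $\p_s u|_{z=0}=0$ and $v_z|_{z=0}=-\p_s u|_{z=0}=0$), except the pure diffusion $\p_z^3 u$, so $\p_z^3 u(s,0)=0$.

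Finally, for $U=\bar w\psi_z-\bar w_z\psi$ a direct Leibniz expansion gives
\[
\p_z U=\bar w\psi_{zz}-\bar w_{zz}\psi,\qquad \p_z^3 U=2\bar w_z\psi_{zzz}+\bar w\psi_{zzzz}-\bar w_{zzzz}\psi-2\bar w_{zzz}\psi_z,
\]
\[
\p_z^4 U=2\bar w_{zz}\psi_{zzz}+3\bar w_z\psi_{zzzz}+\bar w\psi_{zzzzz}-\bar w_{zzzzz}\psi-3\bar w_{zzzz}\psi_z-2\bar w_{zzz}\psi_{zz}.
\]
At $z=0$ the quantities $\bar w$, $\psi$, $\psi_z=u$, $\psi_{zzz}=u_{zz}$, and $\psi_{zzzz}=u_{zzz}$ all vanish by the previous steps, so the cases $j=0,1,3$ are immediate, and for $j=4$ the only non-manifestly-zero residue $-2\bar w_{zzz}(s,0)\psi_{zz}(s,0)=-2\bar w_{zzz}(s,0)u_z(s,0)$ is killed by $\bar w_{zzz}(s,0)=0$ established in the first step. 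The one subtlety, and the only nontrivial input beyond the obvious no-slip reductions, is the identity $\bar u_{zzz}(s,0)=0$ for the FS background: without it the $j=4$ case would fail, since $u_z(s,0)$ is generically nonzero (it is the wall shear of the perturbation).
\ep
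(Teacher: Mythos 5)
Your proof is correct and follows the same route as the paper: evaluate \eqref{eq:for:v:1} (and its $z$-derivative) at $z=0$ to obtain $\p_z^2 u|_{z=0}=\p_z^3 u|_{z=0}=0$, then expand $\p_z^3 U,\p_z^4 U$ by Leibniz and invoke $\p_z^3\bar u|_{z=0}=0$ to kill the residual $-2\bar w_{zzz}u_z$ term in the $j=4$ case. The one thing you add that the paper leaves implicit is the short derivation of the compatibility identity $\p_y^3 u_{FS}(x,0)=0$ from differentiating the Prandtl equation at the wall, which is a worthwhile clarification.
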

\begin{proof} The proof follows by repeatedly using equation \eqref{eq:for:v:1}, and $U$ vanish to second order at $z = 0$. We thus just need to consider $\p_z^3 U, \p_z^4 U$. For this, we record the following identity via the product rule:
\begin{align}
\p_z^3 U = & \bar{w} \p_z^3 u + 2 \bar{w}_z \p_z^2 u - 2  \p_z^3 \bar{w} u - \p_z^4 \bar{w} \psi.
\end{align} 
Evaluation at $z = 0$ and using that $\psi|_{z = 0} = u|_{z= 0} = \p_z^2 u|_{z = 0} = \p_z^3 u|_{z = 0} = 0$, as well as $\p_z^3 \bar{u}|_{z = 0} = 0$, we obtain the result.
\end{proof}

We perform the bottom and top estimates separately, as there are different features that present in each estimate. For the bottom estimate, we need to take care for the boundary conditions at $\{z = 0\}$ and degeneracies due to the vanishing of $\bar{W}$ at $\{z  = 0\}$ (which in particular creates the apparent loss of derivative in \eqref{outer:est:l2:0}). On the other hand, for the top estimate, we need to propagate weights in $z$ as well as contend with the ``nonlocal" effect (the $\psi \chi_I, u \chi_I$ terms appearing on the right-hand side of \eqref{wo:est:l2:0} -- \eqref{wo:est:0}). 
\begin{lemma}[Bottom Estimate] \label{klem3} Under the assumptions of Proposition \ref{pro:vonMise}, for any $0 \le j \le 3$, 
\begin{align}\n
\| \psi \|_{VM^{j}_-}^2 \lesssim & \| \p_z^j \zeta_{\mathrm{Right}} \chi_{O,j}^- \|_{L^2_z}^2 + \| \p_z^j \bold{R} \chi_{O,j}^- \|_{L^2_{sz}}^2 + \bar{L}(\| \psi \chi_I \|_{L^\infty}^2 + \| u \chi_I \|_{L^\infty}^2) \\  \label{vm:YES:1}
& + \bar{L}^{\frac13} \| \psi \|_{\mathrm{von-Mise}_-}^2 + \bold{1}_{1 \le j \le 3} \| \psi \|_{VM^{j-1}_-}^2.
\end{align}
\end{lemma}
\begin{proof} We apply the multiplier $U^{(j)} ( \chi_{O,j}^{-})^2$. Because there are many terms in this estimate, it is useful to keep in mind the overall strategy of this estimate, which we now discuss.

\vspace{2 mm} 

\noindent \textit{Step 0: Strategy of the Proof} We will obtain an identity of the form: 
\begin{align} \label{form:1}
\frac{\p_s}{2} \mathcal{E}^{(j)}_{O,-}(s) - \mathcal{D}^{(j)}_{O,-}(s) = \text{Err}(s). 
\end{align}
Integrating the above equality backwards from $s = 1 + \bar{L}$ to $s = s_\ast$ produces 
\begin{align} \n
 \mathcal{E}^{(j)}_{O,-}(s_\ast) + \int_{s_\ast}^{1 + \bar{L}}  \mathcal{D}^{(j)}_{O,-}(s) \ud s = & \mathcal{E}^{(j)}_{O,-}(1 + \bar{L}) -  \int_{s_\ast}^{1 + \bar{L}} \text{Err}(s) \ud s \\
 \le &  \mathcal{E}^{(j)}_{O,-}(1 + \bar{L}) + \int_1^{1 + \bar{L}} |\text{Err}(s)| \ud s. 
\end{align}
Taking the supremum in $s_\ast$, we obtain 
\begin{align} \label{ocj:1}
\| \psi \|_{VM^j_-}^2 \le \mathcal{E}^{(j)}_{O,-}(1 + \bar{L}) +  \int_1^{1 + \bar{L}} |\text{Err}(s)| \ud s \le \| \p_z^j \zeta_{\text{Right}} \chi_{O,j}^- \|_{L^2_z}^2 + \int_1^{1 + \bar{L}} |\text{Err}(s)| \ud s.
\end{align}
The proof of inequality \eqref{vm:YES:1} will therefore follow upon establishing the following bound:
\begin{align} \n
\int_1^{1 + \bar{L}} |\text{Err}(s)| \ud s \le & \frac{1}{2} \| \psi \|_{VM^j_-}^2  +C\Big( \| \p_z^j \bold{R} \chi_{O,j}^- \|_{L^2_{sz}}^2 + \bar{L}(\| \psi \chi_I \|_{L^\infty}^2 + \| u \chi_I \|_{L^\infty}^2) \\  \label{vm:YES:2}
& + \bar{L}^{\frac13} \| \psi \|_{\text{von-Mise}_-}^2 + \bold{1}_{1 \le j \le 3} \| \psi \|_{VM^{j-1}_-}^2\Big),
\end{align}
where the $ \frac{1}{2} \| \psi \|_{VM^j_-}^2$ will be absorbed to the left-hand side of \eqref{ocj:1}. 

We turn now to the specific form of $\text{Err}(s)$, and the corresponding estimates. 

\vspace{2 mm} 

\noindent \textit{Step 1: Energy Identity} Doing so produces the following identity 
\begin{align} \n
&\frac{\p_s}{2} \int_0^1 |U^{(j)}|^2 (\chi_{O,j}^{-})^2 \ud z + \int_0^1 \p_z u^{(j)} \p_z U^{(j)} (\chi_{O,j}^{-})^2 \ud z \\ \n
=& - 2 \int_{0}^1 \p_z u^{(j)} U^{(j)} \chi_{O,j}^{-} \p_z \chi_{O,j}^{-} \ud z - \int_0^1 \bold{b}_1 \p_z u^{(j)} U^{(j)} (\chi_{O,j}^{-})^2 \ud z  - \int_0^1 \bold{b}_2 u^{(j)} U^{(j)} (\chi_{O,j}^{-})^2 \ud z \\ \n
& -  \int_0^1 \bold{b}_3 \psi^{(j)} U^{(j)} (\chi_{O,j}^{-})^2 \ud z -  \sum_{i = 1}^3 \int_0^1 LC_{i;j} U^{(j)} (\chi_{O,j}^{-})^2 \ud z + \int_0^1 \p_z^j \bold{R} U^{(j)} (\chi_{O,j}^{-})^2 \ud z \\ \label{vmEE:1}
=:& \sum_{i = 1}^8 \mathrm{Err}_{i; j},
\end{align}
where we note the boundary contribution at $\{z = 0\}$ vanishes due to \eqref{SPE:1}.

First of all, in order to rewrite this expression in the form \eqref{form:1}, we need to expand the diffusion term on the left-hand side. This process of expanding, applying the product rule, and integrating by parts produces many (linear) commutator terms which we need to estimate carefully. Indeed, we have   
\begin{align} \n
&\int_0^1 \p_z u^{(j)} \p_z U^{(j)} (\chi_{O,j}^{-})^2 \ud z=  \int_0^1 \p_z u^{(j)} \p_z \p_z^j \{ \bar{W} u - \bar{W}_z \psi  \} (\chi_{O,j}^{-})^2 \ud z \\ \n
= & \int_0^1 \bar{W} |\p_z u^{(j)}|^2 (\chi_{O,j}^{-})^2 \ud z + \int_0^1 j \bar{W}_z \p_z u^{(j)} u^{(j)} (\chi_{O,j}^{-})^2 \ud z \\ \n
&+ \sum_{j' = 0}^{j-1} \binom{j+1}{j'} \int_0^1 \bar{W}^{(1 + j-j')} \p_z u^{(j)}  u^{(j')} (\chi_{O,j}^{-})^2 \ud z \\
& - \sum_{j' = 0}^{j} \binom{j+1}{j'} \int_0^1 \bar{W}_z^{(1 + j-j')} \p_z u^{(j)}  \psi^{(j')} (\chi_{O,j}^{-})^2 \ud z \\ \n
= & \int_0^1 \bar{W} |\p_z u^{(j)}|^2 (\chi_{O,j}^{-})^2 \ud z - \frac12 \int_0^1 j \bar{W}_z  |u^{(j)}|^2 (\chi_{O,j}^{-})^2 \ud z - \frac{j}{2} \bar{W}_z |u^{(j)}|^2|_{z = 0} \\ \n
& - \int_0^1 j \bar{W}_z  |u^{(j)}|^2 \chi_{O,j}^{-} \p_z\chi_{O,j}^- \ud z - \sum_{j' = 0}^{j-1} \binom{j+1}{j'} \int_0^1 \bar{W}^{(1 + j-j')} u^{(j)} \p_z u^{(j')} (\chi_{O,j}^{-})^2 \ud z \\ \n
& - \sum_{j' = 0}^{j-1} \binom{j+1}{j'} \int_0^1 \bar{W}^{(2 + j-j')} u^{(j)} u^{(j')} (\chi_{O,j}^{-})^2 \ud z -2 \sum_{j' = 0}^{j-1} \binom{j+1}{j'} \int_0^1 \bar{W}^{(1 + j-j')} u^{(j)} u^{(j')} \chi_{O,j}^{-} \p_z \chi_{O,j}^{-} \ud z \\ \n
&-  \sum_{j' = 0}^{j-1} \binom{j+1}{j'} \bar{W}^{(1 + j-j')}u^{(j)}  u^{(j')} \Big|_{z = 0} + \sum_{j' = 0}^{j} \binom{j+1}{j'} \int_0^1 \bar{W}_z^{(1 + j-j')}  u^{(j)}  u^{(j')} (\chi_{O,j}^{-})^2 \ud z \\ \n
&+  \sum_{j' = 0}^{j} \binom{j+1}{j'} \int_0^1 \bar{W}_{zz}^{(1 + j-j')}  u^{(j)}  \psi^{(j')} (\chi_{O,j}^{-})^2 \ud z  +2  \sum_{j' = 0}^{j} \binom{j+1}{j'} \int_0^1 \bar{W}_{z}^{(1 + j-j')}  u^{(j)}  \psi^{(j')} \chi_{O,j}^{-} \p_z \chi_{O,j}^{-} \ud z \\ \n
&+ \sum_{j' = 0}^{j} \binom{j+1}{j'}  \bar{W}_z^{(1 + j-j')}  u^{(j)}  \psi^{(j')} \Big|_{z = 0} \\ \label{yes:1}
=:&  \int_0^1 \bar{W} |\p_z u^{(j)}|^2 (\chi_{O,j}^{-})^2 \ud z - \sum_{i = 1}^{11} J_{i},
\end{align}
where the $J_i$ are error terms to be estimated. We now insert the identity \eqref{yes:1} into \eqref{vmEE:1} in order to derive 
\begin{align}
&\frac{\p_s}{2} \int_0^1 |U^{(j)}|^2 (\chi_{O,j}^{-})^2 \ud z +\int_0^1 \bar{W} |\p_z u^{(j)}|^2 (\chi_{O,j}^{-})^2 \ud z = \sum_{i = 1}^8 \mathrm{Err}_{i; j} +  \sum_{i = 1}^{11} J_{i} := \text{Err}(s). 
\end{align}
We note that the above equality is of the form \eqref{form:1} upon noting that $\bar{W} = - |\bar{W}|$ in the support of $\chi_{O,j}^-$. 
\vspace{2 mm}

\noindent \textit{Step 2: Diffusive Error Terms, $J_i(s)$} First, we have 
\begin{align*}
|J_1| \lesssim &\| \bar{W}_z \|_{L^\infty} \| u^{(j)} \chi_{O,j}^{-} \|_{L^2_z}^2.
\end{align*}
Next, we notice that when $0 \le j \le 3$, the only boundary contribution is at $j = 1$, for which we have by a standard Trace inequality: 
\begin{align*}
|J_2| \lesssim &\| \bar{W}_z \|_{L^\infty}|u_z(s, 0)|^2 \lesssim \| \bar{W}_z \|_{L^\infty} (\| u^{(1)} \chi_{O,1}^{-} \|_{L^2_z}^2 + \| u^{(2)} \chi_{O,2}^{-} \|_{L^2_z}^2)
\end{align*}
The terms $J_4, J_5, J_8, J_9$ are treated together as follows: 
\begin{align*}
|J_4| + |J_5| + |J_8| + |J_9| \lesssim & \| \bar{W}^{(3 + j)} \|_{L^\infty} \| u^{(j)} \chi_{O,j}^{-} \|_{L^2_z} \sum_{j' = 0}^{j+1} \| \psi^{(j')} \chi_{O,j} \|_{L^2_z} \\
\lesssim & \| \bar{W}^{(3 + j)} \|_{L^\infty} \| u^{(j)} \chi_{O,j}^{-} \|_{L^2_z} ( \| \psi \chi_{O,0}^- \|_{L^2_z} +  \sum_{j' = 0}^{j} \| u^{(j')} \chi_{O,j' } \|_{L^2_z}),
\end{align*}
where we have used that $\chi_{O,j} \le \chi_{O,j'}$ if $j' \le j$. For the terms where a derivative hits $\chi_{O,j}$, we treat separately the cases when $j \ge 1$ and when $j = 0$.  First, we have 
\begin{align*}
|J_3| \lesssim & \bold{1}_{j \ge 1} \| \bar{W}_z \|_{L^\infty} \| u^{(j)} \chi_{O,j}^{-} \|_{L^2_z} \| \sqrt{|\bar{W}|} \p_z u^{(j-1)} \chi_{O,j-1} \|_{L^2_z}.
\end{align*}
Next, we have upon using that $\p_z \chi_{O,j}^-$ is supported in the set $\{ \chi_{O,j-1} = 1\}$, 
\begin{align*}
|J_6| \lesssim \bold{1}_{j \ge 1} \| \bar{W}^{(1 + j)} \|_{L^\infty} \| u^{(j)} \chi_{O,j}^{-} \|_{L^2_z} \sum_{j' = 0}^{j-1} \| u^{(j')} \chi_{O,j'}^{-} \|_{L^2_z},
\end{align*}
and finally, we have 
\begin{align*}
|J_{10}| \lesssim & \bold{1}_{j \ge 1} \| \bar{W}^{(2 + j)} \|_{L^\infty} \| u^{(j)} \chi_{O,j}^{-} \|_{L^2_z} (\| \psi \chi_{O,0}^- \|_{L^2_z} + \sum_{j' = 0}^{j-1} \| u^{(j')} \chi_{O,j'}^- \|_{L^2_z}) \\
& + \bold{1}_{j = 0} \| \bar{W}_{zz} \|_{L^\infty} \| u \chi_{O,0}^- \|_{L^2_z} \| \psi \chi_I \|_{L^\infty_z}
\end{align*}
We finally notice that $J_7 = J_{11} = 0$ for $0 \le j \le 3$. 

We now consolidate all of the bounds on $J_i$ as follows: 
\begin{align*}
\sum_{i = 1}^{11} |J_i(s)| \lesssim \| \psi \chi_{O,0}^- \|_{L^\infty_z}^2 + \sum_{j' = 0}^j \| u^{(j')} \chi^-_{O,j'} \|_{L^2_z}^2 + \bold{1}_{j \ge 1} \| \sqrt{|\bar{W}|} \p_z u^{(j-1)} \chi_{O,j-1}^- \|_{L^2_z}^2 + \bold{1}_{j = 0} \| \psi \chi_I \|_{L^\infty_z}^2.  
\end{align*}
Integrating this in $s$, and invoking \eqref{ujell2}, yields 
\begin{align} \label{mpsid:1}
\int_1^{1 + \bar{L}}\sum_{i = 1}^{11} |J_i(s)| \ud s \lesssim \bar{L}^{\frac13} \| \psi \|_{\text{von-Mise}_-}^2 + \bold{1}_{1 \le j \le 3} \| \psi \|_{VM^{j-1}_-} + \bold{1}_{j = 0} \bar{L} \| \psi \chi_I \|_{L^\infty}^2.
\end{align}
\vspace{2 mm}

\noindent \textit{Step 3: Estimation of $\mathrm{Err}_{i; j}(s)$} We begin with $\text{Err}_{1;j}$. This is the primary term that talks to the previous order of derivative, and in the case of $j = 0$, talks to the interior norm. We first treat the case when $j \ge 1$. We have upon using that $|\bar{W}| \gtrsim 1$ in the support of $\p_z \chi_{O,j}^-$, 
\begin{align*}
|\text{Err}_{1;j}|\bold{1}_{j \ge 1}  \lesssim &\bold{1}_{j \ge 1} \| \sqrt{|\bar{W}|} \p_z u^{(j)} \chi_{O,j}^- \|_{L^2_z} \| \p_z U^{(j-1)} \chi_{O,j-1}^- \|_{L^2_z}  \\
\le & \frac{1}{4}\mathcal{D}^{(j)}_{O,-}(s) +  \bold{1}_{j \ge 1} C\| \p_z U^{(j-1)} \chi_{O,j-1}^- \|_{L^2_z}^2,
\end{align*}
where we have used Young's inequality for products. 

For the $j \ge 1$ case, we have invoked the bound \eqref{deri:1}. For the $j = 0$ case, we use that $\chi_{O,0}'$ is supported in the set $\{\chi_I = 1\}$, and integrate by parts as follows: 
\begin{align*}
\text{Err}_{1; 0} = & - 2 \int_0^1 \p_z u U \chi_{O,0}^- \p_z \chi_{O,0}^- \ud z= - 2 \int_0^1 \p_z u \{ \bar{W} u - \bar{W}_z \psi \} \chi_{O,0}^- \p_z \chi_{O,0}^- \ud z \\
= &- \int_0^1 \bar{W}_z u^2 \chi_{O,0}^- \p_z \chi_{O,0}^- \ud z +  \int_0^1 \bar{W} u^2 \p_z \{ \chi_{O,0}^- \p_z \chi_{O,0}^- \} \ud z - \int_0^1 \bar{W}_{zz} u \psi \chi_{O,0}^- \p_z \chi_{O,0}^- \ud z \\
& -  \int_0^1 \bar{W}_{z} u \psi \p_z \{ \chi_{O,0}^- \p_z \chi_{O,0}^- \} \ud z, 
\end{align*}
after which we get $|\text{Err}_{1;0}| \lesssim \| \bar{W}_{zz} \|_{L^\infty}( \| u \chi_I \|_{L^2}^2 + \| \psi \chi_I \|_{L^2}^2)$.

Next, we estimate 
\begin{align*}
|\text{Err}_{2;j}| \lesssim & \Big\| \frac{\bold{b}_1}{\bar{W}} \chi_{O}^- \Big\|_{L^\infty} \| \sqrt{\bar{|W|}} \p_z u^{(j)} \chi_{O,j}^- \|_{L^2} \| U^{(j)} \chi_{O,j}^- \|_{L^2_z} \\
\le & \frac{1}{4}\| \sqrt{\bar{|W|}} \p_z u^{(j)} \chi_{O,j}^- \|_{L^2}^2 + C\| U^{(j)} \chi_{O,j}^- \|_{L^2_z}^2 \\
\le & \frac{1}{4}\mathcal{D}^{(j)}_{O,-}(s) + C\| U^{(j)} \chi_{O,j}^- \|_{L^2_z}^2,
\end{align*}
where we have used again Young's inequality for products, as well as the assumptions \eqref{eng:1} -- \eqref{eng:2}.

Next, we successively have
\begin{align*}
|\text{Err}_{3;j}| \lesssim & \| \bold{b}_2 \|_{L^\infty} \| u^{(j)} \chi_{O,j}^- \|_{L^2} \| U^{(j)} \chi_{O,j}^- \|_{L^2}, \\
|\text{Err}_{4;j}| \lesssim & \| \bold{b}_3 \|_{L^\infty} \| \psi^{(j)} \chi_{O,j}^- \|_{L^2} \| U^{(j)} \chi_{O,j}^- \|_{L^2},
\end{align*}
where we have used the bootstrap assumptions \eqref{eng:3} -- \eqref{eng:4}. Next, we have the linear commutators arising from $LC_1, LC_2, LC_3$. We have (upon using that $\chi_{O,j} \le \chi_{O,j'}$ for $j' \le j$)
\begin{align*}
|\text{Err}_{5; j}| \lesssim & (\sum_{j' = 0}^j \| \p_z^{j'} \bold{b}_1 \|_{L^\infty}) (\sum_{j' = 0}^j \| u^{(j')} \chi_{O,j'}^- \|_{L^2_z}) \| U^{(j)} \chi_{O,j}^- \|_{L^2_z}, \\
|\text{Err}_{6; j}| \lesssim &  (\sum_{j' = 0}^j \| \p_z^{j'} \bold{b}_2 \|_{L^\infty}) (\sum_{j' = 0}^j \| u^{(j')} \chi_{O,j'}^- \|_{L^2_z}) \| U^{(j)} \chi_{O,j}^- \|_{L^2_z}, \\
|\text{Err}_{7; j}| \lesssim &  (\sum_{j' = 0}^j \| \p_z^{j'} \bold{b}_3 \|_{L^\infty}) (\sum_{j' = 0}^j \| \psi^{(j')} \chi_{O,j'}^- \|_{L^2_z}) \| U^{(j)} \chi_{O,j}^- \|_{L^2_z}.
\end{align*}
For the final $\text{Err}_{8,j}$ term, we estimate via Cauchy-Schwartz as follows:
\begin{align*}
|\text{Err}_{8; j}| \lesssim & \| \p_z^j \bold{R} \chi_{O,j}^- \|_{L^2_{z}}^2 + \| U^{(j)} \chi_{O,j}^- \|_{L^2_z}^2. 
\end{align*}

Consolidating the above estimates on $\text{Err}_{i; j}$ together, we have 
\begin{align} \n
\sum_{i = 1}^7 |\text{Err}_{i; j}(s)| \le & \frac{1}{2}\mathcal{D}^{(j)}_{O,-}(s) +C \Big( \sum_{j' = 0}^j \| u^{(j')} \chi_{O,j'}^- \|_{L^2_z}^2 + \sum_{j' = 0}^j \| \psi^{(j')} \chi_{O,j'}^- \|_{L^2_z}^2 + \| U^{(j)} \chi_{O,j}^- \|_{L^2_z}^2 \\
&  + \bold{1}_{j \ge 1} \| \p_z U^{(j-1)} \chi_{O,j-1}^- \|_{L^2_z} + \bold{1}_{j = 0} ( \| u \chi_I \|_{L^2}^2 + \| \psi \chi_I \|_{L^2}^2) + \| \p_z^j \bold{R} \chi_{O,j}^- \|_{L^2_{z}}^2\Big).
\end{align}
We now integrate both sides in $s$ and invoke \eqref{ujell2}, \eqref{outer:est:l2:0}, and \eqref{deri:1} to obtain 
\begin{align} \n
\int_1^{1 + \bar{L}} \sum_{i = 1}^7 |\text{Err}_{i; j}(s)| \ud s \le &  \frac{1}{2} \| \psi \|_{VM^j_-}^2 + \bold{1}_{1 \le j \le 3} C\| \psi \|_{VM^{j-1}_-}^2+ C \bar{L}^{\frac13} \| \psi \|_{\text{von-Mise}_-}^2 \\ \label{mpsid:2}
&+ \bold{1}_{j = 0} C\bar{L} ( \| u \chi_I \|_{L^\infty}^2 + \| \psi \chi_I \|_{L^\infty}^2) +C \| \p_z^j \bold{R} \chi_{O,j}^- \|_{L^2}^2.
\end{align}

\vspace{2 mm}

\noindent \textit{Step 4: Conclusion of the Proof} We combine the estimates \eqref{mpsid:1} and \eqref{mpsid:2} in order to achieve the estimate \eqref{vm:YES:2}. We then insert \eqref{vm:YES:2} into the right-hand side of \eqref{ocj:1}, establishing \eqref{vm:YES:1}. This concludes the proof of the lemma. 
\end{proof}

We now perform the same estimate on the domain $(1, \infty)$. As indicated earlier, here there will be two simplifications: the lack of degeneracy of $\bar{W}$ and not needing to deal with boundary conditions at $\{z = 0\}$. On the other hand, there are two new features: the presence of nonlocal contributions $\| \psi \chi_I \|_{L^\infty} + \| u \chi_I \|_{L^\infty}$ as well as weights of $\langle z \rangle^n$. A further (comparatively minor) difference is that we cannot put $\psi$ itself in $L^2_z$ (due to the unboundedness of the domain), but rather only in $L^\infty_z$. This explains why the lowest order term is often singled out in the forthcoming calculations. 

\begin{lemma}[Top Estimate]Under the assumptions of Proposition \ref{pro:vonMise}, for any $0 \le j \le 3$, for any $n \in \mathbb{N}$, 
\begin{align}\n
\| \psi \|_{VM^{j,n}_+}^2 \lesssim & \| \p_z^j \zeta_{\mathrm{Left}} \chi_{O,j}^+ \langle z \rangle^n \|_{L^2_z}^2 + \| \p_z^j \bold{R} \chi_{O,j}^+  \langle z \rangle^n \|_{L^2_{sz}}^2 + \bar{L}(\| \psi \chi_I \|_{L^\infty}^2 + \| u \chi_I \|_{L^\infty}^2) \\  \label{vm:YES:1:plus}
& + \bar{L} \| \psi \|_{\mathrm{von-Mise}_{+,n}}^2 + \bold{1}_{1 \le j \le 3} \| \psi \|_{VM^{j-1, n}_+}^2.
\end{align}
\end{lemma}
\begin{proof} We apply the multiplier $U^{(j)} ( \chi_{O,j}^{+})^2 \langle z \rangle^{2n}$. We again first provide an overview of the strategy of the proof, even though it is essentially the same as the previous lemma. 

\vspace{2 mm} 

\noindent \textit{Step 0: Strategy of the Proof} We will obtain an identity of the form: 
\begin{align} \label{form:1:top}
\frac{\p_s}{2} \mathcal{E}^{(j, n)}_{O,+}(s) + \mathcal{D}^{(j, n)}_{O,+}(s) = \text{Err}(s). 
\end{align}
Integrating the above equality forwards from $s = 1$ to $s = s_\ast$ produces 
\begin{align} \n
 \mathcal{E}^{(j, n)}_{O,-}(s_\ast) + \int_{1}^{s_\ast}  \mathcal{D}^{(j, n)}_{O,+}(s) \ud s = & \mathcal{E}^{(j,n)}_{O,+}(1) +  \int_{1}^{s} \text{Err}(s) \ud s \\
 \le &  \mathcal{E}^{(j,n)}_{O,+}(1)  + \int_1^{1 + \bar{L}} |\text{Err}(s)| \ud s. 
\end{align}
Taking the supremum in $s_\ast$, we obtain 
\begin{align} \label{ocj:2}
\| \psi \|_{VM^{j,n}_+}^2 \le \mathcal{E}^{(j,n)}_{O,+}(1 + \bar{L}) +  \int_1^{1 + \bar{L}} |\text{Err}(s)| \ud s \le \| \p_z^j \zeta_{\text{Left}} \chi_{O,j}^+ \langle z \rangle^n \|_{L^2_z}^2 + \int_1^{1 + \bar{L}} |\text{Err}(s)| \ud s.
\end{align}
The proof of inequality \eqref{vm:YES:1:plus} will therefore follow upon establishing the following bound:
\begin{align} \n
\int_1^{1 + \bar{L}} |\text{Err}(s)| \ud s \le & \frac{1}{2} \| \psi \|_{VM^{j,n}_+}^2  +C\Big( \| \p_z^j \bold{R} \chi_{O,j}^+ \langle z \rangle^n \|_{L^2_{sz}}^2 + \bar{L}(\| \psi \chi_I \|_{L^\infty}^2 + \| u \chi_I \|_{L^\infty}^2) \\  \label{vm:YES:2:plus}
& + \bar{L} \| \psi \|_{\text{von-Mise}_{+,n}}^2 + \bold{1}_{1 \le j \le 3} \| \psi \|_{VM^{j-1, n}_+}^2\Big),
\end{align}
where the $ \frac{1}{2} \| \psi \|_{VM^{j,n}_+}^2$ will be absorbed to the left-hand side of \eqref{ocj:2}. 

We turn now to determining the specific form of $\text{Err}(s)$, and then performing the corresponding estimates. 

\vspace{2 mm}

\noindent \textit{Step 1: Energy Identity} This produces the following identity 
\begin{align} \n
&\frac{\p_s}{2} \int_1^{\infty} |U^{(j)}|^2 (\chi_{O,j}^{+})^2 \langle z \rangle^{2n} \ud z + \int_1^{\infty} \p_z u^{(j)} \p_z U^{(j)} (\chi_{O,j}^{+})^2 \langle z \rangle^{2n} \ud z \\ \n
=& - 2 \int_{1}^{\infty} \p_z u^{(j)} U^{(j)} \chi_{O,j}^{+} \p_z \chi_{O,j}^{+} \langle z \rangle^{2n} \ud z - \int_1^{\infty} \bold{b}_1 \p_z u^{(j)} U^{(j)} (\chi_{O,j}^{+})^2 \langle z \rangle^{2n} \ud z  \\ \n
&- \int_1^{\infty} \bold{b}_2 u^{(j)} U^{(j)} (\chi_{O,j}^{+})^2 \langle z \rangle^{2n} \ud z  -  \int_1^{\infty} \bold{b}_3 \psi^{(j)} U^{(j)} (\chi_{O,j}^{+})^2\langle z \rangle^{2n} \ud z \\ \n
&-  \sum_{i = 1}^3 \int_1^{\infty} LC_{i;j} U^{(j)} (\chi_{O,j}^{+})^2 \langle z \rangle^{2n} \ud z + \int_1^{\infty} \p_z^j \bold{R} U^{(j)} (\chi_{O,j}^{+})^2 \langle z \rangle^{2n} \ud z \\ \n
& - 2n \int_1^\infty \p_z u^{(j)} U^{(j)} \langle z \rangle^{2n-1} (\chi_{O,j}^+)^2 \ud z \\ \label{vmEE:1:top}
=:& \sum_{i = 1}^9 Err_{i; j}.
\end{align}
Compared to \eqref{vmEE:1}, we notice the presence of an extra commutator term, namely $\text{Err}_{9;j}$. This new error term will  be estimated below in \eqref{new:com}. 

Just as before, we need to expand the diffusion term on the left-hand side, which analogously produces 
\begin{align} \n
&\int_1^{\infty} \p_z u^{(j)} \p_z U^{(j)} (\chi_{O,j}^{+})^2 \langle z \rangle^{2n} \ud z \\ \n
= & \int_1^{\infty} \bar{W} |\p_z u^{(j)}|^2 (\chi_{O,j}^{+})^2 \langle z \rangle^{2n} \ud z + \sum_{j' = 0}^j \binom{j+1}{j'} \int_1^\infty \p_z u^{(j)} \p_z^{j+1 - j'} \bar{W} u^{(j')} (\chi_{O,j}^{+})^2 \langle z \rangle^{2n} \ud z \\ \n
& - \sum_{j' = 1}^{j+1} \binom{j+1}{j'} \int_1^\infty \p_z u^{(j)} \p_z^{j+1-j'} \bar{W}_z \psi^{(j')}(\chi_{O,j}^{+})^2 \langle z \rangle^{2n} \ud z - \int_1^\infty \p_z u^{(j)} \p_z^{j+1} \bar{W}_z \psi (\chi_{O,j}^{+})^2 \langle z \rangle^{2n} \ud z \\
=:& \int_1^{\infty} \bar{W} |\p_z u^{(j)}|^2 (\chi_{O,j}^{+})^2 \langle z \rangle^{2n} \ud z - ( J_1 + J_2 + J_3)
\end{align}

Pairing this identity with \eqref{vmEE:1:top}, we have obtained our full energy identity which reads: 
\begin{align} \label{merced:1}
\frac{\p_s}{2} \int_1^{\infty} |U^{(j)}|^2 (\chi_{O,j}^{+})^2 \langle z \rangle^{2n} \ud z + \int_1^{\infty} \bar{W} |\p_z u^{(j)}|^2 (\chi_{O,j}^{+})^2 \langle z \rangle^{2n} \ud z =  \sum_{i = 1}^9 \mathrm{Err}_{i; j} + \sum_{i = 1}^3 J_i =: \text{Err}(s),
\end{align}
which is of the form \eqref{form:1:top}.

\vspace{2 mm}

\noindent \textit{Step 2: Diffusive Error Terms, $J_i(s)$} There are far fewer terms compared to \eqref{yes:1}: we do not need to integrate by parts due to the nonvanishing of $\bar{W}$ on the ``top" domain. Indeed, we may estimate these error terms as follows: 
\begin{align*}
|J_1| + |J_2| \lesssim & (\sum_{j' = 0}^{j+1} \| \p_z^{j'} \bar{W} \|_{L^\infty}) \| \p_z u^{(j)} \chi_{O,j}^+ \langle z \rangle^n \|_{L^2} \sum_{j' = 0}^j \| u^{(j')} \chi_{O,j'}^+ \langle z \rangle^n \|_{L^2_z}, \\
|J_3| \lesssim & \| \p_z^{j+2} \bar{W} \langle z \rangle^{n} \|_{L^2} \| \p_z u^{(j)} \chi_{O,j}^+ \langle z \rangle^n \|_{L^2} \| \psi \chi_{O,0}^+ \|_{L^\infty_z} 
\end{align*}
Since $\bar{W}$ is bounded below on the support of $\chi_{O,j}^+$, we may apply Young's inequality for products to obtain 
\begin{align} \label{nk:1}
\sum_{i = 1}^3 |J_i(s)| \le \frac{1}{4} \| \sqrt{\bar{W}} \p_z u^{(j)} \chi_{O,j}^+ \langle z \rangle^n \|_{L^2_z}^2 + C(\| \psi \chi_{O,0}^+ \|_{L^\infty_z}^2 +\sum_{j' = 0}^j \| u^{(j')} \chi_{O,j'}^+ \langle z \rangle^n \|_{L^2_z}^2).
\end{align}
\vspace{2 mm}

\noindent \textit{Step 3: Estimation of RHS of \eqref{vmEE:1:top}} We now turn to estimating the terms $\text{Err}_{i;j}$.The heuristic will be as follows: our main motivation for integration by parts is to cancel out linearly growing coefficients, as opposed to loss of derivatives.

For the first term, $\text{Err}_{1;j}$, we note that on the support of $\p_z \chi_{O,j}^+$, the weight $\langle z \rangle^n$ is bounded and therefore plays essentially no role. We subsequently estimate as follows: 
\begin{align*}
|\text{Err}_{1;j}| \lesssim & \bold{1}_{j \ge 1} \| \p_z u^{(j)} \chi_{O,j}^+ \|_{L^2_z} \| \p_z U^{(j-1)} \chi_{O,j-1}^+ \|_{L^2_z} + \bold{1}_{j = 0} \| \p_z u^{(j)} \chi_{O,j}^+ \|_{L^2_z} \| U \chi_I \|_{L^2_z} \\
\lesssim &  \bold{1}_{j \ge 1} \| \p_z u^{(j)} \chi_{O,j}^+ \|_{L^2_z} \| \p_z U^{(j-1)} \chi_{O,j-1}^+ \|_{L^2_z} + \bold{1}_{j = 0} \| \p_z u^{(j)} \chi_{O,j}^+ \|_{L^2_z} (\| \psi \chi_I \|_{L^\infty} + \| u \chi_I \|_{L^\infty}) \\
\le & \frac{1}{100}  \| \sqrt{\bar{W}} \p_z u^{(j)} \chi_{O,j}^+ \langle z \rangle^n \|_{L^2_z}^2 + C\bold{1}_{j \ge 1} \| \p_z U^{(j-1)} \chi_{O,j-1}^+ \|_{L^2_z}^2 + C\bold{1}_{j = 0}(\| \psi \chi_I \|_{L^\infty}^2 + \| u \chi_I \|_{L^\infty}^2),
\end{align*}
upon again noting that $\bar{W}$ is bounded uniformly below and subsequently using Young's inequality for products. 

We now address the error term $\text{Err}_{2;j}$ which requires an integration by parts in $z$ due to the linear growth in $z$ of the coefficient $\bold{b}_1$. We use the identity \eqref{derived:1} with $j$ being replaced by $j-1$
to write 
\begin{align*}
\text{Err}_{2;j} =& \int_1^{\infty} \bold{b}_1 \bar{W} \p_z u^{(j)}  u^{(j)}  (\chi_{O,j}^{+})^2 \langle z \rangle^{2n} \ud z + \sum_{j' = 0}^{j-1} \binom{j}{j'} \int_1^{\infty}  \bar{W}^{(j-j')} \bold{b}_1  \p_z u^{(j)}  u^{(j')}  (\chi_{O,j}^{+})^2 \langle z \rangle^{2n} \ud z \\
& - \sum_{j' = 1}^{j} \binom{j}{j'}  \int_1^{\infty} \bold{b}_1 \bar{W}_z^{(j-j')} \p_z u^{(j)}  \psi^{(j')}   (\chi_{O,j}^{+})^2 \langle z \rangle^{2n} \ud z -  \int_1^{\infty} \bold{b}_1\bar{W}_z^{(j)} \psi  \p_z u^{(j)}    (\chi_{O,j}^{+})^2 \langle z \rangle^{2n} \ud z \\
= & - \frac12 \int_1^{\infty} \p_z \bold{b}_1 \bar{W} |u^{(j)}|^2  (\chi_{O,j}^{+})^2 \langle z \rangle^{2n} \ud z - \frac12 \int_1^{\infty} \bold{b}_1 \bar{W}_z |u^{(j)}|^2  (\chi_{O,j}^{+})^2 \langle z \rangle^{2n} \ud z\\
&   -n \int_1^{\infty} \bold{b}_1 \bar{W} |u^{(j)}|^2  (\chi_{O,j}^{+})^2  \langle z \rangle^{2n-1} \ud z - \int_1^{\infty} \bold{b}_1 \bar{W} |u^{(j)}|^2  \chi_{O,j}^{+} \p_z \chi_{O,j}^{+} \langle z \rangle^{2n} \ud z\\
&+ \sum_{j' = 0}^{j-1} \binom{j}{j'} \int_1^{\infty}  \bar{W}^{(j-j')} \bold{b}_1  \p_z u^{(j)}  u^{(j')}  (\chi_{O,j}^{+})^2 \langle z \rangle^{2n} \ud z \\
& - \sum_{j' = 1}^{j} \binom{j}{j'}  \int_1^{\infty} \bold{b}_1 \bar{W}_z^{(j-j')} \p_z u^{(j)}  \psi^{(j')}   (\chi_{O,j}^{+})^2 \langle z \rangle^{2n} \ud z -  \int_1^{\infty} \bold{b}_1\bar{W}_z^{(j)} \psi  \p_z u^{(j)}    (\chi_{O,j}^{+})^2 \langle z \rangle^{2n} \ud z \\
=:& \sum_{i = 1}^7 \text{Err}_{2, i ;j}.
\end{align*}
where above we have integrated by parts the pure transport term $\int_1^{\infty} \bold{b}_1 \bar{W} \p_z u^{(j)}  u^{(j)}  (\chi_{O,j}^{+})^2 \langle z \rangle^{2n} \ud z$ to cancel the linear growth exhibited by $\bold{b}_1$. We now estimate each of these terms successively. 
\begin{align*}
\sum_{i = 1}^3 |\text{Err}_{2, i ;j}| \lesssim & \Big( \| \p_z \bold{b}_1 \|_{L^\infty} + \| \frac{\bold{b}_1}{\langle z \rangle} \|_{L^\infty} \Big)\Big( \| \bar{W} \|_{L^\infty} + \| \bar{W}_z \langle z \rangle\|_{L^\infty} \Big) \| u^{(j)} \chi_{O,j}^+ \langle z \rangle^n \|_{L^2_z}^2,
\end{align*}
where we have invoked \eqref{eng:1} -- \eqref{eng:2}. Next, we have upon noting the support of $\p_z \chi_{O,j}^+$ is for bounded values of $z$ (so the weights $\langle z \rangle^n$ are in particular uniformly bounded), 
\begin{align*}
 |\text{Err}_{2, 4 ;j}| \lesssim & \bold{1}_{j \ge 1} \| \frac{\bold{b}_1}{\langle z \rangle} \|_{L^\infty} \| u^{(j)} \chi_{O,j}^+ \langle z \rangle^n \|_{L^2_z} \| \p_z u^{(j-1)} \chi_{O,j-1}^+ \langle z \rangle^n \|_{L^2_z}  + \bold{1}_{j = 0} \| \frac{\bold{b}_1}{\langle z \rangle} \|_{L^\infty} \| u \chi_I \|_{L^\infty_z}^2 \\
 \lesssim & \| u^{(j)} \chi_{O,j}^+ \langle z \rangle^n \|_{L^2_z}^2 +  \bold{1}_{j \ge 1} \mathcal{D}^{(j-1, n)}_{O,+}(s) + \bold{1}_{j =0}  \| u \chi_I \|_{L^\infty_z}^2.
\end{align*}
Next, upon again using the boundedness below of $\bar{W}$ on the support of $\chi_{O,j}^+$ and Young's inequality for products, 
\begin{align*}
\sum_{i = 5}^6 |\text{Err}_{2, i ;j}| \lesssim &  \| \frac{\bold{b}_1}{\langle z \rangle} \|_{L^\infty} (\sum_{j' = 1}^j \| \bar{W}^{(j')} \langle z \rangle \|_{L^\infty}) \| \p_z u^{(j)} \chi_{O,j}^+ \langle z \rangle^n \|_{L^2_z} (\sum_{j' = 0}^{j-1} \| u^{(j')} \chi_{O,j'} \langle z \rangle^n \|_{L^2_z}) \\
\le & \frac{1}{100} \| \sqrt{\bar{W}} \p_z u^{(j)} \chi_{O,j}^+ \langle z \rangle^n \|_{L^2_z}^2 +C \sum_{j' = 0}^{j-1} \| u^{(j')} \chi_{O,j'} \langle z \rangle^n \|_{L^2_z}^2
\end{align*}
Lastly from this set of terms, 
\begin{align*}
|\text{Err}_{2, 7 ;j}| \lesssim &\| \frac{\bold{b}_1}{\langle z \rangle} \|_{L^\infty} \| \bar{W}^{(j)}_z \langle z \rangle^{n+1} \|_{L^2} \| \psi \chi_{O,0}^+ \|_{L^\infty_z} \| \p_z u^{(j)} \chi_{O,j}^+ \langle z \rangle^n \|_{L^2_z} \\
\le & \frac{1}{100} \| \sqrt{\bar{W}} \p_z u^{(j)} \chi_{O,j}^+ \langle z \rangle^n \|_{L^2_z}^2 +C \| \psi \chi_{O,0}^+ \|_{L^\infty_z}^2
\end{align*}
The treatment of $\text{Err}_{2;j}$ is now concluded. Next, we have 
\begin{align*}
|\text{Err}_{3 ;j}| \lesssim & \| \bold{b}_2 \|_{L^\infty} \| u^{(j)} \chi_{O,j}^+ \langle z \rangle^n \|_{L^2_z} \| U^{(j)} \chi_{O,j}^+ \langle z \rangle^n \|_{L^2_z}, \\
|\text{Err}_{4 ;j}| \lesssim & \bold{1}_{j \ge 1} \| \bold{b}_3 \|_{L^\infty} \| u^{(j-1)} \chi_{O,j-1}^+ \langle z \rangle^n \|_{L^2_z} \| U^{(j)} \chi_{O,j}^+ \langle z \rangle^n \|_{L^2_z} \\
& + \bold{1}_{j = 0} \| \bold{b}_3 \langle z \rangle^{n+1} \|_{L^\infty} \| \psi \chi_{O,0}^+ \|_{L^\infty} \| U \chi_{O,0}^+ \langle z \rangle^n \|_{L^2_z} 
\end{align*}
Next, we have 
\begin{align*}
|\text{Err}_{5;j}| \lesssim & (\sum_{j' = 1}^j \| \p_z^{j'} \bold{b}_1 \|_{L^\infty}) (\sum_{j' = 1}^j \| u^{(j')} \chi_{O,j'}^+ \langle z \rangle^n \|_{L^2_z}) \| U^{(j)} \chi_{O,j}^+ \langle z \rangle^n \|_{L^2}, \\
|\text{Err}_{6;j}| \lesssim &(\sum_{j' = 1}^j \| \p_z^{j'} \bold{b}_2 \|_{L^\infty}) (\sum_{j' = 0}^{j-1} \| u^{(j')} \chi_{O,j'}^+ \langle z \rangle^n \|_{L^2_z}) \| U^{(j)} \chi_{O,j}^+ \langle z \rangle^n \|_{L^2}, \\
|\text{Err}_{7;j}| \lesssim &(\sum_{j' = 1}^{j-1} \| \p_z^{j'} \bold{b}_3 \|_{L^\infty})(\sum_{j' = 0}^{j-2} \| u^{(j')} \chi_{O,j'}^+ \langle z \rangle^n \|_{L^2_z}) + \| \bold{b}_3^{(j)} \langle z \rangle^{n+1} \|_{L^\infty} \| \psi \chi_{O,0}^+ \|_{L^\infty} \| U^{(j)} \chi_{O,j}^+ \langle z \rangle^n \|_{L^2},
\end{align*}
which are acceptable contributions due to the assumptions \eqref{eng:3} -- \eqref{eng:4}. Next, we have our source term, which we estimate simply by Cauchy-Schwartz and Young's inequality: 
\begin{align}
|\text{Err}_{8;j}| \lesssim & \| \p_z^j \bold{R} \chi_{O,j}^+ \langle z \rangle^n \|_{L^2_{z}}^2 + \| U^{(j)} \chi_{O,j}^+ \langle z \rangle^n \|_{L^2_{z}}^2.
\end{align}
We now move to our final term, 
\begin{align} \label{new:com}
|\text{Err}_{9;j}| \lesssim &\| \p_z u^{(j)} \chi_{O,j}^+ \langle z \rangle^n \|_{L^2_z} \| U^{(j)} \chi_{O,j}^+ \langle z \rangle^n \|_{L^2_z} \\ \n
\le &  \frac{1}{100} \| \sqrt{\bar{W}} \p_z u^{(j)} \chi_{O,j}^+ \langle z \rangle^n \|_{L^2_z}^2 +C\| U^{(j)} \chi_{O,j}^+ \langle z \rangle^n \|_{L^2_z}^2. 
\end{align}

We now bring together all the bounds of the $\text{Err}_{i,j}$ terms in the following manner: 
\begin{align} \n
\sum_{i = 1}^9 |\text{Err}_{i,j}(s)| \le & \frac{1}{4}  \| \sqrt{\bar{W}} \p_z u^{(j)} \chi_{O,j}^+ \langle z \rangle^n \|_{L^2_z}^2 \\ \n
&+ C\bold{1}_{j \ge 1} (\| \p_z U^{(j-1)} \chi_{O,j-1}^+ \|_{L^2_z}^2 + \mathcal{D}^{(j-1,n)}_{O,+}(s))  \\ \n
&+ C\bold{1}_{j = 0}(\| \psi \chi_I \|_{L^\infty}^2 + \| u \chi_I \|_{L^\infty}^2) \\ \n
&+ C(\| U^{(j)} \chi_{O,j}^+ \langle z \rangle^n \|_{L^2_z}^2 +  \| \psi \chi_{O,0}^+ \|_{L^\infty}^2 + \sum_{j' = 1}^j \| u^{(j')} \chi_{O,j'}^+ \langle z \rangle^n \|_{L^2_z}^2) \\ \label{nk:2}
& + C \| \p_z^j \bold{R} \chi_{O,j}^+ \langle z \rangle^n \|_{L^2_{z}}^2.
\end{align}
Above, we have grouped terms with respect to the eventual right-hand side of \eqref{vm:YES:2:plus}.

\vspace{2 mm}

\noindent \textit{Step 4: Conclusion of Proof} We bring together bounds \eqref{nk:1} and \eqref{nk:2} (along with the definition of $\text{Err}(s)$ from \eqref{merced:1}) to obtain the consolidated estimate 
\begin{align} \n
|\text{Err}(s)| \le &  \frac{1}{2}  \| \sqrt{\bar{W}} \p_z u^{(j)} \chi_{O,j}^+ \langle z \rangle^n \|_{L^2_z}^2 \\ \n
&+ C\bold{1}_{j \ge 1} (\| \p_z U^{(j-1)} \chi_{O,j-1}^+ \|_{L^2_z}^2 + \mathcal{D}^{(j-1,n)}_{O,+}(s))  \\ \n
&+ C\bold{1}_{j = 0}(\| \psi \chi_I \|_{L^\infty}^2 + \| u \chi_I \|_{L^\infty}^2) \\ \n
&+ C(\| U^{(j)} \chi_{O,j}^+ \langle z \rangle^n \|_{L^2_z}^2 +  \| \psi \chi_{O,0}^+ \|_{L^\infty}^2 + \sum_{j' = 1}^j \| u^{(j')} \chi_{O,j'}^+ \langle z \rangle^n \|_{L^2_z}^2) \\ \label{nk:3}
& + C \| \p_z^j \bold{R} \chi_{O,j}^+ \langle z \rangle^n \|_{L^2_{z}}^2
\end{align}
We integrate both sides in $s$ and invoke the bounds \eqref{deri:2}, \eqref{md:2}, and \eqref{md:4}, as well as the definition of the energy, \eqref{canyou}, in order to establish exactly \eqref{vm:YES:2:plus}. We then insert \eqref{vm:YES:2:plus} into the right-hand side of \eqref{ocj:2}. Upon absorbing the $\frac12 \| \psi \|_{VM^{j,n}_+}^2$ to the left-hand side, this exactly establishes \eqref{vm:YES:1:plus}. This concludes the proof of the lemma.  
\end{proof}

\subsection{Proof of Proposition \ref{pro:vonMise}}

We now bring together the previously established lemmas in order to prove the main proposition of this section. 
\begin{proof}[Proof of Proposition \ref{pro:vonMise}] To make notations convenient, we set  $\mathrm{RHS}_{VM}$ to stand for the right-hand side of the desired estimate, \eqref{mainvonM}. We recall the bounds \eqref{vm:YES:1} and \eqref{vm:YES:1:plus} in the case $j = 0$, which imply: 
\begin{align} 
\| \psi \|_{VM^{0}_-}^2 \lesssim & \mathrm{RHS}_{VM}+ \bar{L}^{\frac13} \| \psi \|_{\text{von-Mise}_-}^2, \\
\| \psi \|_{VM^{0,n}_+}^2 \lesssim & \mathrm{RHS}_{VM} + \bar{L} \| \psi \|_{\text{von-Mise}_{+,n}}^2,
\end{align}
and the general $j = 1, 2, 3$ case reads 
\begin{align} 
\| \psi \|_{VM^{j}_-}^2 \lesssim &\mathrm{RHS}_{VM}+ \bar{L}^{\frac13} \| \psi \|_{\text{von-Mise}_-}^2 +\bold{1}_{1 \le j \le 3}\| \psi \|_{VM^{j-1}_-}^2 , \\
\| \psi \|_{VM^{j,n}_+}^2 \lesssim &\mathrm{RHS}_{VM} + \bar{L} \| \psi \|_{\text{von-Mise}_{+,n}}^2 + \bold{1}_{1 \le j \le 3}\| \psi \|_{VM^{j-1,n}_+}^2,
\end{align}
which clearly implies 
\begin{align} 
\| \psi \|_{VM^{j}_-}^2 \lesssim &\mathrm{RHS}_{VM}+ \bar{L}^{\frac13} \| \psi \|_{\text{von-Mise}_-}^2, \\
\| \psi \|_{VM^{j,n}_+}^2 \lesssim & \mathrm{RHS}_{VM} + \bar{L} \| \psi \|_{\text{von-Mise}_{+,n}}^2.
\end{align}
Upon summing the above inequalities together and subsequently summing over $\sum_{j = 0}^3$, recalling the definition \eqref{vonMisenorm}, we have 
\begin{align} 
\| \psi \|_{\text{von-Mise}_n}^2 \lesssim &\mathrm{RHS}_{VM} + \bar{L}^{\frac13} \| \psi \|_{\text{von-Mise}_n}^2,
\end{align}
which closes the estimate $\| \psi \|_{\text{von-Mise}_n}^2 \lesssim \mathrm{RHS}_{VM}$, as desired in \eqref{mainvonM}. This proves \eqref{mainvonM}. The proof of \eqref{lowvonM} is nearly identical. The proof of the proposition is complete.  
\end{proof}

\section{Analysis of $\mathcal{L}_{\text{Modulation}}[\bold{R}]$} \label{LpLqestsec}

Our main objective in this section is to study the $\mathcal{L}_{\text{Modulation}}[\bold{R}]$ operator, \eqref{merc:1lkj}. The strategy will be to introduce the various transformations we have employed in Section \ref{SectionCH} in order to arrive at the abstract systems $\mathcal{L}_{\mathrm{Airy}}$, $\mathcal{L}_{\mathrm{Crocco}}$, $\mathcal{L}_{\mathrm{von-Mise}}$, with the appropriate source terms. Referring to Figure \ref{org:tree}, we will perform the following steps in this section: 
\begin{figure}[h] 
\hspace{50 mm} \includegraphics[scale=0.3]{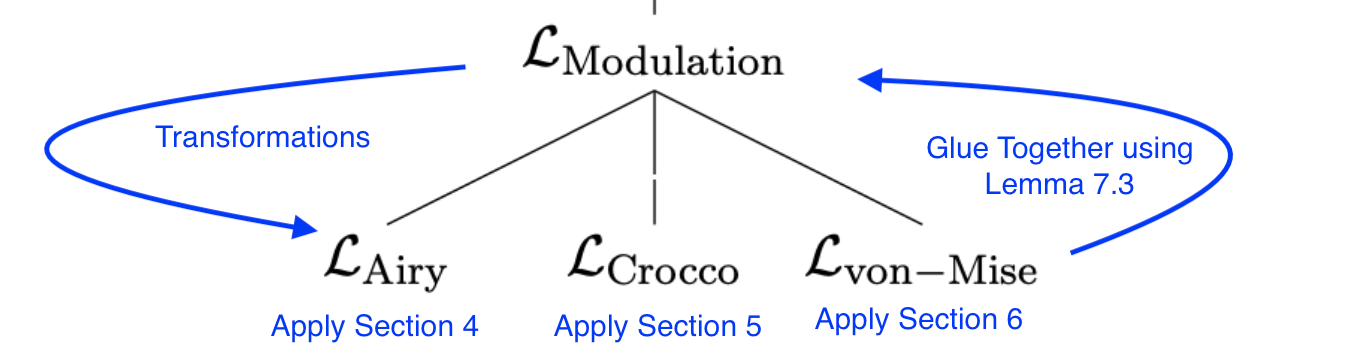}
\caption{Closing the Global Norm} \label{org:tree:mod}
\end{figure}

In particular, we have the following relations: 
\begin{align} \label{weds:1}
Z =& \frac{Y}{\bar{W}_z(s, 1)^{\frac23}}, \qquad Y = \bar{W}(s, z), \\ \label{weds:2}
\Phi(s, Z) =& \phi(s, Y) = \psi(s, z), \qquad \Omega(s, Z) = V(s, Y) = \frac{1}{\bar{W}_z^2}(\p_z u - \frac{\bar{W}_{zz}}{\bar{W}_z}u). 
\end{align}
We subsequently define $\Omega_I = \chi_I(Z) \Omega$, where the parameter $\delta$ appearing in the definition of $\chi_I$ will be chosen below (immediately preceding line \eqref{la:1}). For future use, we define 
\begin{align} \label{dgu}
\gamma_u := u(1, 1), \qquad \gamma_\psi := \psi(1, 1). 
\end{align}
With these transformations, we have the following systems on which we apply our $\mathcal{L}_{\mathrm{Airy}}$ and $\mathcal{L}_{\mathrm{Crocco}}$ analyses:  
\begin{align} \label{abs:67}
Z \p_s \Omega_I - \p_Z^2 \Omega_I =  &F_I, \\
Z \p_s \Omega_I - \tau_0 \Omega_I - \tau_1 \p_Z \Omega_I - (1 + \tau_2) \p_Z^2 \Omega_I =& S_I,
\end{align}
where we have defined  
\begin{align} \n
F_I := & F_{\text{Coupled}} + F_{\text{Low}}, \\ \n
F_{\text{Coupled}}:=&\tau_2 \p_Z^2 \Omega_I + \tau_1 \p_Z \Omega_I + \tau_0 \Omega_I + \chi_I (\tau_{-1} \p_Z \Phi + \tau_{-2} \Phi) \\ \label{defFCL}
&-( \frac{\chi''(\frac{Z}{\delta})}{\delta^2}  + \frac{\tau_1}{\delta} \chi'(\frac{Z}{\delta}) ) \Omega - \frac{2}{\delta} \chi'(\frac{Z}{\delta}) \Omega_Z, \\ \label{defFLOW}
F_{\text{Low}} := & \chi_I(Z)\frac{\p_z}{\bar{W}_z} \Big( \frac{\bold{R}}{\bar{W}_z} \Big)(s, z),
\end{align}
and
\begin{align}
S_I :=& S_{\text{Coupled}} + F_{\text{Low}}, \\ \label{defSCL}
S_{\text{Coupled}} := & \chi_I(Z) ( \tau_{-1} \p_Z \Phi + \tau_{-2} \Phi)   -( \frac{\chi''(\frac{Z}{\delta})}{\delta^2}  + \frac{\tau_1}{\delta} \chi'(\frac{Z}{\delta}) ) \Omega - \frac{2}{\delta} \chi'(\frac{Z}{\delta}) \Omega_Z.
\end{align}
We use the phrase ``Coupled" because $F_{\mathrm{Coupled}}$ and $S_{\mathrm{Coupled}}$ contain linear, solution-dependent terms originating from the left-hand side of $\mathcal{L}_{\text{Modulation}}[\bold{R}]$, \eqref{merc:1lkj}, and must therefore be coupled and closed as part of the estimate. On the other hand, we use ``Low" to refer to terms arising from the source term, $\bold{R}$, which is considered a given. Eventually when we apply this analysis $k$-by-$k$ in Section \ref{ksec}, these terms will arise from \eqref{defRKRK:temp}, \eqref{RkSemi}, and will be of order at most $k-1$.   

We recall the definitions of $\underline{\tau}_i$, $-2 \le i \le 1$ and $\tau_i$, $-2 \le i \le 2$ from \eqref{utau:def} and \eqref{tau:def}, respectively. In particular, we note that $\tau_i = \tau_i[\bar{W}, \vec{\bold{c}}]$. Similarly, to apply our von-Mise analysis, we recall the coefficients $\bold{b}_i$ defined through \eqref{tru:1}, and hence we regard $\bold{b}_i[\bar{W}, \vec{\bold{c}}]$. 

We now define several norms that we will use to close our linear analysis. The first notation is simply to save space, as the following two $L^\infty$ quantities frequently arise together:  
\begin{align} \label{S:norm}
\| \psi \|_{S} := \| \psi \|_{L^\infty_{sz}} + \| u \|_{L^\infty_{sz}}.
\end{align} 
Next, we define
\begin{align} \label{global:n}
\| \psi \|_{\text{Global}_n} := \| \psi \|_{\text{von-Mise}_n} + \| \psi \|_S + \| \Omega_I \|_{\text{Crocco}} +  \| \gamma_{\Omega}^{(1)} \|_{L^2_s}.
\end{align}
\begin{remark}The reason we call this a ``global" norm is because it glues together: (1) trace information (at the interface) encoded by the Trace norm, (2) local (to the interface) information encoded by the Crocco norms, and (3) far-field (from the interface) behavior encoded by the von-Mise norms. 
\end{remark}
The main proposition of this section quantifies control over $\| \psi \|_{\text{Global}_n}$ in terms of norms on the right-hand side $\bold{R}$ and in terms of norms on the boundary data. 
\begin{proposition}\label{pro:global} Let $n \in \mathbb{N}$. Assume the background, $\bar{W}$, satisfies the following estimates: 
\begin{align} \label{sushiL1L2}
&\sum_{k = 0}^1 \| \p_s^k \bar{W} \|_{L^\infty} + \sum_{k = 0}^1 \sum_{j = 1}^5 \| \p_s^k \p_z^j \bar{W} \langle z \rangle^{n+1} \|_{L^\infty} \lesssim 1, \\ \label{sushiL2}
&|\bar{W}| \ge c_0 z, \qquad 0 < z << 1, \\ \label{bhu:1}
&\inf_{1 < s < 1 + \bar{L}} |\bar{W}_z(s, 1)| > 0.
\end{align}
Assume the source term, $\bold{R}$, satisfies $\bold{R}|_{z = 0} = \p_z \bold{R}|_{z = 0} = 0$. Assume the coefficients $\bold{c}_i$, $i = 1, 2, 3$, satisfy the following: 
\begin{align} \label{eng:1L}
\bold{c}_1(s, 0) = & 0, \\ \label{eng:2L}
\| \frac{\bold{c}_1}{\langle z \rangle} \|_{L^\infty} +  \sum_{j = 1}^3 \| \p_z^j \bold{c}_1 \|_{L^\infty} \lesssim & 1, \\ \label{eng:3L}
\sum_{j = 0}^3 \| \p_z^j \bold{c}_2 \|_{L^\infty} \lesssim & 1, \\ \label{eng:4L}
\sum_{j = 0}^3 \| \p_z^j \bold{c}_3 \langle z \rangle^{n+1} \|_{L^\infty} \lesssim & 1.
\end{align}
Then the following estimate is valid for a solution $\psi$ to \eqref{merc:1lkj}
\begin{align} \n
\| \psi \|_{\mathrm{Global}_n} \lesssim & \sum_{j = 0}^3 \| \p_z^j \bold{R} \chi_{O,j} \langle z \rangle^n \|_{L^2_{sz}} + \sum_{j = 0}^1 \| \p_z^j \bold{R} \chi_I \|_{L^2_{sz}} + \| d(z) \p_z^2 \bold{R} \chi_I \|_{L^2_{sz}} \\ \n
& + \bar{L}^{\frac16} \sum_{i = 0}^2 \sum_{\iota \in \mathrm{Left, Right}}  \| \langle \rho \rangle \p_\rho^i \Xi_{\iota} \|_{L^2_\rho} +  \bar{L}^{\frac16} \sum_{i = 1}^2 \sum_{\iota \in \mathrm{Left, Right}} \|\rho^i \p_\rho^i \Xi_{\iota}\|_{L^2_\rho} \\  \label{main:G}
& + \sum_{j = 0}^3 ( \| \p_z^j \zeta_{\mathrm{Left}} \chi_{O,j}^+ \langle z \rangle^n \|_{L^2_z} + \| \p_z^j \zeta_{\mathrm{Right}} \chi_{O,j}^- \|_{L^2_z} ).
\end{align}
In addition, the following bound is valid on $\gamma_u$ and $\gamma_\psi$, 
\begin{align}\n
|\gamma_u| + |\gamma_\psi| \lesssim &  \sum_{j = 0}^3 \| \p_z^j \bold{R} \chi_{O,j} \langle z \rangle^n \|_{L^2_{sz}} + \sum_{j = 0}^1 \| \p_z^j \bold{R} \chi_I \|_{L^2_{sz}} + \| d(z) \p_z^2 \bold{R} \chi_I \|_{L^2_{sz}} \\ \n
& + \bar{L}^{\frac16} \sum_{i = 0}^2 \sum_{\iota \in \mathrm{Left, Right}}  \| \langle \rho \rangle \p_\rho^i \Xi_{\iota} \|_{L^2_\rho} +  \bar{L}^{\frac16} \sum_{i = 1}^2 \sum_{\iota \in \mathrm{Left, Right}} \|\rho^i \p_\rho^i \Xi_{\iota}\|_{L^2_\rho} \\  \label{main:G:g}
& + \sum_{j = 0}^3   ( \bar{L}^{\frac13} \| \p_z^j \zeta_{\mathrm{Left}} \chi_{O,j}^+ \langle z \rangle^n \|_{L^2_z} + \| \p_z^j \zeta_{\mathrm{Right}} \chi_{O,j}^- \|_{L^2_z} ).
\end{align}
\end{proposition}

\begin{remark} While $\gamma_u$ and $\gamma_\psi$ are controlled by $\| \psi \|_{\mathrm{Global}_n}$, the key point in stating \eqref{main:G:g} separately is to eliminate the $O(1)$ dependence of the right-hand side of \eqref{main:G:g} on $\zeta_{\mathrm{Left}}$ for these specific quantities, and replace it with a small $\bar{L}^{\frac13}$ factor. This distinction will be important to close the bounds in Section \ref{final:sec}, for instance \eqref{two:points:2}.
\end{remark}

\subsection{Closing Main Linear Loop}

\begin{lemma} \label{keylemma}Under the hypotheses of Proposition \ref{pro:global}, the following sequence of estimates is valid for $\bar{L} << 1$: 
\begin{align} \n
\| \psi \|_{\mathrm{von-Mise}_n}^2 \lesssim & \bar{L} \| \psi \|_S^2 + \sum_{j = 0}^3 \| \p_z^j \bold{R}\chi_{O,j} \langle z \rangle^n \|_{L^2_{sz}}^2 \\  \label{nlyougo:1}
&+ \sum_{j = 0}^3 ( \| \p_z^j \zeta_{\mathrm{Left}} \chi_{O,j}^+ \langle z \rangle^n \|_{L^2_z}^2 + \| \p_z^j \zeta_{\mathrm{Right}} \chi_{O,j}^- \|_{L^2_z}^2 ),\\ \label{nlyougo:1bbb}
\| \psi \|_{\mathrm{von-Mise}_-}^2 \lesssim & \bar{L} \| \psi \|_S^2 + \sum_{j = 0}^3 \| \p_z^j \bold{R}\chi_{O,j} \langle z \rangle^n \|_{L^2_{sz}}^2 + \sum_{j = 0}^3 ( \| \p_z^j \zeta_{\mathrm{Right}} \chi_{O,j}^- \|_{L^2_z}^2 ),\\ \label{nlyougo:2}
\| \psi \|_{S}^2 \lesssim & \| \psi \|_{\mathrm{von-Mise}_n}^2 + \| \Omega_I \|_{\mathrm{Crocco}}^2, \\ \n
\| \Omega_I \|_{\mathrm{Crocco}}^2 \lesssim & \| \gamma_\Omega^{(1)} \|_{L^2_s}^2 + \bar{L}( \| \psi \|_S^2 +  \| \psi \|_{\mathrm{von-Mise}_n}^2 +\| \Omega_I \|_{\mathrm{Crocco}}^2 ) \\ \label{nlyougo:3}
& + \sum_{i = 0}^1 \|Z^i \p_Z^i F_{\mathrm{Low}} \|_{L^2_{sZ}}^2 + \bar{L}^{\frac13} \sum_{i = 1}^2 \sum_{\iota \in \mathrm{Left, Right}} \|\rho^i \p_\rho^i \Xi_{\iota}\|_{L^2_\rho}^2 ,\\ \n
\| \gamma_\Omega^{(1)} \|_{L^2_s} \lesssim &\bar{L}^{\frac13} (\| \psi \|_S +  \| \Omega_I \|_{\mathrm{Crocco}} +\| \psi \|_{\mathrm{von-Mise}_n}) \\ \label{nlyougo:4}
&+ \| F_{\mathrm{Low}} \|_{L^2_{sZ}} + \bar{L}^{\frac16} \sum_{i = 0}^2\| \langle \rho \rangle (\p_\rho^i \Xi_{\mathrm{Left}},\p_\rho^i \Xi_{\mathrm{Right}}) \|_{L^2_\rho}.
\end{align}
\end{lemma}
\begin{proof}[Proof of \eqref{nlyougo:1} and \eqref{nlyougo:1bbb}] First, we note that the hypotheses on $\bar{W}$, $\bold{R}$, and $\bold{c}_i$ imply the hypotheses of Proposition \ref{pro:vonMise} through the definitions \eqref{tru:1}. Therefore, we may apply Proposition \ref{pro:vonMise}, and the claims are an immediate consequence of \eqref{mainvonM} and \eqref{lowvonM}.
\end{proof}
\begin{proof}[Proof of \eqref{nlyougo:2}] We give the estimate on the stronger quantity $\| u \langle z \rangle \|_{L^\infty}$ (which will then imply the bound on both $u$ and $\psi$). Fix a value, $z_{1}$, with the following properties: $0 < z_1 < 1$ and $z_1 \in \{\chi_{O}^- = 1\} \cap \{\chi_I = 1\}$. Fix another value $z_2$ with the following properties: $1 < z_2 < \infty$ and $z_2 \in \{\chi_I = 1\} \cap \{ \chi_{O}^+ = 1\}$. 

\vspace{2 mm}

\noindent \textit{``Lower von-Mise" Region: } If $0 \le z \le z_1$, then we have
\begin{align*}
|u(s, z)| = |\int_0^z u_z(s, z') \ud z'| = |\int_0^z u_z(s, z') \chi_{O}^- \ud z'| \lesssim \| u_z \chi_{O}^- \|_{L^2_z},
\end{align*}
and therefore $\sup_{1 \le s \le 1 + \bar{L}}|u(s, z)| \lesssim \| \psi \|_{\text{von-Mise}_-}$ by \eqref{outer:est:l2:0}. 
\vspace{2 mm}

\noindent \textit{``Intermediate Crocco" Region: } The assumption of \eqref{bhu:1} paired with the second derivative upper bound in \eqref{sushiL1L2} allows us to define $\delta_1 > 0$ such that $\inf_{1 < s < 1 + \bar{L}} \inf_{1- \delta_1 < z < 1 + \delta_1} \bar{W}_z(s, z) > 0$. By choosing $\delta > 0$ such that $\sup_{1 < s < 1 + \bar{L}} \sup_{1- \delta_1 < z < 1 + \delta_1}|\bar{W}|(s, z) \le 10\delta$, we may ensure that $\{ \bar{W} \le \delta \} \subset (1, 1+ \bar{L}) \times \{1-\delta_1 < z < 1 + \delta_1 \}$. Let $(s, Z_1)$ be in the $Z$ coordinate that corresponds to $(s, z_1$). We now estimate $\p_Z \Phi(s, Z_1)$ by 
\begin{align} \label{la:1}
|\p_Z \Phi(s, Z_1)| \lesssim |u(s, z_1)| + |\psi(s, z_1)| \lesssim \| \psi \|_{\text{von-Mise}_-}.
\end{align}
Let $(s, Z_2)$ be in the $Z$ coordinate that corresponds to $(s, z_1$). For any $Z_1 < Z < Z_2$, we have 
\begin{align*}
|\p_Z \Phi(s, Z)| \le & |\p_Z \Phi(s, Z_1)| + |\int_{Z_1}^{Z_2} \p_Z^2 \phi| \le |\p_Z \Phi(s, Z_1)| + |\int_{Z_1}^{Z_2} \p_Z^2 \phi \chi_I| \\
\le &   |\p_Z \Phi(s, Z_1)| + |\int_{Z_1}^{Z_2} \Omega_I| \\
\lesssim &  |\p_Z \Phi(s, Z_1)| + \| \Omega_I \|_{L^2_Z} \\
\lesssim &  \| \psi \|_{\text{von-Mise}_-} + \| \Omega_I \|_{\text{Crocco}},
\end{align*}
where we have appealed to estimate \eqref{la:1}. Of course this implies also that 
\begin{align} \label{gov:2}
\sup_{z_1 \le z \le z_2} |\psi(s, z)| \lesssim \sup_{Z_1 \le Z \le Z_2} |\Phi(s, Z)| \lesssim &  \| \psi \|_{\text{von-Mise}_-} + \| \Omega_I \|_{\text{Crocco}}.
\end{align}
From here, we obtain 
\begin{align} \label{gov:1}
\sup_{z_1 \le z \le z_2} |u(s, z)| \lesssim \sup_{z_1 \le z \le z_2} |\psi(s, z)| + \sup_{Z_1 \le z \le Z_2} |\p_Z \Omega(s, Z)|  \lesssim &  \| \psi \|_{\text{von-Mise}_-} + \| \Omega_I \|_{\text{Crocco}}.
\end{align}

\vspace{2 mm}

\noindent \textit{``Upper von-Mise" Region:} Here, we apply \eqref{inv:u:diff} with $z_0$ in that expression replaced by the $z_2$ in the present lemma. For clarify, we present the resulting expression for $z > z_2$: 
\begin{align}
u(s, z)  = &\frac{ \psi(s, z_2)}{\bar{W}(s, z_2)}\p_z \bar{W}(s, z) + \p_z \bar{W} \int_{z_2}^z \frac{U}{\bar{W}^2} \ud \tilde{z} + \frac{U}{\bar{W}}
\end{align}
We notice that $(z_2, z) \subset \{\chi_{O}^+ = 1\}$ by design of $z_2$. We also notice that $|\psi(s, z_2)| \lesssim \| \psi \|_{\text{von-Mise}_-} + \| \Omega_I \|_{\text{Crocco}}$ upon using \eqref{gov:2}. Therefore, we obtain 
\begin{align} \n
\langle z \rangle |u(s, z)| \lesssim & \| \p_z \bar{W} \langle z \rangle \|_{L^\infty} |\psi(s, z_2)| + \| U \chi_{O}^+ \|_{L^2_z} + \| U \chi_{O}^+ \langle z \rangle \|_{L^\infty_z} \\  \n
\lesssim & \| \psi \|_{\text{von-Mise}_n} + \| \Omega_I \|_{\text{Crocco}} +\| U \chi_{O}^+ \langle z \rangle \|_{L^2_z} + \| \p_z U \chi_{O}^+ \langle z \rangle \|_{L^2_z} + \| U \p_z \chi_{O}^+ \langle z \rangle \|_{L^2_z} \\  \n
\lesssim & \| \psi \|_{\text{von-Mise}_n} + \| \Omega_I \|_{\text{Crocco}} +\| U \chi_{O}^+ \langle z \rangle \|_{L^2_z} + \| \p_z U \chi_{O}^+ \langle z \rangle \|_{L^2_z} + \| U  \chi_{O, 0}^+ \langle z \rangle \|_{L^2_z} \\  \n
\lesssim & \| \psi \|_{\text{von-Mise}_n} + \| \Omega_I \|_{\text{Crocco}},
\end{align}
where we have applied the standard Sobolev embedding as well as the definition of the von-Mise energy functionals, \eqref{canyou}. 
\end{proof}
\begin{proof}[Proof of \eqref{nlyougo:3}] First of all, we note that the assumptions on $\bar{W}, \bold{c}_i$ imply through the relations \eqref{utau:def} and \eqref{tau:def} that the bootstrap hypotheses \eqref{tau:b:1} and \eqref{tau:b:2} hold. Hence, we may apply Proposition \ref{prop:Crocco}. We recall estimate \eqref{pro:cro:1} in which we choose the source term $S_I = F_{\text{Low}} + S_{\text{Coupled}}$. We estimate first $\| S_{\text{Coupled}} \|_{L^2_{sZ}}$ and then $\| Z \p_Z S_{\text{Coupled}} \|_{L^2_{sZ}}$. First, we have 
\begin{align*}
\| S_{\text{Coupled}} \|_{L^2_{sZ}} \lesssim & \bar{L}^{\frac12} (\| \tau_{-1} \p_Z \Phi \chi_I \|_{L^\infty_{sZ}} + \| \tau_{-2}  \Phi \chi_I \|_{L^\infty_{sZ}}) + ( \| \chi_I'' \Omega \|_{L^2_{sZ}} + \| \chi_I' \tau_{1} \p_Z \Omega \|_{L^2_{sZ}}  ).
\end{align*}
 First, using the change of variables \eqref{Eikonal:1} and \eqref{change:1}, we obtain 
\begin{align*}
 &(\| \tau_{-1} \p_Z \Phi \chi_I \|_{L^\infty_{sZ}} + \| \tau_{-2}  \Phi \chi_I \|_{L^\infty_{sZ}}) \\
  \lesssim & (\| \tau_{-1} \|_{L^\infty} + \| \tau_{-2} \|_{L^\infty})(\| \frac{1}{\bar{W}_z} \chi_I \|_{L^\infty} + \| \bar{W}_z \chi_I \|_{L^\infty}) (\| u \|_{L^\infty} + \| \psi \|_{L^\infty}) \\
  \lesssim & \| \psi \|_S,
\end{align*}
where we note that $\| \tau_{-1} \|_{L^\infty} \lesssim 1$ and $\| \tau_{-2} \|_{L^\infty} \lesssim 1$ upon invoking \eqref{tau:def}, \eqref{utau:def}, and the hypotheses on $\bar{W}, \bold{c}_i$. Next, we have upon using the relation between $u$ and $\Omega$, \eqref{weds:2}
\begin{align*}
&( \| \chi_I'' \Omega \|_{L^2_{sZ}} + \| \chi_I' \tau_{1} \p_Z \Omega \|_{L^2_{sZ}}  )\\
\lesssim & \| \tau_1 \|_{L^\infty} \frac{1}{\| \bar{W}_z \chi_{\mathrm{Fat}}  \|_{L^\infty}}( \sum_{i = 1}^3 \| \p_z^i \bar{W} \|_{L^\infty}) (\sum_{i = 0}^2 \| \p_z^i u \chi_O \|_{L^2_{sz}}) \lesssim \sqrt{\bar{L}} \| \psi \|_{\text{von-Mise}}
\end{align*}
where $\chi_{\mathrm{Fat}}$ is a slightly fattened cutoff of $\chi_I$, equal to $1$ on the support of $\chi_I$.  

Next, for the $Z \p_Z S_{\text{Coupled}}$ term, the estimates are largely the same. The only exception is $\chi_I \p_Z \{ \p_Z \Phi\} = \chi_I \Omega = \Omega_I$, which we estimate as follows 
\begin{align*}
\| \Omega \|_{L^2_{sZ}} \lesssim \sqrt{\bar{L}} \| \Omega \|_{L^\infty_s L^2_Z} \lesssim \sqrt{\bar{L}} \| \Omega_I \|_{\text{Crocco}},
\end{align*}
where we have invoked the definition of the Crocco norm, \eqref{def:Crocco:norm}.  

\end{proof}
\begin{proof}[Proof of \eqref{nlyougo:4}] We recall estimate \eqref{tr:es:1} which we apply with the source terms $F_{\text{Low}} + F_{\text{Coupled}}$. We estimate the $L^2_{sZ}$ norm of $F_{\text{Coupled}}$: 
\begin{align*}
\| F_{\text{Coupled}} \|_{L^2_{sZ}} \le & \| \tau_2 \p_Z^2 \Omega_I \|_{L^2_{sZ}} +  ( \|\tau_1 \p_Z \Omega_I \|_{L^2_{sZ}} +\|\tau_0  \Omega_I \|_{L^2_{sZ}}   )  + ( \|\chi_I \tau_{-1} \p_Z \Phi \|_{L^2_{sZ}} +\|\tau_{-2}  \Phi \|_{L^2_{sZ}}   )  \\
& + ( \| \chi_I'' \Omega \|_{L^2_{sZ}} + \| \chi_I' \tau_{1} \p_Z \Omega \|_{L^2_{sZ}}  ).
\end{align*}
Each group of the above terms will be treated separately. First, we have 
\begin{align*}
\| \tau_2 \p_Z^2 \Omega_{k,I}  \|_{L^2_{sZ}} \lesssim & \| \frac{\tau_2}{Z} \|_{L^\infty}  \| Z \p_Z^2 \Omega_{k,I}   \|_{L^{2}_{sZ}} \lesssim  \bar{L}^{\frac16} \| \Omega_I \|_{\text{Crocco}}, 
\end{align*}
where we have appealed to \eqref{ode:3}. 

Next, we have by definition 
\begin{align*}
 \|\tau_1 \p_Z \Omega_I \|_{L^2_{sZ}} +\|\tau_0  \Omega_I \|_{L^2_{sZ}}  \lesssim (\| \tau_0 \|_{L^\infty} + \| \tau_1 \|_{L^\infty}) (\| \Omega_I \|_{L^2_{sZ}} + \| \p_Z \Omega_I \|_{L^2_{sZ}}) \lesssim \bar{L}^{\frac16} \| \Omega_I \|_{\text{Crocco}},
\end{align*}
where we have appealed to \eqref{ode:2}. 

Next, we have 
\begin{align*}
&\|\chi_I \tau_{-1} \p_Z \Phi \|_{L^2_{sZ}} +\|\tau_{-2}  \Phi \|_{L^2_{sZ}}   \\
\lesssim & \bar{L}^{\frac12} (\| \tau_{-1} \p_Z \Phi \chi_I \|_{L^\infty_{sZ}} + \| \tau_{-2}  \Phi \chi_I \|_{L^\infty_{sZ}}) \\
\lesssim & \bar{L}^{\frac12}(\| \tau_{-1} \|_{L^\infty} + \| \tau_{-2} \|_{L^\infty})(\| \frac{1}{\bar{W}_z} \chi_I \|_{L^\infty} + \| \bar{W}_z \chi_I \|_{L^\infty}) (\| u \|_{L^\infty} + \| \psi \|_{L^\infty}) \\
\lesssim &\bar{L}^{\frac12} \| \psi \|_S. 
\end{align*}
Lastly, we have
\begin{align*}
&\| \chi_I'' \Omega \|_{L^2_{sZ}} + \| \chi_I' \tau_{1} \p_Z \Omega \|_{L^2_{sZ}} \lesssim  \| \tau_1 \|_{L^\infty} ( \| \chi_I'' \Omega \|_{L^\infty_{Z} L^2_s} + \| \chi_I'  \p_Z \Omega \|_{L^2_{sZ}}) \\
\lesssim &  \| \tau_1 \|_{L^\infty} \sum_{i = 0}^1 \| \chi_{\text{Fat}} \p_Z^j \Omega \|_{L^2_{sZ}} \\
\lesssim &  \| \tau_1 \|_{L^\infty} \frac{1}{\| \bar{W}_z  \|_{L^\infty}}( \sum_{i = 1}^3 \| \p_z^i \bar{W} \|_{L^\infty}) (\sum_{i = 0}^2 \| \p_z^i u \chi_O \|_{L^2_{sz}}) \\
\lesssim & \sqrt{\bar{L}} \| \psi \|_{\text{von-Mise}},
\end{align*}
where we have invoked the relation between $u$ and $\Omega$, \eqref{weds:2}. The lemma is proven. 
\end{proof}


\subsection{Proof of Proposition \ref{pro:global}}

We are now ready to prove the main proposition of this section, namely Proposition \ref{pro:global}.

\begin{proof}[Proof of Proposition \ref{pro:global}] We will use the smallness of $\bar{L}$ in order to close the set of estimates \eqref{nlyougo:1} -- \eqref{nlyougo:4}. More specifically, we insert \eqref{nlyougo:2} and \eqref{nlyougo:4} into \eqref{nlyougo:1} and \eqref{nlyougo:3}, which gives 
\begin{align} \n
\| \psi \|_{\mathrm{von-Mise}_n}^2 \lesssim & \bar{L} (\| \psi \|_{\mathrm{von-Mise}_n}^2 + \| \Omega_I \|_{\text{Crocco}}^2) + \sum_{j = 0}^3 \| \p_z^j \bold{R}\chi_{O,j} \langle z \rangle^n \|_{L^2_{sz}}^2 \\  \label{nlyougo:1en}
&+ \sum_{j = 0}^3 ( \| \p_z^j \zeta_{\mathrm{Left}} \chi_{O,j}^+ \langle z \rangle^n \|_{L^2_z}^2 + \| \p_z^j \zeta_{\mathrm{Right}} \chi_{O,j}^- \|_{L^2_z}^2 ), \\ \n
\| \Omega_I \|_{\mathrm{Crocco}}^2 \lesssim &  \bar{L}^{\frac23}(   \| \psi \|_{\mathrm{von-Mise}_n}^2 +\| \Omega_I \|_{\mathrm{Crocco}}^2 ) \\ \n
& + \sum_{i = 0}^1 \|Z^i \p_Z^i F_{\mathrm{Low}} \|_{L^2_{sZ}}^2 + \bar{L}^{\frac13} \sum_{i = 1}^2 \sum_{\iota \in \mathrm{Left, Right}} \|\rho^i \p_\rho^i \Xi_{\iota}\|_{L^2_\rho}^2 \\ \label{nlyougo:3en}
& + \| F_{\mathrm{Low}} \|_{L^2_{sZ}}^2 + \bar{L}^{\frac13} \sum_{i = 0}^1 \sum_{\iota \in \mathrm{Left, Right}}  \| \sqrt{\rho} \p_\rho^i \Xi_{\iota} \|_{L^2_\rho}^2.
\end{align}
At this point, we can notice that 
\begin{align} \n
\| F_{\mathrm{Low}} \|_{L^2_{sZ}}^2 + \sum_{i = 0}^1 \|Z^i \p_Z^i F_{\mathrm{Low}} \|_{L^2_{sZ}}^2 \lesssim & \sum_{j = 0}^3 \| \p_z^j \bold{R} \chi_{O,j} \langle z \rangle^n \|_{L^2_{sz}} + \| \bold{R} \chi_I \|_{L^2_{sz}}  \\ \label{reapply:me}
& +  \| \p_z \bold{R} \chi_I \|_{L^2_{sz}} + \| d(z) \p_z^2 \bold{R} \chi_I \|_{L^2_{sz}}
\end{align}
where we have used the definition \eqref{defFLOW}, as well as the change of variables from $(s, z)$ to $(s,Z)$, \eqref{weds:1}. 

Therefore, letting $\text{RHS}_{\ast}$ denote the right-hand side of \eqref{main:G}, \eqref{nlyougo:1en} -- \eqref{nlyougo:3en} yield 
\begin{align} \n
\| \psi \|_{\mathrm{von-Mise}_n}^2 \lesssim & \bar{L} (\| \psi \|_{\mathrm{von-Mise}_n}^2 + \| \Omega_I \|_{\text{Crocco}}^2) + \text{RHS}_\ast^2, \\ \n
\| \Omega_I \|_{\mathrm{Crocco}}^2 \lesssim &  \bar{L}^{\frac23}(   \| \psi \|_{\mathrm{von-Mise}_n}^2 +\| \Omega_I \|_{\mathrm{Crocco}}^2 ) + \text{RHS}_\ast^2.
\end{align}
Clearly, for $\bar{L} << 1$ this closes the bound for $\| \psi \|_{\mathrm{von-Mise}_n}^2 + \| \Omega_I \|_{\text{Crocco}}^2$ and therefore also closes the bound for $\| \psi \|_S^2$ by \eqref{nlyougo:2} and therefore also closes the bound for $\| \gamma_\Omega^{(1)}\|_{L^2_s}$ by applying \eqref{nlyougo:4}. Thus, by definition \eqref{global:n}, the entire $\| \psi \|_{\text{Global}_n}$ is controlled by $\text{RHS}_\ast$. This proves \eqref{main:G}. 

The proof of \eqref{main:G:g} works in a nearly identical manner, upon using \eqref{nlyougo:1bbb} instead of \eqref{nlyougo:1}. We proceed as follows. Define $\mathrm{RHS}_{\ast \ast}$ to be the right-hand side of \eqref{main:G:g}. By \eqref{gov:2} and \eqref{gov:1}, we have $|\gamma_u| + |\gamma_\psi| \lesssim \| \psi \|_{\text{von-Mise}_-} + \| \Omega_I \|_{\text{Crocco}}$. We now invoke \eqref{nlyougo:1bbb}, \eqref{nlyougo:3}, \eqref{nlyougo:4} to obtain 
\begin{align} \label{gho:1}
\| \psi \|_{\mathrm{von-Mise}_-}^2 \lesssim & \bar{L} \times \mathrm{RHS}_\ast^2 + \mathrm{RHS}_{\ast \ast}^2, \\ \n
\| \Omega_I \|_{\mathrm{Crocco}}^2 \lesssim & \| \gamma_\Omega^{(1)} \|_{L^2_s}^2 + \bar{L} \times \mathrm{RHS}_\ast^2  + \sum_{i = 0}^1 \|Z^i \p_Z^i F_{\mathrm{Low}} \|_{L^2_{sZ}}^2 + \bar{L}^{\frac13} \sum_{i = 1}^2 \sum_{\iota \in \mathrm{Left, Right}} \|\rho^i \p_\rho^i \Xi_{\iota}\|_{L^2_\rho}^2 ,\\ \label{gho:2}
\lesssim & \| \gamma_\Omega^{(1)} \|_{L^2_s}^2 + \bar{L} \times \mathrm{RHS}_\ast^2 + \mathrm{RHS}_{\ast\ast}^2, \\ \n
\| \gamma_\Omega^{(1)} \|_{L^2_s} \lesssim &\bar{L}^{\frac13} \times \mathrm{RHS}_\ast + \| F_{\mathrm{Low}} \|_{L^2_{sZ}} + \bar{L}^{\frac16} \sum_{i = 0}^2\| \langle \rho \rangle (\p_\rho^i \Xi_{\mathrm{Left}},\p_\rho^i \Xi_{\mathrm{Right}}) \|_{L^2_\rho} \\ \label{gho:3}
\lesssim & \bar{L}^{\frac13} \times \mathrm{RHS}_\ast + \mathrm{RHS}_{\ast \ast},
\end{align}
where we have invoked \eqref{reapply:me} above as well. Inserting \eqref{gho:3} into \eqref{gho:2}, and using that $\bar{L}^{\frac13} \mathrm{RHS}_\ast \lesssim \mathrm{RHS}_{\ast \ast}$, we obtain \eqref{main:G:g}. The proposition is proven. 
\end{proof}

\section{Analysis of $\mathcal{L}_{\text{Prandtl}}[\mathcal{R}]$} \label{yikes}

In this section, we are considering \eqref{abs:1}. Our main objective in this section is to convert the bounds in Proposition \ref{pro:global} into the physical $(x, y)$ coordinate system from $(s, z)$ coordinates. The reasons are two-fold. First, the global norm, \eqref{global:n}, contains quantities expressed in $(s, z)$ and in $(s, Z)$, and we would like to have a norm expressed in a single coordinate system. Since the system \eqref{abs:1} is expressed in $(x, y)$ coordinates, the most natural choice for such a unified coordinate system is the $(x, y)$ coordinate system. Second, more practically speaking, the source terms appearing in $\mathcal{R}$ (which will eventually be applied to $\mathcal{R}_k$, defined in \eqref{defRKRK:temp} and \eqref{RkSemi}) are left in $(x, y)$ coordinates and therefore will be more convenient to estimate with a norm that is expressed in these coordinates, as opposed to changing variables for each source term. The basic premise of this section will be to transform \eqref{abs:1} into \eqref{merc:1lkj}, apply the estimates from Proposition \eqref{pro:global}, and then switch the coordinate system back. A caricature of our strategy is shown in Figure \ref{org:tree:PR}.
\begin{figure}[h] 
\hspace{50 mm} \includegraphics[scale=0.3]{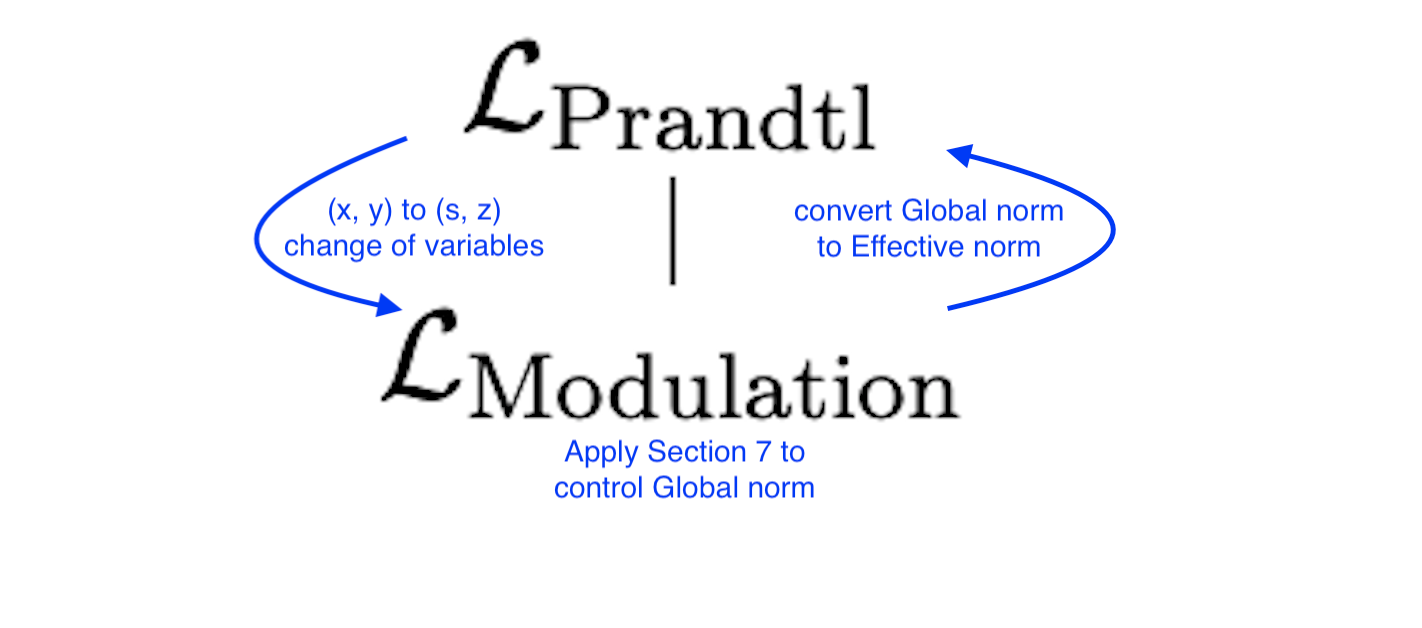}
\caption{Closing the Effective Norm} \label{org:tree:PR}
\end{figure}

To proceed in a self-contained manner, we first recall the form of $\bar{U}$ from \eqref{genbd:1}, and subsequently $\Lambda(x)$ through \eqref{defLambdagn}. Next, we  recall  
\begin{align} \label{z:variable:en}
\frac{\ud s}{\ud x} = \frac{1}{\Lambda(x)^2}, \qquad z := \frac{y}{\Lambda(x)} = \frac{y}{\Lambda_G(x) + \eps \Xi(x)}.
\end{align}
We now transform our unknown functions via 
\begin{align} \label{change:function:as}
&u(s, z) = u_{R}(x, y), \qquad v(s, z) := - \int_0^z \p_s u, \qquad \psi(s, z) := \int_0^z u,
\end{align}
our modulation variables via 
\begin{align}
\lambda(s) := \Lambda(x),  \qquad \lambda_G(s) := \Lambda_G(x), \qquad \mu(s) :=  \Xi(x),
\end{align}
and our backgrounds via 
\begin{align}
&\bar{W}(s, z) = \bar{U}(x, y), \qquad \bar{u}(s, z) = u_{FS}(x, y), \qquad w_B(s, z) = u_B(x, y).
\end{align}
Upon introducing these transformations, the quantities in \eqref{change:function:as} satisfy the equation \eqref{merc:1lkj}. First of all, it is clear from \eqref{defLambdagn} that $\Lambda = \Lambda[u_{B, R}]$, and therefore also $\lambda = \lambda[u_{B, R}]$ (and the same for $\Xi$, $\mu$). Second,  $\bold{c}_i$ are functions of $\bold{C}_i$ and $\lambda(s)$ (and therefore $u_B$ and thus $u_{B,R}$) through \eqref{defn:bold:c1}, so we write $\bold{c}_i = \bold{c}_i[u_{B, R}, \vec{\bold{C}}]$. Finally $\bold{R} = \lambda^2 \mathcal{R}$. 

We note at this point that \eqref{dgu} implies also that 
\begin{align}
\gamma_u = u(1, 1) = u_R(1, y_1(\eps)), \qquad \gamma_\psi = \psi(1, 1) = \psi_R(1, y_1(\eps)). 
\end{align}

One observes that the nature of the global norm, \eqref{global:n} is relatively complicated; it measures several types of quantities in several coordinate systems. We therefore define an ``effective" norm in $(x, y)$ coordinates, that is: a norm that encompasses in a unified manner the essential quantities that have been controlled: 
\begin{align} \n
\| \psi_R \|_{\text{Eff}_n} := & \| \psi_R \|_{L^\infty} + \| u_R \langle y \rangle^n \|_{L^\infty} + \| d(x,y) \p_y^2 u_R \langle y \rangle^n \|_{L^\infty_x L^2_y} + \|  \p_y u_R \langle y \rangle^n \|_{L^\infty_x L^2_y} \\ \label{eff:norm}
& + \|  \p_y^2 u_R \langle y \rangle^n \|_{L^2_{xy}} + \| \sqrt{|d(x,y)|} \p_y^3 u_R \langle y \rangle^n \|_{L^2_{xy}} + \| \sqrt{\bar{W}} \p_y^4 u_R \chi_{O,3} \langle y \rangle^n \|_{L^2_{xy}}.
\end{align}
Above, the function $d(x, y) := d_0( \frac{y}{\Lambda(x)} )$, where $d_0$ is a smooth, monotonically increasing function satisfying $d_0(1) = 0$, and $d_0'(1) = 1$, and $d_0(0) =-1$, $d_0(\infty) = 1$. The purpose of this weight is to mimic $\bar{W}$ in the norms near the interface (it vanishes linearly at $y = \Lambda(x)$, and is, in absolute value, bounded above and below away from $y = \Lambda(x)$) but converges to fixed values away from the interface. It turns out this norm is equivalent to the Global norm, \eqref{global:n}.

We are therefore ready to state the main proposition: 
\begin{proposition}\label{pro:effective}Assume the following bounds on $u_{B,R}$ :
\begin{align} \label{ubboo}
& \sum_{k = 0}^1 \sum_{j = 0}^5 \| \p_x^k \p_y^j u_{B, R} \langle y \rangle^{n+1} \|_{L^\infty} \le  \eps^{-\frac{1}{3}}.
\end{align}
Assume the source term, $\mathcal{R}$, satisfies $\mathcal{R}|_{z = 0} = \p_y \mathcal{R}|_{z = 0} = 0$. Assume the coefficients $\bold{C}_i$, $i = 1, 2, 3$, satisfy the following: 
\begin{align} \label{eng:1Lcap}
\bold{C}_1(s, 0) = & 0, \\ \label{eng:2Lcap}
\| \frac{\bold{C}_1}{\langle z \rangle} \|_{L^\infty} +  \sum_{j = 1}^3 \| \p_z^j \bold{C}_1 \|_{L^\infty} \lesssim & 1, \\ \label{eng:3Lcap}
\sum_{j = 0}^3 \| \p_z^j \bold{C}_2 \|_{L^\infty} \lesssim & 1, \\ \label{eng:4L}
\sum_{j = 0}^3 \| \p_z^j \bold{C}_3 \langle z \rangle^{n+1} \|_{L^\infty} \lesssim & 1.
\end{align}
The following bounds are valid for $0 < L << 1$: 
\begin{align} \n
\| \psi_R \|_{\mathrm{Eff}_n} \lesssim &  \sum_{j = 0}^3 \| \p_y^j \mathcal{R} \chi_{O,j} \langle y \rangle^n \|_{L^2_{xy}} + \sum_{j = 0}^1 \| \p_y^j \mathcal{R} \chi_I \|_{L^2_{xy}} + \| d(x,y) \p_y^2 \mathcal{R} \chi_I \|_{L^2_{xy}} \\ \n
& + L^{\frac16} \sum_{i = 0}^2 \sum_{\iota \in \mathrm{Left, Right}}  \| \langle \rho \rangle \p_\rho^i \Xi_{\iota} \|_{L^2_\rho} +  L^{\frac16} \sum_{i = 1}^2 \sum_{\iota \in \mathrm{Left, Right}} \|\rho^i \p_\rho^i \Xi_{\iota}\|_{L^2_\rho} \\  \label{main:E}
& + \sum_{j = 0}^3 ( \| \p_z^j \zeta_{\mathrm{Left}} \chi_{O,j}^+ \langle z \rangle^n \|_{L^2_z} + \| \p_z^j \zeta_{\mathrm{Right}} \chi_{O,j}^- \|_{L^2_z} ),
\end{align}
and
\begin{align} \n
|\gamma_u| + |\gamma_\psi| \lesssim &  \sum_{j = 0}^3 \| \p_y^j \mathcal{R} \chi_{O,j} \langle y \rangle^n \|_{L^2_{xy}} + \sum_{j = 0}^1 \| \p_y^j \mathcal{R} \chi_I \|_{L^2_{xy}} + \| d(x,y) \p_y^2 \mathcal{R} \chi_I \|_{L^2_{xy}} \\ \n
& + L^{\frac16} \sum_{i = 0}^2 \sum_{\iota \in \mathrm{Left, Right}}  \| \langle \rho \rangle \p_\rho^i \Xi_{\iota} \|_{L^2_\rho} +  L^{\frac16} \sum_{i = 1}^2 \sum_{\iota \in \mathrm{Left, Right}} \|\rho^i \p_\rho^i \Xi_{\iota}\|_{L^2_\rho} \\  \label{main:Ekju}
& + \sum_{j = 0}^3   ( L^{\frac13} \| \p_z^j \zeta_{\mathrm{Left}} \chi_{O,j}^+ \langle z \rangle^n \|_{L^2_z} + \| \p_z^j \zeta_{\mathrm{Right}} \chi_{O,j}^- \|_{L^2_z} ).
\end{align}
\end{proposition}
\begin{remark} Our assumption \eqref{ubboo} shows that $\Lambda$ (hence $\lambda$) is bounded above and below universally. Therefore $s$ and $x$ are comparable in magnitude through the relation \eqref{z:variable:en}. In particular, we have $\bar{L} = \int_1^{1 + L} \frac{1}{\Lambda(x)^2} \ud x$, which implies that 
\begin{align} \label{LbarL}
\bar{L} \lesssim L \lesssim \bar{L}, 
\end{align}
and so at this stage we may translate bounds that require $\bar{L} << 1$ to $L << 1$. 
\end{remark}

\subsection{Effective Lower Bounds}

\begin{lemma} Under the assumptions of Proposition \ref{pro:effective}, the following bounds are valid: 
\begin{align} \label{lyu:1}
\| \psi \|_{L^\infty} + \| u \langle z \rangle^n \|_{L^\infty}  \lesssim &\| \psi \|_{\mathrm{Global}_n}, \\ \label{lyu:2}
 \|  \p_z u \langle z \rangle^n \|_{L^\infty_s L^2_z} + \| d_0(z) \p_z^2 u \langle z \rangle^n \|_{L^\infty_s L^2_z} \lesssim & \| \psi \|_{\mathrm{Global}_n}, \\ \label{lyu:3}
\|  \p_z^2 u \langle z \rangle^n \|_{L^2_{sz}} + \| \sqrt{|d_0(z)|} \p_z^3 u \langle z \rangle^n \|_{L^2_{sz}} \lesssim &  \| \psi \|_{\mathrm{Global}_n}.
\end{align}
\end{lemma}
\begin{proof}[Proof of \eqref{lyu:1}] The proof of \eqref{lyu:1} without the weight $\langle z \rangle^n$ follows immediately from the definition of the $S$-norm, \eqref{S:norm}. We choose a value of $z_{\ast} \in \{\chi_{O} = 1\} \cap \{z > 1\}$, and write 
\begin{align*}
u(s, z)^2 \langle z \rangle^{2n} = & u(s,z_\ast)^2 z_\ast^{2n} + \int_{z_\ast}^z \p_{z'} \{ u^2 \langle z' \rangle^{2n} \} \ud z'  \\
 \lesssim & \| \psi \|_S^2 + \| u \langle z \rangle^n \chi_{O,+} \|_{L^\infty_s L^2_z}^2 +\| \p_z u \langle z \rangle^n \chi_{O,+} \|_{L^\infty_s L^2_z}^2 \\
 \lesssim & \| \psi \|_S^2 + \| \psi \|_{\text{von-Mise}_n}^2,
\end{align*}
where in the final line we used \eqref{wo:est:l2:0}.

By \eqref{outer:est:l2:0} and again \eqref{wo:est:l2:0}, the bounds \eqref{lyu:2} follows if we include $\chi_{O}$: 
\begin{align*}
\| d_0(z) \p_z^2 u \langle z \rangle^n \chi_O \|_{L^\infty_s L^2_z} + \|  \p_z u \langle z \rangle^n\chi_O \|_{L^\infty_s L^2_z} \lesssim & \| \psi \|_{\mathrm{Global}_n}.
\end{align*}
Therefore, it suffices to prove (noting that $\langle z \rangle^n$ is bounded above and below on the support of $\chi_I$)
\begin{align*}
\| d_0(z) \p_z^2 u  \chi_I \|_{L^\infty_s L^2_z} + \|  \p_z u  \chi_I \|_{L^\infty_s L^2_z} \lesssim & \| \psi \|_{\mathrm{Global}_n}
\end{align*}
The proof of \eqref{lyu:1} is complete.
\end{proof}
\begin{proof}[Proof of \eqref{lyu:2}] We first prove the $\p_z u$ estimate. We recall \eqref{weds:2}, which gives us $\p_z u = \bar{W}_z^2 \Omega + \frac{\bar{W}_{zz}}{\bar{W}_z} u$, and therefore 
\begin{align*}
\|  \p_z u  \chi_I \|_{L^\infty_s L^2_z} \le & \| \bar{W}_z^2 \Omega  \chi_I \|_{L^\infty_s L^2_z} + \| \frac{\bar{W}_{zz}}{\bar{W}_z} u \Omega  \chi_I \|_{L^\infty_s L^2_z} \\
\lesssim & (\| \bar{W}_{zz} \|_{L^\infty} + \| \bar{W}_z \|_{L^\infty} + \frac{1}{\bar{W}_z} \chi_I \|_{L^\infty}) ( \| \Omega_I \|_{L^\infty_s L^2_z} + \| u \chi_I \|_{L^\infty_s L^2_z} ) \\
\lesssim & ( \| \Omega_I \|_{L^\infty_s L^2_Z} + \| u \|_{L^\infty_{sz}} )\\
\lesssim & \| \Omega_I \|_{\text{Crocco}} + \| \psi \|_S, 
\end{align*}
where above we have used the boundedness of the change of variable from $L^\infty_s L^2_z$ to $L^\infty_s L^2_Z$. 

We now address the $\p_z^2u$ term in \eqref{lyu:2}. Again, due to \eqref{outer:est:l2:0} and again \eqref{wo:est:l2:0} it suffices to consider the bounds on the support of $\chi_I$. We now differentiate \eqref{weds:2} which produces the identity (upon using the chain rule due to the change of variables $z \rightarrow Y \rightarrow Z$, \eqref{weds:1})
\begin{align} \label{id:cro:2}
\p_z^2 u = \bar{W}_z^2 \frac{\bar{W}_z}{\bar{W}_z(s, 1)^{\frac23}} \Omega_{Z} + 2 \bar{W}_z \bar{W}_{zz} \Omega + \frac{\bar{W}_{zz}}{\bar{W}_z} \p_z u + \p_z \{ \frac{\bar{W}_{zz}}{\bar{W}_z} \} u
\end{align}
We now use equivalence $d_0(z) \sim Z$ to obtain 
\begin{align} \n
\| d_0(z) \p_z^2 u \chi_I \|_{L^2_z} \lesssim & \| \bar{W}_z^2 \frac{\bar{W}_z}{\bar{W}_z(s, 1)^{\frac23}} \|_{L^\infty} \| Z \Omega_Z \chi_I \|_{L^2_z} +  \| \bar{W}_z \bar{W}_{zz} \|_{L^\infty} \| \Omega \chi_I \|_{L^2_z} \\ \label{st:1}
&+ \| \frac{\bar{W}_{zz}}{\bar{W}_z} \|_{L^\infty} \| \p_z u \|_{L^2_z} + \| \p_z \{ \frac{\bar{W}_{zz}}{\bar{W}_z} \} \|_{L^\infty} \| u \chi_I \|_{L^\infty} \\ \label{st:2}
\lesssim & \| Z \p_Z \Omega_I \|_{L^2_z} + \|  \Omega_I \|_{L^2_z} + \| \p_z u \|_{L^2_z} + \| u  \|_{L^\infty} \\ \label{st:3}
\lesssim & \| Z \p_Z \Omega_I \|_{L^2_Z} + \|  \Omega_I \|_{L^2_Z} + \| \p_z u \|_{L^2_z} + \| u  \|_{L^\infty} \\ \label{st:4}
\lesssim & \| \Omega_I \|_{\text{Crocco}} + \| \psi \|_{\text{Global}_n}, 
\end{align}
where we have used the assumed bounds \eqref{ubboo} to estimate the $\bar{W}$ terms, the identity $\p_Z \Omega \chi_I = \p_Z \Omega_I - \Omega \chi_I' = \p_Z \Omega \chi_I = \p_Z \Omega_I - \frac{1}{\bar{W}_z^2}(\p_z u - \frac{\bar{W}_{zz}}{\bar{W}_z}u) \chi_I'$ to go from \eqref{st:1} to \eqref{st:2}, the boundedness of the change of variables from $L^2_z$ to $L^2_Z$ to go from \eqref{st:2} to \eqref{st:3}, and finally the fact that we have already established \eqref{lyu:2} for $\p_z u$ to go from \eqref{st:3} to \eqref{st:4}. The proof of \eqref{lyu:2} is complete.
\end{proof}
\begin{proof}[Proof of \eqref{lyu:3}] To prove the $\p_z^2 u$ inequality of \eqref{lyu:3}, we first notice that since $d(z) \gtrsim 1$ on the support of $1-\chi_I$, we may apply \eqref{lyu:2} to estimate $\|  \p_z^2 u \langle z \rangle^n (1 - \chi_I) \|_{L^2_{sz}}$. Therefore, it suffices to estimate $\|  \p_z^2 u \chi_I \|_{L^2_{sz}}$. For this, we again use the identity \eqref{id:cro:2} and invoke the fact that $\| \p_Z \Omega_I \|_{L^2_{sZ}} \lesssim \| \Omega_I \|_{\text{Crocco}}$, according to \eqref{west:2}. 

To prove the $\p_z^3 u$ estimate, we note that due to \eqref{ujell2} and \eqref{md:3}, it suffices to consider $\| \sqrt{d(z)} \p_z^3 u \chi_I \|_{L^2_{sz}}$. To prove the estimate in this case, we differentiate \eqref{id:cro:2} in $z$. In so doing, the leading order term is $\p_Z^2 \Omega$, which satisfies $\| \sqrt{Z} \p_Z^2 \Omega_I \|_{L^2_{sZ}} \lesssim \| \Omega_I \|_{\text{Crocco}}$ by definition, \eqref{def:Crocco:norm}. The remaining, lower order, terms are estimated using the previously established estimates in \eqref{lyu:1} -- \eqref{lyu:3}. The lemma is proven. 
\end{proof}

\subsection{Proof of Proposition \ref{pro:effective}}

We first establish a lemma that allows us to bound $\Xi(x)$ in terms of the quantity $u_B$. 
\begin{lemma} Given $u_{B, R}$ satisfying \eqref{ubboo}, there exists a perturbation $\Xi(x)$ satisfying the following bounds: 
\begin{align} \label{xi:est}
\| \Xi \|_{L^\infty_x} \lesssim \| u_{B, R} \|_{L^\infty_{xy}}, \qquad \| \Xi' \|_{L^\infty_x} \lesssim \| \p_x u_{B, R} \|_{L^\infty_{xy}} + \| u_{B, R} \|_{L^\infty_{xy}} + \| \p_y u_{B, R} \|_{L^\infty_{xy}}.
\end{align}
\end{lemma}
\begin{proof} Consider the equation \eqref{defLambdagn}, which we rewrite as follows
\begin{align*}
u_{FS}(x, \Lambda_G(x) + \eps \Xi(x)) + \eps u_{B, R}(x, \Lambda_G(x) + \eps \Xi(x)) = 0. 
\end{align*}
We find a solution $\Xi(x)$ to this equation using the following iteration: $\Xi^{(0)}(x) = 0$, 
\begin{align*}
u_{FS}(x, \Lambda_G(x) + \eps \Xi^{(n)}(x)) = - \eps u_{B, R}(x, \Lambda_G(x) + \eps \Xi^{(n-1)}(x)). 
\end{align*}
As $\p_y u_{FS}$ is bounded below in a neighborhood of $x, \Lambda_G(x)$ and $u_{FS}(x, \Lambda_G(x)) = 0$, there exists a smooth inverse $I_{FS}(x, a)$, defined for $|a| < a_{\ast}$ for a universal constant $a_\ast$, such that $u_{FS}(x, \Lambda_G(x) + p) = a \iff p = I_{FS}(x, a)$ and $I_{FS}(x, 0) = 0$. We therefore have 
\begin{align}
\eps \Xi^{(n)}(x) = I_{FS}(x, -\eps u_{B, R}(x, \Lambda_G(x) + \eps \Xi^{(n-1)}(x))).
\end{align}
It is clear that due to the boundedness of $\| u_{B, R} \|_{L^\infty}, \|\p_y u_{B, R} \|_{L^\infty}$ that
\begin{align*}
|\Xi^{(n+1)}(x) - \Xi^{(n)}(x)| \lesssim & \eps \| \p_y u_{B, R} \|_{L^\infty} |\Xi^{(n)}(x) - \Xi^{(n-1)}(x)|, \qquad n \ge 1, \\
|\Xi^{(1)}(x)| \lesssim  &\| u_{B, R} \|_{L^\infty}.
\end{align*}
From here, the standard contraction mapping principle on $\mathbb{R}$ ensures a fixed point ($x$ by $x$), and moreover the fixed point satisfies the bound $\| \Xi \|_{L^\infty_x} \lesssim \| u_{B, R} \|_{L^\infty_{xy}}$.

To obtain the bound on $\Xi'(x)$, we use that we have existence of $\Xi(x)$ to the fixed point equation
\begin{align*}
\eps \Xi(x) = I_{FS}(x, -\eps u_{B, R}(x, \Lambda_G(x) + \eps \Xi(x))),
\end{align*}
which, upon differentiating in $x$ yields 
\begin{align*}
\eps \Xi'(x) = & \p_x I_{FS}(x,  -\eps u_{B, R}(x, \Lambda_G(x) + \eps \Xi(x))) \\
&- \eps \p_a I_{FS}(x, -\eps u_{B, R}(x, \Lambda_G(x) + \eps \Xi(x)))\p_x u_{B, R}(x, \Lambda_G(x) + \eps \Xi(x)) \\
&- \eps \p_a I_{FS}(x, -\eps u_{B, R}(x, \Lambda_G(x) + \eps \Xi(x))) \p_y u_{B, R}(x, \Lambda_G(x) + \eps \Xi(x))\Lambda_G'(x) \\
& -  \eps^2 \p_a I_{FS}(x, -\eps u_{B, R}(x, \Lambda_G(x) + \eps \Xi(x))) \p_y u_{B, R}(x, \Lambda_G(x) + \eps \Xi(x))\Xi'(x) \\
=: & a_1 + a_2 + a_3 + \eps^2 a_4 \Xi'(x).  
\end{align*}
We therefore obtain 
\begin{align*}
 \Xi'(x) = \frac{1}{1-\eps a_4} \frac{1}{\eps} (a_1 + a_2 + a_3). 
\end{align*}
We proceed to estimate 
\begin{align*}
|a_1| \le & |\p_x I_{FS}(x, 0)| + \| \p_{xa} I_{FS} \|_{L^\infty} \eps \| u_{B, R} \|_{L^\infty} \lesssim \eps \| u_{B, R} \|_{L^\infty}, \\
|a_2| \lesssim &\eps \| \p_x u_{B, R} \|_{L^\infty}, \\
|a_3| \lesssim & \eps \|\p_y u_{B, R} \|_{L^\infty}, \\
|a_4| \lesssim & \| \p_y u_{B, R} \|_{L^\infty},
\end{align*}
where for the $a_1$ estimate we use that $I_{FS}(x, 0) = 0$ for all $x$ so that $\p_x I_{FS}(x, 0) = 0$. The proof of the lemma is complete. 
\end{proof}

We are now able to prove Proposition \ref{pro:effective}.
\begin{proof}[Proof of Proposition \ref{pro:effective}] First of all the bounds \eqref{xi:est} show that $\Lambda$ is bounded above and below. We may then translate the information on the left-hand sides of \eqref{lyu:1} -- \eqref{lyu:3} into bounds of the Effective norm.
\begin{align} \label{eff:global:bd}
\| \psi_R \|_{\mathrm{Eff}_n} \lesssim \| \psi \|_{\mathrm{Global}_n}
\end{align}
Indeed, this follows from the chain rule identity $\frac{d}{dy} = \frac{1}{\Lambda(x)} \frac{d}{dz}$, and estimates \eqref{lyu:1} -- \eqref{lyu:3} upon simply noting the equivalence of $L^2_z$ to $L^2_y$ and similarly $L^2_{sz}$ to $L^2_{xy}$.

Second, the assumed bounds on $u_{B, R}$ and $\vec{\bold{C}}$ and the boundary conditions on $\mathcal{R}$ guarantee that we may apply Proposition \ref{pro:global}. Therefore, we may successively apply \eqref{eff:global:bd}, \eqref{main:G}, and \eqref{LbarL} to obtain 
\begin{align} \n
\| \psi_R \|_{\mathrm{Eff}_n} \lesssim  \| \psi \|_{\mathrm{Global}_n} \lesssim &   \sum_{j = 0}^3 \| \p_z^j \bold{R} \chi_{O,j} \langle z \rangle^n \|_{L^2_{sz}} + \sum_{j = 0}^1 \| \p_z^j \bold{R} \chi_I \|_{L^2_{sz}} + \| d(z) \p_z^2 \bold{R} \chi_I \|_{L^2_{sz}} \\ \n
&+ \bar{L}^{\frac16} \sum_{i = 0}^2 \sum_{\iota \in \mathrm{Left, Right}}  \| \langle \rho \rangle \p_\rho^i \Xi_{\iota} \|_{L^2_\rho} +  \bar{L}^{\frac16} \sum_{i = 1}^2 \sum_{\iota \in \mathrm{Left, Right}} \|\rho^i \p_\rho^i \Xi_{\iota}\|_{L^2_\rho} \\  \n
& + \sum_{j = 0}^3 ( \| \p_z^j \zeta_{\mathrm{Left}} \chi_{O,j}^+ \langle z \rangle^n \|_{L^2_z} + \| \p_z^j \zeta_{\mathrm{Right}} \chi_{O,j}^- \|_{L^2_z} ),
\end{align}
which implies \eqref{main:E} upon invoking $\bold{R} = \lambda^2 \mathcal{R}$, using $\frac{d}{dy} = \frac{1}{\Lambda(x)} \frac{d}{dz}$, and changing from $L^2_{sz}$ to $L^2_{xy}$. The proof of \eqref{main:Ekju} is nearly identical. The proposition is proven. 
\end{proof}

\section{Bounds on $k$-dependent Source Terms} \label{ksec}

In this section, our main objective will be to provide bounds on the source terms appearing in \eqref{main:E} and \eqref{main:Ekju}, when the abstract source term $\mathcal{R}$ is replaced by $\mathcal{R}_k$, defined in \eqref{defRKRK}. 

We will moreover need to choose the weight parameter $n$ appearing in \eqref{main:E} and \eqref{main:Ekju} in a $k$-dependent manner. Recall that we define the weight parameter $N_0 = 3k_\ast + 5$. Next, we define the sequence of parameters  
\begin{align} \label{def:NsubK}
N_k := N_0 - k.
\end{align}
Such a choice will be motivated by the eventual form of our linear norm, which we define here \footnote{the verbiage ``Linear" is due to the eventual estimate \eqref{sbU:1}; without nonlinear terms we obtain a bound $\| \psi_R \|_{\mathrm{Linear}} \lesssim \bold{Dat}$.}
\begin{align} \label{Linear:norm}
\| \psi_R \|_{\text{Linear}} := \sum_{k= 0}^{k_\ast} \| \psi_{R,k} \|_{\text{Eff}_{N_k}}
\end{align}
Of note is the \textit{downward weight cascade} created by decreasing $N_k$ at each level of derivative. This is created due to the linear growth of $v_{FS}$, which precisely appears in one term below, namely \eqref{dwc:1}.

We now introduce the notation for the source terms
\begin{align} \label{defSourcek}
\mathrm{\bold{Source}}_k := &\sum_{j = 0}^3 \| \p_y^j \mathcal{R}_k \chi_{O,j} \langle y \rangle^{N_k} \|_{L^2_{xy}} + \sum_{j = 0}^1 \| \p_y^j \mathcal{R}_k \chi_I \|_{L^2_{xy}} + \| d(x,y) \p_y^2 \mathcal{R}_k \chi_I \|_{L^2_{xy}}, 
\end{align}
and
\begin{align} \label{defSourcefull}
\mathrm{\bold{Source}} := & \sum_{k = 0}^{k_\ast} \mathrm{\bold{Source}}_k.
\end{align}

The main proposition of this section will be as follows. 
\begin{proposition}\label{pro:Linear} Fix any $L_{\mathrm{Max}} < \infty$, and let $0 < L < L_{\mathrm{Max}}$. The following bounds are valid: 
\begin{align} \label{sourke}
\mathrm{\bold{Source}}_k \lesssim  L \sum_{k'= 0}^{k-1} \| \psi_{R, k'} \|_{\mathrm{Eff}_{N_{k'}}} + \eps L^{-1} (\sum_{k = 0}^{\lfloor \frac{k_\ast + 1}{2} \rfloor} \sum_{j = 0}^4 \| \p_y^j u_{R,k} \|_{L^\infty} ) \| \psi_R \|_{\mathrm{Linear}} + \eps L^{-1} \| \psi_R \|_{\mathrm{Linear}}^2.
\end{align}
\end{proposition}

\begin{remark}In this section, we collect estimates on the source terms, $\mathrm{\bold{Source}}_k$, which appear on the right-hand sides of \eqref{main:E} and \eqref{main:Ekju}, uniformly over $L \in (0, L_{\mathrm{Max}})$, where $L_{\mathrm{Max}}$ is fixed (as in Theorem \ref{thm:res}). Therefore, the forthcoming bounds will depend poorly on large values of $L_{\mathrm{Max}}$, but we choose to suppress this dependence, and we freely use the inequality $L \lesssim C(L_{\mathrm{Max}}) \lesssim 1$. As all the lemmas in this section are valid on $(0, L_{\mathrm{Max}})$, we do not repeatedly state this for each lemma. 
\end{remark}

\subsection{Bounds on Linear Source Terms}

We begin with estimating the source terms appearing with outer cutoffs. 

\begin{lemma} Fix any $0 \le k \le k_\ast$. The following inequality is valid: 
\begin{align} \label{RkLin:bd:1}
  \sum_{j = 0}^3 \| \p_y^j \mathcal{R}_{k, \mathrm{Lin}} \chi_{O,j} \langle y \rangle^{N_k} \|_{L^2_{xy}}^2 \lesssim L \sum_{k'= 0}^{k-1} \| \psi_{R, k'} \|_{\mathrm{Eff}_{N_{k'}}}.
\end{align}  
\end{lemma}
\begin{proof} We proceed term by term, starting with $\mathcal{R}_{k, \text{Lin}}^{(1)}$ In the forthcoming calculations, we will often use the following property ($u_{FS,k'} = L^{k'} \p_x^{k'} u_{FS}$). We have for any $j$ in the range $0 \le j \le 3$, 
\begin{align} \n
\| \p_y^j \mathcal{R}_{k, \mathrm{Lin}}^{(1)} \langle y \rangle^{N_k}\chi_{O,j} \|_{L^2_{xy}} \lesssim & \bold{1}_{k \ge 2}   \sum_{k' = 0}^{k-2} \sum_{j' = 0}^j L^{k-k' - 1} \| \p_y^{j'} u_{R,k'+1} \langle y \rangle^{N_{k' + 1}} \chi_{O,j} \|_{L^2_{xy}}, \\ \label{down:casc}
\| \p_y^j \mathcal{R}_{k, \mathrm{Lin}}^{(2)} \langle y \rangle^{N_k} \chi_{O,j}\|_{L^2_{xy}} \lesssim & \bold{1}_{k \ge 1} \sum_{k' = 0}^{k-1} \sum_{j' = 0}^j L^{k-k'} \| \p_y^{j'} u_{R,k'} \langle y \rangle^{N_{k' + 1}} \chi_{O,j} \|_{L^2_{xy}},
\end{align}
both of which are easily seen to be bounded by the right-hand side of \eqref{RkLin:bd:1}, upon invoking \eqref{eff:norm}. Above, we have used the fact that $k' + 1 \le k - 1$, and hence $N_{k' + 1} \ge N_{k-1} \ge N_k$, according to \eqref{def:NsubK}. We have also used that $L^{k-k'} \le C(L_{\mathrm{Max}}) L \lesssim L$ and $L^{k-k' - 1} \le C(L_{\mathrm{Max}}) L \lesssim L$. 

The next term is precisely the term that requires us to introduce the ``downward weight cascade", \eqref{def:NsubK}. Indeed, we have
\begin{align} \n
\| \p_y^j \mathcal{R}_{k, \mathrm{Lin}}^{(3)} \langle y \rangle^{N_k}\chi_{O,j} \|_{L^2_{xy}} \lesssim & \bold{1}_{k \ge 1} \sum_{k' = 0}^{k-1} \sum_{j' = 0}^{j-1} \| \p_x^{k-k'} \p_y^{j-j'} v_{FS} \langle y \rangle^{N_k} \|_{L^\infty} \| L^{k-k'} \p_y^{j' + 1}  u_{R,k'} \chi_{O,j} \|_{L^2_{xy}} \\ \label{dwc:1}
& + \bold{1}_{k \ge 1} \sum_{k' = 0}^{k-1}  \| \frac{ \p_x^{k-k'}  v_{FS}}{\sqrt{\bar{W}}} \langle y \rangle^{-1} \|_{L^\infty} \| L^{k-k'} \sqrt{\bar{W}} \p_y^{j + 1}  u_{R,k'} \langle y \rangle^{N_k + 1} \chi_{O,j} \|_{L^2_{xy}},
\end{align}
which is bounded by the right-hand side of \eqref{RkLin:bd:1} upon invoking that $N_{k} + 1 \le N_{k-1} \le N_{k'}$, and again that $L^{k-k'} \le  C(L_{\mathrm{Max}})L \lesssim L$.  

For the next term, we use the rapid decay of $\p_y u_{FS}$ to estimate 
\begin{align*}
\| \p_y^j \mathcal{R}_{k, \mathrm{Lin}}^{(4)} \langle y \rangle^n \chi_{O,j} \|_{L^2_{xy}} \lesssim & \bold{1}_{k \ge 2} \sum_{j' = 1}^j \sum_{k' = 0}^{k-2} \| \p_y^{j-j' + 1} \p_x^{k-k'} u_{FS} \langle y \rangle^{n} \|_{L^\infty} L^{k-k' - 1} \| \p_y^{j' - 1} u_{R,k'+1} \chi_{O,j} \|_{L^2_{xy}} \\
& + \bold{1}_{k \ge 2} \sum_{k' = 0}^{k-2} \| \p_x^{k-k'} \p_y u_{FS} \langle y \rangle^{n + 1} \|_{L^\infty}  L^{k-k' - 1} \| \frac{\psi_{R,k'+1}}{\langle y \rangle} \chi_{O,j} \|_{L^2_{xy}},
\end{align*}
which upon using that $\chi_{O,j} \le \chi_{O,j'-1}$, $L^{k-k' - 1} \le  C(L_{\mathrm{Max}}) L \lesssim L$ (for the range of $k, k'$ in the summations above) and $\| \frac{\psi_{R,k'+1}}{\langle y \rangle} \chi_{O,j} \|_{L^2_{xy}} \lesssim \| \psi_{R,k'+1} \|_{L^\infty_{xy}}$ also immediately proves that this term is controlled by the right-hand side of \eqref{RkLin:bd:1}. The lemma is proven.
\end{proof}

\begin{lemma} Fix any $0 \le k \le k_\ast$. The following inequality is valid: 
\begin{align} \label{pu:2}
\sum_{j = 0}^1 \| \p_y^j \mathcal{R}_{k, \mathrm{Lin}} \chi_I \|_{L^2_{xy}} + \| d(x,y) \p_y^2 \mathcal{R}_{k, \mathrm{Lin}} \chi_I \|_{L^2_{xy}} \lesssim L \sum_{k'= 0}^{k-1} \| \psi_{R, k'} \|_{\mathrm{Eff}_{N_{k'}}}.
\end{align}
\end{lemma}
\begin{proof} We have from \eqref{RkLin} immediately the inequalities, upon invoking the definition of the effective norm, \eqref{eff:norm},
\begin{align*}
\sum_{j = 0}^1 \| \p_y^j \mathcal{R}_{k, \mathrm{Lin}} \chi_I \|_{L^2_{xy}} \lesssim &  C(L_{\mathrm{Max}}) L \sum_{k' = 0}^{k-1} \sum_{j' = 0}^3 \| \p_y^{j'} \psi_{k'} \chi_I \|_{L^2_{xy}} \lesssim L  \sum_{k= 0}^{k-1} \| \psi_{R, k'} \|_{\mathrm{Eff}_{N_{k'}}}, \\
 \| d(x,y) \p_y^2 \mathcal{R}_{k, \mathrm{Lin}} \chi_I \|_{L^2_{xy}} \lesssim &C(L_{\mathrm{Max}}) L\sum_{k' = 0}^{k-1} \sum_{j' = 0}^4 \| d(x, y) \p_y^{j'} \psi_{k'} \chi_I \|_{L^2_{xy}} \lesssim L \sum_{k'= 0}^{k-1} \| \psi_{R, k'} \|_{\mathrm{Eff}_{N_{k'}}},
\end{align*}
where we have generated a factors of $L$ upon again using the property $u_{FS,k'} = L^{k'} \p_x^{k'} u_{FS}$ just as in the previous lemma. The lemma is proven.
\end{proof}

\subsection{Bounds on Semilinear Source Terms}

We are now ready to estimate the semilinear source terms. First, we begin with those semilinear terms containing outer cutoffs.
\begin{lemma} Fix any $0 \le k \le k_\ast$. The following bounds are valid: 
\begin{align} \n
 \sum_{j = 0}^3 \| \p_y^j \mathcal{R}_{k; \mathrm{Semi}} \langle y \rangle^{N_k} \chi_{O,j} \|_{L^2_{xy}} \lesssim & \eps L^{-1} (\sum_{k = 0}^{\lfloor \frac{k_\ast + 1}{2} \rfloor} \sum_{j = 0}^4 \| \p_y^j u_{R,k} \|_{L^\infty} ) \sum_{k'= 0}^{k-1} \| \psi_{R, k'} \|_{\mathrm{Eff}_{N_{k'}}} \\ \label{noise:1}
 &+ \eps L^{-1}(\sum_{k'= 0}^{k-1} \| \psi_{R, k'} \|_{\mathrm{Eff}_{N_{k'}}})^2.
\end{align}
\end{lemma}
\begin{proof} We recall the definition of $\mathcal{R}_{k; \text{Semi}}$ from \eqref{RkSemi}. We establish \eqref{noise:1} for each fixed $j$ in the interval $0 \le j \le 3$. First, 
\begin{align} \n
 \| \p_y^j \mathcal{R}^{(1)}_{k; \text{Semi}} \langle y \rangle^{N_k} \chi_{O,j} \|_{L^2_{xy}}  \lesssim & \eps 1_{k \ge 3} \sum_{k' = 1}^{k-2} \sum_{j' = 0}^j L^{-1} \| \p_y^{j-j'} u_{R,k-k'} \p_y^{j'} u_{R,k' + 1} \langle y \rangle^{N_k} \chi_{O,j} \|_{L^2_{xy}} \\ \n
\lesssim &  \eps 1_{k \ge 3} \sum_{k' = 1}^{\lfloor \frac{k+1}{2} \rfloor - 1} \sum_{j' = 0}^jL^{-1} \| \p_y^{j-j'} u_{R,k-k'} \p_y^{j'} u_{R,k' + 1} \langle y \rangle^{N_k} \chi_{O,j} \|_{L^2_{xy}} \\ \n
& + \eps 1_{k \ge 3} \sum_{k' = \lfloor \frac{k+1}{2} \rfloor}^{k-2} \sum_{j' = 0}^jL^{-1} \| \p_y^{j-j'} u_{R,k-k'} \p_y^{j'} u_{R,k' + 1} \langle y \rangle^{N_k} \chi_{O,j} \|_{L^2_{xy}} \\ \n
\lesssim & \eps L^{-1} ( \sum_{k' = 0}^{\lfloor \frac{k+1}{2} \rfloor} \sum_{j' = 0}^j \| \p_y^{j'} u_{R,k'} \|_{L^\infty}) \sum_{k' = 0}^{k-1} \sum_{j' = 0}^j \| \p_y^{j'} u_{R,k'} \langle y \rangle^{N_{k'}} \chi_{O,j'} \|_{L^2_{xy}} \\ \label{hurricane:1}
\lesssim & \eps L^{-1} (\sum_{k = 0}^{\lfloor \frac{k_\ast + 1}{2} \rfloor} \sum_{j = 0}^4 \| \p_y^j u_{R,k} \|_{L^\infty} ) \sum_{k'= 0}^{k-1} \| \psi_{R, k'} \|_{\text{Eff}_{N_{k'}}},
\end{align}
where we have invoked that $N_{k'} \ge N_k$ for $k' \le k$, as well as the definition of the effective norm, \eqref{eff:norm}.

Next, we split the second semilinear term as follows:
\begin{align*}
\| \p_y^j \mathcal{R}^{(2)}_{k; \text{Semi}} \langle y \rangle^{N_k} \chi_{O,j} \|_{L^2_{xy}} \lesssim A + B,
\end{align*}
where
\begin{align*}
A := &1_{k \ge 3} \eps L^{-1} \sum_{k' = 1}^{k-2} \sum_{j' = 0}^{j-1} \| \p_y^{j-j' - 1} u_{R,k' + 1} \p_y^{j' + 1} u_{R, k-k'}  \langle y \rangle^{N_k} \chi_{O,j} \|_{L^2_{xy}}, \\
B := &1_{k \ge 3} \eps L^{-1} \sum_{k' = 1}^{k-2} \| \psi_{R,k' + 1} \p_y^{j + 1} u_{R,k-k'} \langle y \rangle^{N_k} \chi_{O,j} \|_{L^2_{xy}}.
\end{align*}
Clearly, we have 
\begin{align*}
B \lesssim & \eps L^{-1} (\sum_{k' = 0}^{k-1} \|\frac{ \psi_{R,k'}}{\sqrt{\bar{W}}} \|_{L^\infty}) (\sum_{k' = 0}^{k-1} \|  \sqrt{\bar{W}} \p_y^{j+1} u_{R,k'} \langle y \rangle^{N_{k'}} \chi_{O,j} \|_{L^2_{xy}} ) \\
\lesssim & \eps L^{-1} (\sum_{k' = 0}^{k-1} \| \psi_{R,k'} \|_{L^\infty} + \|u_{R,k'}\|_{L^\infty})  \sum_{k'= 0}^{k-1} \| \psi_{R, k'} \|_{\text{Eff}_{N_{k'}}} \\
\lesssim & \eps L^{-1}( \sum_{k'= 0}^{k-1} \| \psi_{R, k'} \|_{\text{Eff}_{N_{k'}}} )^2. 
\end{align*}
For the $A$ term, noticing that the max $\p_y$ is $\p_y^j$ on both terms, the max $\p_x$ is $k-1$, and that since $(k' + 1) + (k-k') = k+1$ that one of either $k' + 1$ or $k-k'$ has to be $\le \lfloor \frac{k+1}{2} \rfloor$, we have 
\begin{align} \n
A \lesssim & \eps L^{-1}( \sum_{k' = 0}^{\lfloor \frac{k+1}{2} \rfloor} \sum_{j' = 0}^j \| \p_y^{j'} u_{R,k'} \|_{L^\infty} ) (\sum_{k' = 0}^{k-1} \sum_{j' = 0}^j \| \p_y^{j'} u_{R,k'} \langle y \rangle^{N_{k'}} \chi_{O,j'} \|_{L^2_{xy}}  ) \\ \label{rc:2}
\lesssim & \eps L^{-1} (\sum_{k = 0}^{\lfloor \frac{k_\ast + 1}{2} \rfloor} \sum_{j = 0}^4 \| \p_y^j u_{R,k} \|_{L^\infty} ) \sum_{k'= 0}^{k-1} \| \psi_{R, k'} \|_{\text{Eff}_{N_{k'}}}.
\end{align}
Finally, the terms $\mathcal{R}^{(3)}_{k; \text{Semi}}\chi_{O,j}$ and $\mathcal{R}^{(4)}_{k; \text{Semi}} \chi_{O,j}$ are estimated in an almost identical manner to the above. The lemma is proven.
\end{proof}

\begin{lemma}Fix any $0 \le k \le k_\ast$. The following estimate is valid:
\begin{align} \n
 &\sum_{j = 0}^1 \| \p_y^j \mathcal{R}_{k; \mathrm{Semi}} \chi_{I} \|_{L^2_{xy}} +\| d(x,y) \p_y^2 \mathcal{R}_{k; \mathrm{Semi}} \chi_{I} \|_{L^2_{xy}} \\ \label{noise:2}
 \lesssim & \eps L^{-1} (\sum_{k = 0}^{\lfloor \frac{k_\ast + 1}{2} \rfloor} \sum_{j = 0}^4 \| \p_y^j u_{R,k} \|_{L^\infty} ) \sum_{k'= 0}^{k_\ast} \| \psi_{R, k'} \|_{\mathrm{Eff}_{N_{k'}}} + \eps L^{-1}(\sum_{k'= 0}^{k-1} \| \psi_{R, k'} \|_{\mathrm{Eff}_{N_{k'}}})^2.
\end{align}
\end{lemma}
\begin{proof} First, we have for $j = 0, 1, 2$, the following estimate which follows in a nearly identical fashion to \eqref{hurricane:1}
\begin{align} \n
 \| \p_y^j \mathcal{R}^{(1)}_{k; \text{Semi}}\chi_{I} \|_{L^2_{xy}}  \lesssim & \eps 1_{k \ge 3} \sum_{k' = 1}^{k-2} \sum_{j' = 0}^2 L^{-1} \| \p_y^{j-j'} u_{R,k-k'} \p_y^{j'} u_{R,k' + 1} \chi_I \|_{L^2_{xy}} \\ \n
\lesssim &  \eps 1_{k \ge 3} \sum_{k' = 1}^{\lfloor \frac{k+1}{2} \rfloor - 1} \sum_{j' = 0}^2 L^{-1} \| \p_y^{j-j'} u_{R,k-k'} \p_y^{j'} u_{R,k' + 1} \chi_I \|_{L^2_{xy}} \\ \n
& + \eps 1_{k \ge 3} \sum_{k' = \lfloor \frac{k+1}{2} \rfloor}^{k-2} \sum_{j' = 0}^2 L^{-1} \| \p_y^{j-j'} u_{R,k-k'} \p_y^{j'} u_{R,k' + 1} \chi_I \|_{L^2_{xy}} \\ \n
\lesssim & \eps L^{-1} ( \sum_{k' = 0}^{\lfloor \frac{k+1}{2} \rfloor} \sum_{j' = 0}^2 \| \p_y^{j'} u_{R,k'} \|_{L^\infty}) \sum_{k' = 0}^{k-1} \sum_{j' = 0}^2 \| \p_y^{j'} u_{R,k'} \chi_I \|_{L^2_{xy}} \\ \n
\lesssim & \eps L^{-1} (\sum_{k = 0}^{\lfloor \frac{k_\ast + 1}{2} \rfloor} \sum_{j = 0}^4 \| \p_y^j u_{R,k} \|_{L^\infty} ) \sum_{k'= 0}^{k-1} \| \psi_{R, k'} \|_{\text{Eff}_{N_{k'}}},
\end{align}
where we have invoked the definition \eqref{eff:norm}.  

Next, we split the second semilinear term as follows for $j = 0, 1$:
\begin{align*}
\| \p_y^j \mathcal{R}^{(2)}_{k; \text{Semi}} \chi_I \|_{L^2_{xy}} \lesssim A + B, \qquad \| d(x,y) \p_y^2 \mathcal{R}^{(2)}_{k; \text{Semi}} \chi_I \|_{L^2_{xy}} \lesssim C + D,
\end{align*}
where
\begin{align*}
A := &1_{k \ge 3} \eps L^{-1} \sum_{k' = 1}^{k-2} \sum_{j' = 0}^{j-1} \| \p_y^{j-j' - 1} u_{R,k' + 1} \p_y^{j' + 1} u_{R, k-k'}  \chi_I \|_{L^2_{xy}}, \\
B := &1_{k \ge 3} \eps L^{-1} \sum_{k' = 1}^{k-2} \| \psi_{R,k' + 1} \p_y^{j + 1} u_{R,k-k'} \chi_I \|_{L^2_{xy}}, \\
C := &1_{k \ge 3} \eps L^{-1} \sum_{k' = 1}^{k-2} \sum_{j' = 0}^{1} \|d(x,y) \p_y^{j-j' - 1} u_{R,k' + 1} \p_y^{j' + 1} u_{R, k-k'}  \chi_I \|_{L^2_{xy}}, \\
D := &1_{k \ge 3} \eps L^{-1} \sum_{k' = 1}^{k-2} \|d(x,y) \psi_{R,k' + 1} \p_y^{3} u_{R,k-k'} \chi_I \|_{L^2_{xy}}
\end{align*}
For the $A$ and $C$ terms, we have a similar argument to \eqref{rc:2}, which gives
\begin{align} \n
A + C \lesssim & \eps L^{-1}( \sum_{k' = 0}^{\lfloor \frac{k+1}{2} \rfloor} \sum_{j' = 0}^2 \| \p_y^{j'} u_{R,k'} \|_{L^\infty} ) (\sum_{k' = 0}^{k-1} \sum_{j' = 0}^2 \| \p_y^{j'} u_{R,k'} \chi_I \|_{L^2_{xy}}  ) \\ \label{rc:4}
\lesssim & \eps L^{-1} (\sum_{k = 0}^{\lfloor \frac{k_\ast + 1}{2} \rfloor} \sum_{j = 0}^4 \| \p_y^j u_{R,k} \|_{L^\infty} ) \sum_{k'= 0}^{k-1} \| \psi_{R, k'} \|_{\text{Eff}_{N_{k'}}}.
\end{align}
For the $B$ term, we have for $j = 0, 1$: 
\begin{align*}
B \lesssim & \eps L^{-1} (\sum_{k' = 0}^{k-1} \| \psi_{R,k'} \|_{L^\infty}) (\sum_{k' = 0}^{k-1} \| \p_y^{j+1} u_{R,k'} \chi_I \|_{L^2_{xy}} ) \\
\lesssim & \eps L^{-1} (\sum_{k' = 0}^{k-1} \| \psi_{R,k'} \|_{L^\infty} )  \sum_{k'= 0}^{k-1} \| \psi_{R, k'} \|_{\text{Eff}_{N_{k'}}} \lesssim  \eps L^{-1}( \sum_{k'= 0}^{k-1} \| \psi_{R, k'} \|_{\text{Eff}_{N_{k'}}} )^2,
\end{align*}
and essentially identically when $j = 2$:
\begin{align*}
D \lesssim & \eps L^{-1} (\sum_{k' = 0}^{k-1} \| \psi_{R,k'} \|_{L^\infty}) (\sum_{k' = 0}^{k-1} \| d(x, y) \p_y^{3} u_{R,k'} \chi_I \|_{L^2_{xy}} ) \\
\lesssim & \eps L^{-1} (\sum_{k' = 0}^{k-1} \| \psi_{R,k'} \|_{L^\infty} )  \sum_{k'= 0}^{k-1} \| \psi_{R, k'} \|_{\text{Eff}_{N_{k'}}} \lesssim  \eps L^{-1}( \sum_{k'= 0}^{k-1} \| \psi_{R, k'} \|_{\text{Eff}_{N_{k'}}} )^2,
\end{align*}
where we have invoked the definition \eqref{eff:norm}.

We now turn to $\mathcal{R}_{k, \text{Semi}}^{(3)}$ and $\mathcal{R}_{k, \text{Semi}}^{(4)}$, for which we estimate as follows: 
\begin{align*}
&\sum_{j = 0}^2 ( \| \p_y^j \mathcal{R}_{k, \text{Semi}}^{(3)} \chi_I \|_{L^2_{xy}} +  \| \p_y^j \mathcal{R}_{k, \text{Semi}}^{(4)} \chi_I \|_{L^2_{xy}}) \\
\lesssim & \eps L^{-1} \sum_{k \le k_s} (\sum_{j = 0}^3  \| \p_y^j u_{R, \text{Pert}} \|_{L^\infty}) (\| \psi_{R,k+1} \|_{L^\infty} + \sum_{j = 0}^2 \| \p_y^j u_{R,k+1} \|_{L^2_{xy}}) \\
\lesssim & \eps L^{-1} \sum_{k \le k_s} (\sum_{j = 0}^3  \| \p_y^j u_R \|_{L^\infty}) (\| \psi_{R,k+1} \|_{L^\infty} + \sum_{j = 0}^2 \| \p_y^j u_{R,k+1} \|_{L^2_{xy}}) \\
\lesssim & \eps L^{-1}(\sum_{j = 0}^3  \| \p_y^j u_R \|_{L^\infty}) ( \sum_{k \le k_\ast} \| \psi_{R,k} \|_{\text{Eff}_{N_k}}).
\end{align*}
Above, we have used the definition of $u_{R, \text{Pert}}$, \eqref{uRpert}, as well as the definition of the effective norm, \eqref{eff:norm}. The lemma is proven.
\end{proof}

\subsection{Proof of Proposition \ref{pro:Linear}}

\begin{proof}[Proof of Proposition \ref{pro:Linear}]First, we note that by \eqref{RkLin:bd:1} and \eqref{pu:2}, we have 
\begin{align} \n
& \sum_{j = 0}^3 \| \p_y^j \mathcal{R}_{k, \mathrm{Lin}} \chi_{O,j} \langle y \rangle^{N_k} \|_{L^2_{xy}} +   ( \sum_{j = 0}^1 \| \p_y^j\mathcal{R}_{k, \mathrm{Lin}} \chi_I \|_{L^2_{xy}} + \| d(x,y) \p_y^2 \mathcal{R}_{k, \mathrm{Lin}} \chi_I \|_{L^2_{xy}}) \\ \label{mercmerc:2}
  \lesssim & L \sum_{k'= 0}^{k-1} \| \psi_{R, k'} \|_{\text{Eff}_{N_{k'}}},
\end{align}
Similarly, we note that by \eqref{noise:1} and \eqref{noise:2}, we have 
\begin{align} \n
&\sum_{j = 0}^3 \| \p_y^j \mathcal{R}_{k, \mathrm{Semi}} \chi_{O,j} \langle y \rangle^{N_k} \|_{L^2_{xy}} +  ( \sum_{j = 0}^1 \| \p_y^j\mathcal{R}_{k, \mathrm{Semi}} \chi_I \|_{L^2_{xy}} + \| d(x,y) \p_y^2 \mathcal{R}_{k, \mathrm{Semi}} \chi_I \|_{L^2_{xy}}) \\ \label{mercmerc:3}
\lesssim & \eps L^{-1} (\sum_{k = 0}^{\lfloor \frac{k_\ast + 1}{2} \rfloor} \sum_{j = 0}^4 \| \p_y^j u_{R,k} \|_{L^\infty} ) \| \psi_R \|_{\mathrm{Linear}} + \eps L^{-1} \| \psi_R \|_{\mathrm{Linear}}^2,
\end{align}
for each $k$. The proposition is proven. 
\end{proof}

\section{Bounds on Boundary Data} \label{bdsdata:Sec}

In this section, our main objective will be to provide bounds on the data terms appearing in \eqref{main:E} and \eqref{main:Ekju}. We will be applying Proposition \ref{pro:effective} with the specific choices of data instead of the placeholders, $\zeta_{\iota} = U_{\iota; k}$ and $\Xi_{\iota}(\rho) = \Omega_{I, \iota; k}(Z)$. To compactify the notations involving the data elements, we introduce the following two quantities: 
\begin{align} \n
\mathrm{\bold{Dat}}_k := & L^{\frac16} \sum_{i = 0}^2 \sum_{\iota \in \mathrm{Left, Right}}  \| \langle \rho \rangle \p_\rho^i \Omega_{I, \iota; k} \|_{L^2_\rho} + L^{\frac16} \sum_{i = 1}^2 \sum_{\iota \in \mathrm{Left, Right}} \|\rho^i \p_\rho^i \Omega_{I, \iota; k} \|_{L^2_\rho} \\  \label{dat:k:defn}
& + \sum_{j = 0}^3 ( \| \p_z^j U_{\mathrm{Left}; k} \chi_{O,j}^+ \langle z \rangle^{N_k} \|_{L^2_z} + \| \p_z^j U_{\mathrm{Right}; k} \chi_{O,j}^- \|_{L^2_z} ), \\ \n
\mathrm{\bold{Dat}}_{\mathrm{Below},k} := & L^{\frac16} \sum_{i = 0}^2 \sum_{\iota \in \mathrm{Left, Right}}  \| \langle \rho \rangle \p_\rho^i \Omega_{I, \iota; k} \|_{L^2_\rho} +  L^{\frac16} \sum_{i = 1}^2 \sum_{\iota \in \mathrm{Left, Right}} \|\rho^i \p_\rho^i \Omega_{I, \iota; k} \|_{L^2_\rho} \\  \label{dat:k:defn}
& + \sum_{j = 0}^3  ( L^{\frac13}\| \p_z^j U_{\mathrm{Left}; k} \chi_{O,j}^+ \langle z \rangle^{N_k} \|_{L^2_z} + \| \p_z^j U_{\mathrm{Right}; k} \chi_{O,j}^- \|_{L^2_z} ),
\end{align}
which appear as a result of inserting the particular choices $\zeta_{\iota} = U_{\iota; k}$ and $\Xi_{\iota}(\rho) = \Omega_{I, \iota; k}(Z)$ into the data terms of \eqref{main:E}, \eqref{main:Ekju}. Next, we have the total data contributions after taking summation in $k$
\begin{align} \label{boldDat}
\mathrm{\bold{Dat}} := \sum_{k = 0}^{k_\ast} \mathrm{\bold{Dat}}_k, \qquad \mathrm{\bold{Dat}}_{\mathrm{Below}} := \sum_{k = 0}^{k_\ast} \mathrm{\bold{Dat}}_{\mathrm{Below}, k}
\end{align}
Our main goal in this section is to provide estimates on the data terms, $\mathrm{\bold{Dat}}$,  $\mathrm{\bold{Dat}}_{\mathrm{Below}}$, defined in \eqref{boldDat}, in terms of (1) the given data and (2) the solution itself. We will need to define
\begin{align} \label{def:gamma:u:k}
\gamma_{u; k} := u_{R,k}(1, y_1(\eps)), \qquad  \psi_{u; k} := \psi_{R,k}(1, y_1(\eps)).
\end{align}
We recall the form of the given data, \eqref{PhiIota:1} -- \eqref{PhiIota:4}. Our main proposition in this section is the following. 
\begin{proposition} \label{prop:data:M} Fix any $L_{\mathrm{Max}} < \infty$, and let $0 < L < L_{\mathrm{Max}}$. Assume the compatibility conditions \eqref{compat:1} -- \eqref{these:numbers}. Then the following bounds hold: 
\begin{align} \n
\mathrm{\bold{Dat}} \lesssim & L^{\frac16} (\| \mathring{F}_{\mathrm{Left}} \langle \eta \rangle^3 \|_{H^{2 + 3k_\ast}_\eta} + \| \mathring{F}_{\mathrm{Right}} \langle \eta \rangle^3 \|_{H^{2 + 3k_\ast}_\eta}) + \|  G_{\mathrm{Right}} \|_{H^{3 + 3k_\ast}_y} + \|  G_{\mathrm{Left}} \langle y \rangle^{N_0} \|_{H^{3 + 2k_\ast}_y} \\ \label{est:data:M}
& + \sum_{k' = 0}^{k_\ast} (|\gamma_{u;k'}| + |\gamma_{\psi; k'}|), \\ \n
\mathrm{\bold{Dat}}_{\mathrm{Below}} \lesssim & L^{\frac16} (\| \mathring{F}_{\mathrm{Left}} \langle \eta \rangle^3 \|_{H^{2 + 3k_\ast}_\eta} + \| \mathring{F}_{\mathrm{Right}} \langle \eta \rangle^3 \|_{H^{2 + 3k_\ast}_\eta}) + \|  G_{\mathrm{Right}} \|_{H^{3 + 3k_\ast}_y} +L^{\frac13}  \|  G_{\mathrm{Left}} \langle y \rangle^{N_0} \|_{H^{3 + 2k_\ast}_y}  \\ \label{est:data:below:M}
& + L^{\frac16} \sum_{k' = 0}^{k_\ast} (|\gamma_{u;k'}| + |\gamma_{\psi; k'}|).
\end{align}
\end{proposition}
\begin{remark} Sharper estimates are available, but we have chosen to retain the simplest form for the right-hand side. For example, the eventual estimate \eqref{apply:me:k} will show that higher Sobolev norms of $G_{\mathrm{Left}}, G_{\mathrm{Right}}$ will come with higher powers of $L$, which have chosen to discard in the presentation of \eqref{est:data:below:M}, as these are superfluous for our eventual application. 
\end{remark}

\begin{remark} The quantities $\mathrm{\bold{Dat}}$ and $\mathrm{\bold{Dat}}_{\mathrm{Below}}$, though being boundary data, are not fully determined by the \textit{prescribed} data, as can be seen by the presence of $\sum_{k' = 0}^{k_\ast} (|\gamma_{u;k'}| + |\gamma_{\psi; k'}|)$ terms on the right-hand sides of \eqref{est:data:M} and \eqref{est:data:below:M}. This is already seen at the $k = 0$ level through the expression \eqref{PhiIota:1}. At higher orders in $k$, this is due to the fact that the vorticity can be deduced on the boundaries $\{x = 1\} \cap \{y > y_1(\eps) \}$ through the equation, for instance through \eqref{use:me:1}, but then to recover the velocity one needs to integrate vertically, for instance through \eqref{opop:1} -- \eqref{opop:2}.
\end{remark}

\begin{remark}  As in the previous section, the estimates \eqref{est:data:M} -- \eqref{est:data:below:M} are valid for all $L \in (0, L_{\mathrm{Max}})$. We omit the dependence of implicit constants on large values of $L_{\mathrm{Max}}$, and we freely use the inequality $L \lesssim C(L_{\mathrm{Max}}) \lesssim 1$. As all the lemmas in this section are valid on $(0, L_{\mathrm{Max}})$, we do not repeatedly state this for each lemma. 
\end{remark}

We introduce various auxiliary notations to help clarify the relation between various changes of variables. First, we note that on the sides, $\{x = 1\} = \{s = 1\}$ and $\{x = 1 + L\} = \{s = 1 + \bar{L}\}$, the only solution-dependent unknown are the two numbers $\gamma_{\psi; k}, \gamma_{u; k}$; all other information can be reconstructed in terms of the given data and these two numbers. 

We summarize the various variables we have: 
\begin{align*}
z := \frac{y}{\Lambda(x)}, \hspace{3 mm} Y := \bar{u}(s, z) + \eps u(s, z), \hspace{3 mm} Z :=  p(s, Y), \hspace{3 mm} \eta := \frac{y-\Lambda(x)}{\Lambda(x) L^{\frac13}}, \hspace{3 mm} \rho = \frac{Z}{\bar{L}^{\frac13}}.
\end{align*}
We note that, in general these changes of coordinates are $s$ (or $x$)-dependent. Therefore, in this section, they (1) differ based on if we are working on the ``Left" ($s = 1$) or ``Right" ($s = 1 + \bar{L}$), and (2) are themselves determined by the prescribed data in conjunction with the two numbers $\gamma_{\psi}, \gamma_u$. We introduce
\begin{align}
\omega_{NL, k}(x, y) = & \p_y u_{R, k}(x, y) - \frac{\p_y^2 u_{P}}{\p_y u_{P}}u_{R, k}(x, y).
\end{align}
We will take the convention for this section that $A_{\mathrm{Left}}(y) := A(1, y)$, $A_{\mathrm{Right}}(y) := A(1 + L, y)$, for any function $A(x, y)$, and for subscripted functions, such as $A_{j}(x, y)$, then $A_{j; \mathrm{Left}}(y) := A_j(1, y)$, $A_{j; \mathrm{Right}}(y) := A_j(1 + L, y)$. We also sometimes use $'$ to mean $\p_y$ when the quantity is a function of $y$ only.

\subsection{Iterative Structure of Higher Order Data}

Our starting point here is the equation in the original $(x, y)$ variables, which we recall from \eqref{sideLH}, and write in the following manner 
\begin{align} \label{jb43:1}
u_P \p_x u_{R, k} - u_{Py}\p_x \psi_{R,k} = & \p_y^2 u_{R, k} - \bold{C}_1  \p_y u_{R,k} - \bold{C}_2 u_{R,k} - \bold{C}_3 \psi_{R,k} + \overline{\mathcal{R}}_k, \\ \n
u_P \p_x \p_y u_{R,k} - u_{Pyy}\p_x \psi_{R,k}  =& \p_y^3 u_{R, k} - \bold{C}_1  \p_y^2 u_{R,k} - (\p_y \bold{C}_1 + \bold{C}_2) \p_y u_{R,k}-(\p_y \bold{C}_2 + \bold{C}_3) u_{R,k}  \\ \label{jb43:2}
 & - \p_y \bold{C}_3 \psi_{R,k} + \p_y \overline{ \mathcal{R}}_k.
\end{align}
\begin{remark} We use the formulation \eqref{sideLH} here as opposed to quasilinearizing the background $u_P$. Indeed, in order to compute the iterative structure of the data, we need to keep the full background on the left-hand side. 
\end{remark}

\noindent \textit{Identity for $ \omega_{\mathrm{NL}, k+1; \iota}$:} The forthcoming identities will be used on the intervals $[y_1(\eps), y^\ast]$ in the case $\iota = \mathrm{Left}$, and $[y_\ast, y_{1 + L}]$ in the case $\iota = \mathrm{Right}$. We now take the linear combination $\eqref{jb43:2} - \frac{u_{Pyy}}{u_{Py}} \times \eqref{jb43:1}$, which results in the identity
\begin{align} \n
u_P \omega_{\mathrm{NL}, k+1; \iota} = & L \p_y^2 \omega_{\mathrm{NL}, k; \iota} +L ( A_2  \p_y^2 u_{R,k; \iota}  + A_1 \p_y u_{R,k; \iota}  + A_0 u_{R,k; \iota}  + A_{-1} \psi_{R, k; \iota}) \\
&+L (\p_y - \frac{u_{Pyy}}{u_{Py}}) \overline{\mathcal{R}}_{k; \iota},
\end{align}
where we have denoted the coefficients via  
\begin{align}
A_2 := &  - \bold{C}_1, \\
A_1 := &  -2 L \p_y (\frac{u_{Pyy}}{u_{Py}}) - (\p_y \bold{C}_1 - \frac{u_{Pyy}}{u_{Py}} \bold{C}_1) - \bold{C}_2, \\
A_0 := & - L \p_y^2 ( \frac{u_{Pyy}}{u_{Py}}) - (\p_y \bold{C}_2 - \frac{u_{Pyy}}{u_{Py}} \bold{C}_2 ) - \bold{C}_3, \\
A_{-1} := & -(\p_y \bold{C}_3 - \frac{u_{Pyy}}{u_{Py}} \bold{C}_3).
\end{align}
It turns out to be convenient to rewrite the right-hand side in the following manner 
\begin{align} \n
u_P \omega_{\mathrm{NL}, k+1; \iota} = & L \p_y^2 \omega_{\mathrm{NL}, k; \iota} + L B_1 \p_y \omega_{\mathrm{NL}, k; \iota} +L B_0 \omega_{\mathrm{NL}, k; \iota} + L C_0 u_{R, k; \iota} + L A_{-1} \psi_{R, k; \iota} \\
&+L (\p_y - \frac{u_{Pyy}}{u_{Py}}) \overline{ \mathcal{R}}_{k; \iota}, 
\end{align}
where 
\begin{align}
B_1 := &A_2, \\
B_2 := &A_2 \frac{\p_y^2 u_P}{\p_y u_P} + A_1, \\
C_0 := & A_2 (\frac{\p_y^2 u_P}{\p_y u_P})^2 + A_2 \p_y (\frac{\p_y^2 u_P}{\p_y u_P}) + A_1 \frac{\p_y^2 u_P}{\p_y u_P}.
\end{align}
To compactify notation in what follows, we introduce 
\begin{align}
\mathcal{D}_2 \omega_{NL; k} := & \p_y^2 \omega_{\mathrm{NL}; k} + B_1 \p_y \omega_{\mathrm{NL}; k} + B_0 \omega_{\mathrm{NL}; k}, \\ \label{widehatG}
\mathcal{G}_k := &  C_0 u_{R; k} +  A_{-1} \psi_{R; k}, \\ \label{widehatRk}
\overline{r}_k := & (\p_y - \frac{u_{Pyy}}{u_{Py}})\overline{ \mathcal{R}}_k.
\end{align}
after which we obtain 
\begin{align} \label{use:me:1}
u_P \omega_{\mathrm{NL}, k+1; \iota} = & L \mathcal{D}_2 \omega_{\mathrm{NL}, k; \iota} + L\mathcal{G}_{k; \iota} + L\overline{r}_{k; \iota}.
\end{align}

\vspace{2 mm}

\noindent \textit{Identity for $u_{R, k+1}$:} The following will be used on the intervals $[y^\ast, \infty)$ in the case $\iota = \mathrm{Left}$ and $[0, y_\ast]$ in the case $\iota = \mathrm{Right}$. We use \eqref{jb43:1} to write 
\begin{align} \label{expo:1}
u_P^2 \p_y \{ \frac{\psi_{R, k+1; \iota}}{u_P} \} = & L \mathcal{H}_{k; \iota} +L \overline{\mathcal{R}}_{k; \iota}
\end{align}
where we define
\begin{align}
\mathcal{H}_k := \p_y^2 u_{R, k} - \bold{C}_1  \p_y u_{R,k} - \bold{C}_2 u_{R,k} - \bold{C}_3 \psi_{R,k}.
\end{align}
The equation \eqref{expo:1} is an ODE for $\psi_{R, k+1; \iota}$ whose inversion will be done as needed in the forthcoming lemmas to recover $\psi_{R, k+1; \iota}$ and subsequently $u_{R, k+1; \iota}$ upon differentiating. 

\vspace{2 mm}

\noindent \textit{Compatibility Conditions on $Q_{\mathrm{Left}}, Q_{\mathrm{Right}}$:} We can use the iterative identity \eqref{use:me:1} to derive the conditions on $(Q_{\mathrm{Left}}, Q_{\mathrm{Right}})$. These conditions will not be explicit, as they become complicated rapidly. However, we can write them iteratively. The study of $Q_{\mathrm{Left}}, Q_{\mathrm{Right}}$ are symmetric, so for simplicity we write the formulas for $Q_{\mathrm{Left}}$. Indeed, define 
\begin{align} \label{yeayea:1}
Q_{\mathrm{Left}; 0} := &Q_{\mathrm{Left}}, \\ \label{yeayea:2}
Q_{\mathrm{Left}; k} := &\frac{L}{u_P}(\mathcal{D}_2 Q_{\mathrm{Left}; k-1} + \mathcal{G}_{k-1} + \overline{r}_{k-1}).  
\end{align}
Then our compatibility conditions become 
\begin{align} \label{cc:1}
\mathcal{D}_2 Q_{\mathrm{Left}; k}(0) + \mathcal{G}_k(1, y_1(\eps)) + \overline{r}_k(1, y_1(\eps)) = 0. 
\end{align}
Examining the form of $\mathcal{G}_k(1, y_1(\eps)) + \overline{r}_k(1, y_1(\eps))$ from \eqref{widehatG}, \eqref{widehatRk}, it is clear that this condition can be achieved by selecting $Q_{\mathrm{Left}}$ to be a smooth function with appropriately tailored boundary conditions at $0$, satisfying 
\begin{align} \label{QbdLR}
\| Q_{\mathrm{Left}} \|_{C^\infty} + \| Q_{\mathrm{Right}} \|_{C^\infty}  \lesssim L^{\frac23} \sum_{k = 0}^{k_\ast - 1} (|\gamma_{u; k}| + |\gamma_{\psi; k}|). 
\end{align}
%
\begin{lemma} \label{lem:one} Let $0 \le K \le 1$, and $J \in \mathbb{N}$. The following estimates are valid: 
\begin{align}  \label{hm:1}
\| \langle \eta \rangle^{K} (\eta \p_\eta)^{J} \omega_{\mathrm{NL}, k; \mathrm{Left}} \|_{H^m_\eta(y_1(\eps), y^\ast)} \lesssim & \|  (\eta \p_\eta)^{J} F_{\mathrm{Left}} \langle \eta \rangle^K \|_{H^{m + 3k}_\eta} + L^{\frac12-\frac{K}{3}} \sum_{k' = 0}^{k-1}  |\gamma_{u; k'}|, \\  \n
\|  (\eta \p_\eta)^{J} u_{R, k; \mathrm{Left}} \|_{H^m_\eta(y_1(\eps), y^\ast)} \lesssim & \|  (\eta \p_\eta)^{J} F_{\mathrm{Left}}  \|_{H^{m -1 + 3k}_\eta} + L^{-\frac16} |\gamma_{u; k}|\\ \label{hm:2}
& + L^{\frac12} \sum_{k' = 0}^{k-1}  |\gamma_{u; k'}|.
\end{align}
\end{lemma}
\begin{proof}It is convenient to close the bounds for $K = 0, J = 0$ first, with the extension to $K = 1$ and general $J$ being straightforward. We first of all write $u_{R, k; \mathrm{Left}}$ in terms of $\omega_{\mathrm{NL}, k; \mathrm{Left}}$, and subsequently $\psi_{R, k; \mathrm{Left}}$ via the inversion formula:
\begin{align} \label{opop:1}
u_{R, k; \mathrm{Left}}(y) = & \gamma_{u; k} \frac{u_{P; \mathrm{Left}}'(y)}{u_{P; \mathrm{Left}}'(y_1(\eps))} + u'_{P; \mathrm{Left}}(y) \int_{y_1(\eps)}^y \frac{\omega_{NL, k; \mathrm{Left}}}{u'_{P; \mathrm{Left}}}, \\ \label{opop:2}
\psi_{R,k; \mathrm{Left} }(y) = & \gamma_{\psi; k} + \int_{y_1(\eps)}^y u_{R, k; \mathrm{Left}}.
\end{align}
From this expression, we immediately obtain 
\begin{align*}
\|  u_{R, k; \mathrm{Left}} \|_{H^m_\eta(y_1(\eps), y^\ast)} \lesssim& L^{-\frac16} |\gamma_{u; k}| +  \|  \omega_{\mathrm{NL}, k; \mathrm{Left}} \|_{H^{\max\{m-1, 0\}}_\eta(y_1(\eps), y^\ast)}, \\
\| \psi_{R, k; \mathrm{Left}} \|_{L^\infty(y_1(\eps), y^\ast)} \lesssim & ( |\gamma_{\psi; k}| +  |\gamma_{u; k}| ) + L^{\frac16}\|  \omega_{\mathrm{NL}, k; \mathrm{Left}} \|_{L^2_\eta(y_1(\eps), y^\ast)}.
\end{align*}
Next, we use \eqref{use:me:1} to obtain 
\begin{align*}
\omega_{\mathrm{NL}, k+1; \mathrm{Left}} = & L \frac{\mathcal{D}_2 \omega_{\mathrm{NL}, k; \mathrm{Left}} - a_1 }{u_P}+ L \frac{\mathcal{G}_{k; \mathrm{Left}} - a_2}{u_P} + L\frac{\overline{r}_{k; \mathrm{Left}} - a_3}{u_P} := \sum_{i = 1}^3 E_i,
\end{align*}
for $a_1 = \mathcal{D}_2 \omega_{\mathrm{NL}; k}(y_1(\eps))$, $a_2 = \mathcal{G}_k(y_1(\eps))$ and $a_3 = \overline{r}_k(y_1(\eps))$. These sum to zero due to the compatibility condition \eqref{cc:1}. First, we have by using that $u_P \sim y - y_1(\eps)$ on $[y_1(\eps), y^\ast]$,
\begin{align*}
|E_1| \lesssim &L \| \p_y^3 \omega_{\mathrm{NL}, k; \mathrm{Left}} \|_{H^m_\eta(y_1(\eps), y^\ast)} \lesssim \|  \omega_{\mathrm{NL}, k; \mathrm{Left}} \|_{H^{m+3}_\eta(y_1(\eps), y^\ast)} \\
|E_2| \lesssim & L^{\frac23} \|  u_{R, k; \mathrm{Left}} \|_{H^{m+1}_\eta(y_1(\eps), y^\ast)}, \\
|E_3| \lesssim & L^{\frac13} \|  \omega_{\mathrm{NL}, k-1; \mathrm{Left}} \|_{H^{m+2}_\eta(y_1(\eps), y^\ast)} + L^{\frac23}  \|  u_{R,k-1; \mathrm{Left}} \|_{H^{m+1}_\eta(y_1(\eps), y^\ast)} 
\end{align*}
We therefore get the following iteration. For the purpose of saving notation, let 
\begin{align} \n
\alpha_{k, m} := \| \omega_{\mathrm{NL}, k; \mathrm{Left}} \|_{H^m_\eta(y_1(\eps), y^\ast)} , \qquad \beta_{k,m} := \| u_{R, k; \mathrm{Left}} \|_{H^m_\eta(y_1(\eps), y^\ast)},
\end{align}
after which our iteration can be written as
\begin{align}
\beta_{k,m} \lesssim & L^{-\frac16} |\gamma_{u; k}| + \alpha_{k,m-1}, \qquad k \ge 0 \\
\alpha_{k+1,m} \lesssim & \alpha_{k,m+3} + L^{\frac13} 1_{k \ge 1} \alpha_{k-1, m+2} + L^{\frac23} ( \beta_{k, m+1} + 1_{k \ge 1} \beta_{k-1, m+1}), \qquad k \ge 0 \\
\alpha_{0,m} \lesssim & \| F_{\mathrm{Left}} \|_{H^m_\eta}.
\end{align}
From this iteration, the desired bounds follow immediately. The lemma is proven. 
\end{proof}

\begin{lemma} \label{lem:two} Fix any $J \ge 1$. There exists a decomposition $u_{R, k; \mathrm{Left}} =  u_{R, k; \mathrm{Left}}^{\mathrm{(Fast)}} + u_{R, k; \mathrm{Left}}^{\mathrm{(Slow)}}$, where 
\begin{align} \label{js:1}
\| u_{R, k; \mathrm{Left}}^{\mathrm{(Fast)}} \langle \eta \rangle^J \|_{H^m_\eta(\eta^\ast, \infty)} \lesssim & L^{\frac{k+2}{3}} \| F_{\mathrm{Left}} \langle \eta \rangle^{2 + J} \|_{H^{\max(m +2k-1,0)}_\eta}, \\ \n
\| u_{R, k; \mathrm{Left}}^{\mathrm{(Slow)}} \langle y \rangle^J \|_{H^m_y(y^\ast, \infty)} \lesssim & |\gamma_{\psi; k}| + |\gamma_{u; k}| + L^{\frac23} \sum_{k' = 0}^{k-1} (|\gamma_{\psi; k'}| + |\gamma_{u; k'}| ) \\ \label{js:2}
& + L^{\frac16} \| F_{\mathrm{Left}} \langle \eta \rangle^3 \|_{H^{3k}_\eta} + L^k \| G_{\mathrm{Left}} \langle y \rangle^J \|_{H^{m + 2k}_y}.
\end{align}
\end{lemma}
\begin{proof} We first invert the expression \eqref{expo:1} on the domain $(y^\ast, \infty)$ to obtain the expressions
\begin{align} \n
\psi_{R, k+1; \mathrm{Left}}(y) = &\psi_{R, k+1; \mathrm{ Left}}(y^\ast) \frac{u_{P; \mathrm{Left}}(y)}{u_{P; \mathrm{Left}}(y^\ast)} +L u_{P; \mathrm{Left}} \int_{y^\ast}^y \frac{\mathcal{H}_{k; \mathrm{Left}}}{u_{P; \mathrm{Left}}^2} \ud y' \\ \label{mo:by:1}
&+ L u_{P; \mathrm{Left}} \int_{y^\ast}^y \frac{\overline{\mathcal{R}}_{k; \mathrm{Left}}}{u_{P; \mathrm{Left}}^2} \ud y'.
\end{align}
From this expression, we are led to decompose 
\begin{align*}
\psi_{R, k+1; \mathrm{Left}}(y) = \psi^{(\mathrm{Slow})}_{R, k+1; \mathrm{Left}}(y) +  \psi^{(\mathrm{Fast})}_{R, k+1; \mathrm{Left}}(\eta) , 
\end{align*}
where
\begin{align*}
\psi^{(\mathrm{Slow})}_{R, k+1; \mathrm{Left}}(y)  :=  & \psi_{R, k+1; \mathrm{ Left}}(y^\ast) \frac{u_{P; \mathrm{Left}}(y)}{u_{P; \mathrm{Left}}(y^\ast)} + L u_{P; \mathrm{Left}} \int_{y^\ast}^y \frac{\mathcal{H}_{k; \mathrm{Left}}^{(\mathrm{Slow})}}{u_{P; \mathrm{Left}}^2}  \ud y' \\
&+ L u_{P; \mathrm{Left}} \int_{y^\ast}^y \frac{\overline{\mathcal{R}}_{k; \mathrm{Left}}^{(\mathrm{Slow})}}{u_{P; \mathrm{Left}}^2}  \ud y' 
 + L^{\frac43} u_{P; \mathrm{Left}} \int_{\eta^\ast}^\infty \frac{\mathcal{H}_{k; \mathrm{Left}}^{(\mathrm{Fast})}}{u_{P; \mathrm{Left}}^2}  \ud \eta' \\
& + L^{\frac43} u_{P; \mathrm{Left}} \int_{\eta^\ast}^\infty \frac{\overline{\mathcal{R}}_{k; \mathrm{Left}}^{(\mathrm{Fast})}}{u_{P; \mathrm{Left}}^2}  \ud \eta', \\
 \psi^{(\mathrm{Fast})}_{R, k+1; \mathrm{Left}}(\eta)   := &- L^{\frac43} u_{P; \mathrm{Left}} \int_{\eta}^\infty \frac{\mathcal{H}_{k; \mathrm{Left}}^{(\mathrm{Fast})}}{u_{P; \mathrm{Left}}^2}  \ud \eta' - L^{\frac43} u_{P; \mathrm{Left}} \int_{\eta}^\infty \frac{\overline{\mathcal{R}}_{k; \mathrm{Left}}^{(\mathrm{Fast})}}{u_{P; \mathrm{Left}}^2}  \ud \eta'.
\end{align*}
We differentiate both these identities in $y$ to obtain 
\begin{align*}
u^{(\mathrm{Slow})}_{R, k+1; \mathrm{Left}}(y) :=  & \psi_{R,\mathrm{ Left}, k+1}(y^\ast) \frac{\p_y u_{P; \mathrm{Left}}(y)}{u_{P; \mathrm{Left}}(y^\ast)} +  L \p_y u_{P; \mathrm{Left}} \int_{y^\ast}^y \frac{\mathcal{H}_{k; \mathrm{Left}}^{(\mathrm{Slow})}}{u_{P; \mathrm{Left}}^2}  \ud y' + L \frac{\mathcal{H}_{k; \mathrm{Left}}^{(\mathrm{Slow})}}{u_{P; \mathrm{Left}}} \\
& + L \p_y u_{P; \mathrm{Left}} \int_{y^\ast}^y \frac{\overline{\mathcal{R}}_{k; \mathrm{Left}}^{(\mathrm{Slow})}}{u_{P; \mathrm{Left}}^2}  \ud y' + L  \frac{\overline{\mathcal{R}}_{k; \mathrm{Left}}^{(\mathrm{Slow})}}{u_{P; \mathrm{Left}}}  + L^{\frac43} \p_y u_{P; \mathrm{Left}} \int_{\eta^\ast}^\infty \frac{\mathcal{H}_{k; \mathrm{Left}}^{(\mathrm{Fast})}}{u_{P; \mathrm{Left}}^2}  \ud \eta' \\
& + L^{\frac43} \p_y u_{P; \mathrm{Left}} \int_{\eta^\ast}^\infty \frac{\overline{\mathcal{R}}_{k; \mathrm{Left}}^{(\mathrm{Fast})}}{u_{P; \mathrm{Left}}^2}  \ud \eta', \\
u^{(\mathrm{Fast})}_{R, k+1; \mathrm{Left}}(\eta) := & - L^{\frac43} \p_y u_{P; \mathrm{Left}} \int_{\eta}^\infty \frac{\mathcal{H}_{k; \mathrm{Left}}^{(\mathrm{Fast})}}{u_{P; \mathrm{Left}}^2}  \ud \eta' - L \frac{\mathcal{H}_{k; \mathrm{Left}}^{(\mathrm{Fast})}}{u_{P; \mathrm{Left}}} -  L^{\frac43} \p_y u_{P; \mathrm{Left}} \int_{\eta}^\infty \frac{\overline{\mathcal{R}}_{k; \mathrm{Left}}^{(\mathrm{Fast})}}{u_{P; \mathrm{Left}}^2}  \ud \eta'  \\
&- L \frac{\overline{\mathcal{R}}_{k; \mathrm{Left}}^{(\mathrm{Fast})}}{u_{P; \mathrm{Left}}}.
\end{align*}
From these identities, we obtain the following estimates for $J \ge 1$, 
\begin{align} \n
\| u^{(\mathrm{Slow})}_{R, k+1;  \mathrm{Left}} \langle y \rangle^{J} \|_{H^m_{y}(y^\ast, \infty)} \lesssim &   | \psi_{R,\mathrm{ Left}, k+1}(y^\ast) | + L (  \| \mathcal{H}^{(\mathrm{Slow})}_{k; \mathrm{Left}} \langle y \rangle^J \|_{H^m_y(y^\ast, \infty)} + \| \overline{\mathcal{R}}^{(\mathrm{Slow})}_{k; \mathrm{Left}} \langle y \rangle^J \|_{H^m_y(y^\ast, \infty)}) \\  \label{hgyumnb:1}
& +L^{\frac43} \| \mathcal{H}_{k; \mathrm{Left}}^{(\mathrm{Fast})} \langle \eta \rangle \|_{L^2_\eta(\eta^\ast, \infty)} +  L^{\frac43} \| \overline{\mathcal{R}}_{k; \mathrm{Left}}^{(\mathrm{Fast})} \langle \eta \rangle \|_{L^2_\eta(\eta^\ast, \infty)}, \\ \label{hgyumnb:2}
\| u^{(\mathrm{Fast})}_{R, k+1;  \mathrm{Left}} \langle \eta \rangle^{J} \|_{H^m_{\eta}(\eta^\ast, \infty)} \lesssim & L \| \mathcal{H}^{(\mathrm{Fast})}_{k;  \mathrm{Left}} \langle \eta \rangle^{J} \|_{H^m_{\eta}(\eta^\ast, \infty)} +  L \| \overline{\mathcal{R}}^{(\mathrm{Fast})}_{k;  \mathrm{Left}} \langle \eta \rangle^{J} \|_{H^m_{\eta}(\eta^\ast, \infty)}.
\end{align}
We now have the following expressions 
\begin{align} 
\mathcal{H}_{k, \mathrm{Left}}^{(\mathrm{Slow})} := & \p_y^2 u_{R, k; \mathrm{Left}}^{(\mathrm{Slow})} - \bold{C}_1  \p_y u_{R, k; \mathrm{Left}}^{(\mathrm{Slow})} - \bold{C}_2 u^{(\mathrm{Slow})}_{R, k; \mathrm{Left}} - \bold{C}_3 \psi_{R,k;  \mathrm{Left}}^{(\mathrm{Slow})}, \\
\mathcal{H}_{k; \mathrm{Left}}^{(\mathrm{Fast})} := & L^{-\frac23}  \p_\eta^2 u^{(\mathrm{Fast})}_{R,k; \mathrm{Left}} - \bold{C}_1 L^{-\frac13}  \p_\eta u^{(\mathrm{Fast})}_{R,k; \mathrm{Left}} - \bold{C}_2 u^{\mathrm{(Fast)}}_{R,k; \mathrm{Left}} - \bold{C}_3 \psi^{(\mathrm{Fast})}_{R,k; \mathrm{Left}}.
\end{align}
We therefore estimate 
\begin{align} \label{tor:a:1}
\| \mathcal{H}_k^{(\mathrm{Slow})} \langle y \rangle^J \|_{H^m_y(y^\ast, \infty)} \lesssim & \| u^{(\mathrm{Slow})}_{R,k; \mathrm{Left}}\langle y \rangle^{J} \|_{H^{m+2}_{y}(y^\ast, \infty)} + \| \psi^{(\mathrm{Slow})}_{R,k; \mathrm{Left}} \|_{L^{\infty}_{y}(y^\ast, \infty)}, \\ \label{tor:a:2}
\| \mathcal{H}_k^{(\mathrm{Fast})} \langle \eta \rangle^J  \|_{H^m_\eta(\eta^\ast, \infty)} \lesssim & L^{-\frac23} \| u^{(\mathrm{Fast})}_{R,k;  \mathrm{Left}} \langle \eta \rangle^J \|_{H^{m+2}_{\eta}(\eta^\ast, \infty)} + \| \psi^{(\mathrm{Fast})}_{R, k; \mathrm{Left}} \|_{L^\infty_{\eta}(\eta^\ast, \infty)}.
\end{align}
In a similar manner, we have
\begin{align} \label{tor:a:1jk}
\| \overline{\mathcal{R}}_k^{(\mathrm{Slow})} \langle y \rangle^J \|_{H^m_y(y^\ast, \infty)} \lesssim & \| u^{(\mathrm{Slow})}_{R,k-1; \mathrm{Left}}\langle y \rangle^{J} \|_{H^{m+1}_{y}(y^\ast, \infty)} + \| \psi^{(\mathrm{Slow})}_{R,k-1; \mathrm{Left}} \|_{L^{\infty}_{y}(y^\ast, \infty)}, \\ \label{tor:a:2jk}
\| \overline{\mathcal{R}}_k^{(\mathrm{Fast})} \langle \eta \rangle^J  \|_{H^m_\eta(\eta^\ast, \infty)} \lesssim & L^{-\frac13} \| u^{(\mathrm{Fast})}_{R,k-1;  \mathrm{Left}} \langle \eta \rangle^J \|_{H^{m+1}_{\eta}(\eta^\ast, \infty)} + \| \psi^{(\mathrm{Fast})}_{R, k-1; \mathrm{Left}} \|_{L^\infty_{\eta}(\eta^\ast, \infty)}.
\end{align}

To make expressions simpler, we define the following notation for this proof: 
\begin{align*}
\alpha_{k,m} := &\| u^{(\mathrm{Slow})}_{R,k; \mathrm{Left}} \langle y \rangle^J \|_{H^m_{y}(y^\ast, \infty)}, && \beta_{k,m} := \| u^{(\mathrm{Fast})}_{R,k;  \mathrm{Left}}   \langle \eta \rangle^J\|_{H^m_{\eta}(\eta^\ast, \infty)}, \\
\mu_{k} := & \| \psi^{(\mathrm{Slow})}_{R,k; \mathrm{Left}} \|_{L^\infty_{y}(y^\ast, \infty)}, && \gamma_{k} := \| \psi^{(\mathrm{Fast})}_{R,k; \mathrm{Left}} \|_{L^\infty_{\eta}(\eta^\ast, \infty)}.
\end{align*}
Combining the inequalities \eqref{hgyumnb:1} -- \eqref{hgyumnb:2} with \eqref{tor:a:1} -- \eqref{tor:a:2} and \eqref{tor:a:1jk} -- \eqref{tor:a:2jk}, we obtain the following iterative bounds:
\begin{align}\n
\alpha_{k+1,m} \lesssim & | \psi_{R,\mathrm{ Left}, k+1}(y^\ast) |  + L \alpha_{k,m+2} + L \mu_k + L^{\frac23} \beta_{k,2} + L^{\frac43} \gamma_k \\  \label{ind:est:1:lk}
& + 1_{k \ge 1}(L \alpha_{k-1,m+1} + L \mu_{k-1} + L \beta_{k-1,3} + L^{\frac43}\gamma_{k-1}), \\ \label{ind:est:2:lk}
\beta_{k+1,m} \lesssim & L^{\frac13}\beta_{k, m+2} + L \gamma_k + 1_{k \ge 1}( L^{\frac23} \beta_{k-1, m+1} + L \gamma_{k-1}), \\ \label{ind:est:3:lk}
\mu_{k+1} \lesssim & | \psi_{R,\mathrm{ Left}, k+1}(y^\ast) | + \alpha_{k+1,1}, \\
\gamma_{k+1} \lesssim & L^{\frac13} \beta_{k+1,1}.  
\end{align}
The latter two inequalities on the stream function arise from a straightforward Sobolev embedding in one dimension. We may insert the latter two inequalities into the first two and close the iteration fro $\alpha_{k,m}, \beta_{k,m}$ (first for $\beta_{k,m}$, as it is independent of $\alpha_{k,m}$, and then for $\alpha_{k,m}$) to obtain: 
\begin{align} \label{park:1}
\alpha_{k,m} \lesssim & \sum_{k' = 0}^k L^{k-k'} |\psi_{R, k'; \mathrm{Left}}(y^\ast)| + L^k \alpha_{0, m+2k} + L^{\frac{k+2}{3}} \beta_{0,2 + 2k}, \\ \label{park:2}
\beta_{k,m} \lesssim & L^{\frac{k}{3}} \beta_{0, m + 2k}.
\end{align}
It thus remains to provide bounds on $|\psi_{R, k'; \mathrm{Left}}(y^\ast)|$ as well as the initial states $\alpha_{0,m}, \beta_{0,m}$. 

First, for the initial states, we recall the expression \eqref{PhiIota:1}. From here, we decompose 
\begin{align*}
u_{R, 0; \mathrm{Left}}^{(\mathrm{Slow})}(y) := & \chi_{\mathrm{Cont, +}}(y)(\gamma_{u} \frac{u_{FS}'(y)}{u_{FS}'(y_1(\eps))} + L^{\frac13} u_{FS}'(y) \int_{y_1(\eps)}^{\infty} \frac{F_{\mathrm{Left}}(\frac{1}{L^{\frac13}}(\frac{y}{y_1(\eps)} - 1) + 1)}{u_{FS}'} \ud \eta )  \\ 
& + \chi_{\mathrm{Top}}(y) G_{\mathrm{Left}}(y), \\
u_{R, 0; \mathrm{Left}}^{(\mathrm{Fast})}(\eta) := & -\chi_{\mathrm{Cont, +}}(y) L^{\frac13} u_{FS}'(y) \int_{\eta}^{\infty} \frac{F_{\mathrm{Left}}(\frac{1}{L^{\frac13}}(\frac{y}{y_1(\eps)} - 1) + 1)}{u_{FS}'} \ud \eta.
\end{align*}
We now provide estimates on these initial quantities. We have 
\begin{align} \label{in:bd:1}
\alpha_{0,m} \lesssim & |\gamma_u| + L^{\frac23} \| F_{\mathrm{Left}} \langle \eta \rangle \|_{L^2_\eta} + \| G_{\mathrm{Left}} \langle y \rangle^J \|_{H^{m}_y}, \\ \label{in:bd:2}
\beta_{0,m} \lesssim & L^{\frac23} \| F_{\mathrm{Left}} \langle \eta \rangle^{2+ J} \|_{H^{\max(m-1,0)}_\eta}.
\end{align}
Next, we estimate 
\begin{align} \n
|\psi_{R, \mathrm{Left}, k}(y^\ast)| \le & |\gamma_{\psi; k}| + |\int_{y_1(\eps)}^{y^\ast} u_{R, k; \mathrm{Left}} \ud y' | \lesssim  |\gamma_{\psi; k}| + \| u_{R, k; \mathrm{Left}} \|_{L^\infty_y(y_1(\eps), y^\ast)} \\ \n
\lesssim &  |\gamma_{\psi; k}|  + |\gamma_{u; k}| + \| \omega_{NL, k; \mathrm{Left}} \|_{L^2_y(y_1(\eps), y^\ast)} \\ \n
\lesssim &  |\gamma_{\psi; k}|  + |\gamma_{u; k}| + L^{\frac16} \| \omega_{NL, k; \mathrm{Left}} \|_{L^2_\eta(y_1(\eps), \eta^\ast)} \\ \label{in:bd:4}
\lesssim &  |\gamma_{\psi; k}|  + |\gamma_{u; k}| + L^{\frac16} (\| F_{\mathrm{Left}} \|_{H^{3k}_\eta} +L^{\frac12} \sum_{k' = 0}^{k-1} |\gamma_{u; k'}| ),
\end{align}
where we have invoked \eqref{hm:1}.

Pairing the bounds \eqref{park:1} -- \eqref{park:2} with \eqref{in:bd:1} -- \eqref{in:bd:2} and \eqref{in:bd:4} gives the desired result. We note that in so doing, we have chosen to not present the sharpest estimate (powers of $L$) at the gain of simplicity. 
\end{proof}

\begin{lemma} \label{lem:three} Fix any $J \ge 1$. There exists a decomposition $u_{R, k; \mathrm{Right}} =  u_{R, k; \mathrm{Right}}^{\mathrm{(Fast)}} + u_{R, k; \mathrm{Right}}^{\mathrm{(Slow)}}$, where
\begin{align} \label{js:3}
\| u_{R, k; \mathrm{Right}}^{\mathrm{(Fast)}} \langle \eta \rangle^J \|_{H^m_\eta(0, y_\ast)} \lesssim & L^{\frac{k+2}{3}} \| F_{\mathrm{Right}} \langle \eta \rangle^{2 + J} \|_{H^{\max(m +2k-1,0)}_\eta}, \\  \label{js:4}
\| u_{R, k; \mathrm{Right}}^{\mathrm{(Slow)}} \|_{H^m_y(0, y_\ast)} \lesssim & L^{\frac16} \| F_{\mathrm{Right}} \langle \eta \rangle^3 \|_{H^{3k}_\eta} + L^k \| G_{\mathrm{Right}} \|_{H^{m + 3k}_y}.
\end{align}
\end{lemma}
\begin{proof} This proof follows in a very similar manner to Lemma \ref{lem:two}, and we omit repeating the details.

\end{proof}

\begin{lemma} \label{lem:four} Let $0 \le K \le 1$, and $J \in \mathbb{N}$. The following estimates are valid: 
\begin{align}  \label{hm:1:R}
\| \langle \eta \rangle^{K} (\eta \p_\eta)^{J} \omega_{\mathrm{NL}, k; \mathrm{Right}} \|_{H^m_\eta(0, y_\ast)} \lesssim & \|  (\eta \p_\eta)^{J} F_{\mathrm{Right}} \langle \eta \rangle^{\max\{K,3\}} \|_{H^{m + 3k}_\eta} + L^{\frac12-\frac{K}{3}} \| G_{\mathrm{Right}} \|_{H^{3k-2}_y}, \\  \label{hm:2:R}
\|  (\eta \p_\eta)^{J} u_{R, k; \mathrm{Right}} \|_{H^m_\eta(0, y_\ast)} \lesssim & \|  (\eta \p_\eta)^{J} F_{\mathrm{Right}} \langle \eta \rangle^{\max\{K, 3\}} \|_{H^{\max\{m -1 + 3k, 0\}}_\eta} + L^{k-\frac16} \|G_{\mathrm{Right}} \|_{H^{1 + 3k}_y}.
\end{align}
\end{lemma}
\begin{proof} This follows in an identical manner to Lemma \ref{lem:one}, with the quantities $\gamma_{u;k}$ being replaced by $u_{R, k; \mathrm{Right}}(y_\ast)$, which is bounded by 
\begin{align*}
|u_{R, k; \mathrm{Right}}(y_\ast)| =& |\int_0^{y_\ast} \p_y u_{R, k; \mathrm{Right}}(y_\ast)| \lesssim \| u_{R, k; \mathrm{Right}}^{(\mathrm{Slow})} \|_{H^1_y(0, y_\ast)} + \| u_{R, k; \mathrm{Right}}^{(\mathrm{Fast})} \|_{H^1_\eta(0, y_\ast)} \\
\lesssim & L^k \|G_{\mathrm{Right}} \|_{H^{1 + 3k}_y} + L^{\frac16} \| F_{\mathrm{Right}} \langle \eta \rangle^3 \|_{H^{3k}_\eta},
\end{align*}
according to applying \eqref{js:3} and \eqref{js:4} with $m = 1$.

\end{proof}

\subsection{Proof of Proposition \ref{prop:data:M}}

It is useful at this stage to include a computation for the case $k = 0$. Indeed, letting $\mathrm{\bold{Dat}}_k$ be the contribution of the $k'$th element of the sum in \eqref{boldDat}. Then, we have 
\begin{lemma}[$k = 0$ Data Estimate] The following bounds are valid: 
\begin{align} \n
\mathrm{\bold{Dat}}_0 \lesssim & L^{\frac16} (\| F_{\mathrm{Left}} \langle \eta \rangle^3 \|_{H^{2 }_\eta} + \| F_{\mathrm{Right}} \langle \eta \rangle^3 \|_{H^{2}_\eta}) \\ \label{yeah:yea:ye:1}
&+(|\gamma_{u}| + |\gamma_{\psi}|) +  \| G_{\mathrm{Right}} \|_{H^{3}_y} + \|  G_{\mathrm{Left}} \langle y \rangle^{N_0} \|_{H^{3}_y}), \\ \n
\mathrm{\bold{Dat}}_{\mathrm{Below},0} \lesssim & L^{\frac16} (\| F_{\mathrm{Left}} \langle \eta \rangle^3 \|_{H^{2 }_\eta} + \| F_{\mathrm{Right}} \langle \eta \rangle^3 \|_{H^{2}_\eta}) +  \| G_{\mathrm{Right}} \|_{H^{3}_y} \\ \label{yeah:yea:ye:1b}
& + L^{\frac13} (|\gamma_{u}| + |\gamma_{\psi}| +  \|  G_{\mathrm{Left}} \langle y \rangle^{N_0} \|_{H^{3}_y}) . 
\end{align}
\end{lemma}
\begin{proof} We proceed in two steps, namely we will first estimate the quantities involving $\Xi_{\mathrm{Left}}, \Xi_{\mathrm{Right}}$, and then those involving $U_{\mathrm{Left}}, U_{\mathrm{Right}}$.

\vspace{2 mm}

\noindent \underline{Estimates on $\Omega_{I; \mathrm{Left}}$:} The starting point is an expression for $u_{\mathrm{Left}}(z), 1 < z < z_\ast$ in terms of the given functions $F_{\mathrm{Left}}$ and the unknown numbers $(\gamma_\psi, \gamma_u)$. By inversion, we have
\begin{align} \label{kia:1}
u_{\mathrm{Left}}(z)= &  \gamma_u \frac{\bar{u}_z(1, z) }{\bar{u}_z(1,1)} + \bar{u}_z(1, z) \int_1^z \frac{F_{\mathrm{Left}}( \frac{z' - 1}{L^{\frac13}} + 1)}{\bar{u}_z(1, z')} \ud z'.
\end{align}
To emphasize this, we write in operator form, $u_{\mathrm{Left}}  = u_{\mathrm{Left}}[\gamma_u, F_{\mathrm{Left}}]$. This formula will be utilized both to describe the change of variables as well as to describe the change of function. More precisely, we first have 
\begin{align*}
\rho = & \frac{Z}{\bar{L}^{\frac13}} = \frac{p(1, \bar{u}(s, z) + \eps u(1, z))}{\bar{L}^{\frac13}} =   \frac{p(1, \bar{u}(1, z) + \eps u_{\mathrm{Left}}(1, z))}{\bar{L}^{\frac13}} \\
= & \frac{\bar{u}(1, L^{\frac13} \eta + 1) + \eps u_{\mathrm{Left}}(1, L^{\frac13} \eta + 1)}{\bar{L}^{\frac13} \bar{w}_z(1, 1)^{\frac23}} =  \frac{\bar{u}(1, L^{\frac13} \eta + 1) + \eps u_{\mathrm{Left}}(1, L^{\frac13} \eta + 1)}{\bar{L}^{\frac13} ( \bar{u}_z(1, 1) + \eps \omega_\ast(1, 1) + \eps \frac{\bar{u}_{zz}}{\bar{u}_z} \gamma_u )^{\frac23}} \\
=&  \frac{\bar{u}(1, L^{\frac13} \eta + 1) + \eps u_{\mathrm{Left}}(1, L^{\frac13} \eta + 1)}{\bar{L}^{\frac13} ( \bar{u}_z(1, 1) + \eps F_{\mathrm{Left}}(1) + \eps \frac{\bar{u}_{zz}}{\bar{u}_z} \gamma_u )^{\frac23}} = \eta + \eps T_{\mathrm{coord}}[\eta; F_{\mathrm{Left}}, \gamma_u], 
\end{align*} 
where the operator $T_{\mathrm{coord}}[\eta; F_{\mathrm{Left}}, \gamma_u]$ is defined implicitly to be the perturbation of the identity from above. It is also convenient to introduce the inverse operator $S_{\mathrm{coord}}$ via 
\begin{align}
\rho = \eta + \eps T_{\mathrm{coord}}[\eta; F_{\mathrm{Left}}, \gamma_u] \iff \eta = \rho + \eps S_{\mathrm{coord}}[\rho; F_{\mathrm{Left}}, \gamma_u].
\end{align}
We can therefore provide estimates on the Jacobian of the change of coordinates using a straightforward application of Taylor's remainder theorem, 
\begin{align*}
 |\frac{d\rho}{d\eta} - 1| \le \chi_I L^{-\frac13} (L^{\frac23}\eta)(\| \bar{u} \|_{H^2_z} + \eps \|u_{\mathrm{Left}} \|_{H^2_z}) \lesssim \delta (1 + \eps |\gamma_u| + \eps L^{-\frac13} \| F_{\mathrm{Left}} \|_{W^{1,\infty}_\eta}  )
 \end{align*}
We therefore have, in our introduced notation, 
\begin{align*}
\Omega_{I; \mathrm{Left}}(\rho) = & \chi_I(Z)F_{\mathrm{Left}}( \eta).
\end{align*}
We may therefore estimate $\| \Omega_{I; \mathrm{Left}} \|_{L^2_\rho} \lesssim  \| F_{\mathrm{Left}} \|_{L^2_\eta}$.
\vspace{2 mm}

\noindent \underline{Estimates on $\Omega_{I; \mathrm{Right}}$:} These estimates follow in an analogous manner (but are strictly simpler) than those above on $\Omega_{I; \mathrm{Left}}$.

\vspace{2 mm}

\noindent \underline{Estimates on $U_{\mathrm{Left}}$:} We now evaluate $ \sum_{j = 0}^3 ( \| \p_z^j U_{\mathrm{Left}; k} \chi_{O,j}^+ \langle z \rangle^{N_k} \|_{L^2_z}$. The support of $\chi_{O,j}^+$ overlaps the two regions: $y_1(\eps) < y < y^\ast$ as well as $y^\ast < y < \infty$. Out of these, the region $y^\ast < y < \infty$ is the more complicated (as it involves $G_{\mathrm{Left}}$), whereas the contribution from $y_1(\eps) < y < y^\ast$ is identical, just without the $G_{\mathrm{Left}}$. Therefore, we simply treat the former contribution. We note that the change of variable from $z$ to $y$ is nearly the identity at $\{x = 1\}$ (and in fact everywhere). Upon using that $U_{\mathrm{Left}} = \bar{W} u_{\mathrm{Left}} - \bar{W}_z \psi_{\mathrm{Left}}$, it therefore suffices for us to evaluate the quantities $\text{Term}_u := \sum_{j = 0}^3 ( \| \p_y^j u_{R,\mathrm{Left}; k} \chi_{O,j}^+ \langle y \rangle^{N_k} \|_{L^2_y}$ and $\text{Term}_{\psi} := \|  \psi_{R,\mathrm{Left}; k} \chi_{O,j}^+\|_{L^\infty_y}$. From the expression \eqref{PhiIota:1}, we clearly have 
\begin{align*}
|\text{Term}_u| \lesssim |\gamma_u| + \sum_{j =0}^3 \| \p_y^j G_{\mathrm{Left}} \langle y \rangle^{N_0} \|_{L^2_y}.
\end{align*}
Next, by integrating \eqref{kia:1} from $z = 0$ (which corresponds to $y = y_1(\eps)$) as well as \eqref{PhiIota:1}, we have 
\begin{align*}
|\text{Term}_\psi| \lesssim L^{\frac16} \| \langle \eta \rangle^2 F_{\mathrm{Left}} \|_{H^2}+ |\gamma_\psi| + \sum_{j =0}^3 \| \p_y^j G_{\mathrm{Left}} \langle y \rangle^{N_0} \|_{L^2_y}.
\end{align*}

\vspace{2 mm}

\noindent \underline{Estimates on $U_{\mathrm{Right}}$:}  These estimates follow in an analogous manner (but are strictly simpler) than those above on $U_{\mathrm{Left}}$. 

\vspace{2 mm}

Putting these bounds together proves \eqref{yeah:yea:ye:1}. For the proof of \eqref{yeah:yea:ye:1b}, we simply scale the contribution from $U_{\mathrm{Left}}$ by $L^{\frac13}$. This concludes the proof of the lemma. 
\end{proof}

\begin{lemma} The following bounds are valid: 
\begin{align} \n
\mathrm{\bold{Dat}}_k \lesssim & L^{\frac16} (\| F_{\mathrm{Left}} \langle \eta \rangle^3 \|_{H^{2 + 3k}_\eta} + \| F_{\mathrm{Right}} \langle \eta \rangle^3 \|_{H^{2 + 3k}_\eta})  +(|\gamma_{u;k}| + |\gamma_{\psi; k}|)\\ \label{apply:me:k}
& + L^{\frac16}\sum_{k' = 0}^{k-1} (|\gamma_{u; k'}| + |\gamma_{\psi; k'}|)+ L^k( \| G_{\mathrm{Right}} \|_{H^{3+3k}_y} + \|  G_{\mathrm{Left}} \langle y \rangle^{N_0} \|_{H^{3 + 2k}_y}), \\ \n
\mathrm{\bold{Dat}}_{\mathrm{Below}, k} \lesssim &L^{\frac16} (\| F_{\mathrm{Left}} \langle \eta \rangle^3 \|_{H^{2 + 3k}_\eta} + \| F_{\mathrm{Right}} \langle \eta \rangle^3 \|_{H^{2 + 3k}_\eta})+ L^{\frac13}(|\gamma_{u;k}| + |\gamma_{\psi; k}|) \\ \label{apply:me:k:bel}
&+ L^{\frac16}\sum_{k' = 0}^{k-1} (|\gamma_{u; k'}| + |\gamma_{\psi; k'}|) +  L^k( \| G_{\mathrm{Right}} \|_{H^{3+3k}_y} +L^{\frac13} \|  G_{\mathrm{Left}} \langle y \rangle^{N_0} \|_{H^{3 + 2k}_y}).
\end{align}
\end{lemma}
\begin{proof}This follows in an analogous manner to the $k = 0$ case presented above, but with the inductive bounds from Lemmas \ref{lem:one}, \ref{lem:two}, \ref{lem:three}, \ref{lem:four} being used as opposed to the given expressions \eqref{PhiIota:1} -- \eqref{PhiIota:4}.
\end{proof}

We are now ready to prove Proposition \ref{prop:data:M}.
\begin{proof}[Proof of Proposition \ref{prop:data:M}] We simply apply estimates \eqref{apply:me:k} and \eqref{apply:me:k:bel} with \eqref{QbdLR} and sum over $k$ in $0 \le k \le k_\ast$ in order to prove \eqref{est:data:M} and \eqref{est:data:below:M}, respectively.
\end{proof}

\section{Maximum Regularity Bounds} \label{Lpr}

In this section, we continue as in the previous section to study the full Prandtl system. To motivate the analysis of this section, we turn to the second term on the right-hand side of \eqref{sourke}. We observe the $L^\infty$ based term in this bound has the property that they are at low $x$ derivatives ($\lfloor \frac{k_\ast + 1}{2} \rfloor$) but relatively high number of $y$ derivatives ($4$). Therefore, our main goal in this section is to trade away $x$ derivatives to gain $y$ derivatives, which will enable us to control these terms.\footnote{We note that an alternative proof could have avoided this section entirely by propagating higher $y$ derivatives in all the previous sections, say a number $k_{\ast \ast}$ to be chosen, but we opt to differentiate many times in the $x$ direction and recover $y$ derivatives using the analysis of this section. This results in simpler expressions with the product rule throughout the paper.} For this section only (since we can ``lose" $x$ derivatives in these estimates), we fully separate out the linear and quasilinear terms as in \eqref{linHGYU:1}. We define our maximal regularity norm: 
\begin{align} \label{MR:norm}
\| \psi_R \|_{\mathrm{MR}} := & \sum_{k = 0}^{k_\ast - 1} \sum_{j = 1}^{J_k} \| \tilde{L}^j \p_y^{2 + j} u_{R,k} \langle y \rangle^{N_k - j} \|_{L^2_{xy}}, \qquad \tilde{L} := \min\{L, 1\}
\end{align}
where the index $J_k$ appearing above is defined as follows 
\begin{align}
J_k := 2(k_\ast - k).
\end{align}
The feature of the weight index $N_k - j$ appearing is due to the fact that higher $x$ derivatives have weaker decay due to \eqref{def:NsubK}, and so when we trade away $x$ derivatives to gain $y$ derivatives, we also need to account for the weakening in the weight. The feature of the weights in $\tilde{L}^j$ are relevant when $0 < L << 1$. Since control scaled derivatives $L \p_x$ in our linear norm, when $L$ is small this scaled derivative is weaker has the potential to be weak. In order to account for this, we need to also scale the $y$ derivatives appearing in \eqref{MR:norm} by the appropriate factors of $L$. 

The main proposition of this section will be 
\begin{proposition} \label{pro:MR} Let $k_\ast \ge 5$. Fix any $L_{\mathrm{Max}} < \infty$, and let $0 < L < L_{\mathrm{Max}}$. The following estimate is valid: 
\begin{align} \label{main:pro:MR}
\| \psi_R \|_{\mathrm{MR}} \lesssim & \| \psi_R \|_{\mathrm{Linear}} + \eps L^{-1} ( \sum_{k = 0}^{k_\ast - 1} \sum_{j = 0}^{J_k } \|\p_y^j \psi_{R,k} \|_{L^\infty} +  \| \psi_R \|_{\mathrm{Linear}}  )(\| \psi_R \|_{\mathrm{MR}}  + \| \psi_R \|_{\mathrm{Linear}} ). 
\end{align}
\end{proposition}

\begin{remark} As in the previous sections, the estimate \eqref{main:pro:MR} is valid for all $L \in (0, L_{\mathrm{Max}})$. We omit the dependence of implicit constants on large values of $L_{\mathrm{Max}}$, and we freely use the inequality $L \lesssim C(L_{\mathrm{Max}}) \lesssim 1$. As all the lemmas in this section are valid on $(0, L_{\mathrm{Max}})$, we do not repeatedly state this for each lemma. 
\end{remark}

\subsection{Analysis of $\mathcal{L}_{\text{Lin}}[\mathcal{R}, \mathcal{M}]$}

In this subsection, we perform an analysis on the system \eqref{linHGYU:1} as an abstract system with a given source term. More precisely, define 
\begin{align} \label{linHGYU:abs}
u_{FS} \p_x u + \p_y u_{FS} v + v_{FS} \p_y u + \p_x u_{FS} u + \widetilde{\bold{C}}_3  \psi - \p_y^2 u = \mathcal{R} + \mathcal{M},
\end{align}
where $\mathcal{R}, \mathcal{M}$ are given source terms. 

\begin{lemma} Let $j \ge 1, N \ge j$. Let $u$ satisfy \eqref{linHGYU:abs}. Then the following estimates are valid:  
\begin{align} \n
\| \tilde{L}^j \p_y^{2 + j} u \langle y \rangle^{N-j} \|_{L^2_{xy}} \lesssim& \| \psi \|_{\mathrm{Eff}, N} +  \| \tilde{L} \p_x \psi \|_{\mathrm{Eff}, N-1} + \tilde{L} \sum_{j' = 1}^{j-2} \| \tilde{L}^{j' + 1} \p_y^{2 + j'} u_x \langle y \rangle^{N-1 - j'} \|_{L^2} \\ \label{mr:lin}
&+ \sum_{j' = 1}^j  (\| \tilde{L}^{j'} \p_y^{j'} \mathcal{R} \langle y \rangle^{N-j'} \|_{L^2_{xy}} + \| \tilde{L}^{j'} \p_y^{j'} \mathcal{M} \langle y \rangle^{N-j'} \|_{L^2_{xy}}).
\end{align}
\end{lemma}
\begin{proof} It turns out to be more convenient to establish the following bound without the weights of $L$:
\begin{align} \n
\| \p_y^{2 + j} u \langle y \rangle^{N-j} \|_{L^2_{xy}} \lesssim& \| \psi \|_{\mathrm{Eff}, N} +  \|  \p_x \psi \|_{\mathrm{Eff}, N-1} +  \sum_{j' = 1}^{j-2} \|  \p_y^{2 + j'} u_x \langle y \rangle^{N-1 - j'} \|_{L^2} \\ \label{mr:lin:bare}
&+ \sum_{j' = 1}^j  (\| \p_y^{j'} \mathcal{R} \langle y \rangle^{N-j'} \|_{L^2_{xy}} + \| \p_y^{j'} \mathcal{M} \langle y \rangle^{N-j'} \|_{L^2_{xy}}),
\end{align}
after which multiplying both sides by $\tilde{L}^{j}$ (and using that $\tilde{L} \le 1$ by definition) yields \eqref{mr:lin}.

We will induct on $j \ge 1$. Therefore, we first prove the result for $j = 1$ as the base case. 

\vspace{2 mm}

\noindent \textit{Base Case ($j = 1$):} Applying $\p_y$ to \eqref{linHGYU:1} results in the following identity (after rearranging for $\p_y^3 u)$: 
\begin{align} \label{linHGYU:2}
-\p_y^3 u = - \sum_{i = 1}^5 \underline{LT}^{(1)}_i + \p_y \mathcal{R} + \p_y \mathcal{M},
\end{align}
where we define $\underline{LT}_i$ to be the linear transport terms: 
\begin{align*}
\underline{LT}^{(j)}_1 := & \p_y^j \{ u_{FS} \p_x u \}, \\
\underline{LT}^{(j)}_2 := & \p_y^j \{  \p_y u_{FS} v \}, \\
\underline{LT}^{(j)}_3 := &\p_y^j \{  v_{FS} \p_y u \}, \\
\underline{LT}^{(j)}_4 := &\p_y^j \{ \p_x u_{FS} u  \},  \\
\underline{LT}^{(j)}_5 := &\p_y^j \{ \widetilde{\bold{C}}_3 \psi\}. 
\end{align*}
From \eqref{linHGYU:2}, we obtain the estimate 
\begin{align*}
\|  \p_y^{3} u \langle y \rangle^{N - 1} \|_{L^2} \lesssim \sum_{i = 1}^5\| \underline{LT}^{(1)}_i \langle y \rangle^{N-1} \|_{L^2} + \| \p_y \mathcal{R} \langle y \rangle^{N-1} \|_{L^2} + \| \p_y \mathcal{M} \langle y \rangle^{N-1} \|_{L^2}.
\end{align*}
We subsequently estimate 
\begin{align*}
\| \underline{LT}_1^{(1)} \langle y \rangle^{N-1} \|_{L^2} \lesssim & \| u_x \langle y \rangle^{N-1} \|_{L^2} + \| u_{xy} \langle y \rangle^{N-1} \|_{L^2} \lesssim \| \p_x \psi \|_{\text{Eff}, N-1}, \\
\| \underline{LT}_2^{(1)} \langle y \rangle^{N-1} \|_{L^2} \lesssim &\| u_x  \|_{L^2} \lesssim \| \p_x \psi \|_{\text{Eff}, N-1}, \\
\| \underline{LT}_3^{(1)} \langle y \rangle^{N-1} \|_{L^2} \lesssim & \| u_y \langle y \rangle^{N-1} \|_{L^2} + \| u_{yy} \langle y \rangle^{N} \|_{L^2} \lesssim \| \psi \|_{\text{Eff}, N}  \\
\| \underline{LT}_4^{(1)} \langle y \rangle^{N-1} \|_{L^2} \lesssim & \| u \langle y \rangle^{N-1} \|_{L^2} + \| u_y \langle y \rangle^{N-1} \|_{L^2} \lesssim \| \psi \|_{\text{Eff}, N}  \\
\| \underline{LT}_5^{(1)} \langle y \rangle^{N-1} \|_{L^2} \lesssim & \|  \widetilde{\bold{C}}_3 \langle y \rangle^{N-1} \|_{L^\infty}\| u \langle y \rangle^{N-1} \|_{L^2} + \| \p_y \widetilde{\bold{C}}_3 \langle y \rangle^{N} \|_{L^\infty} \| \frac{\psi}{\langle y \rangle} \|_{L^2} \lesssim \| \psi \|_{\text{Eff}, N}. 
\end{align*}
Above, for the term $\underline{LT}_3^{(1)}$ we have used that $v_{FS} \sim y$ for large $y$, hence the loss of one weight. 

\vspace{2 mm}

\noindent \textit{Inductive Step:} We now induct by fixing any $2 \le j$, and assuming that inequality \eqref{mr:lin:bare} is valid up to the index $j - 1$. We apply $\p_y^j$ to \eqref{linHGYU:1}, which produces the identity 
\begin{align}
- \p_y^{2 + j} u =  - \sum_{i = 1}^5 \underline{LT}^{(j)}_i + \p_y^j \mathcal{R} + \p_y^j \mathcal{M},
\end{align}
and subsequently the estimate 
\begin{align*}
\|  \p_y^{2 + j} u \langle y \rangle^{N - j} \|_{L^2} \lesssim \sum_{i = 1}^5\| \underline{LT}^{(j)}_i \langle y \rangle^{N-j} \|_{L^2} + \| \p_y^j \mathcal{R} \langle y \rangle^{N-j} \|_{L^2} + \| \p_y^j \mathcal{M} \langle y \rangle^{N-j} \|_{L^2}
\end{align*}
We turn to estimating the $\underline{LT}_i^{(j)}$ terms on the right-hand side. Here, a delicate bookkeeping of indices is required. We proceed for the first term as follows:
\begin{align*}
\|  \underline{LT}^{(j)}_1 \langle y \rangle^{N-j} \|_{L^2} \lesssim & \sum_{j' = 0}^j \| \p_y^{j'} u_x \langle y \rangle^{N-j} \|_{L^2} \lesssim  \sum_{j' = 0}^2 \| \p_y^{j'} u_x \langle y \rangle^{N-j} \|_{L^2} + \sum_{j' = 3}^j \| \p_y^{j'} u_x \langle y \rangle^{N-j} \|_{L^2} \\
\lesssim &  \sum_{j' = 0}^2 \| \p_y^{j'} u_x \langle y \rangle^{N-j} \|_{L^2} + \sum_{j' = 1}^{j-2} \| \p_y^{2 + j'} u_x \langle y \rangle^{N-j} \|_{L^2} \\
\lesssim &  \sum_{j' = 0}^2 \| \p_y^{j'} u_x \langle y \rangle^{N-1} \|_{L^2} + \sum_{j' = 1}^{j-2} \| \p_y^{2 + j'} u_x \langle y \rangle^{N-1 - j'} \|_{L^2} \\
\lesssim & \| \p_x \psi \|_{\text{Eff}, N-1} +  \sum_{j' = 1}^{j-2} \| \p_y^{2 + j'} u_x \langle y \rangle^{N-1 - j'} \|_{L^2},
\end{align*} 
as required by the right-hand side of \eqref{mr:lin:bare}. The next term is far simpler due to the rapid decay of $\p_y u_{FS}$, and reads as follows: 
\begin{align*}
\|  \underline{LT}^{(j)}_2 \langle y \rangle^{N-j} \|_{L^2} \lesssim & \| \frac{v}{\langle y \rangle} \|_{L^2} + \sum_{j' = 1}^j \| \p_y^{j'-1} u_x  \|_{L^2} \lesssim  \| \p_x \psi \|_{\text{Eff}, N-1} +  \sum_{j' = 1}^{j-2} \| \p_y^{2 + j'} u_x \langle y \rangle^{N-1 - j'} \|_{L^2}.
\end{align*} 
For the third term, due to the linear growth of $v_{FS}$ in $y$, we distinguish this contribution as follows: 
\begin{align*}
\|  \underline{LT}^{(j)}_3 \langle y \rangle^{N-j} \|_{L^2} \lesssim &\| \p_y^{j+1} u \langle y \rangle^{N+1-j} \|_{L^2} + \sum_{j' = 0}^{j-1} \| \p_y^{j' + 1}u \langle y \rangle^{N-j} \|_{L^2} \\
\lesssim & \| \p_y^{(j-1) + 2} u \langle y \rangle^{N-(j-1)} \|_{L^2} + \sum_{j' = 0}^{1} \| \p_y^{j' + 1}u \langle y \rangle^{N-j} \|_{L^2} + \sum_{j' = 2}^{j-1} \| \p_y^{j' + 1}u \langle y \rangle^{N-j} \|_{L^2} \\
\lesssim & \| \p_y^{(j-1) + 2} u \langle y \rangle^{N-(j-1)} \|_{L^2} +\| \psi \|_{\text{Eff},N} + \sum_{j' = 1}^{j-2} \| \p_y^{2 + j'}u \langle y \rangle^{N-j} \|_{L^2}  \\
\lesssim & \| \p_y^{(j-1) + 2} u \langle y \rangle^{N-(j-1)} \|_{L^2} +\| \psi \|_{\text{Eff},N} + \sum_{j' = 1}^{j-2} \| \p_y^{2 + j'}u \langle y \rangle^{N-j'} \|_{L^2}. 
\end{align*}
For these first and third terms appearing on the right-hand side, we appeal to the inductive hypothesis. The next term is again far simpler due to the boundedness of $\p_x u_{FS}$, and we have 
\begin{align*}
\|  \underline{LT}^{(j)}_4 \langle y \rangle^{N-j} \|_{L^2} \lesssim & \sum_{j' = 0}^j \| \p_y^{j'} u \langle y \rangle^{N-j} \|_{L^2} \lesssim  \sum_{j' = 0}^2 \| \p_y^{j'} u \langle y \rangle^{N-j} \|_{L^2} +  \sum_{j' = 3}^j \| \p_y^{j'} u \langle y \rangle^{N-j} \|_{L^2} \\
\lesssim &  \| \psi \|_{\text{Eff},N} +  \sum_{j' = 1}^{j-2} \| \p_y^{2+j'} u \langle y \rangle^{N-j'} \|_{L^2},
\end{align*} 
and we again appeal to the inductive hypothesis for the final quantity appearing on the right-hand side. For the final term, we have 
\begin{align*}
\| \underline{LT}^{(j)}_5 \langle y \rangle^{N-j} \|_{L^2} \lesssim & \| \p_y^j \widetilde{\bold{C}}_3 \langle y \rangle^{N-j + 1} \|_{L^\infty} \| \frac{\psi}{\langle y \rangle} \|_{L^2} + \sum_{j' = 1}^j \| \p_y^{j-j'} \widetilde{\bold{C}}_3 \|_{L^\infty} \| \p_y^{j'} \psi \langle y \rangle^{N-j} \|_{L^2} \\
\lesssim & (\| \p_y^j \widetilde{\bold{C}}_3 \langle y \rangle^{N-j + 1} \|_{L^\infty} + \sum_{j' = 0}^{j-1} \| \widetilde{\bold{C}}_3  \|_{L^\infty}   ) \sum_{j' = 0}^{j-1} \| \p_y^{j'} u \langle y \rangle^{N-j} \|_{L^2} \\
\lesssim & (\| \p_y^j \widetilde{\bold{C}}_3 \langle y \rangle^{N-j + 1} \|_{L^\infty} + \sum_{j' = 0}^{j-1} \| \widetilde{\bold{C}}_3  \|_{L^\infty}   ) ( \sum_{j' = 0}^{2} \| \p_y^{j'} u \langle y \rangle^{N-j} \|_{L^2} \\
& +  \sum_{j' = 1}^{j-3} \| \p_y^{2 + j'} u \langle y \rangle^{N-j'} \|_{L^2}) \\
\lesssim &\| \psi \|_{\text{Eff},N} + \sum_{j' = 1}^{j-3} \| \p_y^{2 + j'} u \langle y \rangle^{N-j'} \|_{L^2}.
\end{align*}
As before, we appeal to the inductive hypothesis for the final term appearing on the right-hand side above. This concludes the proof of the lemma. 
\end{proof}

\subsection{Linear Maximum Regularity Source Terms}

At this point, we deviate from the abstract setting above, and consider the specific system \eqref{linHGYU:1}. We therefore provide estimates on the specific source terms appearing \eqref{linHGYU:1}. We begin with a treatment of $\mathcal{R}_{k, \mathrm{Lin}}$, defined in \eqref{RkLin}.

\begin{lemma} For any $1 \le j \le J_k$, 
\begin{align} \label{des:w:L}
\| \tilde{L}^j \p_y^j \mathcal{R}_{k, \mathrm{Lin}} \langle y \rangle^{N_k-j} \|_{L^2_{xy}} \lesssim  \sum_{k' = 0}^{k-1} \| \psi_{R, k'} \|_{\mathrm{Eff}, N_{k'}} +\tilde{L} \sum_{k' = 0}^{k-1} \sum_{j' = 1}^{j-1} \| \tilde{L}^{j'} \p_y^{2 + j'} u_{R, k'} \langle y \rangle^{N_{k'}-j'} \|_{L^2_{xy}}. 
\end{align}
\end{lemma}
\begin{proof} Just as the previous lemma, it is simpler to prove the $\tilde{L}$-unweighted estimate first, and then scale by factors of $\tilde{L}$ as we need for the future. Therefore, we prove: 
\begin{align} \label{des:wOUT:L}
\| \p_y^j \mathcal{R}_{k, \mathrm{Lin}} \langle y \rangle^{N_k-j} \|_{L^2_{xy}} \lesssim  \sum_{k' = 0}^{k-1} \| \psi_{R, k'} \|_{\mathrm{Eff}, N_{k'}} + \sum_{k' = 0}^{k-1} \sum_{j' = 1}^{j-1} \| \p_y^{2 + j'} u_{R, k'} \langle y \rangle^{N_{k'}-j'} \|_{L^2_{xy}}.
\end{align}
Clearly, multiplying both sides of the above inequality by $\tilde{L}^j$ (and throwing away excess powers of $\tilde{L}$) implies the desired bound, \eqref{des:w:L}.

Second, in order to prove \eqref{des:wOUT:L}, we notice it suffices to prove the estimate: 
\begin{align} \n
\| \p_y^j \mathcal{R}_{k, \mathrm{Lin}} \langle y \rangle^{N_k-j} \|_{L^2_{xy}} \lesssim &\sum_{k' = 0}^{k-1} \| \psi_{R,k'} \|_{L^\infty} +  \sum_{k' = 0}^{k-1} \sum_{j' = 0}^{2} \| \p_y^{j'} u_{R, k'} \langle y \rangle^{N_k+1} \|_{L^2_{xy}} \\ \label{ifitdont}
&+  \sum_{k' = 0}^{k-1} \sum_{j' = 3}^{j+1} \| \p_y^{j'} u_{R, k'} \langle y \rangle^{N_k+3-j'} \|_{L^2_{xy}}
\end{align}
by invoking the definition of the effective norm, \eqref{eff:norm}, and the property of $N_k + 1 \le N_{k'}$ for $k' \le k-1$ according to \eqref{def:NsubK}.  

We recall the definition of $\mathcal{R}_k$ from \eqref{defRKRK}. We have 
\begin{align*}
\| \p_y^j \mathcal{R}^{(1)}_{k, \mathrm{Lin}} \langle y \rangle^{N_k-j} \|_{L^2_{xy}} \lesssim & 1_{k \ge 2} \sum_{k' = 0}^{k-2} \sum_{j' = 0}^j L^{k-k' - 1} \| \p_y^{j'} u_{R,k' + 1} \langle y \rangle^{N_k-j} \|_{L^2_{xy}}, \\
\| \p_y^j \mathcal{R}^{(2)}_{k, \mathrm{Lin}} \langle y \rangle^{N_k-j} \|_{L^2_{xy}} \lesssim &1_{k \ge 1} \sum_{k' = 0}^{k-1} \sum_{j' = 0}^j L^{k-k' } \| \p_y^{j'} u_{R,k'} \langle y \rangle^{N_k-j} \|_{L^2_{xy}},
\end{align*}
both of which satisfy the bound on the right-hand side of \eqref{ifitdont}. Next, we have the $v_{FS}$ term which grows linearly in $y$. To account for this, we have 
\begin{align*}
\| \p_y^j \mathcal{R}^{(3)}_{k, \mathrm{Lin}} \langle y \rangle^{N_k-j} \|_{L^2_{xy}} \lesssim & 1_{k \ge 1} \sum_{k' = 0}^{k-1} \sum_{j' = 0}^j \| \frac{\p_y^{j - j'} \p_x^{k-k'} v_{FS}}{\langle y \rangle} \|_{L^\infty} L^{k-k'} \| \p_y^{j' + 1} u_{R, k'} \langle y \rangle^{N_k+1-j} \|_{L^2_{xy}} \\
\lesssim & 1_{k \ge 1} \sum_{k' = 0}^{k-1} \sum_{j' = 1}^{j+1}  L^{k-k'} \| \p_y^{j'} u_{R, k'} \langle y \rangle^{N_{k'}-j} \|_{L^2_{xy}}.
\end{align*}
For the final linear term, we have 
\begin{align*}
\| \p_y^j \mathcal{R}^{(4)}_{k, \mathrm{Lin}} \langle y \rangle^{N_k-j} \|_{L^2_{xy}} \lesssim &1_{k \ge 2} \sum_{k' = 0}^{k-2} L^{k-k' - 1} \| \psi_{R,k' + 1} \|_{L^\infty_{xy}} \| \p_y^{j+1} \p_x^{k-k'} u_{FS} \langle y \rangle^{N_k-j} \|_{L^2_{xy}} \\
& + 1_{k \ge 2} \sum_{k' = 0}^{k-2} \sum_{j' = 1}^j L^{k-k' - 1} \| \p_y^{j' - 1} u_{R,k' + 1} \|_{L^2_{xy}} \| \p_y^{j-j' + 1} \p_x^{k-k} u_{FS}\langle y \rangle^{N_k-j} \|_{L^2_{xy}} \\
\lesssim & 1_{k \ge 2} \sum_{k' = 1}^{k-1} L^{k - k'}  \| \psi_{R,k' } \|_{L^\infty_{xy}} +  1_{k \ge 2} \sum_{k' = 1}^{k-1} \sum_{j' = 0}^{j-1} L^{k-k' } \| \p_y^{j'} u_{R,k'} \|_{L^2_{xy}},
\end{align*}
which is bounded by the right-hand side of \eqref{ifitdont}. The lemma is proven. 
\end{proof}

\subsection{Semilinear Maximum Regularity Estimates}

We provide estimates on $\mathcal{R}_{k; \text{Semi}}$ which will be used for our maximum regularity analysis. 
\begin{lemma} \label{lmrsem}Let $k_\ast \ge 5$. Fix any $0 \le k \le k_\ast -1$. Let $1 \le j \le J_k = 2(k_\ast - k)$. Define $N_k := n_\ast - k$. Then the following nonlinear estimates hold: 
\begin{align} \n
\| L^j \p_y^j \mathcal{R}_{k; \mathrm{Semi}} \langle y \rangle^{N_k - j} \|_{L^2_{xy}} \lesssim &  \eps L^{-1} ( \sum_{k = 0}^{k_\ast - 1} \sum_{j = 0}^{J_k } \|\p_y^j \psi_{R,k} \|_{L^\infty}  )(\sum_{k' = 0}^{k-1} \| \psi_{R, k'} \|_{\mathrm{Eff}, N_{k'}} \\ \label{pluplu:1}
&\qquad \qquad + \sum_{k' = 0}^{k-1} \sum_{j' = 1}^{J_{k'}} \| L^{j'} \p_y^{2 + j'} u_{R, k'} \langle y \rangle^{N_{k'}-j'} \|_{L^2_{xy}} ).
\end{align}
\end{lemma}
\begin{proof} We recall the definition of $\mathcal{R}_{k, \mathrm{Semi}}$ from \eqref{RkSemi}. We consider the estimate of 
\begin{align} \label{ltouc:1}
\| \p_y^j \mathcal{R}^{(1)}_{k; \text{Semi}} \langle y \rangle^{N_k - j} \|_{L^2_{xy}} \lesssim \eps L^{-1} \sum_{k' = 1}^{k-2} \sum_{j' = 0}^j \| \p_y^{j-j'} u_{R,k-k'} \p_y^{j'} u_{R,k' + 1} \langle y \rangle^{N_k - j} \|_{L^2_{xy}}
\end{align}
In order to decide which of the terms above to put in $L^\infty$ and use Sobolev embedding versus putting in $L^2$, we argue using the ``tame principle" as follows. If $j' \le  J_{k' + 1} - 4$, then we will put the second term in the product in $L^\infty$. Therefore, we assume $j' > J_{k' + 1} - 4 = 2(k_\ast - (k' + 1)) - 4 = 2k_\ast - 2k' - 6$. In this case we provide the following bound on indices: 
\begin{align*}
j - j' \le& J_k - j' = 2k_\ast - 2k - j' < 2k_\ast - 2k - (2k_\ast - 2k' - 6) = 6 - 2k + 2k' \\
\le & (2k_\ast - 4) - 2k + 2k' = 2k_\ast - 2(k-k') - 4 = J_{k-k'} - 4, 
\end{align*}
where to go from the first line to the second line we have used $2k_\ast - 4 \ge 6 \iff k_\ast \ge 5$. The conclusion, therefore, is that either $j' \le J_{k' + 1} - 4$ or $j-j' \le J_{k-k'} - 4$. Using this, we may estimate the term above as follows: 
\begin{align} \n
\|L^j \p_y^j \mathcal{R}^{(1)}_{k; \text{Semi}} \langle y \rangle^{N_k - j} \|_{L^2_{xy}} \lesssim \eps L^{-1} & \Big( \sum_{k' = 0}^{k-1} \sum_{j' = 0}^{J_{k'} - 4} \| \p_y^{j'} u_{R,k'} \|_{L^\infty_{xy}} \Big) \Big( \sum_{k' = 0}^{k-1} \sum_{j' = 0}^{J_{k'}} \| L^{j'} \p_y^{j'} u_{R,k'} \langle y \rangle^{N_{k'} - j'} \|_{L^2_{xy}} \Big) 
\end{align}
For the second semilinear term, we decompose it into two sub-terms as follows: 
\begin{align} \n
\| L^j \p_y^j \mathcal{R}^{(2)}_{k; \text{Semi}} \langle y \rangle^{N_k - j} \|_{L^2_{xy}} \lesssim & \eps L^{-1} \sum_{k' = 1}^{k-2} \sum_{j' = 0}^{j-1} L^j \| \p_y^{j'} u_{R,k'+1} \p_y^{j-j'} u_{R,k-k'} \langle y \rangle^{N_k - j} \|_{L^2_{xy}} \\
& +\eps L^{-1} \sum_{k' = 1}^{k-2}L^j \| \psi_{R,k' + 1} \p_y^{j+1} u_{R,k-k'} \langle y \rangle^{N_k - j} \|_{L^2_{xy}} := A + B. 
\end{align}
A comparison with \eqref{ltouc:1} shows that the estimate of $A$ is identical to that of the previous term. To estimate $B$, we proceed as follows: 
\begin{align*}
B \lesssim & \eps L^{-1} (\sum_{k' = 0}^{k-1} \| \psi_{R,k'} \|_{L^\infty}) (\sum_{k' = 0}^{k-1} \| L^j \p_y^{j+1} u_{R,k'}   \langle y \rangle^{N_k - j} \|_{L^2_{xy}} ) \\
\lesssim & \eps L^{-1} (\sum_{k' = 0}^{k-1} \| \psi_{R,k'} \|_{L^\infty}) (\sum_{k' = 0}^{k-1} \|L^j \p_y^{j+1} u_{R,k'}   \langle y \rangle^{N_{k'} - (j+1)} \|_{L^2_{xy}} ).
\end{align*}
Using the fact that $j \le J_k$, we have $j+1 \le J_{k} + 1 \le J_{k' }$. The lemma is proven. 
\end{proof}

\subsection{Quasilinear Maximum Regularity Estimates}

We provide estimates on $\mathcal{M}_{k; \text{Quasi}}$, defined in \eqref{def:MkQuasi}. 
\begin{lemma} Let $k_\ast \ge 2$. Fix any $0 \le k \le k_\ast-1$. Let $1 \le j \le J_k = 2(k_\ast - k)$. Define $N_k := n_\ast - k$. Then the following nonlinear estimates hold: 
\begin{align} \n
\| L^j \p_y^j \mathcal{M}_{k; \text{Quasi}} \langle y \rangle^{N_k - j} \|_{L^2_{xy}} \lesssim&  \eps L^{-1} ( \sum_{k = 0}^{k_\ast - 1} \sum_{j = 0}^{J_k } \|\p_y^j \psi_{R,k} \|_{L^\infty} + \| \psi \|_{\mathrm{Linear}} )(\sum_{k' = 0}^{k+1} \| \psi_{R, k'} \|_{\mathrm{Eff}, N_{k'}} \\ \label{qmr:bd:1}
&\qquad \qquad + \sum_{k' = 0}^{k+1} \sum_{j' = 1}^{J_{k'}} \| L^{j'} \p_y^{2 + j'} u_{R, k'} \langle y \rangle^{N_{k'}-j'} \|_{L^2_{xy}} ).
\end{align}
\end{lemma}
\begin{proof} We treat separately the cases $k \ge 1$ and $k = 0$. 

\vspace{2 mm}

\noindent \underline{Case $k \ge 1$} We first have 
\begin{align*}
\| L^j \p_y^j \mathcal{M}_{k; \text{Quasi}}^{(1)} \langle y \rangle^{N_k - j} \|_{L^2_{xy}} \lesssim & \sum_{j' = 0}^j \eps L^{-1} \| \p_y^{j-j'} u_R \|_{L^\infty} \| L^{j'} \p_y^{j'} u_{R, k+1} \langle y \rangle^{N_k - j} \|_{L^2_{xy}} \\
\lesssim & \eps L^{-1} (\sum_{j' = 0}^{J_k} \| \p_y^{j'} u_R \|_{L^\infty}) (\sum_{j' = 0}^{j} \| L^{j'}  \p_y^{j'} u_{R, k + 1} \langle y \rangle^{N_k - j} \|_{L^2_{xy}} ) \\
\lesssim & \eps L^{-1} (\sum_{j' = 0}^{J_k} \| \p_y^{j'} u_R \|_{L^\infty}) (\sum_{j' = 0}^{J_k} \| L^{j'}  \p_y^{j'} u_{R, k + 1} \langle y \rangle^{N_{k+1} -( j' - 1)} \|_{L^2_{xy}} ) \\
\lesssim & \eps L^{-1} (\sum_{j' = 0}^{J_0-2} \| \p_y^{j'} u_R \|_{L^\infty})( (\sum_{j' = 1}^{J_{k+1}} \| L^{j'}  \p_y^{2 + j'} u_{R, k + 1} \langle y \rangle^{N_{k+1} -j'} \|_{L^2_{xy}} )  \\
& + (\sum_{j' = 0}^{2} \| \p_y^{j'} u_{R, k + 1} \langle y \rangle^{N_{k+1} } \|_{L^2_{xy}} ) ),
\end{align*}
Clearly, the right-hand side above is bounded by the right-hand side of \eqref{qmr:bd:1}. As in the previous lemmas, by decomposing 
\begin{align*}
 \p_y^j \mathcal{M}_{k; \text{Quasi}}^{(2)} = \p_y^j \{ \eps u_{Ry} v_{R, k} \} = \eps L^{-1} \sum_{j' = 0}^{j-1} \binom{j}{j' + 1} \p_y^{j-j'} u_R \p_y^{j'} u_{R, k+1} + \eps L^{-1} \p_y^{j+1} u_R \psi_{R, k+1}, 
\end{align*}
and noticing that the first sum on the right-hand side is contained in the treatment of $\mathcal{M}_{k; \text{Quasi}}^{(1)}$, it suffices to treat the second term on the right-hand side. We do so as follows:
\begin{align*}
L^j \| \eps L^{-1} \p_y^{j+1} u_R \psi_{R, k+1} \langle y \rangle^{N_k - j} \|_{L^2_{xy}} \lesssim & \eps L^{-1} \| L^j \p_y^{j+1} u_R \langle y \rangle^{N_k - j + 1} \|_{L^2_{xy}} \| \psi_{R, k + 1} \|_{L^\infty} \\
\lesssim & \| \psi_{R, k +1} \|_{L^\infty} ( \sum_{j' = 0}^2 \| \p_y^{j'} u_R \langle y \rangle^{N_0} \|_{L^2_{xy}} + \sum_{j' = 1}^{J_0} \| L^{j'}\p_y^{2+ j'} u_R \langle y \rangle^{N_0 - j'} \|_{L^2_{xy}})\\
\lesssim & \| \psi \|_{\mathrm{Linear}} ( \sum_{j' = 0}^2 \| \p_y^{j'} u_R \langle y \rangle^{N_0} \|_{L^2_{xy}} + \sum_{j' = 1}^{J_0} \|  L^{j'} \p_y^{2+ j'} u_R \langle y \rangle^{N_0 - j'} \|_{L^2_{xy}}),
\end{align*}
which is again bounded by the right-hand side of \eqref{qmr:bd:1}. 

Next, we have 
\begin{align*}
\| L^j \p_y^j \mathcal{M}_{k; \text{Quasi}}^{(3)} \langle y \rangle^{N_k - j} \|_{L^2_{xy}} \lesssim & \eps L^{-1} \| \psi_{R,1} \|_{L^\infty} \|L^j \p_y^{j+1} u_{R,k} \langle y \rangle^{N_k - j} \|_{L^2_{xy}} \\
&+ \eps L^{-1} \sum_{j' = 1}^{j} \| \p_y^{j-j'} u_{R,1} \|_{L^\infty} \| L^j \p_y^{j'} u_{R,k} \langle y \rangle^{N_k - j} \|_{L^2_{xy}} \\
\lesssim & \eps L^{-1} (\| \psi_{R,1} \|_{L^\infty} + \sum_{j' = 0}^{J_1 - 1} \| \p_y^{j'} u_{R, 1} \|_{L^\infty} ) ( (\sum_{j' = 1}^{J_{k}} \| L^{j'} \p_y^{2 + j'} u_{R, k } \langle y \rangle^{N_{k} -j'} \|_{L^2_{xy}} )  \\
& + (\sum_{j' = 0}^{2} \| \p_y^{j'} u_{R, k} \langle y \rangle^{N_{k} } \|_{L^2_{xy}} ) ),
\end{align*}
which is bounded by the right-hand side of \eqref{qmr:bd:1} upon using that $k_\ast \ge 2$. 
\vspace{2 mm}

\noindent \underline{Subcase $k = 1$} In this case, we estimate 
\begin{align*}
\| L^j \p_y^j \mathcal{M}_{k; \text{Quasi}}^{(4)} \langle y \rangle^{N_k - j} \|_{L^2_{xy}} \lesssim & \eps L^{-1} \sum_{j' = 0}^j L^j \| \p_y^{j-j'} u_{R,1} \p_y^{j'} u_{R,1} \langle y \rangle^{N_1 - j} \|_{L^2_{xy}} \\
\lesssim & \eps L^{-1} (\sum_{j' = 0}^{J_1 - 1} \| \p_y^{j'} u_{R,1} \|_{L^\infty})(\sum_{j' = 0}^{2} \| \p_y^{j'} u_{R,1} \langle y \rangle^{N_1 } \|_{L^2_{xy}} \\
& + \sum_{j' = 1}^{J_1} \| L^{j'} \p_y^{2 + j'} u_{R,1} \langle y \rangle^{N_1 - j'} \|_{L^2_{xy}} ),
\end{align*}
which is bounded by the right-hand side of \eqref{qmr:bd:1} upon using that $k_\ast \ge 2$.

\vspace{2 mm}

\noindent \underline{Subcase $k \ge 2$} In this subcase, we have 
\begin{align*}
\| L^j \p_y^j \mathcal{M}_{k; \text{Quasi}}^{(4)} \langle y \rangle^{N_k - j} \|_{L^2_{xy}} \lesssim & ( \sum_{j' = 0}^{J_k} \| \p_y^{j'} u_{R,1} \|_{L^\infty}) (\sum_{j' = 0}^{J_k} \| L^{j'} \p_y^{j'} u_{R,k} \langle y \rangle^{N_k - j} \|_{L^2_{xy}}) \\
\lesssim & ( \sum_{j' = 0}^{J_1 - 2} \| \p_y^{j'} u_{R,1} \|_{L^\infty}) (\sum_{j' = 0}^{J_k} \| L^{j' } \p_y^{j'} u_{R,k} \langle y \rangle^{N_k - j} \|_{L^2_{xy}}),
\end{align*}
which is bounded by the right-hand side of \eqref{qmr:bd:1} upon using that $k_\ast \ge 2$.

For the term $\mathcal{M}_{k, \text{Quasi}}^{(5)}$, we decompose via 
\begin{align*}
\| L^j \p_y^j \mathcal{M}_{k, \text{Quasi}}^{(5)}\langle y \rangle^{N_k - j} \|_{L^2_{xy}}  \lesssim & \eps L^{-1} \sum_{j' = 1}^j  L^j \| \p_y^{j-j' + 1} u_{R,1} \p_y^{j' - 1} u_{R,k} \langle y \rangle^{N_k - j} \|_{L^2_{xy}} \\
&+ \eps L^{-1}  \| \psi_{R,k} \|_{L^\infty} \| L^j \p_y^{j+1} u_{R,1} \langle y \rangle^{N_k - j} \|_{L^2_{xy}} \\
\lesssim &  ( \sum_{j' = 0}^{J_1 - 2} \| \p_y^{j'} u_{R,1} \|_{L^\infty}) (\sum_{j' = 0}^{J_k} \| L^{j'} \p_y^{j'} u_{R,k} \langle y \rangle^{N_k - j} \|_{L^2_{xy}}) \\
& + \eps L^{-1}  \| \psi_{R,k} \|_{L^\infty} \sum_{j = 0}^{J_1} \| L^j \p_y^{j+1} u_{R,1} \langle y \rangle^{N_1 - j} \|_{L^2_{xy}}.
\end{align*},
which is bounded by the right-hand side of \eqref{qmr:bd:1} upon using that $k_\ast \ge 2$.
\vspace{2 mm}

\noindent \underline{Case $k = 0$} We turn now to the case when $k = 0$. Here, we notice that the form of $\mathcal{M}_{0, \text{Quasi}}$ simplifies to  
\begin{align}
\mathcal{M}_{0, \text{Quasi}} = \eps u_R \p_x u_R + \eps v_R \p_y u_R.
\end{align}
We proceed to estimate for $1 \le j \le J_0$,  
\begin{align*}
&\| L^j \p_y^j \mathcal{M}_{0; \text{Quasi}} \langle y \rangle^{N_0 - j} \|_{L^2_{xy}} \\
 \lesssim & \eps L^{-1} \sum_{j' = 0}^{j - 1} \| \p_y^{j'} u_R \|_{L^\infty} \| L^{j-j'} \p_y^{j - j'} u_{R,1} \langle y \rangle^{N_0 - j} \|_{L^2_{xy}} + \eps L^{-1} \| u_{R,1} \|_{L^\infty} \| L^j \p_y^{j} u_R \langle y \rangle^{N_0 - j} \|_{L^2_{xy}} \\
\lesssim & \eps L^{-1} (\sum_{j' = 0}^{J_0 - 1} \| \p_y^{j'} u_R \|_{L^\infty})(\sum_{j' = 0}^2  \| \p_y^{j'} u_{R,1} \langle y \rangle^{N_1 } \|_{L^2_{xy}}  + \sum_{j' = 1}^{J_1} \| L^{j'}  \p_y^{2 + j'} u_{R,1} \langle y \rangle^{N_1 - j'} \|_{L^2_{xy}} ) \\
& +  \eps L^{-1} \| u_{R,1} \|_{L^\infty}\| L^j \p_y^{j} u_R \langle y \rangle^{N_0 - j} \|_{L^2_{xy}}.
\end{align*}
Finally, we have by decomposing 
\begin{align*}
&\| L^j \p_y^j \mathcal{M}_{1; \text{Quasi}} \langle y \rangle^{N_0 - j} \|_{L^2_{xy}} \\
 \lesssim & \eps L^{-1} \sum_{j' = 1}^{j-1} L^j \| \p_y^{j-j'} u_{R,1} \p_y^{j'} u_R \langle y \rangle^{N_0 - j} \|_{L^2_{xy}} + \eps L^{-1} L^j \| u_{R,1} \p_y^j u_R \langle y \rangle^{N_0 - j} \|_{L^2_{xy}} \\
&+ \eps L^{-1} L^j \| \psi_{R,1} \p_y^{j+1} u_{R} \langle y \rangle^{N_0 - j} \|_{L^2_{xy}} \\
\lesssim & \eps L^{-1} (\sum_{j' = 0}^{J_0 - 1} \| \p_y^{j'} u_R \|_{L^\infty})(\sum_{j' = 0}^2  \| \p_y^{j'} u_{R,1} \langle y \rangle^{N_1 } \|_{L^2_{xy}}  + \sum_{j' = 1}^{J_1} \| L^{j'}  \p_y^{2 + j'} u_{R,1} \langle y \rangle^{N_1 - j'} \|_{L^2_{xy}} ) \\
& +  \eps L^{-1} \| u_{R,1} \|_{L^\infty}\| L^j \p_y^{j} u_R \langle y \rangle^{N_0 - j} \|_{L^2_{xy}} + \eps L^{-1} \| \psi_{R,1} \|_{L^\infty} \|L^j \p_y^{j+1} u_R \langle y \rangle^{N_0 - j} \|_{L^2_{xy}},
\end{align*}
where we notice that the first two terms on the right-hand side above have already been estimated as part of $\| \p_y^j \mathcal{M}_{0; \text{Quasi}} \langle y \rangle^{N_0 - j} \|_{L^2_{xy}}$. All of the contributions above in the case $k = 0$ are bounded by the right-hand side of \eqref{qmr:bd:1} upon using that $k_\ast \ge 2$. This concludes the proof of the lemma. 
\end{proof}

\subsection{Proof of Proposition \ref{pro:MR}}

We are now ready to bring together the above bounds to prove the main proposition of this section. 
\begin{proof}[Proof of Proposition \ref{pro:MR}] Our starting point is estimate \eqref{mr:lin}, which we apply for each $k$ in the range $0 \le k \le k_\ast -1$, with $N = N_k$, and with $1 \le j \le J_k$:
\begin{align} \n
\| \tilde{L}^j \p_y^{2 + j} u_{R,k} \langle y \rangle^{N_k-j} \|_{L^2_{xy}} \lesssim& \| \psi_k \|_{\mathrm{Eff}, N_k} +  \| \tilde{L} \p_x \psi_{R,k} \|_{\mathrm{Eff}, N_k-1} + \tilde{L} \sum_{j' = 1}^{j-2} \| \tilde{L}^{j' + 1} \p_y^{2 + j'} \p_x u_{R,k} \langle y \rangle^{N_k-1 - j'} \|_{L^2_{xy}} \\ \n
&+ \sum_{j' = 1}^j  (\| \tilde{L}^{j'} \p_y^{j'} \mathcal{R}_k \langle y \rangle^{N_k-j'} \|_{L^2_{xy}} + \| \tilde{L}^{j'} \p_y^{j'} \mathcal{M}_{k, \text{Quasi}} \langle y \rangle^{N_k-j'} \|_{L^2_{xy}}) \\ \n
\lesssim  & \| \psi_{R,k} \|_{\mathrm{Eff}, N_k}+  \|  \psi_{R,k+1} \|_{\mathrm{Eff}, N_k-1} + \tilde{L} \sum_{j' = 1}^{j-2} \| \tilde{L}^{j' + 1} \p_y^{2 + j'} \p_x u_{R,k} \langle y \rangle^{N_k-1 - j'} \|_{L^2_{xy}} \\ \n
&+ \sum_{j' = 1}^j  (\| \tilde{L}^{j'} \p_y^{j'} \mathcal{R}_k \langle y \rangle^{N_k-j'} \|_{L^2_{xy}} + \| \tilde{L}^{j'} \p_y^{j'} \mathcal{M}_{k, \text{Quasi}} \langle y \rangle^{N_k-j'} \|_{L^2_{xy}}) \\ \n
\lesssim  & \| \psi \|_{\mathrm{Linear}}+ \tilde{L} \sum_{j' = 1}^{j-2} \| \tilde{L}^{j' } \p_y^{2 + j'}  u_{R,k+1} \langle y \rangle^{N_k-1 - j'} \|_{L^2_{xy}} \\ \label{heli:1}
&+ \sum_{j' = 1}^j  (\| \tilde{L}^{j'} \p_y^{j'} \mathcal{R}_k \langle y \rangle^{N_k-j'} \|_{L^2_{xy}} + \| \tilde{L}^{j'} \p_y^{j'} \mathcal{M}_{k, \text{Quasi}} \langle y \rangle^{N_k-j'} \|_{L^2_{xy}}).
\end{align}
Above, we have used successively that $L \p_x \psi_{R,k} = \psi_{R,k+1}$, and that $N_{k} - 1 = N_{k+1}$. 

We now want to sum in the relevant $(k, j)$ indices\footnote{Even though these are finite sums, the purpose of switching the order of summation in this argument is to extract the appropriate inductive structure, which is in the $j$ variable as opposed to the $k$ variable (see \eqref{floorbed:1}).}. To do this most efficiently, we switch the sum appearing in \eqref{MR:norm} so that the $j$ summation appears on the outside and the $k$ summation on the inside. Define 
\begin{align*}
K_j := k_\ast - \frac{\mu(j)}{2}, \qquad \mu(j) := \begin{cases} j + 1, \qquad j \text{ odd} \\ j, \qquad j \text{ even}. \end{cases}
\end{align*}
Then we have by rearranging the sum in \eqref{MR:norm} that 
\begin{align} \label{floorbed:2}
\| \psi_R \|_{\mathrm{MR}} = \sum_{j = 1}^{2k_\ast} \| \psi_R \|_{\mathrm{MR}_j}, \qquad \| \psi_R \|_{\mathrm{MR}_j} := \sum_{k = 0}^{K_j} \| L^j \p_y^{2 + j} u_{R,k} \langle y \rangle^{N_k - j} \|_{L^2_{xy}}
\end{align}
Next, we have that since $k \le K_j$, this implies that $k+1 \le K_{j-2}$ and hence the second term on the right-hand side of \eqref{heli:1} can be bounded by $L \sum_{j' = 1}^{j-2} \| \psi_R \|_{\mathrm{MR}_{j'}}$. Therefore, upon further summing \eqref{heli:1} from $k = 0$ to $k = K_j$, we have 
\begin{align} \n
\| \psi_R \|_{\mathrm{MR}_j} \lesssim& \| \psi_R \|_{\mathrm{Linear}}+ \tilde{L} \sum_{j' = 1}^{j-2}\| \psi_R \|_{\mathrm{MR}_{j'}} + \sum_{k = 0}^{K_j} \sum_{j' = 1}^{j}  (\| \tilde{L}^{j'} \p_y^{j'} \mathcal{R}_k \langle y \rangle^{N_k-j'} \|_{L^2_{xy}} \\ \label{heat:1}
&+ \| \tilde{L}^{j'} \p_y^{j'} \mathcal{M}_{k, \text{Quasi}} \langle y \rangle^{N_k-j'} \|_{L^2_{xy}}).
\end{align}
Next, as a result of \eqref{des:w:L}, we observe the following bounds for each $(k, j)$ such that $1 \le j \le J_k$, 
\begin{align*}
\| \tilde{L}^j \p_y^j \mathcal{R}_{k, \mathrm{Lin}} \langle y \rangle^{N_k-j} \|_{L^2_{xy}} \lesssim & \| \psi_R \|_{\mathrm{Linear}} +\tilde{L} \sum_{k' = 0}^{k-1} \sum_{j' = 1}^{j-1} \| \tilde{L}^{j'} \p_y^{2 + j'} u_{R, k'} \langle y \rangle^{N_{k'}-j'} \|_{L^2_{xy}} \\
\lesssim & \| \psi_R \|_{\mathrm{Linear}} + \tilde{L} \sum_{j' = 0}^{j-1} \| \psi \|_{\mathrm{MR}_{j'}}.
\end{align*}
Next, simplifying the right-hand sides of both \eqref{pluplu:1} and \eqref{qmr:bd:1}, we have for any $0 \le k \le k_\ast -1$, and all $1 \le j \le J_k$, 
\begin{align*}
&\| L^j \p_y^j \mathcal{R}_{k; \text{Semi}} \langle y \rangle^{N_k - j} \|_{L^2_{xy}} + \| L^j \p_y^j \mathcal{M}_{k; \text{Quasi}} \langle y \rangle^{N_k - j} \|_{L^2_{xy}} \\
\lesssim& \eps L^{-1} ( \sum_{k' = 0}^{k_\ast - 1} \sum_{j = 0}^{J_{k'} } \|\p_y^j \psi_{R,k'} \|_{L^\infty} +  \| \psi_R \|_{\mathrm{Linear}}  )(\| \psi_R \|_{\mathrm{MR}}  + \| \psi_R \|_{\mathrm{Linear}} ).
\end{align*}
Inserting these bounds into \eqref{heat:1}, we obtain for each $j$ (in the range $1 \le j \le 2k_\ast$), 
\begin{align} \n
\| \psi_R \|_{\mathrm{MR}_j} \lesssim&  \| \psi_R \|_{\mathrm{Linear}}+ L \sum_{j' = 1}^{j-1} \| \psi_R \|_{\mathrm{MR}_{j'}} \\ \label{floorbed:1}
& + \eps L^{-1} ( \sum_{k' = 0}^{k_\ast - 1} \sum_{j = 0}^{J_{k'} } \|\p_y^j \psi_{R,k'} \|_{L^\infty} +  \| \psi_R \|_{\mathrm{Linear}}  )(\| \psi_R \|_{\mathrm{MR}}  + \| \psi_R \|_{\mathrm{Linear}} ),
\end{align}
which inductively in $j$ implies 
\begin{align} \n
\| \psi_R \|_{\mathrm{MR}_j} \lesssim& \| \psi_R \|_{\mathrm{Linear}} + \eps L^{-1} ( \sum_{k' = 0}^{k_\ast - 1} \sum_{j = 0}^{J_{k'} } \|\p_y^j \psi_{R,k'} \|_{L^\infty} +  \| \psi_R \|_{\mathrm{Linear}}  )(\| \psi_R \|_{\mathrm{MR}}  + \| \psi_R \|_{\mathrm{Linear}} ).
\end{align}
We now sum in $j$ over $0 \le k \le 2k_\ast$ and invoke the \eqref{floorbed:2} to obtain the estimate \eqref{main:pro:MR}. The proposition is proven. 
\end{proof}

\section{Complete $Z$ Norm Estimate, $0 < L << 1$} \label{final:sec}


We are now ready to apply the results from our linearized analysis for $0 < L << 1$, in particular Proposition \ref{pro:effective}, to the specific Prandtl system at order $k$. More precisely, we will consider the system \eqref{sideLH} with specific choices of the background  
\begin{align}
u_{B, R} := u_R, \qquad u_B = u_{R; \mathrm{Quasi}}, \qquad \bar{U} := U_P = u_{FS} + \eps u_{R; \mathrm{Quasi}},
\end{align}
where $u_{R; \mathrm{Quasi}}$ is defined in \eqref{urQuasi}, $\vec{\bold{C}}$ as in \eqref{bold:C1:pcts} -- \eqref{bold:C3:pcts}, and specific source terms $\mathcal{R}_k$ defined through \eqref{defRKRK}.  We define
\begin{align} \label{Z:norm:ult}
\| \psi_R \|_{\mathcal{Z}} := \| \psi_R \|_{\mathrm{Linear}} + \| \psi_R \|_{\mathrm{MR}}.
\end{align}

\subsection{Proof of Theorem \ref{main:thm}}

\begin{proof}[Proof of Theorem \ref{main:thm}] We now bring together all of our key estimates. First, when $\bold{C}_i$ are defined through \eqref{bold:C1:pcts} -- \eqref{bold:C3:pcts}, the assumption 
\begin{align} \label{boots:pro:lin:2}
 \sum_{k = 0}^1 \sum_{j = 0}^5 \| \p_x^k \p_y^j u_R \langle y \rangle^{N_{k_\ast}+1} \|_{L^\infty} \le \eps^{-\frac{1}{3}},
\end{align}
ensures that the assumptions of Proposition \ref{pro:effective} are valid for each $0 \le k \le k_\ast$. Therefore, we may apply estimates \eqref{main:E} and \eqref{main:Ekju} for each $k$ to obtain (recall the definition \eqref{def:gamma:u:k})
\begin{align} \n
\| \psi_{R,k} \|_{\mathrm{Eff}_{N_k}} \lesssim &\mathrm{\bold{Source}}_k  +\mathrm{\bold{Dat}}_k, \\ \n
|\gamma_{u; k}| + |\gamma_{\psi; k}| \lesssim &\mathrm{\bold{Source}}_k  +\mathrm{\bold{Dat}}_{\mathrm{Below},k}
\end{align}
We therefore obtain by using $\mathrm{\bold{Dat}}_k \le \mathrm{\bold{Dat}}$, $\mathrm{\bold{Dat}}_{\mathrm{Below},k} \le \mathrm{\bold{Dat}}_{\mathrm{Below}}$, and invoking \eqref{sourke}
\begin{align} \n
\| \psi_{R,k} \|_{\mathrm{Eff}_{N_k}} \lesssim & \mathrm{\bold{Dat}} + L \sum_{k'= 0}^{k-1} \| \psi_{R, k'} \|_{\text{Eff}_{N_{k'}}} +  \eps L^{-1} (\sum_{k = 0}^{\lfloor \frac{k_\ast + 1}{2} \rfloor} \sum_{j = 0}^4 \| \p_y^j u_{R,k} \|_{L^\infty} ) \| \psi_R \|_{\mathrm{Linear}} \\
&+ \eps L^{-1} \| \psi_R \|_{\mathrm{Linear}}^2, \\ \n
|\gamma_{u; k}| + |\gamma_{\psi; k}| \lesssim & \mathrm{\bold{Dat}}_{\mathrm{Below}} + L \sum_{k'= 0}^{k-1} \| \psi_{R, k'} \|_{\text{Eff}_{N_{k'}}} +  \eps L^{-1} (\sum_{k = 0}^{\lfloor \frac{k_\ast + 1}{2} \rfloor} \sum_{j = 0}^4 \| \p_y^j u_{R,k} \|_{L^\infty} ) \| \psi_R \|_{\mathrm{Linear}} \\
&+ \eps L^{-1} \| \psi_R \|_{\mathrm{Linear}}^2.
\end{align}
Inducting in $k$, the above inequality implies 
\begin{align} \n
\| \psi_{R,k} \|_{\mathrm{Eff}_{N_k}} \lesssim & \mathrm{\bold{Dat}}  +  \eps L^{-1} (\sum_{k = 0}^{\lfloor \frac{k_\ast + 1}{2} \rfloor} \sum_{j = 0}^4 \| \p_y^j u_{R,k} \|_{L^\infty} ) \| \psi_R \|_{\mathrm{Linear}} + \eps L^{-1} \| \psi_R \|_{\mathrm{Linear}}^2, \\
|\gamma_{u; k}| + |\gamma_{\psi; k}| \lesssim & \mathrm{\bold{Dat}}_{\mathrm{Below}}  +  L\mathrm{\bold{Dat}}  +  \eps L^{-1} (\sum_{k = 0}^{\lfloor \frac{k_\ast + 1}{2} \rfloor} \sum_{j = 0}^4 \| \p_y^j u_{R,k} \|_{L^\infty} ) \| \psi_R \|_{\mathrm{Linear}} + \eps L^{-1} \| \psi_R \|_{\mathrm{Linear}}^2.
\end{align}
We now take the summation of both sides over $0 \le k \le k_\ast$. While the right-hand sides above are independent of $k$, the left-hand side of the top inequality produces the linear norm upon summation upon invoking \eqref{Linear:norm}. This closes the bounds 
\begin{align} \label{sbU:1}
\| \psi_R \|_{\mathrm{Linear}} \lesssim & \mathrm{\bold{Dat}} + \eps L^{-1} (\sum_{k = 0}^{\lfloor \frac{k_\ast + 1}{2} \rfloor} \sum_{j = 0}^4 \| \p_y^j u_{R,k} \|_{L^\infty} ) \| \psi_R \|_{\mathrm{Linear}} + \eps L^{-1} \| \psi_R \|_{\mathrm{Linear}}^2, \\ \n
\sum_{k = 0}^{k_\ast}(|\gamma_{u; k}| + |\gamma_{\psi; k}|) \lesssim &  \mathrm{\bold{Dat}}_{\mathrm{Below}} + L \mathrm{\bold{Dat}} + \eps L^{-1} (\sum_{k = 0}^{\lfloor \frac{k_\ast + 1}{2} \rfloor} \sum_{j = 0}^4 \| \p_y^j u_{R,k} \|_{L^\infty} ) \| \psi_R \|_{\mathrm{Linear}} \\ \label{shbujL2}
&+ \eps L^{-1} \| \psi_R \|_{\mathrm{Linear}}^2.
\end{align}
Upon recalling \eqref{est:data:M} and \eqref{main:pro:MR}, we therefore have the three bounds 
\begin{align} \label{sbu:ghy:1}
\| \psi_R \|_{\mathrm{Linear}} \lesssim & \mathrm{\bold{Dat}} + \eps L^{-1} (\sum_{k = 0}^{\lfloor \frac{k_\ast + 1}{2} \rfloor} \sum_{j = 0}^4 \| \p_y^j u_{R,k} \|_{L^\infty} ) \| \psi_R \|_{\mathrm{Linear}} + \eps L^{-1} \| \psi_R \|_{\mathrm{Linear}}^2, \\ \label{main:pro:MR:cons}
\| \psi_R \|_{\mathrm{MR}} \lesssim & \| \psi_R \|_{\mathrm{Linear}} + \eps L^{-1} ( \sum_{k = 0}^{k_\ast - 1} \sum_{j = 0}^{J_k } \|\p_y^j \psi_{R,k} \|_{L^\infty} +  \| \psi_R \|_{\mathrm{Linear}}  )(\| \psi_R \|_{\mathrm{MR}}  + \| \psi_R \|_{\mathrm{Linear}} ), \\ \n
\mathrm{\bold{Dat}} \lesssim & L^{\frac16} (\| \mathring{F}_{\mathrm{Left}} \langle \eta \rangle^3 \|_{H^{2 + 3k_\ast}_\eta} + \| \mathring{F}_{\mathrm{Right}} \langle \eta \rangle^3 \|_{H^{2 + 3k_\ast}_\eta}) + \|  G_{\mathrm{Right}} \|_{H^{3 + 3k_\ast}_y} + \|  G_{\mathrm{Left}} \langle y \rangle^{N_0} \|_{H^{3 + 2k_\ast}_y} \\ \label{dat:est:yea}
& + \sum_{k = 0}^{k_\ast} (|\gamma_{u;k}| + |\gamma_{\psi; k}|). 
\end{align}
We now need to bring together two of our ``more refined" estimates on just the quantities $\gamma_{u; k}, \psi_{u; k}$ due to their dependence (without a small parameter) on the right-hand side above in \eqref{dat:est:yea}. Namely, we recall \eqref{shbujL2} and \eqref{est:data:below:M} 
\begin{align} \label{two:points:1}
\sum_{k = 0}^{k_\ast} (|\gamma_{u; k}| + |\gamma_{\psi; k}|) \lesssim &\mathrm{\bold{Dat}}_{\mathrm{Below}}  + L \mathrm{\bold{Dat}} + \eps L^{-1} (\sum_{k = 0}^{\lfloor \frac{k_\ast + 1}{2} \rfloor} \sum_{j = 0}^4 \| \p_y^j u_{R,k} \|_{L^\infty} ) \| \psi_R \|_{\mathrm{Linear}} + \eps L^{-1} \| \psi_R \|_{\mathrm{Linear}}^2, \\ \n
\mathrm{\bold{Dat}}_{\mathrm{Below}} \lesssim & L^{\frac16} (\| \mathring{F}_{\mathrm{Left}} \langle \eta \rangle^3 \|_{H^{2 + 3k_\ast}_\eta} + \| \mathring{F}_{\mathrm{Right}} \langle \eta \rangle^3 \|_{H^{2 + 3k_\ast}_\eta}) + \|  G_{\mathrm{Right}} \|_{H^{3+ 3k_\ast}_y} \\ \label{two:points:2}
&+ L^{\frac13}\|  G_{\mathrm{Left}} \langle y \rangle^{N_0} \|_{H^{3 + 2k_\ast}_y}  + L^{\frac16} \sum_{k = 0}^{k_\ast} (|\gamma_{u;k}| + |\gamma_{\psi; k}|).
\end{align}
At this point, we are led to define the notation 
\begin{align} \n
\mathcal{F}[\mathring{F}_{\mathrm{Left}}, \mathring{F}_{\mathrm{Right}}, G_{\mathrm{Left}}, G_{\mathrm{Right}}]:=  &L^{\frac16} (\| \mathring{F}_{\mathrm{Left}} \langle \eta \rangle^3 \|_{H^{2 + 3k_\ast}_\eta} + \| \mathring{F}_{\mathrm{Right}} \langle \eta \rangle^3 \|_{H^{2 + 3k_\ast}_\eta}) \\
&+ \|  G_{\mathrm{Right}} \|_{H^{3 + 3k_\ast}_y} + \|  G_{\mathrm{Left}} \langle y \rangle^{N_0} \|_{H^{3 + 2k_\ast}_y}.
\end{align}
By bringing together \eqref{sbu:ghy:1}, \eqref{main:pro:MR:cons}, \eqref{dat:est:yea}, \eqref{two:points:1}, \eqref{two:points:2}, and using that $L << 1$, we obtain 
\begin{align} \n
\| \psi_R \|_{\mathrm{Linear}} \lesssim & \mathcal{F}[\mathring{F}_{\mathrm{Left}}, \mathring{F}_{\mathrm{Right}}, G_{\mathrm{Left}}, G_{\mathrm{Right}}]+ \eps L^{-1} (\sum_{k = 0}^{\lfloor \frac{k_\ast + 1}{2} \rfloor} \sum_{j = 0}^4 \| \p_y^j u_{R,k} \|_{L^\infty} ) \| \psi_R \|_{\mathrm{Linear}} \\ \label{can:haf:1} 
&+ \eps L^{-1} \| \psi_R \|_{\mathrm{Linear}}^2, \\ \label{otfs:ii:1}
\| \psi_R \|_{\mathrm{MR}} \lesssim & \| \psi_R \|_{\mathrm{Linear}} + \eps L^{-1} ( \sum_{k = 0}^{k_\ast - 1} \sum_{j = 0}^{J_k } \|\p_y^j \psi_{R,k} \|_{L^\infty} +  \| \psi_R \|_{\mathrm{Linear}}  )(\| \psi_R \|_{\mathrm{MR}}  + \| \psi_R \|_{\mathrm{Linear}} ).
\end{align}
We now have, for a factor $c(L)$ which blows up as $L \rightarrow 0$, the following bounds
\begin{align} \label{mr:b:1}
\sum_{k = 0}^1 \sum_{j = 0}^5 \| \p_x^k \p_y^j u_R \langle y \rangle^{N_{k_\ast}+1} \|_{L^\infty} \lesssim & c(L)  \| \psi_R \|_{\mathrm{MR}} \qquad \text{ if } k_\ast \ge 7, \\ \label{mr:b:2}
\sum_{k = 0}^{\lfloor \frac{k_\ast + 1}{2} \rfloor} \sum_{j = 0}^4 \| \p_y^j u_{R,k} \|_{L^\infty}  \lesssim & c(L)  \| \psi_R \|_{\mathrm{MR}} \qquad \text{ if } k_\ast \ge 7, \\ \label{mr:b:3}
\sum_{k = 0}^{k_\ast - 1} \sum_{j = 0}^{J_k } \|\p_y^j \psi_{R,k} \|_{L^\infty} \lesssim & c(L) \| \psi_R \|_{\mathrm{MR}}.
\end{align}
To obtain \eqref{mr:b:1}, we note by a standard Sobolev embedding in two-dimensions, we have 
\begin{align*}
\sum_{k = 0}^1 \sum_{j = 0}^5 \| \p_x^k \p_y^j u_R \langle y \rangle^{N_{k_\ast}+1} \|_{L^\infty}  \lesssim \sum_{k = 0}^2 \sum_{j = 0}^6 \| \p_x^k \p_y^j u_R \langle y \rangle^{N_{k_\ast}+1} \|_{L^2},
\end{align*}
and hence comparing with the definition of our maximal regularity norm, \eqref{MR:norm}, we arrive at two restrictions, one for the derivative count and one for the weight, namely $k = 2, J_k \ge 4$ (regularity), and $N_2 - 4 \ge N_{k_\ast} + 1$ (weight). The regularity constraint is satisfied if $k_\ast \ge 4$, whereas the weight constraint is satisfied if $k_\ast \ge 7$ upon invoking \eqref{def:NsubK}. The factor of $c(L)$ is used to account for the powers of $L^j$ appearing in \eqref{MR:norm}. A similar Sobolev embedding argument is used for \eqref{mr:b:2}, and also for \eqref{mr:b:3}. We now recall our other restriction on $k_\ast$, namely that $k_\ast \ge 5$ from Proposition \ref{pro:MR}. Therefore, we choose $k_\ast \ge 7$ to satisfy all constraints. 

We now recall our definition, \eqref{Z:norm:ult}. Inserting \eqref{mr:b:1} -- \eqref{mr:b:3} into \eqref{boots:pro:lin:2} and \eqref{can:haf:1} -- \eqref{otfs:ii:1} and using that $\eps << L$ gives our main \textit{a-priori} estimate. Assuming 
\begin{align}
\| \psi_R \|_{\mathcal{Z}} \le \eps^{-\frac{1}{4}},
\end{align} 
then
\begin{align} \n
\| \psi_R \|_{\mathcal{Z}} \le &  \eps^{\frac12} \| \psi_R \|_{\mathcal{Z}}^2 +C \mathcal{F}[\mathring{F}_{\mathrm{Left}}, \mathring{F}_{\mathrm{Right}}, G_{\mathrm{Left}}, G_{\mathrm{Right}}].
\end{align}
This concludes the proof of the theorem. 
\end{proof}

\section{Spectral Theory of $(-\Delta)^{\frac16} + \bold{K}_L$} \label{sec:spec}

The main task of this section is to establish Theorem \ref{thm:res}, which addresses the case when $L$ is not small. We will do this by building off of Theorem \ref{main:thm}, as well as establishing the $L$-analyticity properties of an appropriate family of linear operators. As mentioned earlier, the guiding principles can be understood through analogy with the simpler elliptic problem, $- \Delta + V_L$, where the potential $V_L$ is analytic in the parameter $L$. The analyticity will imply that there are only discretely many values of $L$ in which $-\Delta + V_L$ is not invertible. In our case, the $-\Delta$ is replaced by $(-\Delta)^{\frac16}$, however the main complication is that the potential $V_L$ is not explicit; it must be defined through a series of maps.  

In our setting, the main focus will be the inversion of $\mathcal{L}_{\mathrm{Modulation}}$, which is the only place the small $L$ enters (for example, to close both the individual estimates \eqref{nlyougo:1} -- \eqref{nlyougo:4} as well as the couplings between these estimates in order to prove Proposition \ref{pro:global}). The main goal of this section will be to show the \textit{a-priori} bound, \eqref{main:G}, on $\mathcal{L}_{\mathrm{Modulation}}$ even when $L$ is not necessarily small, but away from a discrete set of resonances. 

To make matters more precise, we need to abstract the data which we are supplementing $\mathcal{L}_{\mathrm{Prandtl}}$ with, since we will apply this result for each $k$, and for each $k$ the data on the sides changes. More precisely, we assume 
\begin{align} \label{hgjiL:1}
&u_R|_{x = 1} = A(y) \gamma_u + B(y) \gamma_\psi+ C(y), \qquad y^\ast < y < \infty, \\ \label{hgjiL:2}
&\omega_{g}|_{x = 1} = D(y), \qquad y_1(\eps) < y < y^\ast, \\ \label{hgjiL:3}
&\omega_{g}|_{x = 1 + L} = E(y), \qquad y_\ast < y < y_{1 + L}, \\ \label{hgjiL:4}
&u_R|_{x = 1 + L} = H(y), \qquad 0 < y < y_\ast, 
\end{align}
where $\gamma_u := u(1, y_1(\eps))$ and $\gamma_{\psi} := \psi(1, y_1(\eps))$. This form is motivated by \eqref{mo:by:1} and \eqref{in:bd:4}, where $A(y), B(y)$ are the fixed functions multiplying $\gamma_u, \gamma_\psi$, and $C, D, E, H$ are the remaining functions arising from the iteration scheme (that are all lower order, and hence they are not contributing linear terms that need to be inverted in this section). 

More precisely, $D(y), E(y)$ will stand for $\omega_{g, k; \iota}$, $\iota \in \{\mathrm{Left}, \mathrm{Right}\}$, which are computed from $\omega_{NL, k; \iota}$ using the results of Lemmas \ref{lem:one}, \ref{lem:four}. Similarly, $H(y)$ is a placeholder for $u_{R, k; \mathrm{Right}}$, computed using the analysis of Lemma \ref{lem:three}. Evidently, none of these contribute linear terms that we need to invert. On the other hand, examining the expression \eqref{mo:by:1}, we have a representation for $\psi_{R, k; \mathrm{Left}}$ in terms of $\psi_{R, k; \mathrm{ Left}}(y^\ast) \frac{u_{P; \mathrm{Left}}(y)}{u_{P; \mathrm{Left}}(y^\ast)}$ together with lower order terms which we group into $C(y)$. Further examining \eqref{in:bd:4}, we have 
\begin{align*}
&\psi_{R, k; \mathrm{ Left}}(y^\ast) \frac{u_{P; \mathrm{Left}}(y)}{u_{P; \mathrm{Left}}(y^\ast)} \\
=& \frac{u_{P; \mathrm{Left}}(y)}{u_{P; \mathrm{Left}}(y^\ast)} \Big( \gamma_{\psi; k} + \int_{y_1(\eps)}^{y^\ast} ( \gamma_{u; k} \frac{u_{P; \mathrm{Left}}'(y)}{u_{P; \mathrm{Left}}'(y_1(\eps))} + u_{P; \mathrm{Left}}' \int_{y_1(\eps)}^{\bar{y}} \frac{ \omega_{NL, k; \mathrm{Left}}}{u'_{P; \mathrm{Left}}} )\ud \bar{y} \Big) \\
= &  \frac{u_{P; \mathrm{Left}}(y)}{u_{P; \mathrm{Left}}(y^\ast)}  \gamma_{\psi; k} + \frac{u_{P; \mathrm{Left}}(y)}{u'_{P; \mathrm{Left}}(y_1(\eps))}  \gamma_{u; k} +  \frac{u_{P; \mathrm{Left}}(y)}{u_{P; \mathrm{Left}}(y^\ast)} \int_{y_1(\eps)}^{y^\ast}  u_{P; \mathrm{Left}}' \int_{y_1(\eps)}^{\bar{y}} \frac{ \omega_{NL, k; \mathrm{Left}}}{u'_{P; \mathrm{Left}}} \ud \bar{y}.
\end{align*}
As a result, we put
\begin{align}
A(y) :=  \frac{u_{P; \mathrm{Left}}(y)}{u'_{P; \mathrm{Left}}(y_1(\eps))}, \qquad B(y) :=  \frac{u_{P; \mathrm{Left}}(y)}{u_{P; \mathrm{Left}}(y^\ast)},
\end{align}
and the $k$-dependent term $\frac{u_{P; \mathrm{Left}}(y)}{u_{P; \mathrm{Left}}(y^\ast)} \int_{y_1(\eps)}^{y^\ast}  u_{P; \mathrm{Left}}' \int_{y_1(\eps)}^{\bar{y}} \frac{ \omega_{NL, k; \mathrm{Left}}}{u'_{P; \mathrm{Left}}} \ud \bar{y}$ into $C(y)$. 

Normally, $\zeta_{\mathrm{Left}}$ is a placeholder for $u_P u_R - u_{Py} \psi_R$, which clearly sees the contributions from $\gamma_u, \gamma_\psi$. Since we are removing these and including them as part of the linear operator, we will define $\zeta_{\mathrm{Left}}^{\mathrm{(Hom)}} = u_P C(y) - u_{Py} \int_{y_1(\eps)}^y C(\bar{y}) \ud \bar{y}$. 

The main proposition of this section will therefore be an analogue of Proposition \ref{pro:global}. 
\begin{proposition} \label{prop:spec:LjK} Assume the bootstrap assumptions of Proposition \ref{pro:global}. Fix any number $L_{\mathrm{Max}} < \infty$. Then there exist a discrete set of values $\{L_1, L_2, \dots, L_{N} \}$ $\subset (0, L_{\mathrm{Max}})$ such that for $L \notin \{L_1, L_2, \dots, L_{N} \}$, the following bounds (the analogues of \eqref{main:G}, \eqref{main:G:g}) holds
\begin{align} \n
\| \psi \|_{\mathrm{Global}_n} \lesssim & \sum_{j = 0}^3 \| \p_z^j \bold{R} \chi_{O,j} \langle z \rangle^n \|_{L^2_{sz}} + \sum_{j = 0}^1 \| \p_z^j \bold{R} \chi_I \|_{L^2_{sz}} + \| d(z) \p_z^2 \bold{R} \chi_I \|_{L^2_{sz}} \\ \n
& + \sum_{i = 0}^2 \sum_{\iota \in \mathrm{Left, Right}}  \| \langle \rho \rangle \p_\rho^i \Xi_{\iota} \|_{L^2_\rho} +  \sum_{i = 1}^2 \sum_{\iota \in \mathrm{Left, Right}} \|\rho^i \p_\rho^i \Xi_{\iota}\|_{L^2_\rho} \\  \label{main:G:LL}
& + \sum_{j = 0}^3 ( \| \p_z^j \zeta^{(\mathrm{Hom})}_{\mathrm{Left}} \chi_{O,j}^+ \langle z \rangle^n \|_{L^2_z} + \| \p_z^j \zeta_{\mathrm{Right}} \chi_{O,j}^- \|_{L^2_z} ),
\end{align}
where the implicit constant above depends poorly on $L_{\mathrm{Max}}$ and the distance between $L$ and the discrete set $\{L_1, L_2, \dots, L_N\}$. 
\end{proposition}

\subsection{Formulation of Spectral Condition}

We now formulate a linear operator, whose invertibility will be the key. 
This linear operator, which we ultimately denote as $\bold{L}[\cdot]$ (see below, \eqref{est:L:cpct}), depends on the background profile $\bar{u}$. Moreover, it will be achieved as a composition of several auxiliary linear operators. We now introduce these. First, we want to regard $\gamma^{(0)}_{\Omega}(s)$ as a prescribed function, and $(\psi, u)$ as being determined from $\gamma^{(0)}_{\Omega}(s)$.  

We define $u$ to solve the linearized version of \eqref{eq:for:v:1}, namely with $u_B = 0$, in which case $\bar{W}(s, z) = \bar{u}(s, z)$, and through \eqref{mu:pert:1}, the $z$ variable is adapted so that $\bar{u}(s, 1) = 0$. 
\begin{subequations}
\begin{align}  \label{eq:for:v:1:sp}
&\bar{u} \p_s u + v \p_z \bar{u} +  \bold{c}_1 \p_z u + \bold{c}_2 u   + \bold{c}_3 \psi - \p_z^2 u = 0, \\
&u|_{s = 1, z > 1} = 0, \\
&u|_{s = 1 + \bar{L}, 0 < z < 1} = 0, \\
&u|_{z = 0} = \psi|_{z = 0} = u|_{z \rightarrow \infty} = 0, \\
&(u_z - \frac{\bar{u}''}{\bar{u}'} u)|_{z = 1} = \gamma^{(0)}_{\Omega}(s), \qquad 1 < s < 1 + \bar{L}. 
\end{align}
\end{subequations}
That is, this $u$ represents the abstract operator that takes the boundary condition at $z = 1$ of $\gamma^{(0)}_{\Omega}(s)$ and generates a solution on $(s, z) \in (1, 1 + \bar{L}) \times \mathbb{R}_+$, without any source term. This operator is well-defined for each given $\gamma_\Omega^{(0)} \in L^2_s$ by standard parabolic theory applied to $z < 1$ and $z > 1$. For the datum, we need to extract the components of \eqref{hgjiL:1} -- \eqref{hgjiL:4} which are linearly dependent on the solution, namely, we take  
\begin{align} \label{hgjiL:1H}
&u_R|_{x = 1} = A(y) \gamma_{u; \mathrm{Dyn}} + B(y) \gamma_{\psi; \mathrm{Dyn}}, \qquad y^\ast < y < \infty, \\ \label{hgjiL:2H}
&\omega_g|_{x = 1} = 0, \qquad y_1(\eps) < y < y^\ast, \\ \label{hgjiL:3H}
&\omega_g|_{x = 1 + L} = 0, \qquad y_\ast < y < y_{1 + L}, \\ \label{hgjiL:4H}
&u_R|_{x = 1 + L} = 0, \qquad 0 < y < y_\ast.
\end{align}
We give this map the name:  
\begin{align}
\bold{K}[\gamma^{(0)}_{\Omega}; \bar{u}] := \psi(s, z).  
\end{align}
Above, we have defined:  
\begin{align}
\gamma_{u; \mathrm{Dyn}} :=& \p_z \bold{K}[\gamma^{(0)}_{\Omega}; \bar{u}] (1, y_1(\eps)), \\
\gamma_{\psi; \mathrm{Dyn}} :=&\bold{K}[\gamma^{(0)}_{\Omega}; \bar{u}] (1, y_1(\eps)), 
\end{align}
Notice that this differs from $\gamma_{u}$ and $\gamma_{\psi}$ because these quantities are determined by the full solution $\psi$ which include the data and source term.  Therefore, we will have $\gamma_u = \gamma_{u; \mathrm{Dyn}} + \gamma_{u; \text{Hom}}$ (and correspondingly for $\gamma_\psi$) where we describe $\gamma_{u; \text{Hom}}$ as follows. 

The ``homogeneous" component of the solution will not depend on $\gamma_\Omega^{(0)}$, but only the data. This component is defined as the solution to 
\begin{subequations}
\begin{align}  \label{eq:for:v:1:HOM}
&\bar{u} \p_s u + v \p_z \bar{u} +  \bold{c}_1 \p_z u + \bold{c}_2 u   + \bold{c}_3 \psi - \p_z^2 u = 0, \\
&u|_{s = 1, z > 1} = 0, \\
&u|_{s = 1 + \bar{L}, 0 < z < 1} = 0, \\
&u|_{z = 0} = \psi|_{z = 0} = u|_{z \rightarrow \infty} = 0, \\
&(u_z - \frac{\bar{u}''}{\bar{u}'} u)|_{z = 1} = 0, \qquad 1 < s < 1 + \bar{L}. 
\end{align}
\end{subequations} 
where 
\begin{align} \label{hgjiL:1H:hom}
&u_R|_{x = 1} =C(y), \qquad y^\ast < y < \infty, \\ \label{hgjiL:2H:hom}
&\omega_g|_{x = 1} = D(y), \qquad y_1(\eps) < y < y^\ast, \\ \label{hgjiL:3H:hom}
&\omega_g|_{x = 1 + L} = E(y), \qquad y_\ast < y < y_{1 + L}, \\ \label{hgjiL:4H:hom}
&u_R|_{x = 1 + L} = H(y), \qquad 0 < y < y_\ast.
\end{align}

We introduce a change of variables which is a linearized version of the $(s, Z)$ coordinate system introduced above. We note that the change of variables, \eqref{change:1} -- \eqref{change:2}, is linear, as is \eqref{Eikonal:1}, and we thus define the linear maps 
\begin{align}
\bold{K}_{\mapsto}[f] = F, \qquad \bold{K}_{\mapsfrom}[F] = f,  \qquad F(s, Z) = f(s, z), 
\end{align}
We now notice that $\tau_i$ defined in \eqref{tau:def} depend only on $\bar{u}$, and therefore contribute purely linear terms to the operator we are about to define. 
We now consider the linear part of the right-hand side of \eqref{loc:sys:I1}, namely 
\begin{align} \label{boldrl}
\bold{r}_{\mathrm{Lin}} := \chi\Big(\frac{Z}{\delta}\Big) (\tau_2 \p_Z^2 \Omega +   \tau_1 \p_Z \Omega + \tau_0 \Omega + \tau_{-1} \p_Z \Phi + \tau_{-2} \Phi)   - \frac{\chi''(\frac{Z}{\delta})}{\delta^2} \Omega - \frac{2}{\delta} \chi'(\frac{Z}{\delta})\p_Z \Omega,
\end{align}
and re-express these terms as a composition of linear operators on $\gamma^{(0)}_{\Omega}$ through the use of the operators we have defined above. In particular, we have 
\begin{align} \n
\bold{r}_{\mathrm{Lin}}[\gamma^{(0)}_{\Omega}] := &\chi\Big(\frac{Z}{\delta}\Big) (\tau_2 \p_Z^4  \bold{K}_{\mapsto} \circ \bold{K}[\gamma^{(0)}_{\Omega}]  +  \tau_1 \p_Z^3  \bold{K}_{\mapsto} \circ \bold{K}[\gamma^{(0)}_{\Omega}] +  \tau_0 \p_Z^2  \bold{K}_{\mapsto} \circ \bold{K}[\gamma^{(0)}_{\Omega}] \\ \n
&+ \tau_{-1} \p_Z  \bold{K}_{\mapsto} \circ \bold{K}[\gamma^{(0)}_{\Omega}] + \tau_{-2} \bold{K}_{\mapsto} \circ \bold{K}[\gamma^{(0)}_{\Omega}]  )   - \frac{\chi''(\frac{Z}{\delta})}{\delta^2} \p_Z^2 \bold{K}_{\mapsto} \circ \bold{K}[\gamma^{(0)}_{\Omega}] \\ \label{bold:low:r}
&  - \frac{2}{\delta} \chi'(\frac{Z}{\delta})\p_Z^3 \bold{K}_{\mapsto} \circ \bold{K}[\gamma^{(0)}_{\Omega}].
\end{align}
We now want to insert $\bold{r}_{\mathrm{Lin}}[\gamma^{(0)}_{\Omega}]$ into the Fourier integral operators, $\mathcal{T}_{-\frac13, \iota}$, where $\iota \in \pm$, which has been defined in \eqref{defn:T:op}. In reality, we think of these operators as acting on $\gamma_{Q}^{(0)}$, where $\gamma_\Omega^{(0)}$ is equivalent to $\gamma_Q^{(0)}$, up to a component that depends on the data, as shown in \eqref{gamma:Q:def:he}. Upon doing so, we define 
\begin{align} \label{est:L:cpct}
\underline{\bold{K}}_{L}[\gamma^{(0)}_{Q}] := & \sum_{\iota \in \pm} \mathcal{T}_{-\frac13, \iota} \circ \bold{r}_{\mathrm{Lin}}[\gamma^{(0)}_{Q}], \qquad \bold{L}[\gamma^{(0)}_{Q}] := (-\Delta_D)^{\frac16} \gamma^{(0)}_{Q} -\underline{ \bold{K}}_L[\gamma^{(0)}_{Q}].
\end{align}
Due to the complicated, nonlocal nature of defining $\underline{\bold{K}}_L$, it is not possible to obtain a simpler formula for the linear operator $\bold{L}[\gamma_{1,\mathcal{Q}}]$. Nevertheless, we have the bound:
\begin{align}
\| \underline{\bold{K}}_L[\gamma^{(0)}_{Q}] \|_{L^2_s} \lesssim \| \gamma^{(0)}_{Q} \|_{L^2_s} 
\end{align}
Due to \eqref{est:L:cpct}, we can regard $\underline{\bold{K}}_L$ as a compact perturbation of $(-\Delta_D)^{\frac16}$. We thus can state:
\begin{definition}[Spectral Condition] \label{defn:spectral}The operator $\bold{L}$ defined in \eqref{est:L:cpct} satisfies the \textit{Spectral Condition} if $\bold{L}$ is invertible and satisfies the following bound:
\begin{align} \label{assumption}
\| \p_s^{\frac13} \gamma^{(0)}_{Q, \mathrm{Ext}}\|_{L^2_s} \lesssim \| \bold{L}[\gamma^{(0)}_{Q}] \|_{L^2_s}.
\end{align}
\end{definition}

\subsection{Analyticity in $L$ \& structure of spectrum $\Sigma(L)$}

We rescale \eqref{eq:for:v:1:sp} by introducing
\begin{align}
t := \frac{s-1}{\bar{L}} \in (0, 1), \qquad U(t) := u(s), \qquad V(t) := - \int_0^z \p_t U \ud z', \qquad \phi(t, z) := \psi(s, z). 
\end{align}
We note that the background FS profiles are independent of $s$ and therefore of $t$ as well. In the $(t, z)$ variables, \eqref{eq:for:v:1:sp} reads
\begin{subequations}
\begin{align} \label{hguy:1}
&F(z) \p_t U + V F'(z) - L \p_z^2 U + L \sum_{j = 0}^2 A_j(z) \p_z^j \phi = 0, \\ \label{hguy:2}
&\p_z U(t, 1) - \frac{F''(z)}{F'(z)} U(t, 1)= \gamma^{(0)}_\Omega(Lt) =: \Gamma_\Omega^{(0)}(t)\\ \label{hguy:3}
&U(t, 0) = 0, \qquad U(t, \infty) = 0.
\end{align}
\end{subequations}
Above, the quantities $U, \phi$ are thought of as profiles; they depend on the parameter $L$ through the appearance of $L$ in the coefficients of \eqref{hguy:1}. To describe the data on the sides, $t = 0, t = 1$, is slightly more complicated. We essentially want to consider the $F_{\mathrm{Left}}, F_{\mathrm{Right}}, G_{\mathrm{Left}}, G_{\mathrm{Right}} = 0$ case of \eqref{PhiIota:1} -- \eqref{PhiIota:4}.
\begin{align} \label{PhiIota:1L}
U|_{t = 0}:= &  A(y) \overline{\gamma}_{u; \mathrm{Dyn}} + B(y) \overline{\gamma}_{\psi; \mathrm{Dyn}}, \qquad z > z^\ast\\ \label{PhiIota:2L}
(U_z - \frac{\bar{u}''}{\bar{u}'} U)|_{t =0} := & 0 , \qquad 1 < z < z^\ast, \\ \label{PhiIota:3L}
(U_z - \frac{\bar{u}''}{\bar{u}'} U)|_{t = 1} := & 0, \qquad z_\ast < z < 1, \\ \label{PhiIota:4L}
U|_{t = 1}(z) := &0 , \qquad 0 < z < z_{\ast}.
\end{align} 
We will define the operators 
\begin{subequations}
\begin{align}
\bold{K}^{\mathrm{lift}}_L[\Gamma_\Omega^{(0)}] :=& \phi(t, z), \\ \label{compo:proj:0}
\bold{K}^{\Pi}_{L}[F(t, Z)] := &\int_0^{\infty} ai( \frac{(i\xi)^{\frac13}}{L^{\frac13}}Z) \widehat{F}(\xi, Z) \ud Z , \\ \label{compo:1}
\bold{K}_L[\Gamma_\Omega^{(0)}(\cdot)] := & L^{\frac13} \sum_{j = 0}^4 \bold{K}^{\Pi}_{L} \circ \widehat{\tau}_{j-2} \p_Y^j \bold{K}_{\mapsto} \circ \bold{K}_L^{lift}[\Gamma^{(0)}_\Omega(\cdot)],
\end{align}
\end{subequations}
where we can obtain $\widehat{\tau}_j$ by grouping terms and collecting coefficients appropriately from \eqref{bold:low:r}. Let us define the following function spaces
\begin{align} 
\| \phi \|_{X_{\mathrm{en}}}^2 := & \sup_{0 < t < 1} \int |\phi|^2 \ud z + \int_0^1 \int F(z) |\p_z U|^2 , \\ \n
\| \phi \|_{X}^2 := & \sup_{1 \le t \le 1} \Big( \int_{\mathbb{R}} |\p_z U|^2 \ud z + \int_{\mathbb{R}} F(z)^2 |\p_z^2 U|^2 \ud z + \int_{\mathbb{R}} F(z)^4 |\p_z^3 U|^2 \ud z  \Big) \\ \label{def:Crocco:norm:spec}
& + \int_0^{1} \int_{\mathbb{R}} |F(z)| |\p_z^3 U|^2 \ud z \ud t + \int_0^{1} \int_{\mathbb{R}} |F(z)|^3 |\p_z^4 U|^2 \ud z \ud t.
\end{align}
The intuition behind \eqref{def:Crocco:norm} is: if we collect the energy-dissipation functionals from the von-Mise analysis, \eqref{yahL2} -- \eqref{vonMisenorm}, and the Crocco analysis, \eqref{def:Crocco:norm}, it is equivalent to the norm \eqref{def:Crocco:norm:spec}.

We will also need spaces of operators. For this, define 
\begin{align}
\bold{O} :=& \{ \bold{K} : L^2(0, 1) \mapsto L^2(0,1), \| \bold{K} \|_{\bold{O}} < \infty \}, \qquad \| \bold{K} \|_{\bold{O}} = \sup_{\| \gamma \|_{L^2(0, 1)} = 1} \| \bold{K}[\gamma] \|_{L^2(0,1)}, \\ 
\bold{O}_{\mathrm{lift}} :=& \{ \bold{K} : L^2(0, 1) \mapsto X, \| \bold{K} \|_{\bold{O}_{\mathrm{lift}}} < \infty \} , \qquad \| \bold{K} \|_{\bold{O}_{\mathrm{lift}}} = \sup_{\| \gamma \|_{L^2(0, 1)} = 1} \| \bold{K}[\gamma] \|_{X}, \\
\bold{O}_{\mathrm{lift, en}} :=& \{ \bold{K} : L^2(0, 1) \mapsto X_{\mathrm{en}}, \| \bold{K} \|_{\bold{O}_{\mathrm{lift, en}}} < \infty \} , \qquad \| \bold{K} \|_{\bold{O}_{\mathrm{lift, en}}} = \sup_{\| \gamma \|_{L^2(0, 1)} = 1} \| \bold{K}[\gamma] \|_{X_{\mathrm{en}}}, \\
\bold{O}_{\mathrm{proj}} :=& \{ \bold{K} : X \mapsto L^2(0, 1), \| \bold{K} \|_{\bold{O}_{\mathrm{proj}}} < \infty \} , \qquad \| \bold{K} \|_{\bold{O}_{\mathrm{proj}}} = \sup_{\| F \|_{X} = 1} \| \bold{K}[F] \|_{L^2(0,1)}. 
\end{align}

Our main proposition here will be 
\begin{proposition} \label{prop:comp} Fix any $L_0 > 0$. The operator $\bold{K}_L: L^2(0, 1) \mapsto L^2(0, 1)$ is analytic in the parameter $L$ on the domain $L \in (L_0, \infty)$. More precisely, the map $L \mapsto \bold{K}_L$ from $(L_0, \infty) \mapsto \bold{O}$ is an analytic map:
\begin{align}
\sup_{k}  \sup_{L \in (L_0, \infty)} \frac{1}{C^{k+1} k!} \| \p_L^k \bold{K}_L \|_{\bold{O}} < \infty.
\end{align}
\end{proposition}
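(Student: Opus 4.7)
The plan is to exploit the product structure in the definition \eqref{compo:1}, writing
$$\bold{K}_L \;=\; \sum_{j = 0}^{3} \bold{K}_L^{\Pi} \circ (\tau_j \p_Y^j) \circ \bold{K}_{\mapsto} \circ \bold{K}_L^{lift}.$$
The middle operators $\tau_j \p_Y^j$ and $\bold{K}_{\mapsto}$ are bounded linear maps between the relevant function spaces and are \emph{independent} of $L$, so the task reduces to establishing real-analyticity, with uniform factorial bounds on $L$-derivatives, for the two $L$-dependent factors $\bold{K}_L^{lift}$ and $\bold{K}_L^{\Pi}$; the bound for $\bold{K}_L$ itself then follows from the Cauchy product of analytic compositions. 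The general strategy for both factors is to extend $L$ to a complex strip $\mathcal{S}_0 := \{\zeta \in \mathbb{C}: \Re \zeta > L_0/2,\ |\Im \zeta| < r_0\}$ with $r_0 = r_0(L_0) > 0$ chosen small relative to $L_0$, establish uniform boundedness of each factor there, and conclude via the Cauchy integral formula.

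For the projection operator $\bold{K}_L^{\Pi}$, analyticity is nearly immediate. Since $\zeta \mapsto \zeta^{-1/3}$ is holomorphic on the slit plane and $ai(\cdot)$ is entire, the integrand $ai(\zeta^{-1/3}(i\xi)^{1/3}Z)\widehat{F}(\xi, Z)$ in \eqref{compo:proj:0} is holomorphic in $\zeta$ throughout $\mathcal{S}_0$. The exponential decay of $ai$ on the sector $\Sigma_0$ from Lemma \ref{lemma:fund:solutions}, which survives under a small imaginary perturbation of $\zeta$, provides uniform integrability. Differentiating under the integral and applying Faà di Bruno to the composition $ai \circ (\zeta \mapsto \zeta^{-1/3})$ produces bounds of the form $\|\p_L^k \bold{K}_L^{\Pi}\|_{\bold{O}_{proj}} \lesssim C^{k+1} k!$ uniformly on $(L_0, \infty)$.

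The main obstacle is $\bold{K}_L^{lift}$. The rescaled linearized PDE \eqref{hguy:1}--\eqref{hguy:4} is \emph{affine} in $L$: writing $\mathcal{A}(L) := \mathcal{A}_0 + L\,\mathcal{A}_1$, where $\mathcal{A}_0 U := F(z) \p_t U + V F'(z)$ collects the $L$-independent transport block and $\mathcal{A}_1 U := -\p_z^2 U + \sum_{j=0}^{2} A_j(z) \p_z^j \phi$ collects the diffusive and stretching block, the lift satisfies $\mathcal{A}(L) \phi = \mathcal{B}[\gamma_1]$, where $\mathcal{B}$ is an $L$-independent operator homogenizing the Robin-type boundary datum \eqref{hguy:2}. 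The uniform-in-$L$ bound
$$\|\bold{K}_L^{lift}[\gamma_1]\|_{X} \le M(L_0) \|\gamma_1\|_{L^2(0,1)}, \qquad L \in \mathcal{S}_0,$$
will be obtained by re-running the outer energy estimates of Section \ref{Section:Outer}, adapted to complex $L$: multiply the equation by the natural good-unknown multiplier as in \eqref{kjlk0}--\eqref{kjlk1}, take real parts, and absorb the $O(|\Im L|)$ contribution by choosing $r_0$ small. The crucial observation is that the cancellations underlying \eqref{stable:1} carry over unchanged, since every $L$-factor appearing in the energy identity either strengthens coercivity of the diffusive term or is controlled by Hardy-type inequalities on the background profile $F(z)$; the higher-derivative $X$-norm control of $\p_z^3 \phi$ in $L^\infty_z L^2_s$ is then propagated as in Lemma \ref{lemma:trutru}.

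With uniform boundedness of $\bold{K}_\zeta^{lift}$ on $\mathcal{S}_0$ in hand, the Cauchy integral formula
$$\p_L^k \bold{K}_L^{lift} \;=\; \frac{k!}{2\pi i} \oint_{|\zeta - L| = r_0/2} \frac{\bold{K}_\zeta^{lift}}{(\zeta - L)^{k+1}} \ud \zeta$$
immediately yields $\|\p_L^k \bold{K}_L^{lift}\|_{\bold{O}_{lift}} \le (2/r_0)^k\, k!\, M(L_0)$, uniformly for $L \in (L_0, \infty)$. Combining this factorial bound with the analogous estimate for $\bold{K}_L^{\Pi}$ via the Cauchy product of power series, together with the $L$-independent boundedness of the middle factors $\tau_j \p_Y^j$ and $\bold{K}_{\mapsto}$, produces the desired uniform bound $\sup_k \sup_{L \in (L_0, \infty)} \frac{1}{C^{k+1} k!} \|\p_L^k \bold{K}_L\|_{\bold{O}} < \infty$. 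The delicate step throughout, as emphasized, is verifying that the energy inequality for $\mathcal{A}(L)^{-1}$ is stable under a small imaginary perturbation of $L$; once that point is settled, the remainder of the argument is a standard application of analytic functional calculus.
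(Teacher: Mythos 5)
Your approach is genuinely different from the paper's, and the difference lies precisely in the treatment of the lift operator $\bold{K}_L^{lift}$. The paper proves its analyticity by \emph{directly} differentiating the rescaled system \eqref{hguy:1}--\eqref{hguy:4} $k$ times in $L$, exploiting the affine $L$-dependence of the operator to produce the recursion $\mathcal{A}(L)\p_L^k\phi = -k\,\mathcal{A}_1\p_L^{k-1}\phi$, and then closing an inductive energy bound of the form $\|\p_L^k\phi\|_{X_{en}} \le CkL^k\|\gamma_1\|_{L^2} + Ck\|\p_L^{k-1}\phi\|_{X_{en}}$, which unrolls to the factorial estimate; the upgrade from $X_{en}$ to $X$ is then done by re-running the Section~\ref{sec:I} type analysis on the differentiated quantities. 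You instead complexify $L$ to a strip $\{\Re\zeta > L_0/2,\ |\Im\zeta| < r_0\}$, claim uniform boundedness of the lift on that strip, and recover the factorial bounds from Cauchy's formula. This is the cleaner analytic-perturbation route (in the spirit of Kato, whom the paper cites at exactly this point), and it avoids the $k$-fold bookkeeping. What it buys is economy of argument; what the paper's route buys is that it never leaves the real axis, so every estimate used is one the paper has already proved.

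The place where your proposal remains genuinely incomplete is the assertion that ``the cancellations underlying the stability estimate carry over unchanged'' for complex $L$. There are two layers here that need verification, not just one. First, the energy identity for the good unknown on the two half-domains $\{z>1\}$ and $\{0<z<1\}$ with complex $L$: after multiplying by $\overline{\mathcal{U}}$ and taking real parts, the dissipative block becomes $\Re(L)\int F(z)|\p_z U|^2$, which is fine, but the boundary couplings $L\beta_2 U(t,1)\overline{\phi(t,1)}$ and the lower-order $L$-weighted bulk terms pick up $\Im L$ contributions whose absorption needs to be shown, not merely asserted. Second, and more seriously, the passage from the energy topology $X_{en}$ to $X = L^\infty_z L^2_s$ (Lemma~\ref{lemma:lift}) is exactly what makes the subsequent composition with $\bold{K}_L^{\Pi}\circ\tau_j\p_Y$ work, and the paper obtains it by routing through the $\mathcal{I}$-norm / Airy machinery of Section~\ref{sec:I}. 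In your framework that machinery must itself be extended to complex $L$, which tilts the sectors in which the Airy arguments $(i\xi)^{1/3}Z/L^{1/3}$ lie and requires re-establishing the decay/growth dichotomy of Lemma~\ref{lemma:fund:solutions} under the rotation by $\arg(L^{-1/3})$. Your own treatment of $\bold{K}_L^{\Pi}$ already notices this rotation is harmless for small $r_0$; the same point must be argued for the higher-regularity lift estimates as well. You flag the energy step as ``the delicate step,'' which is correct, but the $X_{en}\to X$ upgrade is an equally load-bearing and equally unverified part of the claim.
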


It turns out that this proposition will follow from a weaker bound, namely
\begin{lemma}Fix any $L_0 > 0$. The lift operator $\bold{K}^{\mathrm{lift}}_L: L^2(0, 1) \mapsto X_{\mathrm{en}}$ is analytic in the parameter $L$ on the domain $L \in (L_0, \infty)$. More precisely, the map $L \mapsto \bold{K}_L^{\mathrm{lift}}$ from $(L_0, \infty) \mapsto \bold{O}_{\mathrm{lift, en}}$ is an analytic map:
\begin{align}
\sup_{k}  \sup_{L \in (L_0, \infty)} \frac{1}{C^{k+1} k!} \| \p_L^k \bold{K}_L^{\mathrm{lift}} \|_{\bold{O}_{\mathrm{lift,en}}} < \infty.
\end{align}
\end{lemma} 

Our next task will be to upgrade the above bound to the space of operators $\bold{O}_{lift}$, which is the content of the next lemma. 
\begin{lemma} \label{lemma:lift}Fix any $L_0 > 0$. The lift operator $\bold{K}^{\mathrm{lift}}_L: L^2(0, 1) \mapsto X$ is analytic in the parameter $L$ on the domain $L \in (L_0, \infty)$. More precisely, the map $L \mapsto \bold{K}_L^{\mathrm{lift}}$ from $(L_0, \infty) \mapsto \bold{O}_{\mathrm{lift}}$ is an analytic map:
\begin{align}
\sup_{k}  \sup_{L \in (L_0, \infty)} \frac{1}{C^{k+1} k!} \| \p_L^k \bold{K}_L^{\mathrm{lift}} \|_{\bold{O}_{\mathrm{lift}}} < \infty.
\end{align}
\end{lemma} 

Finally, we want to establish analyticity of the projection operators.  
\begin{lemma} \label{lemma:proj} Fix any $L_0 > 0$. The projection operator $ \bold{K}^{(\Pi)}_L \circ \p_Z: X \mapsto L^2(0, 1)$ is analytic in the parameter $L$ on the domain $L \in (L_0, \infty)$. More precisely, the map $L \mapsto \bold{K}_L^{(\Pi)} \circ \p_Z$ from $(L_0, \infty) \mapsto \bold{O}_{\mathrm{proj}}$ is an analytic map:
\begin{align} \label{an2}
\sup_{k}  \sup_{L \in (L_0, \infty)} \frac{1}{C^{k+1} k!} \| \p_L^k \bold{K}_L^{(\Pi)} \circ \p_Z \|_{\bold{O}_{\mathrm{proj}}} < \infty.
\end{align}
\end{lemma}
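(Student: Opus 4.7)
The plan is to exploit the explicit Airy-kernel form of $\bold{K}^{(\Pi)}_L \circ \p_Y$, differentiate under the integral sign, and reduce to bounds of the Fourier-integral type already established in Section~\ref{Section:Airy}. Writing out the definition from \eqref{compo:proj:0}, for $F \in X$,
\begin{equation}
(\bold{K}^{(\Pi)}_L \circ \p_Y)[F](t) = \mathcal{F}^{-1}_{\xi \to t} \left[ \int_0^\infty ai\!\left(\frac{(i\xi)^{1/3}}{L^{1/3}} Z\right) \p_Z \widehat{F}(\xi, Z) \, dZ \right],
\end{equation}
so the only $L$-dependence sits inside the argument of the Airy function. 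First I would integrate by parts in $Z$ up to three times, transferring derivatives onto $F$ (to pair with the $\p_Z^3 F$ regularity encoded in the $X$-norm), each integration by parts producing either an $ai^{(j)}$ or a boundary contribution at $Z=0$. The boundary terms at $Z=\infty$ vanish by the super-exponential decay of $ai$ on the ray $\arg = \pi/6$ (cf.\ Lemma~\ref{lemma:fund:solutions}). The boundary contributions at $Z=0$ reduce to traces $\p_Z^j F|_{Z=0}$, which are controlled by the $X$-norm via one-dimensional Sobolev embedding.

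\textbf{$L$-differentiation.} Second, I would compute the $k$-th $L$-derivative. Since $\p_L L^{-1/3} = -\tfrac{1}{3} L^{-4/3}$ and $L \mapsto L^{-1/3}$ is analytic on $(L_0, \infty)$ with bounds $|\p_L^k L^{-1/3}| \le C^{k+1} k! L_0^{-1/3 - k}$, a Fa\`a di Bruno expansion yields
\begin{equation}
\p_L^k ai\!\left(\frac{(i\xi)^{1/3}}{L^{1/3}} Z\right) = \sum_{j=1}^{k} p_{j,k}(L) \, \big( (i\xi)^{1/3} Z \big)^{j} \, ai^{(j)}\!\left(\frac{(i\xi)^{1/3}}{L^{1/3}} Z\right),
\end{equation}
where the coefficients satisfy $|p_{j,k}(L)| \le C^{k+1} k! \, L^{-j/3 - k}$ uniformly for $L > L_0$. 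The key point is that each factor $(i\xi)^{1/3} Z$ multiplying $ai^{(j)}$ of the rescaled argument is neutralized by the Airy decay on $\arg = \pi/6$: for any $N$,
\begin{equation}
\big|(i\xi)^{1/3} Z\big|^{j} \left|ai^{(j)}\!\left(\tfrac{(i\xi)^{1/3}}{L^{1/3}}Z\right)\right| \le C_{j,N} L^{j/3} \left\langle \tfrac{(i\xi)^{1/3}}{L^{1/3}}Z \right\rangle^{-N},
\end{equation}
so the powers of $L$ in $p_{j,k}$ combine to give a bound polynomial in $L^{-1}$, uniform for $L > L_0$.

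\textbf{Reduction to FIO bounds.} Third, each term in the expansion of $\p_L^k (\bold{K}^{(\Pi)}_L \circ \p_Y)[F]$ becomes a Fourier integral operator with Airy kernel of the same shape as those bounded in Lemma~\ref{lemma:FIOijk}, now with an extra polynomial pre-factor in $\xi$. Combined with the integrations by parts performed above, the net Fourier-side multiplier is a bounded symbol, so Mikhlin--H\"ormander together with the admissibility bounds of Lemma~\ref{lemma:admissible} yield the required operator-norm estimate
\begin{equation}
\| \p_L^k (\bold{K}^{(\Pi)}_L \circ \p_Y)[F] \|_{L^2(0,1)} \le C^{k+1} k! \, \| F \|_X,
\end{equation}
which is precisely \eqref{an2}.

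\textbf{Main obstacle.} The delicate point is the interaction between the $(i\xi)^{j/3}$ factors produced by $L$-differentiation and the frequency decomposition needed to apply the Mikhlin-type bounds: for $|\xi|$ small, the factors $(i\xi)^{j/3}$ are benign but the Airy decay in $Z$ is weak (so the $Z$-integration requires care), whereas for $|\xi|$ large the Airy decay is strong but one must track how many derivatives can be transferred to $F$ before the boundary traces at $Z=0$ become uncontrollable. Carrying out this low/high frequency split cleanly, in the spirit of the decomposition $F = F_{high} + F_{low}$ used in the proof of Lemma~\ref{lemma:FIO:1}, is where most of the technical work will sit; the factorial bound in $k$ then follows immediately from the analyticity of $L \mapsto L^{-1/3}$ on the half-line $L > L_0$ and the uniformity of the resulting constants in $L$.
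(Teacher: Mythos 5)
Your proposal is correct and takes the same approach as the paper, whose proof of this lemma consists of a single sentence asserting that the analyticity ``follows immediately from the explicit formula'' \eqref{compo:proj:0}. You have simply made explicit what the paper leaves implicit: the only $L$-dependence sits in the factor $L^{-1/3}$ inside the Airy argument, the Fa\`a di Bruno expansion produces combinatorial constants $C^{k+1}k!$ together with $L$-powers that cancel against the corresponding powers generated by the Airy decay on the ray $\arg=\pi/6$, and the resulting Fourier multipliers are controlled uniformly in $L>L_0$ by the same Mikhlin--H\"ormander machinery (Lemmas \ref{lemma:FIOijk} and \ref{lemma:admissible}) already used in Section \ref{Section:Airy}.
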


Given the above lemmas, we can prove Proposition \ref{prop:comp}.
\begin{proof}[Proof of Proposition \ref{prop:comp}] According to the composition, \eqref{compo:1}, we can rewrite 
\begin{align} \n
\bold{K}_L = & \bold{K}_L^{\Pi} \circ (\widehat{\tau}_{-2} \bold{K}_{\mapsto} \circ \bold{K}_L^{\mathrm{lift}}) + \sum_{j = 1}^4 (\bold{K}_L^{\Pi} \circ \widehat{\tau}_{j-2} \p_Z) \circ ( \p_Z^{j-1} \bold{K}_{\mapsto} \circ \bold{K}_L^{\mathrm{lift}}) =  \sum_{j= 0}^4 \Pi_j \circ \bold{E}_j, 
\end{align}
where we define the ``projection" and ``extension" operators via 
\begin{align}
\Pi_0 := & \bold{K}_L^{\Pi}, \qquad \bold{E}_0 := \widehat{\tau}_{-2} \bold{K}_{\mapsto} \circ \bold{K}_L^{\mathrm{lift}} \\
\Pi_j :=  & \bold{K}_L^{\Pi} \circ \widehat{\tau}_{j-2} \p_Z, \qquad \bold{E}_j :=  \p_Z^{j-1} \bold{K}_{\mapsto} \circ \bold{K}_L^{\mathrm{lift}}, \qquad j = 1, 2, 3, 4. 
\end{align}
We have 
\begin{align}
\bold{E}_j : L^2(0, 1) \mapsto X, \qquad \Pi_j: X \mapsto L^2(0, 1), \qquad \Pi_j \circ \bold{E}_j : L^2(0, 1) \mapsto L^2(0, 1). 
\end{align}
By Lemmas \ref{lemma:lift} and \ref{lemma:proj}, we have that $\bold{E}_j$ and $\Pi_j$ are individually analytic, and hence since the composition of analytic functions is analytic, we have $\bold{K}_L: L^2(0, 1) \mapsto L^2(0, 1)$ is analytic in $L$.  
\end{proof}

\subsubsection{Analyticity into $\bold{O}_{lift,en}, \bold{O}_{lift}, \bold{O}_{proj}$ Spaces} \label{sec:ana:o}

It suffices in what follows to establish the bound
\begin{align} \label{an1}
\sup_{k}  \sup_{L \in (L_0, \infty)} \frac{1}{C^{k+1} k!} \| \p_L^k \phi \|_{X_{\mathrm{en}}} \lesssim \| \Gamma_\Omega^{(0)} \|_{L^2(0, 1)}
\end{align}
To establish this bound, we differentiate \eqref{hguy:1} which results in 
\begin{subequations}
\begin{align} \label{br:1}
&\p_t \p_L^k \mathcal{U} - L \bold{D}[ \p_L^k \phi] = k \bold{D}[\p_L^{k-1}U], \\
&\bold{D}[U] := \p_z^3 \phi+ \sum_{j = 0}^2 A_j(z) \p_z^j \phi, \\ \label{int:cond:1}
&\p_L^k \p_z U(t, 1) - \frac{F''(z)}{F'(z)} \p_L^k U(t, 1) = \p_L^k \Gamma_\Omega^{(0)}(t) = 0, 
\end{align}
\end{subequations}
As usual, we need to introduce a good unknown which is transported: 
\begin{align}
\mathcal{U}  := F(z) U - F'(z) \phi. 
\end{align}
It is important to note that $L \ge L_0 > 0$ will be important to ensure that the dissipative term in \eqref{br:1} is non-degenerate. We now establish the following inductive bound:
\begin{lemma} For a constant $0 < C < \infty$ which could depend on $L_0$ and $L_{\mathrm{Max}}$, we have the following bounds:
\begin{align}
\| \p_L^k \phi \|_{X_{\mathrm{en}}} \le C k L^k \| \Gamma_\Omega^{(0)} \|_{L^2(0, 1)} + C k \| \p_L^{k-1} \phi \|_{X_{\mathrm{en}}}, \qquad k \ge 1. 
\end{align}
\end{lemma}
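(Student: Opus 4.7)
The plan is to derive the recursive bound by a standard energy estimate on the equation satisfied by $\p_L^k \mathcal{U}$, then to iterate. Differentiating \eqref{hguy:1} $k$ times in $L$, the good unknown $\p_L^k \mathcal{U}$ satisfies
\begin{align}
\p_t \p_L^k \mathcal{U} - L \bold{D}[\p_L^k \phi] = k \bold{D}[\p_L^{k-1} U]
\end{align}
on $(0,1) \times (0, \infty)$, with homogeneous Dirichlet data at $t = 0, 1$ and at $z = 0, \infty$, and homogeneous Robin data at $z = 1$ for $k \ge 1$ (since $\p_L^k \Gamma_1 \equiv 0$ when $k \ge 1$). Only the base case $k = 0$ therefore carries the inhomogeneous $\gamma_1$ data, through the Robin condition at $z = 1$.

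The first step would be to apply the multiplier $\p_L^k \mathcal{U}$ and integrate on the whole strip $(0,1) \times (0, \infty)$, splitting the $z$-integral at $z = 1$ in the style of Lemma \ref{lemma:trutru}. The $\p_t$ term integrates in $t$ to a sign-definite quantity, since the endpoint contributions at $t = 0, 1$ vanish by the Dirichlet data in \eqref{hguy:4}. The diffusive term produces $L \int F(z) |\p_z \p_L^k U|^2 \, \ud z$ after integration by parts, together with lower-order transport commutators that are absorbed using the Hardy-type inequalities of Section \ref{Section:FF}. Boundary contributions at $z = 1$ reduce to sign-definite expressions $\propto L (\p_L^k U(t, 1))^2$ by the homogeneous Robin condition, for $k \ge 1$. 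The forcing $k \bold{D}[\p_L^{k-1} U]$ has $L$-independent coefficients (since the $A_j(z)$ depend only on the background $F(z)$), so Cauchy-Schwarz gives
\begin{align}
\Big| \iint k \bold{D}[\p_L^{k-1} U] \cdot \p_L^k \mathcal{U} \, \ud z \, \ud t \Big| \lesssim k \, \|\p_L^{k-1} \phi\|_{X_{en}} \cdot \|\p_L^k \mathcal{U}\|_{L^2_{t,z}}.
\end{align}
Combined with the coercivity of the diffusive and transport parts, this absorbs one copy of the $\p_L^k$ energy on the left-hand side, producing the main inductive term $C k \|\p_L^{k-1} \phi\|_{X_{en}}$.

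To track the $C k L^k \|\gamma_1\|$ contribution, I would next convert the estimate on $\p_L^k \mathcal{U}$ back to one on $\p_L^k \phi$ via the inversion formula $\phi = F(z) \int_0^z \mathcal{U}/F^2 \, \ud z'$ analogous to \eqref{inv:u}. Each such inversion must be performed against the weight supplied by the diffusive operator $L \bold{D}$, and accumulates one factor of $L$ per $\p_L$-derivative through the Hardy inequalities of Section \ref{Section:FF}. These $L^k$ factors then multiply the only $\gamma_1$-dependent source, namely the $k = 0$ base estimate $\|\phi\|_{X_{en}} \lesssim \|\gamma_1\|_{L^2(0,1)}$ arising from the inhomogeneous Robin condition at $z = 1$; this is what yields the $C k L^k \|\gamma_1\|_{L^2(0,1)}$ contribution in the stated bound.

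The main obstacle is the careful bookkeeping of the $L$-dependence across the closed loop of estimates \eqref{the:first} -- \eqref{the:fourth} when applied to each $\p_L^k \phi$, making sure the constants do not implicitly depend on $L$ in a way that breaks the factorial growth on iteration. Once the recursion is established, iteration produces $\|\p_L^k \phi\|_{X_{en}} \le C^{k+1} k! \, L^k \|\gamma_1\|_{L^2(0,1)}$ for $L \ge 1$, which yields local analyticity of $\bold{K}^{lift}_L \in \bold{O}_{lift, en}$ on any compact subset of $(L_0, \infty)$ with radius of convergence $\sim 1/(CL)$ — sufficient for the composition argument in Proposition \ref{prop:comp} and hence for the discreteness of the resonant set $\{L_k\}$ claimed in Theorem \ref{thm:res}.
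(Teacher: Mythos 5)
Your framework is right — multiply \eqref{C49}'s system by $\p_L^k \mathcal{U}$, split at $z=1$, and integrate forward/backward in $t$ as in Lemma \ref{lemma:trutru} — but there is a genuine gap in your treatment of the forcing. You claim that Cauchy-Schwarz yields
\begin{align*}
\Big| \iint k \,\bold{D}[\p_L^{k-1} U]\, \p_L^k \mathcal{U}\, \ud z\,\ud t \Big| \lesssim k\, \|\p_L^{k-1}\phi\|_{X_{en}}\,\|\p_L^k \mathcal{U}\|_{L^2_{t,z}},
\end{align*}
but this does not close. The operator $\bold{D}$ contains the top-order term $\p_z^3 \p_L^{k-1}\phi = \p_z^2 \p_L^{k-1}U$, whereas the $X_{en}$ norm only controls the degenerate-weighted quantity $\int_0^1\!\int F(z)\,|\p_z \p_L^{k-1}U|^2$ — one $z$-derivative on $U$, with a weight $F$ vanishing at $z=1$. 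So $\|\bold{D}[\p_L^{k-1}U]\|_{L^2_z}$ is not bounded by $\|\p_L^{k-1}\phi\|_{X_{en}}$, and your Cauchy-Schwarz step loses a full $\p_z$. The paper resolves this by integrating by parts once in $z$ in the $\p_z^3\p_L^{k-1}\phi$ contribution, which moves a derivative onto $\p_L^k\mathcal{U}$ and rebalances to $\lesssim k\,\|\sqrt{F}\,\p_z\p_L^{k-1}U\|_{L^2_z}\,\|\sqrt{F}\,\p_z\p_L^k U\|_{L^2_z}$ (plus a boundary term at $z=1$ and lower-order terms), which matches what the dissipation $L\int F|\p_z\p_L^k U|^2$ and the energy can absorb. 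Without this integration by parts the recursion cannot be established.

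A secondary issue is your account of the $C k L^k \|\gamma_1\|_{L^2}$ source. You propose that inverting $\mathcal{U}\mapsto\phi$ ``accumulates one factor of $L$ per $\p_L$-derivative through the Hardy inequalities,'' but the inversion is performed once for each $k$, not $k$ times, and the Hardy inequalities do not manufacture factors of $L$. In the paper's argument the $L^k\|\gamma_1\|^2$ term appears directly in a Young's inequality applied to the boundary contributions at $z=1$ in the dissipation identity — there is no chain of inversions and no per-derivative $L$ gain. Your heuristic here, unlike the Cauchy-Schwarz gap above, is not strictly fatal (you could in principle be more generous in the constant), but as written it misidentifies where the $\gamma_1$ dependence enters and why the $L$-power appears.
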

\begin{proof} We multiply the above system by $\p_L^k \mathcal{U}$ and integrate by parts, on the subdomains $z > 1$ and $0 < z < 1$ separately. As the proofs are nearly identical, the only exception being the data containing depending on $A(y) \gamma_{u; \mathrm{Dyn}} + B(y) \gamma_{\psi; \mathrm{Dyn}}$, we give the proof only for $0 < z < 1$. This gives the natural energy 
\begin{align} \label{C49}
\frac{\p_t}{2} \int |\p_L^k \mathcal{U}|^2 \ud z - L \int \bold{D}[\p_L^k \phi] \p_L^k \mathcal{U} = k \int \bold{D}[\p_L^{k-1} U] \p_L^k \mathcal{U}
\end{align}
We now collect an identity which captures the dissipation term $\bold{D}$:
\begin{align} \n
- L \int \bold{D}[\p_L^k \phi] \p_L^k \mathcal{U} \ud z = & L \int F(z) |\p_z \p_L^k U|^2 \ud z + L \int \alpha_1(z) |\p_L^k U|^2 \ud z + L \int \alpha_2(z) |\p_L^k \phi|^2 \ud z \\ \n
& + L \beta_1 |\p_L^k  \phi(t, 1)|^2 + L \beta_2 \p_L^k U(t, 1) \p_L^k \phi(t, 1) + L \beta_3 \p_L^k \phi(t,1) \p_z \p_L^k U(t, 1) 
\end{align}
A standard application of Young's inequality results in 
\begin{align}
- L \int \bold{D}[\p_L^k \phi] \p_L^k \mathcal{U} \ud z \ge \frac{L}{2} \int F(z) |\p_z \p_L^k U|^2 \ud z - C_0(L) L^{k} \| \Gamma_\Omega^{(0)} \|_{L^2}^2 - C_1(L) \int |\p_L^k \mathcal{U}|^2 \ud z. 
\end{align}
We now estimate the terms on the right-hand side of \eqref{C49}. We have the primary term arising from $\bold{D}$, the top order $\p_z^3 \p_L^{k-1} \phi$ term, for which an integration by parts yields 
\begin{align} \n
k |\int \p_z^3 \p_L^{k-1} \phi \p_L^k \mathcal{U}| = & |- k \int F(z)  \p_L^{k-1} \p_z U \p_L^k \p_z U +  k \int F''(z)  \p_L^{k-1} \p_z U \p_L^k \phi \\ \n
& + k \p_L^{k-1} \p_z U(t, 1) F'(1) \p_L^k \phi(t, 1)| \\
\lesssim & k \| \sqrt{F} \p_z \p_L^{k-1} U \|_{L^2_z} \| \sqrt{F} \p_z \p_L^k U \|_{L^2_z}.
\end{align}
A standard application of Cauchy-Schwartz, Young's inequality, and a Gronwall argument concludes the proof. 
\end{proof}

We want to upgrade the topology in which we prove analyticity. In particular, we want to prove Lemma \ref{lemma:lift}, which will follow from the bound:  
\begin{align} \label{b2}
\sup_{k}  \sup_{L \in (L_0, \infty)} \frac{1}{C^{k+1} k!} \| \p_L^k \phi \|_{X} \lesssim \|\Gamma_\Omega^{(0)} \|_{L^2(0, 1)}.
\end{align}
This in turn will follow from the following bound on the solution: 
\begin{lemma}For a constant $0 < C < \infty$ which could depend on $L_0$, we have the following bounds:
\begin{align} \label{b1}
\| \p_L^k \phi \|_{X} \lesssim \| \p_L^k \phi \|_{X_{\mathrm{en}}} + C k \| \p_L^{k-1} \phi \|_{X}.
\end{align}
\end{lemma}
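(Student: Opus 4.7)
The plan is to derive \eqref{b1} by applying $\p_L^k$ to the rescaled linearized system \eqref{hguy:1}--\eqref{hguy:4}, rearranging to isolate $\p_z^3 \p_L^k \phi$, and estimating each resulting term either by the $X_{en}$-norm of $\p_L^k\phi$, by lower-order $\p_L^{k-1}$ contributions, or by the inductive $X$-norm term.

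First, I will commute $\p_L^k$ through \eqref{hguy:1}, using the Leibniz identity $\p_L^k(L f) = L \p_L^k f + k \p_L^{k-1} f$ whenever the coefficient $L$ appears. Writing $U_k := \p_L^k U$, $\phi_k := \p_L^k \phi$, $V_k := \p_L^k V$, I obtain the PDE
\begin{equation*}
F(z) \p_t U_k + V_k F'(z) - L \p_z^2 U_k + L \sum_{j=0}^{2} A_j(z) \p_z^j \phi_k
= k \p_z^2 U_{k-1} - k \sum_{j=0}^2 A_j \p_z^j \phi_{k-1},
\end{equation*}
together with the homogeneous boundary data $\p_L^k \Gamma_1 = 0$ (which holds since $\Gamma_1$ is prescribed independently of $L$). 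Rearranging the PDE so that $L \p_z^3 \phi_k = L\p_z^2 U_k$ is alone on one side gives a pointwise-in-$z$ identity
\begin{equation*}
L \p_z^3 \phi_k = F \p_t U_k + F' V_k + L \sum_{j=0}^2 A_j \p_z^j \phi_k - k\p_z^3 \phi_{k-1} + k \sum_{j=0}^2 A_j \p_z^j \phi_{k-1}.
\end{equation*}

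Next, I take $L^\infty_z L^2_s$ norms in this identity. The commutator term $k\p_z^3 \phi_{k-1}$ directly produces $Ck\|\p_L^{k-1}\phi\|_X$, which is exactly the inductive term on the right-hand side of \eqref{b1}. The low-order commutator $k \sum_{j\le 2} A_j \p_z^j \phi_{k-1}$ is controlled in $L^\infty_z L^2_s$ by $\|\p_L^{k-1}\phi\|_{X_{en}}$ via a one-dimensional Sobolev embedding $\|f\|_{L^\infty_z L^2_s} \lesssim \|f\|_{L^2_s H^1_z}$ combined with the energy and dissipation parts of $X_{en}$; since $X_{en}$ is weaker than $X$, this is dominated by $Ck\|\p_L^{k-1}\phi\|_X$. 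The same embedding handles the term $L\sum_{j\le 2} A_j \p_z^j \phi_k$, which is bounded by $L\|\p_L^k\phi\|_{X_{en}}$.

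The main obstacle, and the step I would prioritize, is controlling the ``time-derivative'' terms $\|F \p_t U_k\|_{L^\infty_z L^2_s}$ and $\|F' V_k\|_{L^\infty_z L^2_s}$ in terms of $\|\p_L^k\phi\|_{X_{en}}$. The natural device is the good unknown $\mathcal{U}_k := F U_k - F' \phi_k$ (the rescaled analog of \eqref{goodunk:1}), which satisfies a pure transport equation $\p_t \mathcal{U}_k - L \p_z^2 U_k + L\sum_j A_j \p_z^j \phi_k = \text{commutator in }k$. This identity expresses $\p_t \mathcal{U}_k$ in terms of the very same quantities we are estimating, so the bound must be closed through a maximal-regularity argument analogous to Section \ref{section:Max:reg}: apply $\p_s^{-1/3}$-type smoothing from the Airy-Fourier representation (or the simpler elliptic Hardy chain used in Lemmas \ref{car1}--\ref{LB2}) to trade two $z$-derivatives for one $s$-derivative, thereby bounding $\|F\p_t U_k\|_{L^\infty_z L^2_s}$ by $\|\p_L^k\phi\|_{X_{en}}$ plus harmless lower-order contributions. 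For $F' V_k$, I use $V_k = -\int_0^z \p_t U_k\,\mathrm{d}z'$ and a weighted Hardy estimate so that the same maximal-regularity bound applies.

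Finally, I divide through by $L \ge L_0 > 0$ to absorb the prefactor; the remaining $L$-independent bound yields \eqref{b1}. Iterating this inequality, combined with \eqref{an1} for the $X_{en}$-norm of $\p_L^k\phi$, produces the desired factorial-type bound \eqref{b2} with a constant $C$ depending only on $L_0$, which is exactly what Lemma \ref{lemma:lift} asserts.
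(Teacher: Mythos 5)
The route you chose is not the paper's and it has a genuine gap. The paper's one-line proof is to repeat the lifting analysis that established \eqref{emb:Linfty}: write the rescaled $\p_L^k$ system in the Airy variables, split the solution via the explicit Airy Green's function into a homogeneous (trace) piece plus a Duhamel piece, and estimate each by Mikhlin--H\"ormander and the bi-parameter bounds on $\mathcal{J}_1, \mathcal{J}_2$ (Lemmas \ref{lemma:Jbd}, \ref{lemma:khbd}). That explicit representation is what lets you reach the full $L^\infty_z L^2_s$ control of $\p_z^3 \phi_k$ starting from the trace $\gamma_{1,\Omega}$ (which vanishes for the $\p_L^k$ system) and the forcing $k\,\bold{D}[\p_L^{k-1}U]$.

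Your pointwise rearrangement does not close. After isolating $L\p_z^3\phi_k$, you must bound $\|F\p_t U_k\|_{L^\infty_z L^2_s}$ and $\|F' V_k\|_{L^\infty_z L^2_s}$ by $\|\phi_k\|_{X_{en}}$. But $X_{en}$ controls only $\sup_t \|\phi_k\|_{L^2_z}$ and $\|\sqrt{F}\,\p_z U_k\|_{L^2_{t,z}}$ --- there is no $\p_t$ on $U_k$ anywhere in $X_{en}$, and no $L^\infty_z$ integrability either, so the desired bound is simply false at this level. You acknowledge the issue and invoke a ``maximal-regularity argument analogous to Section~\ref{section:Max:reg}", but that section produces bounds in terms of $\mathcal{I}$, $X_O$, $B_{Max}$ norms, not the energy norm $X_{en}$; importing it here is circular because the quantity $\p_t\mathcal{U}_k$ is re-expressed by the equation in terms of $\p_z^2 U_k = \p_z^3\phi_k$, the very quantity you are trying to estimate. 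A secondary issue: even your low-order term $L A_2 \p_z^2\phi_k$ needs $\|\p_z^2\phi_k\|_{L^\infty_z L^2_s}$, which is one derivative more than what the dissipative half of $X_{en}$ gives in $L^2_{t,z}$, and with a degenerate weight $F$ vanishing at $z=1$; a plain 1D Sobolev embedding in $z$ does not produce it. The Airy Duhamel representation sidesteps both difficulties because it never rearranges the PDE pointwise; it gains the extra $\p_z$-regularity from the smoothing encoded in the Airy kernel bounds.
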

\begin{proof} This is obtained by essentially repeating the analysis which resulted in the Crocco and von-Mise estimates, namely Propositions \ref{prop:Crocco} and \ref{pro:vonMise}. We omit repeating all the details.  
\end{proof}

Coupling \eqref{b1} now with \eqref{an1} we obtain \eqref{b2}. Finally, the analyticity \eqref{an2} follows immediately from the explicit formula, \eqref{compo:proj:0}.

\subsection{Proof of Proposition \ref{prop:spec:LjK}}

\begin{proof}[Proof of Proposition \ref{prop:spec:LjK}] We proceed in two main steps, which we delineate as follows. 

\vspace{2 mm}

\noindent \textit{Step 1: Verification of Spectral Assumption} We decompose $L^2(0, 1) = L^2_{\le N}(0, 1) \oplus L^2_{> N}(0, 1)$ where
\begin{align*}
L^2_{\le N}(0, 1) := \text{span} \{ \sin(ks) : 1 \le k \le N \}, \qquad L^2_{> N}(0, 1) := \text{span} \{ \sin(ks) : k > N \}
\end{align*}
We clearly have 
\begin{align*}
0 \notin \Sigma[ ((-\Delta)^{\frac16} + \bold{K}_L)|_{L^2_{> N}(0, 1)}],
\end{align*}
and thus it suffices to study the finite dimensional operator $((-\Delta)^{\frac16} + \bold{K}_L)|_{L^2_{\le N}(0, 1)}$. Here we have a matrix representation 
\begin{align}
A := (-\Delta)^{\frac16} |_{L^2_{\le N}(0, 1)}, \qquad B(L) := \bold{K}_L|_{L^2_{\le N}(0, 1)}. 
\end{align}
We are thus left with studying the analytically perturbed matrix $T(L) := A + B(L)$. A classical fact about analytically perturbed matrices states: by excising finitely many values of $L$, the number of eigenvalues for $T(L)$ remains constant in the interval $(L_0, L_{max})$. Let these be denoted by $\sigma_1(L), \dots, \sigma_{n_{\ast}}(L)$. These functions are analytic in $L$. We also know that $\sigma_i(L) \neq 0$ for $L < 2L_0$ according to Theorem \ref{main:thm}. Therefore, we have again at most finitely many $L$ values for which $\sigma_i(L)$ cross $0$.  (see for instance Kato, \cite{KatoBook}, p. 63-64).

To conclude the argument, we enumerate these finite values of $L$ via $\{L_1, L_2, \dots L_{N}\}$, where $L_k < L_{k+1}$. We know that $L_{k+1} - L_k > 0$, and we have the bound 
\begin{align} \label{eig:lb}
\inf_{i} |\sigma_i(L)| \gtrsim \min_k |L - L_k|.
\end{align}
The lower bound \eqref{eig:lb} immediately shows that our Spectral Condition is satisfied. 

\vspace{2 mm}

\vspace{2 mm}

\noindent \textit{Step 2: a-priori Bounds on $\| \gamma_\Omega^{(1)} \|_{L^2_s}$:} In this step, we essentially want to obtain an analogue of the bound \eqref{nlyougo:4}, but without the first three terms on the right-hand side of \eqref{nlyougo:4} appearing. Specifically, we will prove 
\begin{align} \n
\| \gamma_\Omega^{(1)} \|_{L^2_s} \lesssim & \sum_{j = 0}^3 \| \p_z^j \bold{R} \chi_{O,j} \langle z \rangle^n \|_{L^2_{sz}} + \sum_{j = 0}^1 \| \p_z^j \bold{R} \chi_I \|_{L^2_{sz}} + \| d(z) \p_z^2 \bold{R} \chi_I \|_{L^2_{sz}} \\ \n
& + \sum_{i = 0}^2 \sum_{\iota \in \mathrm{Left, Right}}  \| \langle \rho \rangle \p_\rho^i \Xi_{\iota} \|_{L^2_\rho} +  \sum_{i = 1}^2 \sum_{\iota \in \mathrm{Left, Right}} \|\rho^i \p_\rho^i \Xi_{\iota}\|_{L^2_\rho} \\  \label{mahg}
& + \sum_{j = 0}^3 ( \| \p_z^j \zeta^{(\mathrm{Hom})}_{\mathrm{Left}} \chi_{O,j}^+ \langle z \rangle^n \|_{L^2_z} + \| \p_z^j \zeta_{\mathrm{Right}} \chi_{O,j}^- \|_{L^2_z} ),
\end{align}
where the implicit constant above depends poorly on $L_{\mathrm{Max}}$ and the distance between $L$ and the discrete set $\{L_1, L_2, \dots, L_N\}$. 

Our starting point is the elliptic equation \eqref{a:Green:1}. We rewrite \eqref{a:Green:1} with $F_I = F_{\mathrm{Coupled}} + F_{\mathrm{Low}}$ as in \eqref{abs:67}, which gives the elliptic equation
\begin{align} \n  
(-\Delta_D)^{\frac16} \gamma_{Q}^{(0)} -  \sum_{\iota \in \pm} \mathcal{T}_{- \frac 1 3, \iota} [F_{\mathrm{Coupled}} 1_{1 \le s \le 1 + \bar{L}}]  =  &\sum_{\iota \in \pm} \mathcal{T}_{- \frac 1 3, \iota} [F_{\mathrm{Low}} 1_{1 \le s \le 1 + \bar{L}}] \\ \n
&+ \sum_{\iota \in \pm} \mathcal{T}_{- \frac 1 3, \iota} [F_{\mathrm{Data}, \iota}[\Xi_{\mathrm{Left}}, \Xi_{\mathrm{Right}}]].
\end{align}
Recall $F_{\mathrm{Coupled}}$ is defined in \eqref{defFCL}. We first of all split $F_{\mathrm{Coupled}} = \bold{r}_{\mathrm{Lin}}^{(\eps)} + F_{\mathrm{Coupled}, \mathrm{Hom}}$ where $\bold{r}_{\mathrm{Lin}}^{(\eps)}$ is \eqref{boldrl} but with the $\tau_j$ their $\eps$-dependent values. Secondly, $F_{\mathrm{Coupled}, \mathrm{Hom}}$ contains the components that depend on the homogeneous solutions, \eqref{eq:for:v:1:HOM}.

We then write the above equality as 
\begin{align} \n
\bold{L}_{\eps}[\gamma^{(0)}_Q] = & \sum_{\iota \in \pm} \mathcal{T}_{- \frac 1 3, \iota} [F_{\mathrm{Coupled}, \mathrm{Hom}} 1_{1 \le s \le 1 + \bar{L}}] +  \sum_{\iota \in \pm} \mathcal{T}_{- \frac 1 3, \iota} [F_{\mathrm{Low}} 1_{1 \le s \le 1 + \bar{L}}] \\ \label{ofme}
&+ \sum_{\iota \in \pm} \mathcal{T}_{- \frac 1 3, \iota} [F_{\mathrm{Data}, \iota}[\Xi_{\mathrm{Left}}, \Xi_{\mathrm{Right}}]],
\end{align}
where $\bold{L}_{\eps}$ is defined to be the version of $\bold{L}$ in which $\bold{r}_{\mathrm{Lin}}$ is replaced by $\bold{r}_{\mathrm{Lin}}^{(\eps)}$. By continuity, our Spectral Condition ensures the following bound:
\begin{align} \label{bold:L:eps:bd}
\| \p_s^{\frac13} \gamma^{(0)}_{Q, \mathrm{Ext}} \|_{L^2_s(\mathbb{R})} \lesssim \| \bold{L}_{\eps}[\gamma^{(0)}_Q] \|_{L^2_s(1, 1 + \bar{L})}.
\end{align}
For the right-hand side of \eqref{ofme}, we invoke definitions \eqref{defFCL} for the $F_{\mathrm{Coupled}, \mathrm{Hom}}$ term and \eqref{defFLOW} for the $F_{\mathrm{Low}}$ term, which are easily seen to be bounded above by the right-hand side of \eqref{main:G} upon re-applying \eqref{reapply:me}, and we invoke the bounds \eqref{spurt:1} -- \eqref{spurt:4} for the $F_{\mathrm{Data}}$ term. 

This proves that the right-hand side of \eqref{main:G:LL} controls $\| \p_s^{\frac13} \gamma^{(0)}_{Q, \mathrm{Ext}} \|_{L^2_s(\mathbb{R})}$. From here, bounding $\| \gamma_\Omega^{(1)} \|_{L^2_s}$ above by the right-hand side of \eqref{main:G:LL} follows in a straightforward manner, upon invoking the definition \eqref{DN:top} in much the same manner as in the bounds \eqref{asiun:1} and \eqref{guy:1}.

\vspace{2 mm}

\noindent \textit{Step 3: a-priori Bounds on $\| \psi \|_{\mathrm{Global}_n}$:} We now need to obtain bounds over the remaining three quantities in $\|\psi \|_{\mathrm{Global}_n}$, defined in \eqref{global:n}. To do so, we combine the energy and dissipation functionals from our Crocco norm, \eqref{def:Crocco:norm}, and our von-Mise norms, \eqref{Ejn}. We can treat the case $z < 1$ independently from $z > 1$. For simplicity, we focus on $z < 1$. We first introduce the notation 
\begin{align}
\mathcal{E}_{\mathrm{Crocco}, -}(s) :=  \int_{\mathbb{R}_-} \Omega_I^2 \ud Z + \int_{\mathbb{R}_-} Z^2 |\p_Z \Omega_I|^2 \ud Z + \int_{\mathbb{R}_-} Z^4 \p_Z^2 \Omega_{I}^2 \ud Z
\end{align}
which are the energetic quantities from \eqref{def:Crocco:norm}, and 
\begin{align}
\mathcal{D}_{\mathrm{Crocco}, -}(s) :=\int_{\mathbb{R}_-} |Z| |\p_Z^2 \Omega_{I}|^2 \ud Z \ud s + \int_1^{1 + \bar{L}} \int_{\mathbb{R}_-} |Z|^3 |\p_Z^3 \Omega_{I}|^2 \ud Z \ud s,
\end{align}
which are the dissipation terms from \eqref{def:Crocco:norm}. In a similar spirit, we introduce 
\begin{align}
\mathcal{E}_{\mathrm{von-Mise}, -}(s) := \sum_{j = 0}^3 \mathcal{E}_{O,- }^{(j)}(s), \qquad \mathcal{D}_{\mathrm{von-Mise},-}(s) :=  \sum_{j = 0}^3 \mathcal{D}_{O,- }^{(j)}(s),
\end{align}
which are the energy and dissipation functionals from \eqref{yahL1}. We then define our full energy-dissipation as 
\begin{align}
\mathcal{H}_{\mathrm{Full},-}(s) := \mathcal{H}_{\mathrm{Crocco}, -}(s)  + \mathcal{H}_{\mathrm{von-Mise}, -}(s), \qquad \mathcal{H} \in \{ \mathcal{E}, \mathcal{D} \}. 
\end{align}
Revisiting the proofs of Lemmas \ref{klem1}, \ref{klem2}, and \ref{klem3}, we obtain the energy-dissipation inequality 
\begin{align*}
\frac{\p_s}{2} \mathcal{E}_{\mathrm{Full},-}(s) - \mathcal{D}_{\mathrm{Full},-}(s) =  \mathrm{Err}_{\mathrm{Lin}}(s) + \mathrm{Err}_{\mathrm{Bdry}}(s) + \mathrm{Err}_{\mathrm{Source}}(s), 
\end{align*} 
where
\begin{align}
|\mathrm{Err}_{\mathrm{Lin}}(s)| \le &C \mathcal{E}_{\mathrm{Full},-}(s) + \frac{1}{2}\mathcal{D}_{\mathrm{Full},-}(s), \\
|\mathrm{Err}_{\mathrm{Bdry}}(s)| \lesssim & |\gamma_\Omega^{(1)}(s)|, \\
 |\mathrm{Err}_{\mathrm{Source}}(s)| \lesssim & \sum_{j = 0}^3 \| \p_z^j \bold{R} \chi_{O,j} \langle z \rangle^n \|_{L^2_{z}}^2 + \sum_{j = 0}^1 \| \p_z^j \bold{R} \chi_I \|_{L^2_{z}}^2 + \| d(z) \p_z^2 \bold{R} \chi_I \|_{L^2_{z}}^2.
\end{align}
A standard Gronwall inequality then closes the desired bounds on $\mathcal{E}_{\mathrm{Full},-}$ and $\mathcal{D}_{\mathrm{Full},-}$. A nearly identical argument is applied for the $+$ case, thereby closing the full estimate on $\| \psi \|_{\mathrm{Global}_n}$.
\end{proof}

\subsection{Proof of Theorem \ref{thm:res}}

We are now ready to establish the main theorem for $L \in (0, L_{\mathrm{Max}}$):
\begin{proof}[Proof of Theorem \ref{thm:res}] Once the main linear operator $\mathcal{L}_{\mathrm{Modulation}}$ is shown to be coercive (according to bound \eqref{main:G:LL}), we may apply the bounds on the source terms, the maximal regularity analysis, and the data (Sections \ref{ksec} -- \ref{Lpr}), since these bounds were established uniformly in $L \in (0, L_{\mathrm{Max}})$. The only exception is there is no the need for $\mathrm{\bold{Dat}}_{\mathrm{Below}}$, since the $\gamma_u, \gamma_\psi$ contributions have been moved to the inverted linear operator leaving only $\zeta_{\mathrm{Left}}^{(\mathrm{Hom})}$ in \eqref{main:G:LL}. Indeed, by using Proposition \ref{prop:spec:LjK} $k$ by $k$ (and then switching to the Effective norm in $(x,y)$ coordinates) we obtain the bounds 
\begin{align}
\| \psi_{R,k} \|_{\mathrm{Eff}_{N_k}} \lesssim &\mathrm{\bold{Source}}_k  +\mathrm{\bold{Dat}}_k,
\end{align}
where the implicit constant above depend poorly on large values of $L_{\mathrm{Max}}$ and $\min |L - L_k|$. We therefore obtain by using $\mathrm{\bold{Dat}}_k \le \mathrm{\bold{Dat}}$, and invoking \eqref{sourke}
\begin{align} \n
\| \psi_{R,k} \|_{\mathrm{Eff}_{N_k}} \lesssim & \mathrm{\bold{Dat}} + L \sum_{k'= 0}^{k-1} \| \psi_{R, k'} \|_{\text{Eff}_{N_{k'}}} +  \eps L^{-1} (\sum_{k = 0}^{\lfloor \frac{k_\ast + 1}{2} \rfloor} \sum_{j = 0}^4 \| \p_y^j u_{R,k} \|_{L^\infty} ) \| \psi_R \|_{\mathrm{Linear}} \\ \n
&+ \eps L^{-1} \| \psi_R \|_{\mathrm{Linear}}^2,
\end{align}
for an implicit constant that depends poorly on large $L_{\mathrm{Max}}$ and $\min |L - L_k|$. Inducting in $k$, the above inequality implies 
\begin{align} \n
\| \psi_{R,k} \|_{\mathrm{Eff}_{N_k}} \lesssim & \mathrm{\bold{Dat}}  +  \eps L^{-1} (\sum_{k = 0}^{\lfloor \frac{k_\ast + 1}{2} \rfloor} \sum_{j = 0}^4 \| \p_y^j u_{R,k} \|_{L^\infty} ) \| \psi_R \|_{\mathrm{Linear}} + \eps L^{-1} \| \psi_R \|_{\mathrm{Linear}}^2.
\end{align}
We now take the summation of both sides over $0 \le k \le k_\ast$, which closes the bounds 
\begin{align} \label{sbU:1}
\| \psi_R \|_{\mathrm{Linear}} \lesssim & \mathrm{\bold{Dat}} + \eps L^{-1} (\sum_{k = 0}^{\lfloor \frac{k_\ast + 1}{2} \rfloor} \sum_{j = 0}^4 \| \p_y^j u_{R,k} \|_{L^\infty} ) \| \psi_R \|_{\mathrm{Linear}} + \eps L^{-1} \| \psi_R \|_{\mathrm{Linear}}^2.
\end{align}
Now, recalling \eqref{est:data:M} and \eqref{main:pro:MR}
\begin{align} \label{sbu:ghy:1:LL}
\| \psi_R \|_{\mathrm{Linear}} \lesssim &\mathrm{\bold{Dat}} + \eps L^{-1} (\sum_{k = 0}^{\lfloor \frac{k_\ast + 1}{2} \rfloor} \sum_{j = 0}^4 \| \p_y^j u_{R,k} \|_{L^\infty} ) \| \psi_R \|_{\mathrm{Linear}} + \eps L^{-1} \| \psi_R \|_{\mathrm{Linear}}^2, \\ \n
\| \psi_R \|_{\mathrm{MR}} \lesssim & \| \psi_R \|_{\mathrm{Linear}} + \eps L^{-1} ( \sum_{k = 0}^{k_\ast - 1} \sum_{j = 0}^{J_k } \|\p_y^j \psi_{R,k} \|_{L^\infty} +  \| \psi_R \|_{\mathrm{Linear}}  ) \\ \label{main:pro:MR:cons:LL}
& \times(\| \psi_R \|_{\mathrm{MR}}  + \| \psi_R \|_{\mathrm{Linear}} ), \\ \n
\mathrm{\bold{Dat}} \lesssim & \| \mathring{F}_{\mathrm{Left}} \langle \eta \rangle^3 \|_{H^{2 + 3k_\ast}_\eta} + \| \mathring{F}_{\mathrm{Right}} \langle \eta \rangle^3 \|_{H^{2 + 3k_\ast}_\eta} + \|  G_{\mathrm{Right}} \|_{H^{3 + 3k_\ast}_y} \\ \label{dat:est:yea:LL}
&+ \|  G_{\mathrm{Left}} \langle y \rangle^{N_0} \|_{H^{3 + 2k_\ast}_y},
\end{align}
where the implicit constants above depend poorly on large values of $L_{\mathrm{Max}}$ and $\min |L - L_k|$. This then implies the result of Theorem \ref{thm:res} upon using Sobolev embedding in the same manner as \eqref{mr:b:1} -- \eqref{mr:b:3}, and upon bringing $\eps$ small relative to $L_{\mathrm{Max}}$ and $\min |L - L_k|$.
\end{proof}

\appendix

\section{Airy Functions} \label{app:Airy}

We follow largely \cite{Olver}, specifically Chapter 11, Section 8. For any $\zeta \in \mathbb{C}$, we are considering the Airy equation on $\mathbb{C}$, 
\begin{align} \label{AIRY:ODE}
\zeta F(\zeta) - F''(\zeta) = 0. 
\end{align}
We introduce two solutions that are real valued: 
\begin{align}
ai(x) := &\frac{1}{\pi} \int_0^{\infty} \cos(\frac 13 t^3 + xt) \ud t, \\
bi(x) := &\frac{1}{\pi} \int_0^{\infty} e^{- \frac 13 t^3 + xt} + \sin (\frac 13 t^3 + xt) \ud t
\end{align}
We may extend these definitions to $\mathbb{C}$ via analytic continuation, and thus define $ai(\zeta), bi(\zeta), \zeta \in \mathbb{C}$. We subsequently define the following open sets: 
\begin{subequations}
\begin{align}
\Sigma_{0} := & \{ \zeta \in \mathbb{C} : - \frac{\pi}{3} < \arg(\zeta) < \frac{\pi}{3}   \}, \\
\Sigma_{1} := & \{ \zeta \in \mathbb{C} : \frac{\pi}{3} < \arg(\zeta) < \pi \}, \\
\Sigma_{-1} := & \{ \zeta \in \mathbb{C} : -\pi < \arg(\zeta) < - \frac{\pi}{3} \},
\end{align}
\end{subequations}
and the following rays: 
\begin{subequations}
\begin{align}
&\Gamma_{\pi} := \{ \zeta \in \mathbb{C} : \arg(\zeta) = \pi \}\\
&\Gamma_{\frac{\pi}{3}} :=   \{ \zeta \in \mathbb{C} : \arg(\zeta) = \frac{\pi}{3} \}\\
&\Gamma_{-\frac{\pi}{3}} := \{ \zeta \in \mathbb{C} : \arg(\zeta) = - \frac{\pi}{3} \}
\end{align} 
\end{subequations}

We want to now record some well-known asymptotic properties of these special functions. 

\begin{lemma} \label{lemma:fund:solutions} On $\Sigma_0$, we will have $\{ ai(\zeta), bi(\zeta) \}$ as the fundamental solutions, where $bi(\cdot)$ grows exponentially in $e^{\zeta^{\frac 3 2}}$, whereas $ai(\zeta)$ decays by $e^{- \zeta^{\frac 3 2}}$. On $\Sigma_1$, we have $ai(\zeta)$ grows and $B_{+1}(\zeta) := ai(e^{- \frac 2 3 \pi i} \zeta)$ decays. On $\Sigma_{-1}$, we have $ai(\zeta)$ grows and $B_{-1}(\zeta) := ai(e^{\frac{2}{3} \pi i} \zeta)$ decays. More precisely, we have the following asymptotics:
\begin{subequations}
\begin{align} \label{airy:asy:1}
|ai(\zeta)| \lesssim |\frac{1}{\zeta^{\frac 1 4}} e^{- \frac 2 3 \zeta^{\frac 3 2}}|, \qquad |bi(\zeta)| \gtrsim |\frac{1}{\zeta^{\frac 1 4}} e^{\frac 2 3 \zeta^{\frac 3 2}}|, \qquad \zeta \in \Sigma_0, \\ \label{airy:asy:2}
|ai(\zeta)| \gtrsim |\frac{1}{\zeta^{\frac 1 4}} e^{\frac 2 3 \zeta^{\frac 3 2}}|, \qquad |B_{+1}(\zeta)| \lesssim |\frac{1}{\zeta^{\frac 1 4}} e^{-\frac 2 3 \zeta^{\frac 3 2}}|, \qquad \zeta \in \Sigma_1,  \\ \label{airy:asy:3}
|ai(\zeta)| \gtrsim |\frac{1}{\zeta^{\frac 1 4}} e^{\frac 2 3 \zeta^{\frac 3 2}}|, \qquad |B_{-1}(\zeta)| \lesssim | \frac{1}{\zeta^{\frac 1 4}} e^{-\frac 2 3 \zeta^{\frac 3 2}}|, \qquad \zeta \in \Sigma_{-1}, 
\end{align}
\end{subequations}
\end{lemma}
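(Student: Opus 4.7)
The plan is to derive these bounds as a standard application of the method of steepest descent to the contour integral representation of the Airy function, together with the threefold rotational symmetry of the Airy equation. I would start from the complex contour representation
\begin{align*}
ai(\zeta) = \frac{1}{2\pi i} \int_{\mathcal{C}} e^{\frac{t^3}{3} - \zeta t}\,\ud t,
\end{align*}
where $\mathcal{C}$ is a contour running from $\infty e^{-i\pi/3}$ to $\infty e^{i\pi/3}$. The two saddle points are $t_{\pm} = \pm \zeta^{1/2}$, with phase values $\mp \tfrac{2}{3}\zeta^{3/2}$. For $\zeta \in \Sigma_0$, the sector $|\arg \zeta| < \pi/3$ is precisely where the contour $\mathcal{C}$ can be homotoped to the steepest descent path through the single saddle $t_+ = \zeta^{1/2}$; the resulting Gaussian calculation produces the classical asymptotic $ai(\zeta) \sim \frac{1}{2\sqrt{\pi}}\zeta^{-1/4} e^{-\frac{2}{3}\zeta^{3/2}}$, which is the upper bound in \eqref{airy:asy:1}. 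The matching lower bound for $bi(\zeta)$ follows either from its explicit integral representation (the $e^{-t^3/3 + xt}$ piece is exactly the one controlled by the other saddle $t_- = -\zeta^{1/2}$, with phase $+\tfrac{2}{3}\zeta^{3/2}$) or from the connection formula $bi(\zeta) = e^{i\pi/6} B_{+1}(\zeta) + e^{-i\pi/6} B_{-1}(\zeta)$.

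For the bounds on $\Sigma_{\pm 1}$, I would exploit the symmetry of the Airy ODE $F'' = \zeta F$: if $F(\zeta)$ is a solution, then so is $F(e^{\pm 2\pi i/3}\zeta)$, since $(e^{\pm 2\pi i/3})^3 = 1$ means the rotation is compatible with the linear coefficient $\zeta$. Hence $B_{\pm 1}(\zeta) = ai(e^{\mp 2\pi i/3}\zeta)$ is an Airy solution, and the rotation $\zeta \mapsto e^{\mp 2\pi i/3}\zeta$ maps $\Sigma_{\pm 1}$ bijectively onto $\Sigma_0$. Transferring the $\Sigma_0$-decay estimate for $ai$ across this rotation gives the $B_{\pm 1}$ decay estimate in \eqref{airy:asy:2}--\eqref{airy:asy:3}. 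Simultaneously, the inverse rotation carries the decay sector of $ai$ to the growth sector, giving the growth lower bound on $ai$ in $\Sigma_{\pm 1}$ (by applying steepest descent through the other saddle $t_- = -\zeta^{1/2}$, whose phase is $+\frac{2}{3}\zeta^{3/2}$).

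The only genuinely delicate point—really the bookkeeping rather than an obstacle—is tracking branch choices of $\zeta^{1/2}$ and $\zeta^{3/2}$ across the three sectors, since the whole content of the lemma is which exponential rate controls each fundamental solution in each sector. With the principal branch of $\zeta^{1/2}$ fixed, one verifies $(e^{\mp 2\pi i/3}\zeta)^{3/2} = -\zeta^{3/2}$, which is precisely what is needed to flip decay into growth and vice versa under the rotation. All of the implied constants are universal, coming from the explicit prefactor $\frac{1}{2\sqrt{\pi}}$ in the standard saddle-point formula, and no optimization is required because the lemma asks only for comparability up to constants. All of these details are carried out in the classical treatment in Chapter 11 of Olver \cite{Olver}, which we invoke as a black box.
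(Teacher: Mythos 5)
Your proposal is correct and, like the paper, ultimately defers to Olver \cite{Olver} as the authoritative reference; indeed the paper's own proof of this lemma is a one-line citation to Olver pp.~413--414, so your more detailed steepest-descent sketch (saddle points at $t_{\pm} = \pm\zeta^{1/2}$, rotation $\zeta \mapsto e^{\mp 2\pi i/3}\zeta$ carrying $\Sigma_{\pm 1}$ onto $\Sigma_0$, branch identity $(e^{\mp 2\pi i/3}\zeta)^{3/2} = -\zeta^{3/2}$) is a faithful unpacking of the same classical argument. One small typo: in the paper's notation $B_{-1}(\zeta) := ai(e^{2\pi i/3}\zeta)$, so the connection formula should read $bi(\zeta) = e^{i\pi/6}B_{-1}(\zeta) + e^{-i\pi/6}B_{+1}(\zeta)$ (your subscripts are swapped), though this has no bearing on the validity of the sketch.
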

\begin{proof} These are standard bounds on the Airy functions, see for instance \cite{Olver}, P. 413 -- 414, for precise proofs.  
\end{proof}

\noindent \textbf{Acknowledgements:} S.I is grateful for the hospitality and inspiring work atmosphere at NYU Abu Dhabi, where this work was initiated. The work of S.I is  supported by NSF grant DMS-2306528 and a UC Davis Hellman Foundation Fellowship. The work of N.M. is supported by NSF grant DMS-1716466 and by Tamkeen under the NYU Abu Dhabi Research Institute grant of the center SITE.  The authors would like to thank Pierre-Yves Lagree for useful email exchanges regarding the self-similar Falkner-Skan reversed flow at the early stages of this project, and for pointing them towards the source \cite{Lagree}. S.I. also thanks Mateusz Kwasnicki for pointing him towards  \cite{Grubb}. Both authors thank A-L. Dalibard and F. Marbach for constructive feedback on this work. The authors thank the anonymous referee for several constructive suggestions which improved the quality and presentation of the paper.

\def\bibindent{3.5em}

\end{document}